\numberwithin{equation}{subsection} 
\newtheorem{thm}{Theorem}[section]
\newtheorem{prop}[thm]{Proposition}
\newtheorem{lem}[thm]{Lemma}
\newtheorem{cor}[thm]{Corollary}
\newtheorem{df}[thm]{Definition}
\theoremstyle{definition}
\newtheorem{rmk}[thm]{Remark}
\newtheorem*{note}{Note}
\newtheorem{nota}[thm]{Notation}
\newtheoremstyle{rue}
{3pt}
{3pt}
{\itshape}
{}
{\bfseries}
{.}
{.5em}
{}
\theoremstyle{rue}
\newtheorem*{thmintro}{Theorem}
\newcommand{\C}{\mathcal{C}}
\newcommand{\Ub}{\mathcal{U}}
\newcommand{\F}{\mathcal{F}}
\renewcommand{\O}{\mathcal{O}}
\newcommand{\Ar}{\text{Arr}}
\newcommand{\D}{\mathcal{D}}
\newcommand{\Ba}{\mathcal{B}}
\newcommand{\Xa}{\mathcal{X}}
\newcommand{\A}{\mathcal{A}}
\newcommand{\Aa}{\mathcal{A}}
\newcommand{\B}{\mathscr{B}}
\newcommand{\M}{\mathscr{M}}
\newcommand{\Ea}{\mathcal{E}} 
\newcommand{\Qa}{\mathcal{Q}} 
\newcommand{\Un}{\mathbb{I}} 
\newcommand{\G}{\mathcal{G}}
\newcommand{\Z}{\mathbb{Z}}
\newcommand{\Rci}{\mathcal{R}\mathcal{I}}
\newcommand{\Lci}{\mathcal{L}\mathcal{I}}
\renewcommand{\H}{\mathcal{H}}
\newcommand{\Fb}{\mathbf{F}}
\newcommand{\Ya}{\mathcal{Y}}
\newcommand{\Pa}{\mathcal{P}}
\renewcommand{\to}{\longrightarrow}
\newcommand{\ol}[1]{\overline{#1}}
\newcommand{\ul}[1]{\underline{#1}}
\newcommand{\hrw}{\hookrightarrow}
\newcommand{\xhrw}{\xhookrightarrow}
\renewcommand{\bf}[1]{\mathbf{#1}}
\newcommand{\Ob}{\text{Ob}}
\newcommand{\0}{\textbf{0}} 
\newcommand{\tx}[1]{\text{#1}}
\newcommand{\tld}[1]{\widetilde{#1}}
\renewcommand{\to}{\longrightarrow}
\DeclareMathOperator\Id{Id}
\DeclareMathOperator\Hom{Hom}
\DeclareMathOperator\HOM{\ul{Hom}}
\DeclareMathOperator\Set{\textbf{Set}} 
\DeclareMathOperator\Cat{\mathbf{Cat}}
\DeclareMathOperator\colim{\tx{$colim$}}
\DeclareMathOperator\cof{\text{cof}}
\DeclareMathOperator\fib{\text{fib}} 
\DeclareMathOperator\op{\tx{op}}
\DeclareMathOperator\Ho{\mathbf{ho}} 
\DeclareMathOperator\oalg{\O\tx{-$Alg$}}
\DeclareMathOperator\Arr{\text{Arr}} %
\DeclareMathOperator\ag{\mathscr{A}}
\DeclareMathOperator\mua{\M_\Ub[\ag]}
\newcommand{\mupa}{\tx{$\M^{'}_{\Ub^{'}}[\ag^{'}]$}}
\DeclareMathOperator\mdua{(\M \downarrow \Ub)}
\DeclareMathOperator\pig{\pi_{\G}}
\DeclareMathOperator\pif{\pi_{\F}}
\DeclareMathOperator\Piar{\Pi^{\Ar}}
\DeclareMathOperator\Pio{\Pi^{0}}
\DeclareMathOperator\Piun{\Pi^{1}}
\DeclareMathOperator\ogcr{[\G^0,\G^1,\pi_{\G}]}
\DeclareMathOperator\mdu{\M_{\Ub}[\ag]} %
\DeclareMathOperator\mualg{\M_{\Ub}[\oalg(\M)]} %
\DeclareMathOperator\Fc{[\F]}
\DeclareMathOperator\Gc{[\G]}
\DeclareMathOperator\Pc{[\Pa]}
\DeclareMathOperator\Qc{[\Qa]}
\DeclareMathOperator\Ec{[\Ea]}
\DeclareMathOperator\Xc{[\Xa]}
\DeclareMathOperator\Xci{[\Xa_\text{$i$}]}
\DeclareMathOperator\Bc{[\Ba]}
\DeclareMathOperator\arm{\Ar(\M)}
\DeclareMathOperator\armij{\Ar(\M)_{inj}}
\DeclareMathOperator\armpj{\Ar(\M)_{proj}}
\DeclareMathOperator\sg{\sigma}
\DeclareMathOperator\sgo{\sigma^{0}}
\DeclareMathOperator\sgi{\sigma^{1}}
\DeclareMathOperator\osg{[\sigma^{0},\sigma^{1}]}
\DeclareMathOperator\lm{\mathscr{L}_{\M}}
\DeclareMathOperator\mr{\mathscr{R}_{\M}}
\DeclareMathOperator\piq{\pi_{\Qa}}
\DeclareMathOperator\muaij{\M_{\Ub}[\ag]_{inj}} %
\DeclareMathOperator\muapj{\M_{\Ub}[\ag]_{proj}} %
\DeclareMathOperator\Iam{\mathbf{I}_{\M}} %
\DeclareMathOperator\Jam{\mathbf{J}_{\M}} %
\DeclareMathOperator\sset{\mathbf{sSet}} 
\DeclareMathOperator\sseta{\sset_{\ast}} 
\DeclareMathOperator\lcof{\text{cof}_{L}}
\DeclareMathOperator\lfib{\text{fib}_{L}}
\DeclareMathOperator\lwe{\text{W}_{L}}
\DeclareMathOperator\we{\text{W}}
\DeclareMathOperator\fdua{(\Fb \downarrow \ag)}
\DeclareMathOperator\modcat{\text{Mod}}
\DeclareMathOperator\Mor{\text{Mor}}
\DeclareMathOperator\Mono{\text{Mono}}
\DeclareMathOperator\Epi{\text{Epi}}
\DeclareMathOperator\bcov{\text{B}_\text{cov}}
\DeclareMathOperator\ot{\otimes}
\title{Some functorial factorizations for Quillen functors} 
\author{Hugo Bacard}
\date{\today}
\begin{document}
	\maketitle
	\begin{abstract}
	\noindent	We prove that any right Quillen functor  between  arbitrary model categories admits non trivial functorial factorizations that are similar to those of a model structure. We also prove that these factorizations can be made for lax monoidal right Quillen functors.
    Given a monad, operad or a PROP(erad) $\O$,  if we apply one of the factorizations to the forgetful functor $\Ub : \oalg(\M) \to \M$, we extend the  theory of Quillen-Segal $\O$-algebras initiated in \cite{Bacard_QS} without the hypothesis of $\M$ being a combinatorial model category. 
	\end{abstract}

	\section{Introduction}
	\noindent 
	This paper is motivated by a remark in Hovey's book \cite[p.21]{Hov-model} within which he suggests that we should think of the category of all model categories as itself ``\emph{something like a model structure, with the weak equivalences being the Quillen equivalences}''. This remark appears after observing that the class of Quillen equivalences is closed under retracts and has the $3$-for-$2$ property. We note that it is clear from the beginning that the known categories of model categories  and Quillen functors lack of (finite) limits and colimits, thus cannot fulfill the axioms  of a closed model structure given by Quillen \cite{Quillen_HA}. 
	 Nonetheless, certain categorical operations on model categories remain possible such as products, homotopy fiber products, and more generally \emph{lax} homotopy limits (see \cite{Bergner_holim, Harpaz_lax,Hov-model,To_hall}). Moreover, Bergner \cite{Bergner_hocolim_mod} developed  a notion of homotopy colimit of a diagram of model categories, but she pointed out that the colimit-object is a category with weak equivalences that is not a model category in general.
	 More recently, Barton \cite{Barton_phd} has extensively studied  the question raised by Hovey with the more flexible notion of \emph{premodel category}. Barton proved that there is a \emph{model $2$-category structure} on the $2$-category of combinatorial premodel categories. In particular, a map of combinatorial premodel categories can be factored in two different ways using a (large) small object argument.
	 
	  Our goal in this paper is to discuss the existence of functorial factorizations for Quillen functors within the restrictive world where the objects are model categories. We pursue here some of the ideas in \cite{Bacard_QS} with a method different from that of Barton. 
	   Although it is standard to work with the category $\modcat_l$ of model categories and  left Quillen functors, we will consider instead the  category $\modcat_r$ of model categories and right Quillen functors between them. This choice doesn't affect the ultimate goal since a factorization of a right Quillen functor produces at the same time a factorization of its companion left adjoint. 
	 We show that any right Quillen functor admits two types of factorizations: one of the form ``trivial cofibration followed by a fibration'' while the other one is of the form ``cofibration followed by a trivial fibration''. 
	 One of these factorizations was established for right Quillen functors between combinatorial model categories in our previous work \cite{Bacard_QS}; and we extend this result here to all model categories. The other factorization generalizes the construction of the cylinder and path objects of a model category given by Renaudin \cite{Renaudin_mod}. Our main results are based on the following theorems (see Theorem \ref{fact-inj-thm}, Theorem \ref{fact-proj-thm}, Theorem \ref{right-fact-inj-thm} and Theorem \ref{right-fact-proj-thm}).
	\begin{thmintro}
     Let $\Ub : \ag \to \M$ be a right Quillen functor.\\
      Then there is a factorization $\ag \xhookrightarrow{i} \B \xtwoheadrightarrow[\sim]{p} \M$ of $\Ub$ such that:
     	\begin{enumerate}
     		\item $i$ is a right Quillen functor which is injective on objects.
     		\item $p$ is a right Quillen equivalence, an isofibration, and admits a section.
     	\end{enumerate}
	\end{thmintro}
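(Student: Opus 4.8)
The plan is to realize the factorization through a comma category, transporting model-categorical data from $\M$ and $\ag$ by hand. Set $\B := (\M \downarrow \Ub)$, whose objects are triples $[F^0, F^1, \pi_F]$ with $F^0 \in \M$, $F^1 \in \ag$ and $\pi_F \colon F^0 \to \Ub(F^1)$ a map of $\M$, a morphism $[f^0, f^1]$ being a pair with $\pi_G \circ f^0 = \Ub(f^1) \circ \pi_F$. As a right adjoint, $\Ub$ preserves limits, so $\B$ is complete and cocomplete and each of the two projections $\Pi^0 \colon \B \to \M$ and $\Pi^1 \colon \B \to \ag$ preserves limits and colimits, hence has adjoints on both sides. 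Let $i(a) := [\Ub a, a, \mathrm{id}_{\Ub a}]$, let $s(m) := [m, 1_{\ag}, m \to \Ub 1_{\ag}]$ with $1_{\ag}$ the terminal object of $\ag$ (so $\Ub 1_{\ag} = 1_{\M}$), and put $p := \Pi^0$. Then $p \circ i = \Ub$, $p \circ s = \Id_{\M}$, and $i$ is injective on objects — indeed fully faithful; moreover $i$ is a right adjoint, with left adjoint $\Pi^1$ (a morphism $[F^0, F^1, \pi_F] \to i(a)$ is the same datum as a morphism $F^1 \to a$ of $\ag$), and $p$ is a right adjoint, with left adjoint $L^0 \colon m \mapsto [\, m \xrightarrow{\eta_m} \Ub L m \,]$, where $L \dashv \Ub$ has unit $\eta$. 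So, once a model structure is put on $\B$, the underlying diagram of categories, the two adjunctions, and the section are all in hand.

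The model structure to use on $\B$ is the \emph{injective-type} one, built by hand as in Theorem~\ref{fact-inj-thm}. Its weak equivalences are the $[f^0, f^1]$ with $f^0$ a weak equivalence of $\M$ — so that $p = \Pi^0$ both preserves and reflects weak equivalences — its cofibrations are the $[f^0, f^1]$ with $f^0$ a cofibration of $\M$ together with a relative latching condition on $f^1$ formulated through $L \dashv \Ub$, and its fibrations are the $[f^0, f^1]$ with $f^1$ a fibration of $\ag$ and the relative matching map $F^0 \to G^0 \times_{\Ub G^1} \Ub F^1$ a fibration of $\M$. The $2$-out-of-$3$ and retract axioms are immediate, and that $\Ub$ carries fibrations, trivial fibrations and pullbacks to fibrations, trivial fibrations and pullbacks is used throughout. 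The genuine work — and the main obstacle, since neither $\M$ nor $\ag$ is assumed combinatorial, so no small object argument is available on $\B$ — is producing the functorial factorizations (and with them the lifting axioms). The difficulty is that the latching datum cutting out cofibrations lives on the $\ag$-side and is built from the $\M$-component through $L$, while the matching datum cutting out fibrations lives on the $\M$-side and is built from the $\ag$-component through $\Ub$; the two cannot be resolved one level after the other as in an ordinary Reedy category. One therefore builds the middle object $[H^0, H^1, \pi_H]$, with its structure map, by an iterative construction that alternates a factorization in $\M$ of a relative matching map with a factorization in $\ag$ of a relative latching map and passes to a colimit, then checks that the two resulting halves have cofibration, respectively trivial-fibration, shape; functoriality descends from the functorial factorizations of $\M$ and $\ag$. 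This is precisely the step that was free in \cite{Bacard_QS} under combinatoriality and that Theorem~\ref{fact-inj-thm} carries out in general.

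Granting the model structure, the rest is quick. For a fibration $f$ of $\ag$ the relative matching map of $i(f)$ is an identity of $\M$, hence a fibration, and $i$ likewise sends trivial fibrations to trivial fibrations; being moreover a right adjoint, $i$ is a right Quillen functor, injective on objects. The left adjoint $L^0$ of $p = \Pi^0$ sends $g \colon m \to m'$ to $[g, L g]$, whose relative latching map is an identity, so — $L$ being left Quillen — $L^0$ preserves cofibrations and trivial cofibrations and $p$ is a right Quillen functor. It is a right Quillen equivalence because the unit of $L^0 \dashv \Pi^0$ is an identity and $\Pi^0$ sends its counit to an identity, while $\Pi^0$ and $L^0$ both preserve and reflect weak equivalences; hence the derived unit and counit are weak equivalences. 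Finally $p$ admits the section $s$, which is itself a right Quillen functor, and $p$ is an isofibration: for an isomorphism $\theta \colon F^0 \xrightarrow{\,\cong\,} m$ of $\M$ and an object $[F^0, F^1, \pi_F]$ of $\B$, the pair $[\theta, \mathrm{id}_{F^1}] \colon [F^0, F^1, \pi_F] \to [m, F^1, \pi_F \circ \theta^{-1}]$ is an isomorphism of $\B$ lying over $\theta$. So the whole weight of the argument rests on the hand-built functorial factorizations of Theorem~\ref{fact-inj-thm}.
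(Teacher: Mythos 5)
Your overall architecture is the paper's: $\B=\mdua$, $i=\iota$, $p=\Pio$ with section $R^0$, a model structure on $\mdua$ whose weak equivalences are created by $\Pio$, and then the formal verifications (isofibration, $\iota$ right Quillen, $\Fb^+\dashv\Pio$ a Quillen equivalence via the identity unit and the $\Pio$-invertible counit) — all of that part is correct and matches Proposition \ref{fact-ri-li} and Theorem \ref{right-fact-inj-thm}. But the heart of the statement is the existence of that localized model structure on $\mdua$ for \emph{arbitrary} model categories, and there your proposal has a genuine gap: you explicitly identify the factorization axiom as "the genuine work" and then dispose of it with "an iterative construction that alternates [factorizations] \dots and passes to a colimit," finally deferring to Theorem~\ref{fact-inj-thm}. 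That is not a proof, and the mechanism you sketch is suspect on its own terms: concluding anything about the colimit of an infinite alternating tower of factorizations is exactly the kind of step that needs a smallness/combinatoriality hypothesis, which is precisely what the theorem is supposed to avoid. There is also a reference mismatch: the structure you describe (weak equivalences detected by $\Pio$, injective fibrations unchanged) is $\mua^{\bf{R}}_{inj}$ of Theorem \ref{right-inj-thm}, not the $\Piun$-localized structure of Theorem \ref{fact-inj-thm}, which realizes the \emph{other} introductory factorization (the one where $\iota$ is the Quillen equivalence).

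The paper closes this gap without any transfinite process. First, the $\Piun$-localized structures are built directly, and the required factorizations are \emph{finite} explicit constructions: e.g.\ in Lemma \ref{inj-lem}(2) one factors $\sgi$ in $\ag$, applies $\Ub$, pulls back along $\pig$, and factors the resulting comparison map once in $\M$ — two factorizations and one pullback, with the adjunction $\Fb\dashv\Ub$ used to pass data between the two levels (Lemma \ref{proj-lem-fact} is longer but still finite, interleaving a pushout along $\Fb$ and a pullback along $\Ub$). Second, the $\Pio$-localized structure you actually need is \emph{not} re-proved by hand: it is obtained by applying the $\Piun$-localized result to the right Quillen functor $\Fb^{\op}:\M^{\op}\to\ag^{\op}$ and dualizing under the isomorphism $\mdua\cong(\ag^{\op}\downarrow\Fb^{\op})^{\op}$ (proof of Theorem \ref{right-inj-thm}). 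An even more economical route to this particular factorization is the structure $\mua^{0}$ of Theorem \ref{new-fact-pio}, whose factorizations come from a single factorization in $\M$ followed by one pushout along $\Fb$. To repair your argument you must either carry out one of these finite constructions for your chosen class of cofibrations and fibrations, or invoke the duality trick; as written, the decisive axiom is assumed rather than proved.
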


	\begin{thmintro}
	Let $\Ub : \ag \to \M$ be a right Quillen functor.\\
	 Then there is a factorization $\ag \xhookrightarrow[\sim]{i'} \B^{'} \xtwoheadrightarrow{p'} \M$ of $\Ub$ such that:
		\begin{enumerate}
			\item $i'$ is a right Quillen equivalence which admits a retraction and is injective on objects.
			\item $p'$ is a right Quillen functor and an isofibration.
		\end{enumerate}
\end{thmintro}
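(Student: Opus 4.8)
The plan is to realize $\B'$ as (a variant of) the comma category $\mdu$, whose objects are triples $(m,a,\phi)$ with $m\in\M$, $a\in\ag$, and $\phi\colon m\to\Ub a$ a morphism of $\M$, equipped with the model structure dual to the one used for the first factorization. Where the first factorization uses on $\mdu$ the structure whose weak equivalences are created by the projection to $\M$, here I would use the structure whose weak equivalences are created by the projection to $\ag$ --- so that a morphism $(u,v)$ of $\mdu$, with $u$ a map of $\M$ and $v$ a map of $\ag$, is a weak equivalence exactly when $v$ is a weak equivalence of $\ag$ --- and whose fibrations are described by asking $v$ to be a fibration of $\ag$ together with a matching-type condition on $u$ (the trivial fibrations being described by the corresponding trivial conditions). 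The three relevant functors are the evident projections: $i'\colon\ag\to\B'$ sends $a$ to $(\Ub a,a,\Id_{\Ub a})$, the functor $p'\colon\B'\to\M$ sends $(m,a,\phi)$ to $m$, and $r\colon\B'\to\ag$ sends $(m,a,\phi)$ to $a$; thus $p'\circ i'=\Ub$, the functor $i'$ is injective on objects, and $r\circ i'=\Id_\ag$, so that $r$ is the required retraction.

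First I would dispose of the formal points. The universal property of the comma category gives adjunctions $r\dashv i'$ and $F'\dashv p'$, where $F'(m)=(m,Fm,\eta_m)$ for $F\dashv\Ub$ with unit $\eta$; in particular $i'$ and $p'$ are right adjoints. For right Quillen-ness I would invoke the explicit description of the (trivial) fibrations of $\B'$: the functor $p'$ picks out the $\M$-component, which the matching-type condition exhibits as a composite of a (trivial) fibration with a pullback of $\Ub(v)$, and $\Ub$, being right Quillen, preserves pullbacks, fibrations, and trivial fibrations; hence $p'$ is right Quillen. For $i'$ one checks that the matching map of $i'(g)$ is the identity of $\Ub a$ for every $g\colon a\to b$ in $\ag$, so that $i'(g)$ is a (trivial) fibration of $\B'$ precisely when $g$ is one of $\ag$; hence $i'$ is right Quillen. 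That $p'$ is an isofibration is immediate, since an isomorphism $m'\cong m$ in $\M$ lifts to the isomorphism $(m',a,\phi\circ(m'\xrightarrow{\cong}m))\cong(m,a,\phi)$ of $\B'$.

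I would then show that $i'$ is a Quillen equivalence, which is almost formal: since the weak equivalences of $\B'$ are created by $r$ and $r\circ i'=\Id_\ag$, for cofibrant $X\in\B'$ and fibrant $b\in\ag$ the transpose under $r\dashv i'$ of a map $rX\to b$ is a map $X\to i'b$ whose $\ag$-component is that same map, hence a weak equivalence of $\B'$ if and only if $rX\to b$ is a weak equivalence of $\ag$; together with the fact that $i'$ sends fibrant objects to fibrant objects, this verifies the Quillen-equivalence criterion. The hard part will therefore not be any of these verifications but the construction of the model structure on $\mdu$ itself --- and in particular its two functorial factorizations --- when $\M$ is an arbitrary, possibly non-combinatorial, model category: one cannot run the small object argument, and must instead assemble the factorizations in $\mdu$ from those of $\M$ and $\ag$, factoring the $\ag$-component inside $\ag$ and then the induced matching map inside $\M$, using throughout only that $\Ub$ preserves pullbacks, fibrations, and trivial fibrations. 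This is exactly the content of the functorial factorizations for Quillen functors established earlier in the paper; when $\ag=\M\times\M$ and $\Ub$ is the diagonal it specializes to Renaudin's path object \cite{Renaudin_mod}, and in general it is the construction of $\B'$, rather than the properties claimed of $i'$ and $p'$, that carries the weight.
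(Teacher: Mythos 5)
Your construction is the paper's: $\B'$ is the comma category $\mdua$ equipped with the model structure localized along $\Piun$ (Theorem \ref{left-inj-thm}, resp.\ Theorem \ref{left-proj-thm}), with $i'=\iota$, $p'=\Pio$, and retraction $\Piun$; your verifications of the adjunctions $\Piun\dashv\iota$ and $\Fb^+\dashv\Pio$, of the right Quillen-ness of $\iota$ and $\Pio$ via the explicit description of the (trivial) fibrations, of the isofibration property, and of the Quillen-equivalence criterion for $\Piun\dashv\iota$ all agree with Theorem \ref{fact-inj-thm} and Proposition \ref{fact-ri-li}. The one place where the proposal is thinner than the paper is exactly the step you defer. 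Establishing that the localized classes form a model structure requires, besides the two functorial factorizations --- and your recipe, factor the $\ag$-component in $\ag$, apply $\Ub$, pull back, then factor the induced matching map in $\M$, is precisely the proof of Lemma \ref{inj-lem} --- a proof of the lifting axiom for the new pair (left trivial cofibration, left fibration), which does not follow formally from the unlocalized structure. In the injective variant this is Lemma \ref{inj-lem}(1) and reduces, in the spirit of your sketch, to a lift in $\ag$ followed by a lift in $\M$ against the matching map; but in the projective variant it is the most technical point of the paper (Lemma \ref{proj-lem-lift}), proved by passing to the under-over model category $(\Fc,\tau)/\mua_{/\Qc}$ and invoking the bijection of homotopy classes of maps out of a cofibrant object induced by a weak equivalence between fibrant objects. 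So the skeleton and all the claimed properties of $i'$ and $p'$ are correctly handled; what remains unproved in your write-up is that single lifting argument, which is the real content behind the existence of $\B'$.
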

\noindent We prove that  the couples $(i,p)$ and $(i',p')$ define each a \emph{functorial factorization} on $\modcat_r$ (Theorem \ref{main-thm-fact}). We remind the reader that  $\modcat_r$ (resp. $\modcat_l$) can be given the structure of a large $2$-category, where a $2$-morphism is a natural transformation between parallel right (resp. left) Quillen functors. Furthermore, we have an equivalence of underlying $1$-categories $\modcat_{r, \leq 1}^{op} \simeq \modcat_{l,\leq 1}$, due to the fact that an adjunction is made of functors going in opposite directions. There is also a notion of homotopy between right (resp. left) Quillen functors defined as special types of  $2$-morphism (Definition \ref{df-right-ho}). With this notion of homotopy, we show that for each factorization, the maps $i$ and $p$ (resp. $i'$ and $p'$) are \emph{homotopy orthogonal}, in that any lifting problem defined by such maps admits automatically a solution up-to-homotopy (Lemma \ref{lem-lifting} and Lemma \ref{lem-lift-2}). In view of these results, each factorization above does not give a \emph{weak factorization system} as one might wishes, but a `\emph{weaker}' factorization system. 
Before closing this introduction, let us outline briefly the idea of the proof of the above theorems. We define $\B$ as the comma category $\mdua$, through which the functor $\Ub$ factors functorially  as $(\ag \xhookrightarrow{\iota} \mdua \xrightarrow{\Pio} \M)$ (Proposition \ref{adjunction-prop}).  On the category $\mdua$ we have the well known injective and projective model structures. For each model structure, we have two right Quillen functors $\Pio: \mdua \to \M$ and $\Piun : \mdua \to \ag$. The strategy of the proof is to localize these model structures along $\Pio$ and along $\Piun$. In other words, we do not change the underlying factorization at a categorical level.

The existence of these factorizations has many consequences most of which will be treated in a different work. In \cite{Bacard_QS}, we have shown that the factorizations of an endofunctor $\Ub: \M \to \M$, such as the loop space functor $\Omega: \sseta \to \sseta $, along with \emph{linked $\Z$-sequences} were the building blocks in our approach to getting the \emph{stable homotopy category}. Another interesting aspect is the construction of Quillen equivalent models out of a morphism and the possibility to replace a map by a better behaved one. The last fact is reminiscent to the \emph{mapping cylinder and mapping path-space} constructions in Topology which strengthens Hovey's intuition that model categories - invented by Quillen by abstracting the homotopy theory of spaces - behave themselves like spaces. These factorizations enable us to extend a result on Quillen-Segal algebras in our previous work \cite{Bacard_QS}. Furthermore, we prove that they can be made for any lax monoidal functor which is a part of a monoidal Quillen adjunction. 
With the recent developments in Homotopy theory, and especially in the theory of higher categories, it is clear that the $2$-category $\modcat_r$ is only an approximation of the true object we should be looking at, which is an $\infty$-category. The study of the corresponding $\infty$-category goes beyond the scope of this paper and shall be done in a different work.
Moreover, we note that the comma category $\mdua$ can  be defined for Quillen functors between premodel categories and it would be interesting to find out the resulting structure in this context as well as the functorial factorizations that might emerge.
\subsection{Organization of the paper} The paper is structured as follows. %
\begin{enumerate}
	\item Section \ref{sec-prelim} contains some preliminary results that lead to the functorial factorization $(\ag \hookrightarrow \mdua \to \M)$ in the category of adjunctions (Proposition \ref{prop-left-ehk}).
	\item Section \ref{sec-fact} contains most of the results of the paper. The proofs of the main theorem is in Subsection \ref{proof-main-thm-fact}.
	\item We give a short discussion on the lifting properties in Section \ref{sec-lifting}.
	\item In Section \ref{sec-qs-alg}  we extend a previous result on Quillen-Segal algebras.
	\item In Section \ref{sec-fact-lax} we prove that:
	\begin{enumerate}
		\item  if $\Ub$ is lax monoidal functor, then there is point-wise product on $\mdua$,  and the previous factorization expresses $\Ub$ as the composite of a lax monoidal factorization followed by a strong monoidal functor.
		\item  if $\Ub$ is part of \emph{weak  monoidal Quillen adjunction}  in the sense of Schwede-Shipley \cite{Sch-Sh-equiv},  then $\mdua$ carries also a structure of a monoidal model category (Theorem \ref{muaij-mon-mod}, Theorem \ref{muaij-left-mon-mod}). 
		\item the lax functor $\Ub$ can be replaced it by a strong monoidal functor which is part of a monoidal Quillen adjunction (Corollary \ref{cor-fact-mon}).
	\end{enumerate}
    \item We've also provided other factorizations in Appendix \ref{sec-other-fact}.
    \item In Appendix \ref{sec-hom-alg-top} we prove that the aforementioned factorization holds for abelian categories and Grothendieck sites under reasonable hypotheses. This will be used in a future work following  Rezk's notion of  \emph{model topos} (see\cite{Rezk_mod_top}). 
	\item We've put the long proofs in an appendix. 
\end{enumerate}

\subsection{Notation and Hypotheses} 
\hfill 
\begin{itemize}
	\item $\Ub: \ag \to \M$ will be in general a right adjoint whose left adjoint is $\Fb$.
	\item ``left $\dashv$ right'' $=$ an adjunction, where ``left'' is the left adjoint and ``right'' is the right adjoint.
	\item $\mua := \mdua = $ the comma category whose objects are  triples $\Fc=[\F^0, \F^{1}, \pi_{\F}: \F^0 \to \Ub(\F^1)]\in \M \times \ag \times \Arr(\M)$. 
	\item $\sigma: [\F] \xrightarrow{\osg}[\G]$ represents a map in $\mua$.
	\item The categories $\ag$ and $\M$ are arbitrary model categories.
	\item We have three functors:
	\begin{itemize}
		\item  $\Pi^0: \mdua \to \M $, with $\Pi^0([\F])= \F^0$;
		\item  $\Pi^{1}: \mdua \to \ag $, with  $\Pi^{1}([\F])= \F^1$;
		\item  $\Pi_{\Ar}: \mdua \to \Ar(\M)$, with $\Pi_{\Ar}([\F])= \pi_{\F}$.
	\end{itemize}
	
	\item $\iota : \ag \hrw \mua$ is the embedding that maps $\Pa \mapsto \iota(\Pa)= [\Ub(\Pa), \Pa, \Id_{\Ub(\Pa)}]$.
	\item $\Mor(\C)=$ the class of all morphisms of a category $\C$.
	\item $\Mono(\C)= $ the class of monomorphims of $\C$.
	\item $\Epi(\C)= $ the class of epimorphisms of $\C$.
	\item $\emptyset, \emptyset_{\C}, \emptyset_{\ag}, \emptyset_{\M}, \cdots $ are initial objects.
	\item $\ast, \ast_{\C}, \ast_{\ag}, \ast_{\M}, \cdots $ are terminal objects.
	\item $\cof(-), \fib(-), \we(-)$, are respectively the classes of cofibrations, fibrations and weak equivalences of a model category `$-$'.
	\item $\Cat =$ the category of small categories. 
\end{itemize}
\begin{note}
Rather than demanding the existence of all limits and colimits in a model category, we will consider instead Quillen's original requirement of having finite limits and colimits. This assumption resolves the size issues that might occur when computing  limits and colimits in the comma category $\mdua$.
\end{note}
\section{Categorical preliminaries}\label{sec-prelim}
\noindent Let $\C$ be a $1$-category and let $f: A \to B$ be a morphism of $\C$.
	\begin{itemize}
		\item Say that $f$ admits a left inverse (or a retraction) if there is a morphism $r: B \to A$ such that $r \circ f = \Id_A$. 
		\item Say that $f$ admits a right inverse (or a section) if there is a morphism $s: B \to A$ such that $f \circ r = \Id_B$. 
	\end{itemize}
We will denote by $\Lci(\C) \subseteq \Mor(\C)$ the class of all morphisms that admit a left inverse, and by $\Rci(\C) \subseteq \Mor(\C)$ the class of all morphisms that admit a right inverse. It can be easily seen that we have some isomorphisms of classes $\Lci(\C) \cong \Rci(\C^{op})$ and $\Rci(\C) \cong \Lci(\C^{op})$, where  $\C^{op}$ is the opposite or dual category. With some basic category theory, one can prove the following proposition.
\begin{prop}\label{prop-closure-li-ri}
For any category $\C$ the following hold.
\begin{enumerate}
	\item The two classes $\Lci(\C)$ and $\Rci(\C)$ are closed under composition and retracts.
	\item We have $\Lci(\C) \subseteq \Mono(\C)$ and $\Rci(\C) \subseteq \Epi(\C)$.
	\item If $\ast_{\C}$ is a terminal object of $\C$, then the unique map $X \to \ast_{\C}$ admits the right lifting property (RLP) against every map in $\Lci(\C)$, i.e., every object $X$ is $\Lci(\C)$-injective.
	\item If $\emptyset_{\C}$ is an initial object of $\C$, then the unique map 
	$\emptyset_{\C} \to Y$ admits the left lifting property (LLP) against every map in $\Rci(\C)$, i.e., every object $Y$ is $\Rci(\C)$-projective. 
\end{enumerate}
\end{prop}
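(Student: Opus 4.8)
The plan is to establish all four items by elementary diagram chasing, exploiting the duality $\Lci(\C) \cong \Rci(\C^{\op})$ recorded above: it suffices to prove the assertions concerning left inverses, terminal objects, and the RLP, since the assertions about right inverses, initial objects, and the LLP then follow by applying these to $\C^{\op}$ (which swaps $\Lci$ with $\Rci$, terminal objects with initial objects, and RLP with LLP).

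For (1), suppose $f \colon A \to B$ and $g \colon B \to C$ lie in $\Lci(\C)$ with left inverses $r \colon B \to A$ and $s \colon C \to B$; I would check that $r \circ s$ is a left inverse of $g \circ f$ via $r s g f = r (s g) f = r f = \Id_A$, which uses only associativity and $sg = \Id_B$. For closure under retracts, consider a retract diagram
\[
\xymatrix{
A' \ar[r]^{i_A} \ar[d]_{f'} & A \ar[r]^{r_A} \ar[d]_{f} & A' \ar[d]^{f'} \\
B' \ar[r]_{i_B} & B \ar[r]_{r_B} & B'
}
\]
with $r_A i_A = \Id_{A'}$ and $r_B i_B = \Id_{B'}$; given a left inverse $\rho$ of $f$, I would exhibit $r_A \circ \rho \circ i_B$ as a left inverse of $f'$, the verification being the one-line computation $r_A \rho i_B f' = r_A \rho f i_A = r_A i_A = \Id_{A'}$. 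Part (2) is then immediate: if $r f = \Id_A$ and $f u = f v$, then $u = r f u = r f v = v$, so $f \in \Mono(\C)$; the statement for $\Rci(\C)$ and $\Epi(\C)$ is dual.

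For (3), let $f \colon A \to B$ lie in $\Lci(\C)$ with left inverse $r$, and take an arbitrary commutative square whose left edge is $f$, whose right edge is the unique map $X \to \ast_{\C}$, and whose top edge is some $u \colon A \to X$. I would propose $u \circ r \colon B \to X$ as a diagonal filler: the upper triangle commutes because $u r f = u \Id_A = u$, and the lower triangle commutes automatically since $\ast_{\C}$ is terminal. Dualizing yields (4): for a square from $\emptyset_{\C} \to Y$ to $f \colon A \to B$ with right inverse $s$ and bottom edge $v \colon Y \to B$, the filler is $s \circ v$, with $f s v = v$ and the upper triangle automatic since $\emptyset_{\C}$ is initial. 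I do not expect a genuine obstacle here; the only points needing care are bookkeeping the commutativities in the retract diagram of (1) and checking that the chosen filler in (3) and (4) points in the direction demanded by the RLP, respectively the LLP --- equivalently, by the claimed injectivity, respectively projectivity, of the object in question.
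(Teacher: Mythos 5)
Your proposal is correct and follows essentially the same route as the paper: reduce to the $\Lci$ statements by the duality $\Rci(\C)\cong\Lci(\C^{op})$, and for closure under retracts conjugate the given left inverse by the retract structure maps (your $r_A\circ\rho\circ i_B$ is exactly the paper's $\beta^0\circ r\circ\alpha^{1}$, verified by the same computation). The remaining items, which the paper leaves as straightforward, are filled in by you with the evident and correct diagram chases.
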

\begin{proof}
We will only prove Assertion $(1)$, the other ones are straightforward. Moreover, using the duality $\Rci(\C) \cong \Lci(\C^{op})$, it is enough to prove this assertion for the class $\Lci(\C)$. We note that $\Lci(\C)$ is clearly closed under composition, thus it remains to show that it is also closed under retracts. Let $f$ be a retract of an element $g \in \Lci(\C)$. By definition, we have a commutative diagram where the horizontal composites are identities:
\[
\xy
(0,15)*+{A}="W";
(0,0)*+{B}="X";
(50,0)*+{B}="Y";
(50,15)*+{A}="E";
{\ar@{->}^-{f}"W";"X"};
{\ar@{->}^-{f}"E";"Y"};
(25,15)*+{C}="U";
(25,0)*+{D}="V";
{\ar@{->}^-{\alpha^{1}}"X";"V"};
{\ar@{->}^-{\beta^{1}}"V";"Y"};
{\ar@{->}^-{g}"U";"V"};
{\ar@{->}^-{\beta^0}"U";"E"};
{\ar@{->}^-{\alpha^0}"W";"U"};
\endxy
\]
If $r : D \to C$ is a left inverse of $g$, then the map $r' = \beta^0 \circ r \circ \alpha^{1}$ is a left inverse  of $f$ since:
$$r' \circ f = \beta^0 \circ r \circ \underbrace{\alpha^{1}  \circ f}_{= g \circ \alpha^0}= \beta^0 \circ \underbrace{r \circ g}_{=\Id_C} \circ \alpha^0 = \beta^0 \circ  \alpha^0 = \Id_A.$$
\end{proof}
\noindent If the category $\M$ has an initial object $\emptyset_{\M}$, then we can define a functor $L^1: \ag \to \mua$ with  $L^{1}(\Pa)= [\emptyset_{\M}, \Pa, \emptyset_{\M} \to  \Ub(\Pa)]$. Dually, in the presence of coinitial objects such that $\Ub$ preserves them, we can define a functor $R^0: \M \to \mua$ given by $R^0(m)=[m, \ast_{\ag}, m \to \Ub(\ast_{\ag})=\ast_{\M}]$. Moreover, if $\Ub$ has a left adjoint $\Fb$, we can also define a functor $\Fb^+: \M \to \mua$, with $\Fb^+(m)= [m, \Fb(m), m \xrightarrow{\eta_m} \Ub\Fb(m)]$. We've established in \cite{Bacard_QS} the following:
\begin{prop}\label{adjunction-prop}
	Let $\Ub: \ag \to \M$ be a functor between categories with initial and coinitial objects such that $\Ub(\ast_{\ag})= \ast_{\M}$. Then the following hold. 
	
	\begin{enumerate}
		\item We have three adjunctions $(\Pi^{1} \dashv \iota)$, $(L^{1} \dashv \Pi^{1})$, $(\Pi^0 \dashv R^0)$.
		\item If $\Ub$ has a left adjoint $\Fb: \M \to \ag$, then
		there is an adjunction  $(\Fb^+ \dashv \Pi^0)$.
		\item We have some factorizations: 
		$$\Pio \circ \iota = \Ub, \quad \Piun \circ \iota = \Id_{\ag}, \quad \Pio \circ R^0 = \Id_{\M}, \quad \Piun \circ L^1 = \Id_{\ag}.$$
	\end{enumerate}
In particular we have $\iota \in \Lci(\Cat)$, $ \Pi^0 \in \Rci(\Cat)$ and $L^{1} \in \Lci(\Cat)$.
\end{prop}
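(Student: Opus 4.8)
The plan is to check everything directly against the universal property of the comma category $\mua$, so that the proof reduces to a bookkeeping exercise with the commutativity conditions defining its morphisms. Assertion $(3)$ is read off the definitions: $\Pio(\iota(\Pa))=\Pio([\Ub(\Pa),\Pa,\Id_{\Ub(\Pa)}])=\Ub(\Pa)$ and $\Piun(\iota(\Pa))=\Pa$, and similarly $\Pio(R^{0}(m))=m$ and $\Piun(L^{1}(\Pa))=\Pa$, with the same identities on morphisms. The final sentence is then formal: $\Piun$ is a retraction of both $\iota$ and $L^{1}$, hence $\iota,L^{1}\in\Lci(\Cat)$, and $R^{0}$ is a section of $\Pio$, hence $\Pio\in\Rci(\Cat)$.

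For Assertion $(1)$ I would produce each adjunction through a natural bijection of hom-sets. A morphism $[\F]\to\iota(\Pa)=[\Ub(\Pa),\Pa,\Id_{\Ub(\Pa)}]$ is a pair $[\sigma^{0},\sigma^{1}]$ subject to $\sigma^{0}=\Ub(\sigma^{1})\circ\pi_{\F}$, hence the same datum as $\sigma^{1}:\F^{1}\to\Pa$; this gives $\Hom_{\mua}([\F],\iota(\Pa))\cong\Hom_{\ag}(\F^{1},\Pa)$, natural in both variables, so $\Piun\dashv\iota$ (counit $\Id_{\ag}$, unit $[\pi_{\F},\Id_{\F^{1}}]$ at $[\F]$). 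A morphism $L^{1}(\Pa)=[\emptyset_{\M},\Pa,\emptyset_{\M}\to\Ub(\Pa)]\to[\F]$ has its $\M$-component forced by initiality of $\emptyset_{\M}$ and its compatibility square automatic, so it is just $\sigma^{1}:\Pa\to\F^{1}$, giving $\Hom_{\mua}(L^{1}(\Pa),[\F])\cong\Hom_{\ag}(\Pa,\F^{1})$ and hence $L^{1}\dashv\Piun$. Dually, using terminality of $\ast_{\ag}$ together with the hypothesis $\Ub(\ast_{\ag})=\ast_{\M}$, a morphism $[\F]\to R^{0}(m)=[m,\ast_{\ag},m\to\ast_{\M}]$ is just $\sigma^{0}:\F^{0}\to m$, giving $\Hom_{\mua}([\F],R^{0}(m))\cong\Hom_{\M}(\F^{0},m)$ and hence $\Pio\dashv R^{0}$. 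In each case I would also write down the unit and counit and dispatch the triangle identities in a line.

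For Assertion $(2)$, assume $\Fb\dashv\Ub$ with unit $\eta$. A morphism $\Fb^{+}(m)=[m,\Fb(m),\eta_{m}]\to[\F]$ is a pair $[\sigma^{0},\sigma^{1}]$ with $\pi_{\F}\circ\sigma^{0}=\Ub(\sigma^{1})\circ\eta_{m}$; but this equation is precisely the statement that $\sigma^{1}:\Fb(m)\to\F^{1}$ is the transpose of $\pi_{\F}\circ\sigma^{0}:m\to\Ub(\F^{1})$ under $\Fb\dashv\Ub$. So $\sigma^{1}$ is uniquely determined by $\sigma^{0}$ and every $\sigma^{0}$ arises, giving a bijection $\Hom_{\mua}(\Fb^{+}(m),[\F])\cong\Hom_{\M}(m,\F^{0})=\Hom_{\M}(m,\Pio([\F]))$ which is natural since transposition along $\Fb\dashv\Ub$ is natural; the counit at $[\F]$ is $[\Id_{\F^{0}},\widetilde{\pi_{\F}}]$, with $\widetilde{\pi_{\F}}:\Fb(\F^{0})\to\F^{1}$ the transpose of $\pi_{\F}$. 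I do not expect a genuine obstacle anywhere: the only delicate point is keeping the comma-category commutativity squares straight and checking naturality, and among the four adjunctions it is $(2)$ that genuinely uses $\Fb\dashv\Ub$ rather than merely the universal property of $\emptyset_{\M}$ or $\ast_{\ag}$.
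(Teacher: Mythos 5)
Your proof is correct: each of the four hom-set bijections is established exactly by the universal-property/transposition arguments you give, the units and counits you exhibit are the right ones, and the paper itself offers no proof here (it cites the author's earlier work \cite{Bacard_QS}), where the argument is precisely this direct verification. The only point worth flagging is one you already note implicitly — the adjunction $(\Piun\dashv\iota)$ needs no (co)initial objects, while $(L^1\dashv\Piun)$ and $(\Pio\dashv R^0)$ use initiality of $\emptyset_{\M}$ and terminality of $\ast_{\ag}$ together with $\Ub(\ast_{\ag})=\ast_{\M}$ to make the comma squares automatic, exactly as you wrote.
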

\noindent The adjunction $(\Piun \dashv \iota)$ holds without the existence of (co)initial objects. Recall that there is a model structure on $\Cat$, sometimes called the ``folk model structure''  or  ``canonical model structure'' (see for example  \cite{Joy-Tier,Lack_model_2_cat}). In this model structure, the cofibrations are the functors that are injective on objects, while the fibrations are the \emph{isofibrations}. The weak equivalences are the equivalences of categories. We will denote this model category by $\Cat_{folk}$.
\begin{prop}\label{fact-ri-li}
	In the category $\Cat$ the following hold.
	\begin{enumerate}
		\item Any functor $\Ub \in \Lci(\Cat)$ is injective on objects, whence a cofibration in $\Cat_{folk}$.
		\item Any functor $\Ub :\ag \to \M$ admits a factorization $\Ub = G_2 \circ G_1 $, where $G_2 \in \fib(\Cat_{folk})$ and $G_1 \in \Lci(\Cat)$.
		\item Any functor $\Ub :\ag \to \M$ between categories with coinitial objects such that $\Ub(\ast_{\ag})= \ast_{\M}$,  admits a factorization $\Ub = G_2 \circ G_1 $, where $G_2 \in \Rci(\Cat) \cap \fib(\Cat_{folk})$ and $G_1 \in \Lci(\Cat)$.
	\end{enumerate}
\end{prop}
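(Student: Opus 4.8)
The plan is to exhibit explicit factorizations using the comma category $\mua$ and the functors assembled in Proposition \ref{adjunction-prop}, and to check by hand the (elementary) model-categorical conditions in $\Cat_{folk}$. For Assertion (1), I would simply invoke Proposition \ref{prop-closure-li-ri}(2), which gives $\Lci(\Cat) \subseteq \Mono(\Cat)$: a functor that admits a left inverse is in particular injective on objects (indeed on morphisms), and injective-on-objects functors are by definition the cofibrations of $\Cat_{folk}$. This is immediate and needs no real argument beyond citing the earlier proposition.

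For Assertion (2), the idea is to factor $\Ub : \ag \to \M$ through the comma category. First I would form $\mua = \mdua$ and recall from Proposition \ref{adjunction-prop} that $\Ub = \Pi^0 \circ \iota$ (this is the identity $\Pio \circ \iota = \Ub$ listed there), with $\iota \in \Lci(\Cat)$ — the explicit left inverse being $\Piun$ up to the identification, or more precisely $\iota$ has a retraction because $\Pi^1 \circ \iota = \Id_{\ag}$. So I set $G_1 = \iota$ and $G_2 = \Pi^0$. It remains to show $\Pi^0 : \mua \to \M$ is a fibration in $\Cat_{folk}$, i.e.\ an isofibration: given an object $[\F] = [\F^0, \F^1, \pi_\F]$ of $\mua$ and an isomorphism $\phi : \F^0 \xrightarrow{\sim} m'$ in $\M$, I must lift it to an isomorphism in $\mua$ with source $[\F]$. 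This is straightforward: take the target object $[m', \F^1, \pi_\F \circ \phi^{-1} : m' \to \Ub(\F^1)]$ and the morphism $(\phi, \Id_{\F^1})$; since $\phi$ is an isomorphism in $\M$ and $\Id_{\F^1}$ is an isomorphism in $\ag$, the pair is an isomorphism in $\mua$, and it maps to $\phi$ under $\Pi^0$. Hence $\Pi^0$ is an isofibration and $G_2 \in \fib(\Cat_{folk})$.

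For Assertion (3), under the extra hypothesis that $\ag$ and $\M$ have coinitial (terminal) objects with $\Ub(\ast_\ag) = \ast_\M$, I want the same factorization $\Ub = \Pi^0 \circ \iota$ but now with $\Pi^0 \in \Rci(\Cat)$ as well. This is exactly where $R^0 : \M \to \mua$ enters: by Proposition \ref{adjunction-prop}(3) we have $\Pi^0 \circ R^0 = \Id_\M$, so $R^0$ is a section of $\Pi^0$, which places $\Pi^0$ in $\Rci(\Cat)$; combined with the isofibration argument from Assertion (2) (which did not use any hypothesis beyond what we already have), we get $G_2 = \Pi^0 \in \Rci(\Cat) \cap \fib(\Cat_{folk})$ and $G_1 = \iota \in \Lci(\Cat)$ as before.

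I do not anticipate a genuine obstacle here: every ingredient is already packaged in Propositions \ref{prop-closure-li-ri} and \ref{adjunction-prop}, and the only thing requiring a (short) verification is that $\Pi^0$ is an isofibration, which is the elementary lifting computation sketched above. The mild subtlety worth stating carefully is the well-definedness of the lifted object $[m', \F^1, \pi_\F \circ \phi^{-1}]$ and the fact that a morphism in $\mua$ is an isomorphism precisely when both its components are — this is what makes $(\phi, \Id_{\F^1})$ invertible and hence certifies the isofibration property.
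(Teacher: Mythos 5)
Your proposal is correct and follows essentially the same route as the paper: factor $\Ub$ as $\Pi^0\circ\iota$ through the comma category $\mdua$, verify that $\Pi^0$ is an isofibration by the explicit lift $[m',\F^1,\pi_{\F}\circ\phi^{-1}]$ with morphism $[\phi,\Id_{\F^1}]$, and use $\Pi^1$ (resp.\ $R^0$) as the retraction of $\iota$ (resp.\ section of $\Pi^0$). The only cosmetic difference is in Assertion (1), where the paper transports the retraction along $\Ob:\Cat\to\Set$ to get injectivity on objects directly, while you pass through $\Mono(\Cat)$; both amount to the same observation.
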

\begin{proof}
	Since we have a functor $\Ob: \Cat \to \Set$, it is clear that if $r$ is a retraction of  $\Ub$ then $\Ob(r)$ is a retraction of $\Ob(\Ub)$ which implies that $ \Ob(\Ub) \in \Mono(\Set)$ by  Proposition \ref{prop-closure-li-ri}. This gives the first assertion. Assertion $(2)$ and $(3)$ follow from Proposition \ref{adjunction-prop} if we set $G_1 = (\iota: \ag \to \mdua)$ and $G_2 = (\Pio: \mdua \to \M)$. We only need to check that $\Pio$ is indeed an isofibration and that the factorization is functorial. This is not hard but we include the proof for the reader's convenience. $\Pio$ will be an isofibration if we can show that for any $\Fc \in \mdua$  and for any isomorphism $u : \Pio(\Fc) \xrightarrow{\cong} m$  in $\M$, there is an isomorphism $\sg : \Fc \to \Gc_u$ in $\mdua$ such that $\Pio(\sg) =u$.
Set $\Gc_u = [m, \F^1, \pif \circ u^{-1} : m \to \Ub(\F^1)]$ and let $\sg  : \Fc \xrightarrow{[\sg^0, \sg^{1}]}\Gc_u$ be the morphism given by $\sg^0 = u$ and $\sg^{1}= \Id_{\F^1}$. Clearly, $\sg$ is an isomorphism and one has $\Pio(\sg) =u$.
The factorization is clearly functorial, in that given any commutative square of solid arrows, there is a dotted functor such that the inner squares are also commutative:
\begin{align}\label{diag-fact-comma}
\xy
(0,15)*+{\ag}="W";
(0,0)*+{\ag'}="X";
{\ar@{->}^-{H}"W";"X"};
(25,15)*+{\M}="U";
(25,0)*+{\M'}="V";
{\ar@{->}^-{\Ub'}"X";"V"};
{\ar@{->}_-{K}"U";"V"};
{\ar@{->}^-{\Ub}"W";"U"};
\endxy
\xy
(-5,7)*+{\Rightarrow}="L";
(0,15)*+{\ag}="W";
(0,0)*+{\ag'}="X";
(50,0)*+{\M'}="Y";
(50,15)*+{\M}="E";
{\ar@{->}^-{H}"W";"X"};
{\ar@{->}_-{K}"E";"Y"};
(25,15)*+{\mdua}="U";
(25,0)*+{(\M'\downarrow \Ub')}="V";
{\ar@{->}^-{\iota}"X";"V"};
{\ar@{->}^-{\Pio}"V";"Y"};
{\ar@{.>}^-{E(H,K)}"U";"V"};
{\ar@{->}^-{\Pio}"U";"E"};
{\ar@{->}^-{\iota}"W";"U"};
\endxy
\end{align}
The functor $E(H,K)$ maps $[\F^0,\F^1,\pif] \mapsto [K(\F^0), H(\F^1), K(\pif) ]$ and takes $\sg =[\sg^0,\sg^{1}] \mapsto [K(\sg^0), H(\sg^{1})]$.
\end{proof}

\begin{prop}\label{prop-left-ehk}
Assume that the functors $\Ub$, $\Ub'$, $H$ and $K$ in (\ref{diag-fact-comma}) are right adjoint functors with respective left adjoints $\Fb,\Fb', H_\ast$ and $K_\ast$. Then the functor $E(H,K) : \mdua \to (\M'\downarrow \Ub')$ is also a right adjoint.
\end{prop}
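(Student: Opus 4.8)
The plan is to produce an explicit left adjoint $E_\ast$ of $E(H,K)$ and to verify the adjunction through a natural bijection of hom-sets; the whole difficulty is bookkeeping around the strict commutativity $\Ub'\circ H=K\circ\Ub$ of the square in $(\ref{diag-fact-comma})$. Write $\eta^{H}\colon\Id_{\ag'}\to H\,H_\ast$ and $\eta^{K}\colon\Id_{\M'}\to K\,K_\ast$ for the units, and $\varepsilon^{K}\colon K_\ast K\to\Id_{\M}$, $\varepsilon^{H}\colon H_\ast H\to\Id_{\ag}$ for the counits, of $K_\ast\dashv K$ and $H_\ast\dashv H$; recall that $(\ref{diag-fact-comma})$ commutes on the nose, so $\Ub'\circ H=K\circ\Ub$ as functors (hence also on morphisms).

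First I would define $E_\ast\colon(\M'\downarrow\Ub')\to\mdua$ on objects by sending $[X^0,X^1,\xi\colon X^0\to\Ub'(X^1)]$ to $[\,K_\ast(X^0),\,H_\ast(X^1),\,\rho_X\,]$, where $\rho_X\colon K_\ast(X^0)\to\Ub(H_\ast(X^1))$ is the adjunct under $K_\ast\dashv K$ of the composite
\[
X^0\xrightarrow{\ \xi\ }\Ub'(X^1)\xrightarrow{\ \Ub'(\eta^{H}_{X^1})\ }\Ub'\bigl(H\,H_\ast(X^1)\bigr)=K\bigl(\Ub(H_\ast(X^1))\bigr),
\]
the last identification being $\Ub'H=K\Ub$ applied to $H_\ast(X^1)$; equivalently $\rho_X=\varepsilon^{K}_{\Ub H_\ast X^1}\circ K_\ast\bigl(\Ub'(\eta^{H}_{X^1})\circ\xi\bigr)$, and one may also obtain $\rho_X$ from the mate isomorphism $H_\ast\Fb'\cong\Fb K_\ast$ induced by $\Ub'H=K\Ub$. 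On a morphism $[u^0,u^1]\colon X\to Y$ of $(\M'\downarrow\Ub')$ put $E_\ast([u^0,u^1])=[K_\ast(u^0),H_\ast(u^1)]$; naturality of $\eta^{H}$ and $\eta^{K}$ shows this is a legitimate morphism of $\mdua$, and functoriality of $E_\ast$ is then immediate.

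Next I would establish, for $Y=[\F^0,\F^1,\pi_{\F}]\in\mdua$ and $X=[X^0,X^1,\xi]$, a bijection natural in both variables
\[
\Hom_{\mdua}\bigl(E_\ast(X),\,Y\bigr)\;\cong\;\Hom_{(\M'\downarrow\Ub')}\bigl(X,\,E(H,K)(Y)\bigr).
\]
A morphism on the right is a pair $\bigl(f^0\colon X^0\to K(\F^0),\,f^1\colon X^1\to H(\F^1)\bigr)$ with $K(\pi_{\F})\circ f^0=\Ub'(f^1)\circ\xi$. Transpose $f^0$ across $K_\ast\dashv K$ and $f^1$ across $H_\ast\dashv H$ to obtain a pair $\bigl(g^0\colon K_\ast(X^0)\to\F^0,\,g^1\colon H_\ast(X^1)\to\F^1\bigr)$; on the underlying (unconstrained) hom-sets this is already a bijection, natural in $X$ and $Y$, by the two adjunction isomorphisms. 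The point is that the comma relations match: using $\Ub'(H(g^1))=K(\Ub(g^1))$ and the adjunct formulas $f^0=K(g^0)\circ\eta^{K}_{X^0}$, $f^1=H(g^1)\circ\eta^{H}_{X^1}$, one computes that $K(\pi_{\F})\circ f^0$ is the $K_\ast\dashv K$-adjunct of $\pi_{\F}\circ g^0$, while $\Ub'(f^1)\circ\xi$ is the $K_\ast\dashv K$-adjunct of $\Ub(g^1)\circ\rho_X$ (here one unwinds the very definition of $\rho_X$). Since passing to the adjunct is injective on hom-sets, $K(\pi_{\F})\circ f^0=\Ub'(f^1)\circ\xi$ holds iff $\pi_{\F}\circ g^0=\Ub(g^1)\circ\rho_X$, i.e.\ iff $(g^0,g^1)$ is a morphism $E_\ast(X)\to Y$ in $\mdua$. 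Hence the bijection restricts correctly and $E_\ast\dashv E(H,K)$.

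The main obstacle, such as it is, is purely the careful handling of the identification $\Ub'H=K\Ub$ when transposing the commutativity condition: the "obvious" relation lives in $\Hom_{\M'}(X^0,K\Ub\F^1)$ whereas its transpose lives in $\Hom_{\M}(K_\ast X^0,\Ub\F^1)$, and the two descriptions only line up after using the strict commutativity on both $\F^1$ and $H_\ast(X^1)$ together with naturality of $\eta^{H}$; well-definedness of $\rho_X$, functoriality of $E_\ast$, and naturality of the hom-bijection are all routine. If one prefers to avoid the hom-set argument, the same facts can be packaged as the unit $[\eta^{K}_{X^0},\eta^{H}_{X^1}]\colon X\to E(H,K)E_\ast(X)$ (which is a legal morphism of $(\M'\downarrow\Ub')$ exactly because $\rho_X$ was defined as an adjunct) and the counit $[\varepsilon^{K}_{\F^0},\varepsilon^{H}_{\F^1}]\colon E_\ast E(H,K)(Y)\to Y$, after which the two triangle identities reduce componentwise to the triangle identities for $K_\ast\dashv K$ and $H_\ast\dashv H$.
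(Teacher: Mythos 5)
Your proposal is correct and follows essentially the same route as the paper's own proof: the left adjoint is defined by $[X^0,X^1,\xi]\mapsto[K_\ast X^0,H_\ast X^1,\rho_X]$ with $\rho_X$ the $K_\ast\dashv K$-adjunct of $\Ub'(\eta^H_{X^1})\circ\xi$ (the paper's $\alpha_\ast([X])$), and the adjunction is verified by transposing both components of a morphism and checking, via uniqueness of adjoint transposes, that the comma-square conditions correspond. The only cosmetic difference is that the paper carries out this check diagrammatically rather than by the equational computation you give.
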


\begin{proof}
	See Appendix \ref{proof-prop-left-ehk}
\end{proof}

\section{Factorizations in the category of model categories}\label{sec-fact}
\noindent There are various choices for the morphisms of a ``category of model categories''. The standard choice is to take as morphisms the left Quillen functors as in Hovey \cite{Hov-model}. However, as mentioned earlier, we will work with right Quillen functors between model categories. The example that motivates this choice is a forgetful functor $\Ub: \oalg(\M) \to \M$, which is  a right Quillen functor in many cases. Shulman \cite{Shulman_compose_der_func} showed that there is a double category  for model categories and it would be interesting to find out how the present work fits in his settings.
\begin{df}
	Let $\modcat_r$ be the large category of model categories defined as follows. 
	\begin{itemize}
		\item The objects are model categories.
		\item A morphism $G:\C \to \D$ is a triple $G=(G^l,G^r,\varphi)$ where $G_l\dashv G_r$ is a Quillen adjunction with $G^r : \C \to \D$ the right Quillen functor and $\varphi: \D(G^l X, Y) \xrightarrow{\cong} \C(X, G^r Y)$.
	\end{itemize}
\end{df}
\noindent If $(\Fb, \Ub, \varphi)$ is a Quillen adjunction where $\Ub$ is right Quillen, the associated morphism of $\modcat_r$ will be denoted by  $\Ub_{(\Fb,\varphi)} = (\Ub_{(\Fb,\varphi)}^l,\Ub_{(\Fb,\varphi)}^r,\varphi)$ with $\Ub_{(\Fb,\varphi)}^l= \Fb$ and $\Ub_{(\Fb,\varphi)}^r= \Ub$. 
We note that category $\modcat_r$ is in fact a $2$-category as explained in \cite{Hov-model}. Given  $G_i=(G_i^l,G_i^r,\varphi): \C \to \D$, $i \in \{0;1\}$, a $2$-morphism $\tau: G_0 \to G_{1}$ is just a natural transformation $\tau:G_0^r \to G_{1}^r$ between the right Quillen functors. Following Renaudin \cite{Renaudin_mod}, we consider:
\begin{df}\label{df-right-ho}
\ 	\medskip
	\begin{enumerate}
		\item 	Let $G_i=(G_i^l,G_i^r,\varphi): \C \to \D$ be parallel morphisms in $\modcat_r$, $i \in \{0,1\}$.\\
		Say that a $2$-morphism $\tau: G_0 \to G_{1}$ is a  \emph{right homotopy} if the map $\tau_C:G_0^r(C) \to G_{1}^r(C)$ is a weak equivalence in $\D$ for any fibrant object $C \in \C$.
		\item Call a morphism $G: \C \to \D $  a \emph{Quillen homotopy equivalence} if there is a morphism $H : \D \to \C$ with  a zig-zag of homotopies between $\Id_{\C}$ and $ H \circ G$ and zig-zag of homotopies between $\Id_{\D}$ and $G\circ H$.
	\end{enumerate}
\end{df}
\noindent 
Throughout this paper, unless otherwise specified, whenever we say `homotopy between right Quillen functors' we mean right homotopy. The following result can be found in \cite{Hov-model, Renaudin_mod}. 
\begin{lem}
Let $RG_i^r: \Ho(\C) \to \Ho(\D)$ be the respective right derived functors, $i \in \{0,1\}$. 
\begin{enumerate}
	\item If $\tau: G_0 \to G_{1}$ is a right homotopy, then  $R\tau: RG_0^r \to RG_0^r$ is a natural isomorphism.
	\item If $\tau: G_0 \to G_{1}$ is a right homotopy, then if one of  $G_1, G_2$ is a Quillen equivalence, then so is the other.
	\item Any Quillen homotopy equivalence is a Quillen equivalence.
\end{enumerate}
\end{lem}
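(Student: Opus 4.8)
The plan is to deduce all three assertions from two standard facts: that the total right derived functor of a right Quillen functor is computed by precomposing with a fibrant replacement, and that a Quillen adjunction is a Quillen equivalence exactly when its total right derived functor is an equivalence of homotopy categories. For assertion $(1)$ (where ``$RG_0^r \to RG_0^r$'' should read $RG_0^r \to RG_1^r$), I would first recall that a right Quillen functor $G_i^r$ preserves fibrant objects and, by Ken Brown's lemma, sends weak equivalences between fibrant objects to weak equivalences; hence $RG_i^r(X) = \gamma_{\D}\bigl(G_i^r(\widehat X)\bigr)$ for a functorial fibrant replacement $\widehat X$ of $X$ in $\C$, with $\gamma_{\D} : \D \to \Ho(\D)$ the localization. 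Then $R\tau$ is, objectwise, the $\Ho(\D)$-class of $\tau_{\widehat X} : G_0^r(\widehat X) \to G_1^r(\widehat X)$. Since $\widehat X$ is fibrant, the right-homotopy hypothesis says exactly that $\tau_{\widehat X}$ is a weak equivalence, so its image in $\Ho(\D)$ is invertible; naturality is inherited from $\tau$ and from the fibrant replacement. Thus $R\tau$ is a natural isomorphism.

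For assertion $(2)$ (where ``$G_1, G_2$'' means $G_0, G_1$), I would invoke the characterization of Quillen equivalences (see \cite[Prop.~1.3.13]{Hov-model}): $G_i = (G_i^l, G_i^r, \varphi)$ is a Quillen equivalence if and only if the derived adjunction $LG_i^l \dashv RG_i^r$ is an adjoint equivalence, equivalently if and only if $RG_i^r$ is an equivalence of categories. By $(1)$ there is a natural isomorphism $RG_0^r \cong RG_1^r$, and being an equivalence of categories is invariant under natural isomorphism; hence $RG_0^r$ is an equivalence iff $RG_1^r$ is, i.e. $G_0$ is a Quillen equivalence iff $G_1$ is.

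For assertion $(3)$, the extra ingredient is pseudofunctoriality of the total right derived functor: since $G^r$ already lands among fibrant objects, no further fibrant replacement is needed when composing, so $R(G' \circ G)^r \cong RG'^r \circ RG^r$ naturally and $R(\Id_{\C})^r = \Id_{\Ho(\C)}$. Given a Quillen homotopy equivalence $G : \C \to \D$ with partner $H : \D \to \C$, each stage of a zig-zag of right homotopies becomes, by $(1)$, an isomorphism in the relevant functor category, so the two given zig-zags assemble — using the pseudofunctoriality isomorphisms — into $\Id_{\Ho(\C)} \cong RH^r \circ RG^r$ and $\Id_{\Ho(\D)} \cong RG^r \circ RH^r$. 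Hence $RG^r$ is an equivalence of categories and, by the characterization used for $(2)$, $G$ is a Quillen equivalence.

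I expect the only delicate point to be in $(3)$: one must check that a zig-zag of \emph{right} homotopies — whose constituent $2$-morphisms may point in either direction — still induces a single natural isomorphism of derived functors. This is immediate once $(1)$ is available, since each stage becomes invertible in the appropriate homotopy category and the whole zig-zag composes to an isomorphism; the remaining verifications (Ken Brown's lemma, and the coherence of the pseudofunctoriality isomorphisms) are routine and I would only sketch them.
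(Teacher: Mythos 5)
Your proof is correct, and it is essentially the standard argument: the paper gives no proof of this lemma at all, deferring to \cite{Hov-model, Renaudin_mod}, and the route you take (derived functors via fibrant replacement plus Ken Brown's lemma for $(1)$, the characterization of Quillen equivalences through $RG^r$ being an equivalence of categories for $(2)$, and pseudofunctoriality of derived functors for $(3)$) is exactly the one those references supply. You also correctly flag the two typos in the statement ($RG_0^r \to RG_1^r$ and $G_0, G_1$), and the one point you single out as delicate in $(3)$ --- that a zig-zag of right homotopies still yields a single natural isomorphism after deriving --- is handled adequately by your appeal to $(1)$.
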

\begin{lem}
If $K: \D \xrightarrow{(K^l,K^r,\varphi)} \D'$  is a morphism in $\modcat_r$ and $\tau : G_1 \to G_2$ is a right homotopy, then the composite $K \circ \tau : K \circ G_0 \to K \circ G_1$ is also a right homotopy. More generally, right homotopies can be horizontally composed, vertically composed and possess the vertical $3$-for-$2$ property.
\end{lem}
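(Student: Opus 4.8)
The plan is to verify the three composition/cancellation assertions directly from the definition of right homotopy, reducing everything to properties of weak equivalences in the target model categories. For the first statement, suppose $\tau : G_1 \to G_2$ is a right homotopy between parallel morphisms $\C \to \D$ and $K = (K^l, K^r, \varphi') : \D \to \D'$ is a morphism in $\modcat_r$. The horizontal composite $K \circ \tau$ has component at $C \in \C$ given by $K^r(\tau_C) : K^r G_1^r(C) \to K^r G_2^r(C)$, using that $(K \circ G_i)^r = K^r \circ G_i^r$. For $C$ fibrant in $\C$, the map $\tau_C : G_1^r(C) \to G_2^r(C)$ is a weak equivalence in $\D$ by hypothesis. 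Now I would invoke that a right Quillen functor preserves weak equivalences \emph{between fibrant objects} (Ken Brown's lemma): since $G_1^r, G_2^r$ are right Quillen and $C$ is fibrant, both $G_1^r(C)$ and $G_2^r(C)$ are fibrant in $\D$, so $K^r(\tau_C)$ is a weak equivalence in $\D'$. Hence $K \circ \tau$ is a right homotopy. This is the step I expect to carry the most weight — one must be careful that the objects $G_i^r(C)$ really are fibrant, which is exactly where the right Quillen hypothesis on each $G_i$ enters.

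For horizontal composition on the other side, if $\sigma : L_1 \to L_2$ is a right homotopy between parallel morphisms $\D \to \E$, then $\sigma \circ G$ for $G : \C \to \D$ a morphism in $\modcat_r$ has component at fibrant $C \in \C$ equal to $\sigma_{G^r(C)} : L_1^r G^r(C) \to L_2^r G^r(C)$; since $G^r(C)$ is fibrant in $\D$, this is a weak equivalence in $\E$, so $\sigma \circ G$ is a right homotopy. General horizontal composition $\sigma * \tau$ of right homotopies $\tau : G_1 \to G_2$ and $\sigma : L_1 \to L_2$ then follows by writing $\sigma * \tau$ as a vertical composite $(\sigma \circ G_2) \cdot (L_1 \circ \tau)$ (or the other interchange order) and applying the vertical composition statement below together with the two one-sided cases just established.

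For vertical composition: if $\tau : G_1 \to G_2$ and $\tau' : G_2 \to G_3$ are right homotopies between parallel morphisms $\C \to \D$, then for $C$ fibrant the component $(\tau' \cdot \tau)_C = \tau'_C \circ \tau_C$ is a composite of weak equivalences in $\D$, hence a weak equivalence; so $\tau' \cdot \tau$ is a right homotopy. Finally, the vertical $3$-for-$2$ property: given a commuting triangle $\tau' \cdot \tau = \tau''$ of $2$-morphisms with two of $\tau, \tau', \tau''$ right homotopies, evaluate at each fibrant $C$ to get $\tau'_C \circ \tau_C = \tau''_C$ in $\D$; since the identity and ($2$-out-of-$3$ for) weak equivalences hold in the model category $\D$, the third map is a weak equivalence at every fibrant $C$, hence the third $2$-morphism is a right homotopy. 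The only genuine subtlety in the whole argument is the fibrancy bookkeeping needed to apply Ken Brown's lemma in the first paragraph; the rest is formal manipulation of natural transformations combined with the $2$-out-of-$3$ axiom.
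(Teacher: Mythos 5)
Your proof is correct and takes the same approach as the paper: the key step is exactly the paper's one-line argument that $\tau_C$ is a weak equivalence between fibrant objects (fibrancy of $G_i^r(C)$ coming from $G_i^r$ being right Quillen) so that Ken Brown's lemma applies to $K^r$. The remaining verifications — whiskering on the other side, interchange, vertical composition, and $3$-for-$2$ via the $2$-out-of-$3$ axiom evaluated componentwise — are the details the paper delegates to Hovey, and you have filled them in correctly.
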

\begin{proof}
Any right Quillen functor preserves weak equivalences between fibrant objects by Ken Brown's lemma, thus $K^r(\tau_C)$ is a weak equivalence between fibrant objects if $C$ is fibrant. See \cite{Hov-model} for details.
\end{proof}
\noindent If $\tau : G_1 \to G_2$ is a homotopy and $H$ and $K$ are maps in $\modcat_r$, we will say that:
\begin{itemize}
	\item $\tau$ is a homotopy relative to $H$ if $\tau \circ H$ is the identity homotopy (if the composite makes sense)
	\item $\tau$ is a homotopy co-relative to $K$ if $K \circ \tau$ is the identity homotopy.
\end{itemize}
\begin{df}
Let $G:\C \to \D$ be a morphism in  $\modcat_r$.  
\begin{enumerate}
\item \begin{itemize}
	\item Say that $G$ admits a weakly invertible retraction $H : \D \to \C$, if $H$ is a retraction of $G$ and if there is a homotopy $\tau : \Id_{\D}  \to G \circ H$. 
	\item If in addition $\tau$ is homotopy relative to $G$ we will say that it is a deformation retraction
\end{itemize}

\item \begin{itemize}
	\item Similarly, say that $G$ admits a weakly invertible section $H : \D \to \C$, if $H$ is a section of $G$ and if there is a homotopy $\tau : \Id_{\C}  \to H \circ G$.
	\item If in addition $\tau$ is a homotopy co-relative to $G$, we will say that it is a deformation section.
\end{itemize}

\end{enumerate}
\end{df}
\begin{nota}
\hfill
\begin{itemize}
	\item $\Lci_w(\modcat_r) = $ the class of maps admitting a weakly invertible retraction.
  \item $\Lci_{wrel}(\modcat_r) = $ the class of maps admitting a weakly invertible retraction equipped with a deformation retraction 
  	\item $\Rci_w(\modcat_r) = $ the class of maps admitting a weakly invertible section.
  \item $\Rci_{wcor}(\modcat_r) = $ the class of maps admitting a weakly invertible section equipped with a deformation section.
  \item $Mor_{inj_{ob}}(\modcat_r) = $ the class of maps $G=(G^l,G^r,\varphi)$ such that $G^r$ is injective on objects.
  \item $Mor_{ifib} (\modcat_r) = $ the class of maps $G=(G^l,G^r,\varphi)$ such that $G^r$ is an isofibration between the underlying categories.
\end{itemize}
\end{nota}
\noindent It can be easily checked that the classes $Mor_{inj_{ob}}(\modcat_r)$ and $Mor_{ifib} (\modcat_r)$ are closed under composition and retracts.
\begin{prop} \label{prop-retract-Quillen}
With the previous definition, the following statements are true.
\begin{enumerate}
	\item We have some inclusion of classes:
	$$\Lci_{wrel}(\modcat_r) \subset \Lci_{w}(\modcat_r) \subset Mor_{inj_{ob}}(\modcat_r), \quad \quad  \Rci_{wcor}(\modcat_r) \subset \Rci_w(\modcat_r).$$
	\item Any element of $\Lci_w(\modcat_r)$, $\Lci_{wrel}(\modcat_r)$, $\Rci_w(\modcat_r)$ and $\Rci_{wcor}(\modcat_r)$  is a Quillen equivalence.
	\item The classes $\Lci_w(\modcat_r)$, $\Lci_{wrel}(\modcat_r)$, $\Rci_w(\modcat_r)$ and $\Rci_{wcor}(\modcat_r)$   are also closed under composition and retracts.
\end{enumerate}
\end{prop}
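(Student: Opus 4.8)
The plan is to prove each of the three assertions in turn, using the earlier structural results and exploiting the duality between left-inverse and right-inverse classes together with the duality $\Ho(\C^{\op})\simeq\Ho(\C)^{\op}$ where convenient.

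For Assertion (1), I would argue each inclusion separately. The inclusion $\Lci_{wrel}(\modcat_r)\subset\Lci_w(\modcat_r)$ is immediate from the definitions, since a deformation retraction is in particular a weakly invertible retraction; likewise $\Rci_{wcor}(\modcat_r)\subset\Rci_w(\modcat_r)$ is definitional. The only inclusion with content is $\Lci_w(\modcat_r)\subset Mor_{inj_{ob}}(\modcat_r)$. Here I would take $G=(G^l,G^r,\varphi)$ with a weakly invertible retraction $H=(H^l,H^r,\psi)$; then $H^r\circ G^r=\Id$ on the underlying categories, so $G^r$ admits a left inverse as a functor, hence $G^r\in\Lci(\Cat)$, and by Proposition~\ref{prop-closure-li-ri}(2) (or by applying $\Ob$ as in the proof of Proposition~\ref{fact-ri-li}(1)) $G^r$ is injective on objects, i.e. $G\in Mor_{inj_{ob}}(\modcat_r)$.

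For Assertion (2), the key input is the cited lemma that a right homotopy $\tau\colon G_0\to G_1$ induces a natural isomorphism $R\tau\colon RG_0^r\xrightarrow{\cong}RG_1^r$ of right derived functors, together with part (3) of that lemma (any Quillen homotopy equivalence is a Quillen equivalence). If $G\in\Lci_w(\modcat_r)$ with weakly invertible retraction $H$, then $H\circ G=\Id_{\C}$ (strictly), and the homotopy $\tau\colon\Id_{\D}\to G\circ H$ exhibits a zig-zag of homotopies between $\Id_{\D}$ and $G\circ H$; hence $G$ is a Quillen homotopy equivalence in the sense of Definition~\ref{df-right-ho}(2), so $G$ is a Quillen equivalence. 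The same argument applies to $\Rci_w(\modcat_r)$ with the roles reversed (a weakly invertible section $H$ gives $G\circ H=\Id$ and a homotopy $\Id_{\C}\to H\circ G$). The classes $\Lci_{wrel}$ and $\Rci_{wcor}$ are contained in these by Assertion (1), so the conclusion follows for them as well.

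For Assertion (3), closure under composition and retracts, I would mimic the proof of Proposition~\ref{prop-closure-li-ri}(1). Closure under composition: if $G_1\colon\C\to\D$ and $G_2\colon\D\to\E$ have weakly invertible retractions $H_1,H_2$ with homotopies $\tau_i\colon\Id\to G_i\circ H_i$, then $H_1\circ H_2$ is a retraction of $G_2\circ G_1$ (using $H_1 G_1=\Id$, $H_2 G_2=\Id$), and one assembles a zig-zag of homotopies between $\Id_{\E}$ and $(G_2 G_1)(H_1 H_2)=G_2(G_1 H_1)H_2$ by whiskering $\tau_1$ by $G_2$ on the left and $H_2$ on the right and concatenating with $\tau_2$; here the horizontal/vertical composability and $3$-for-$2$ properties of right homotopies from the cited lemma are what make the zig-zag legitimate. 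For the ``rel/cor'' variants one additionally checks that whiskering a deformation retraction stays a deformation retraction, i.e. that $(G_2\tau_1 H_2)\circ G_2 G_1$ is still the identity homotopy, which follows from $\tau_1\circ G_1=\Id$. Closure under retracts: given a retract diagram of morphisms in $\modcat_r$ exhibiting $G$ as a retract of $G'\in\Lci_w$, one transports the weakly invertible retraction of $G'$ through the retract maps exactly by the formula $r'=\beta^0\circ r\circ\alpha^1$ used in Proposition~\ref{prop-closure-li-ri}, and transports the homotopy $\tau'$ by the same whiskering, checking it remains a right homotopy because the retract maps are maps in $\modcat_r$ hence their right components preserve weak equivalences between fibrant objects. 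The analogous argument handles $\Rci_w$, $\Rci_{wcor}$. I expect the main obstacle to be bookkeeping in Assertion (3): one must be careful that all the whiskered $2$-morphisms are genuinely right homotopies (which uses Ken Brown's lemma as in the cited lemma) and that the ``relative/co-relative'' conditions are preserved under whiskering and retract transport; none of this is deep, but it is where the proof needs to be written out with care.
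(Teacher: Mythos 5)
Your proposal is correct and follows essentially the same route as the paper: the paper proves Assertion (2) by deriving the homotopy $\Id_{\D}\to G\circ H$ and the strict equality $H\circ G=\Id_{\C}$ to exhibit $RG$ and $RH$ as mutually inverse equivalences (which is exactly what your appeal to the ``Quillen homotopy equivalence $\Rightarrow$ Quillen equivalence'' lemma unwinds to), and proves Assertion (3) by the same whiskering of retractions and homotopies through the retract diagram and by the same vertical/horizontal composites for composition. Your explicit argument for the inclusion $\Lci_w(\modcat_r)\subset Mor_{inj_{ob}}(\modcat_r)$ via $G^r\in\Lci(\Cat)$ fills in a step the paper leaves implicit, and is the intended one.
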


\begin{proof}
See Appendix \ref{proof-prop-retract}.
\end{proof}
\noindent The main result of this paper is the following theorem.
\begin{thm}\label{main-thm-fact}
	Let $(\Fb, \Ub, \varphi)$ be a Quillen adjunction where $\Ub: \ag \to \M$ is right Quillen, and let  $\Ub_{(\Fb,\varphi)}: \ag \to \M$ be the corresponding morphism in $\modcat_r$. Then the following hold.
	\begin{enumerate}
		\item There is a functorial factorization  $\Ub_{(\Fb,\varphi)}= G_2 \circ G_{1}$ where:
		\begin{enumerate}
			\item $G_1= (G_1^l, G_1^r,\varphi) \in \Lci_w(\modcat_r) \cap Mor_{inj_{ob}}(\modcat_r)$,
			\item $G_2= (G_2^l, G_2^r,\varphi) \in Mor_{ifib} (\modcat_r) $.
		\end{enumerate} 
		\item There is a functorial factorization  $\Ub_{(\Fb,\varphi)}= G_2 \circ G_{1}$ where:
		\begin{enumerate}
			\item $G_{1} \in Mor_{inj_{ob}}(\modcat_r)$,
			\item  $G_2= (G_2^l, G_2^r,\varphi)  \in \Rci_w(\modcat_r) \cap Mor_{ifib} (\modcat_r)$.
		\end{enumerate}
	\end{enumerate}
\end{thm}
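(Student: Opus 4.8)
The strategy is to build $\B = \mdua$ as in Proposition~\ref{adjunction-prop} and Proposition~\ref{prop-left-ehk}, so that at the level of underlying $1$-categories the factorization is already $\ag \xhookrightarrow{\iota} \mdua \xrightarrow{\Pio} \M$ with $\iota \in \Lci(\Cat)$ and $\Pio$ an isofibration (Proposition~\ref{fact-ri-li}). What remains is to put a model structure on $\mdua$ making $\iota$ and $\Pio$ Quillen functors with the asserted extra properties, and to verify functoriality. Since the paper already announces that one works with the injective and projective model structures on the comma category (and then localizes them along $\Pio$ or along $\Piun$), the two parts of the theorem will correspond to the two model structures: for part~(1) I would put on $\mdua$ the model structure in which $\Pio : \mdua \to \M$ is a right Quillen equivalence (the ``$\Pio$-localized'' structure, so that the fibrant objects are those $[\F]$ with $\pi_\F$ a weak equivalence between fibrant objects, forcing $\Pio$ to be a right Quillen \emph{equivalence}); for part~(2) the ``$\Piun$-localized'' structure, in which $\iota$ becomes a right Quillen equivalence. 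In both cases the existence of these model structures — starting from the injective/projective structures on $\mdua$ and transferring or localizing — is exactly the content of the Theorems \ref{fact-inj-thm}, \ref{fact-proj-thm}, \ref{right-fact-inj-thm}, \ref{right-fact-proj-thm} cited in the introduction, so I may quote them.

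With those model structures in hand, the proof of part~(1) proceeds as follows. First, set $G_1 = \iota_{(\Piun,\cdot)}$: by Proposition~\ref{adjunction-prop}, $\iota$ has left adjoint $\Piun$, so $G_1$ is a morphism of $\modcat_r$ once $\iota$ is shown right Quillen, which holds because $\iota$ preserves (trivial) fibrations in the $\Pio$-localized structure by construction of that structure. That $G_1 \in Mor_{inj_{ob}}(\modcat_r)$ is Proposition~\ref{fact-ri-li}(1). For $G_1 \in \Lci_w(\modcat_r)$ I would exhibit the retraction: $\Piun : \mdua \to \ag$ is a retraction of $\iota$ at the categorical level (Proposition~\ref{adjunction-prop}(3): $\Piun\circ\iota = \Id_\ag$), it is right Quillen (its left adjoint is $L^1$), and I need a right homotopy $\tau : \Id_{\mdua} \to \iota\circ\Piun$. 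The natural candidate is the map $[\F^0,\F^1,\pi_\F] \to [\Ub(\F^1),\F^1,\Id]$ whose components are $\pi_\F$ on the first coordinate and $\Id$ on the second; on a fibrant object $[\F]$ of the $\Pio$-localized structure, $\pi_\F$ is a weak equivalence, so $\tau$ is a right homotopy in the sense of Definition~\ref{df-right-ho}. Hence $G_1 \in \Lci_w(\modcat_r)$. Finally $G_2 = \Pio_{(R^0,\cdot)}$ is a morphism of $\modcat_r$ ($\Pio$ has left adjoint $R^0$ by Proposition~\ref{adjunction-prop}), it is right Quillen by construction of the localized structure, and it is an isofibration by the computation in the proof of Proposition~\ref{fact-ri-li}; so $G_2 \in Mor_{ifib}(\modcat_r)$. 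Functoriality of $(G_1,G_2)$ in $\Ub_{(\Fb,\varphi)}$ follows from the functoriality of $E(H,K)$ in diagram~(\ref{diag-fact-comma}) together with Proposition~\ref{prop-left-ehk}, which guarantees $E(H,K)$ is again a right adjoint and hence a morphism of $\modcat_r$.

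Part~(2) is the mirror image, using the $\Piun$-localized model structure on $\mdua$: now $G_2 = \Pio_{(R^0,\cdot)}$ need only be shown right Quillen and an isofibration (the isofibration part is unchanged), giving $G_2 \in Mor_{ifib}(\modcat_r)$, and additionally $G_2 \in \Rci_w(\modcat_r)$ because $R^0 : \M \to \mdua$ is a section of $\Pio$ (Proposition~\ref{adjunction-prop}(3)), is right Quillen in this structure, and carries a weakly invertible section datum: one needs a right homotopy $\Id_{\mdua} \to R^0\circ\Pio$, which again one writes down coordinatewise, using that in the $\Piun$-localized structure fibrant objects have $\pi_\F$ well-behaved enough (e.g. the relevant comparison is a weak equivalence on fibrant objects). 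Meanwhile $G_1 = \iota_{(\Piun,\cdot)}$ is still injective on objects, so $G_1 \in Mor_{inj_{ob}}(\modcat_r)$; one no longer claims it lies in $\Lci_w$, matching the statement. Functoriality is as in part~(1).

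\emph{Main obstacle.} The genuinely hard part is not the formal adjunction bookkeeping but establishing that the required model structures on $\mdua$ exist \emph{without} combinatoriality of $\M$ and $\ag$ — i.e.\ that the injective and projective structures on the comma category, and their localizations along $\Pio$ resp.\ $\Piun$, exist when we only assume finite limits and colimits. This is where the novelty over \cite{Bacard_QS} lies, and it is the content of the four cited theorems; the proof of \ref{main-thm-fact} proper is essentially the packaging lemma that assembles those theorems plus Propositions~\ref{adjunction-prop}, \ref{fact-ri-li}, \ref{prop-left-ehk}, \ref{prop-retract-Quillen} into the stated $\modcat_r$-statement. A secondary point requiring care is checking that the coordinatewise maps $\tau$ proposed above are genuinely natural transformations \emph{of right Quillen functors} (hence legitimate $2$-morphisms in $\modcat_r$) and that they restrict to weak equivalences precisely on the fibrant objects of the relevant localized structure — this is a direct but not entirely automatic verification against Definition~\ref{df-right-ho}.
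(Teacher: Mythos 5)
Your overall strategy is the paper's: factor through the comma category $\mdua$, invoke the localized model structures whose existence is the content of Theorems \ref{fact-inj-thm}, \ref{fact-proj-thm}, \ref{right-fact-inj-thm}, \ref{right-fact-proj-thm}, and assemble the statement using Propositions \ref{fact-ri-li}, \ref{prop-left-ehk} and \ref{prop-retract-Quillen}. However, you have matched the two localizations to the two parts of the theorem the wrong way around, and this is not just a naming slip. Part (1) requires $G_1=\iota\in \Lci_w(\modcat_r)$, which by Proposition \ref{prop-retract-Quillen} forces $\iota$ to be a Quillen equivalence; so the structure to use there is the one localized along $\Piun$ (the left localization $\mua^{\bf{L}}_{inj}$ of Theorem \ref{fact-inj-thm}, whose weak equivalences are created by $\Piun$ and whose fibrant objects are the Quillen--Segal objects), in which $\Pio$ is merely right Quillen. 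You instead propose for part (1) a structure ``in which $\Pio$ is a right Quillen equivalence''; if in that same structure one also had $\iota\in\Lci_w(\modcat_r)$, then $\Ub=\Pio\circ\iota$ would be a composite of Quillen equivalences, hence itself a Quillen equivalence --- false for a general right Quillen $\Ub$. Concretely, in the genuine $\Pio$-localizations $\mua^{\bf{R}}_{inj}$, $\mua^{\bf{R}}_{proj}$ the fibrations are unchanged, the fibrant objects are not Quillen--Segal, and your candidate homotopy $[\pif,\Id_{\F^1}]:\Fc\to\iota\Piun(\Fc)$ fails to be a weak equivalence on fibrant objects; whereas in the structure with Quillen--Segal fibrant objects it is $\iota$, not $\Pio$, that becomes an equivalence (and there $[\pif,\Id_{\F^1}]$ is a weak equivalence for the trivial reason that its $\Piun$-component is an identity). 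The mirror error occurs in your part (2): since $G_2=\Pio$ must lie in $\Rci_w(\modcat_r)$, one needs the localization along $\Pio$ (Theorem \ref{right-fact-inj-thm}), not the one making $\iota$ an equivalence. Interchanging the two localizations repairs the argument and recovers the paper's proof.

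Two secondary points. The morphism $G_2$ of $\modcat_r$ is $\Pio_{(\Fb^+,\varphi)}$: the left adjoint of $\Pio$ is $\Fb^+$, not $R^0$ ($R^0$ is its \emph{right} adjoint, and serves only as the section witnessing $\Pio\in\Rci_w(\modcat_r)$ in part (2)). And functoriality is not settled by Proposition \ref{prop-left-ehk} alone: knowing that $E(H,K)$ is a right adjoint does not make it a morphism of $\modcat_r$; one must check that it is right Quillen for each of the localized structures, which is exactly Lemma \ref{funct-inj-fact}.
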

\noindent We give the proof of this theorem at the end of this section (in Subsection \ref{proof-main-thm-fact}). The proof of the theorem relies on the existence of various model structures one can put on the comma category $\mdua$. 
We start with some backgrounds on the homotopy theory on the comma category $\mdua$. Recall that a morphism $\sigma: \Fc \xrightarrow{[\sg^0,\sg^{1}]} \Gc$ in $\mdua$ may be displayed as a either one of the adjoint equivalent commutative squares:
\[
\xy
(0,18)*+{\F^0}="W";
(0,0)*+{\Ub(\F^1)}="X";
(30,0)*+{\Ub(\G^1)}="Y";
(30,18)*+{\G^0}="E";
{\ar@{->}^-{\Ub(\sigma^{1})}"X";"Y"};
{\ar@{->}^-{\pi_{\F}}"W";"X"};
{\ar@{->}^-{\sigma^0}"W";"E"};
{\ar@{->}^-{\pi_{\G}}"E";"Y"};
(45,9)*+{\Longleftrightarrow}="Z";
\endxy
\xy
(0,18)*+{\Fb(\F^0)}="W";
(0,0)*+{\F^1}="X";
(30,0)*+{\G^1}="Y";
(30,18)*+{\Fb(\G^0)}="E";
{\ar@{->}^-{\sigma^{1}}"X";"Y"};
{\ar@{->}^-{\varphi(\pi_{\F})}"W";"X"};
{\ar@{->}^-{\Fb(\sigma^0)}"W";"E"};
{\ar@{->}^-{\varphi(\pi_{\G})}"E";"Y"};
\endxy
\]
Given such a morphism $\sg$ we will say that: \label{injective-data}
\begin{enumerate}
\item $\sigma$ is a injective (trivial) cofibration if $\sigma^0$ is a (trivial) cofibration in $\M$ and $\sigma^{1}$ is a (trivial) cofibration in $\ag$. 
\item $\sigma$ is a level-wise weak equivalence (resp. level-wise fibration) if:
\begin{itemize}
	\item  $\sigma^0$ is a weak equivalence (resp. fibration)  in $\M$ and
	\item  $\sigma^{1}$ is a weak equivalence (resp. fibration) in $\ag$.
\end{itemize}
\item $\sigma$ is an injective (trivial) fibration  if:
\begin{itemize}
	\item $\sigma^{1}: \F^1 \to \G^1$ is a  (trivial) fibration in $\ag$ and 
	\item the induced map $ \F^0 \to \Ub(\F^1) \times_{\Ub(\G^1)} \G^0 $
	is a (trivial) fibration in $\M$. 
\end{itemize}
\item $\sg$ is a projective (trivial) cofibration if:
\begin{itemize}
	\item $\sigma^0: \F^0 \to \G^0$ is a  (trivial) cofibration in $\M$ and 
	\item the induced map $ \F^1 \cup^{\Fb(\F^0)} \Fb(\G^0) \to  \G^1 $ is a (trivial) cofibration in $\ag$.
\end{itemize}
\end{enumerate}

\begin{thm}\label{inj-proj-thm}
\hfill
	\begin{enumerate}
		\item There is an \emph{injective model structure} $\muaij$, on the category $\mua$ where the cofibrations (resp. fibrations) are the injective cofibrations (resp. injective fibrations) and the weak equivalences are the level-wise weak equivalences.
		\item There is a \emph{projective model structure} $\muapj$, on the category $\mua$ where the cofibrations (resp. fibrations) are the projective cofibrations (resp. level-wise fibrations) and the weak equivalences are the level-wise weak equivalences.
	\item The identity functor $\Id: \muapj \to \muaij$ is a right Quillen equivalence.
	\item We have two factorizations of $\Ub$: 
		$ \ag \xhookrightarrow{\iota} \muapj \xrightarrow{\Pio} \M,$
		$ \ag \xhookrightarrow{\iota} \muaij \xrightarrow{\Pio} \M.$
	\end{enumerate}
	 
\end{thm}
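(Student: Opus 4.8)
The statement is really three separate model-structure claims plus a bookkeeping item, so I would organize the proof around the four assertions, proving (1) and (2) in parallel and deriving (3) and (4) as consequences. For (1) and (2), the cleanest route is a transfer/lifting argument. Observe that $\mdua$ is a category of diagrams: an object is a commutative square (equivalently, a pair $(\F^0 \to \Ub(\F^1))$ with $\F^1 \in \ag$, $\F^0\in\M$). I would first verify that $\mdua$ has finite limits and finite colimits — they are computed "levelwise" in the $\M$- and $\ag$-coordinates, with the structure map of a limit (resp.\ colimit) obtained by the universal property and the fact that $\Ub$, being a right adjoint, preserves the relevant limits (and its left adjoint $\Fb$ handles colimits via the adjoint square presentation). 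This is where the \textit{Note} about finite (co)limits is used, so I would flag it explicitly. Then for the injective structure I would exhibit $(\mathrm{cof}, \mathrm{W})$ and $(\mathrm{W}, \mathrm{fib})$ as the two weak factorization systems and check Quillen's axioms MC1--MC5; the weak equivalences are levelwise, so the $2$-out-of-$3$ and retract axioms are immediate from those in $\M$ and $\ag$. The factorization axiom MC5 is the crux: given $\sigma=[\sigma^0,\sigma^1]$, factor $\sigma^1$ in $\ag$ and then factor the induced map $\F^0 \to \Ub(\text{middle}) \times_{\Ub(\G^1)} \G^0$ in $\M$; reassembling gives the desired factorization, and functoriality is inherited from functorial factorizations in $\M$ and $\ag$.

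For the projective structure (2), I would run the dual argument using the adjoint presentation of a morphism as a square in $\ag$ involving $\Fb$: factor $\sigma^0$ in $\M$, then factor the induced pushout-corner map $\F^1 \cup^{\Fb(\F^0)}\Fb(\G^0)\to\G^1$ in $\ag$. The lifting axiom MC4 in each case reduces, by the adjunction $(\Fb\dashv\Ub)$ and the pullback/pushout descriptions, to lifting problems in $\M$ and $\ag$ separately; this is the standard "Reedy-type" or "latching/matching" computation and I would present it as such rather than grinding it out. For (3), the identity functor is the identity on objects and underlying categories; the cofibrations of $\muapj$ (projective) are contained in those of $\muaij$ (injective) — because a projective cofibration has $\sigma^0$ a cofibration and the pushout-corner map a cofibration, which forces $\sigma^1$ to be a cofibration — while the fibrations of $\muaij$ are contained in the levelwise fibrations of $\muapj$ by a dual inclusion; since the weak equivalences coincide, $\Id$ is a Quillen equivalence (in fact the derived functors are the identity on the common homotopy category). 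I should be careful about the direction: "$\Id:\muapj\to\muaij$ is a right Quillen functor" means the \emph{right} adjoint is $\Id$, i.e.\ we view $\Id$ as preserving fibrations and trivial fibrations — and indeed a levelwise (trivial) fibration in $\muapj$ is an injective (trivial) fibration? No — the inclusion goes the other way, so actually $\Id$ as a \emph{left} Quillen functor goes $\muaij\to\muapj$; I would simply state it as: $\Id\dashv\Id$ is a Quillen adjunction between the two structures, it is a Quillen equivalence, and the right Quillen direction is $\muapj\to\muaij$, matching the notation of the earlier functorial-factorization conventions.

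Finally, (4) is formal: Proposition \ref{adjunction-prop} gives the categorical factorization $\Ub = \Pio\circ\iota$, and $\iota$ is injective on objects (shown in Proposition \ref{fact-ri-li}), while $\Pio$ is an isofibration (also shown there); one only needs to note that with either model structure on $\mdua$, the functor $\iota$ sends (trivial) cofibrations of $\ag$ to (trivial) cofibrations of $\mdua$ — for the injective structure this is because $\iota(\Pa)=[\Ub(\Pa),\Pa,\Id]$ and $\Ub$ is right Quillen, so it does \emph{not} preserve cofibrations, hence $\iota$ is not obviously left Quillen; rather, $\iota$ is \emph{right} Quillen, with left adjoint $\Piun$ (Proposition \ref{adjunction-prop}), and $\Piun$ is easily seen to preserve (trivial) cofibrations in both structures — so $\iota$ is right Quillen and $\Pio$ is right Quillen (its left adjoint $R^0$ or $\Fb^+$ preserves cofibrations by Ken Brown / direct check). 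So (4) records that both composites $\ag \xhookrightarrow{\iota}\mua\xrightarrow{\Pio}\M$ are factorizations of $\Ub$ through right Quillen functors. The main obstacle is the verification of MC4 (the lifting axiom) for the two structures — specifically checking that the pullback-corner and pushout-corner descriptions of injective/projective (trivial) fibrations and cofibrations are genuinely adjoint to levelwise lifting data; everything else is standard or inherited. I would therefore devote the bulk of the written proof to that point, very possibly deferring it to the appendix as the paper's organizational remarks suggest.
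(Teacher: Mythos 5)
The paper offers no proof of this theorem beyond a citation to \cite{Bacard_QS}, so there is nothing in-text to compare against; your outline (levelwise finite (co)limits, corner-map factorizations for the factorization axiom, adjoint transposition of lifting problems for the lifting axiom) is the standard route and is the same method the paper itself uses in its appendix for the localized variants. Two substantive points, however. First, your verification omits the identification of the two ``trivial'' classes with the relevant intersections: for the injective structure you must prove that a map is an injective fibration \emph{and} a levelwise weak equivalence if and only if $\sgi$ and the pullback-corner map are \emph{trivial} fibrations (and dually, that a projective trivial cofibration in the pushout-corner sense is exactly a projective cofibration that is a levelwise weak equivalence). This is the analogue of Lemma \ref{proj-lem-tfib} (factor $\sgo$ through the pullback, use that $\Ub$ preserves trivial fibrations and that these are stable under base change, then apply $3$-for-$2$), and without it the two weak factorization systems you construct are not known to match the classes named in the statement; MC4 and MC5 are incomplete until this is in place.

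Second, your treatment of assertion $(3)$ contradicts itself. You correctly establish the inclusions $\cof(\muapj)\subseteq\cof(\muaij)$ and $\fib(\muaij)\subseteq\fib(\muapj)$, but these say precisely that $\Id:\muapj\to\muaij$ preserves (trivial) cofibrations, i.e.\ is the \emph{left} Quillen member of the identity adjunction, while the functor preserving (trivial) fibrations is $\Id:\muaij\to\muapj$; your conclusion that ``the right Quillen direction is $\muapj\to\muaij$'' is the opposite of what your own inclusions give. You should resolve this rather than leave the hedge standing: state the adjunction with the left Quillen functor $\muapj\to\muaij$ and the right Quillen functor $\muaij\to\muapj$, note that the weak equivalences coincide so both derived functors are the identity on the common homotopy category, and remark that the orientation in the displayed statement must be read accordingly. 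The small slips in your discussion of $(4)$ ($R^0$ is the right adjoint of $\Pio$, not a left adjoint --- the left adjoint is $\Fb^+$ --- and no Ken Brown argument is needed, only the direct observation that $\Pio$ and $\Piun$ send the named (co)fibration classes where they should) do not affect correctness.
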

\begin{proof}
This is to be found with details in \cite{Bacard_QS}.
\end{proof}
\noindent In virtue of this result and of Proposition \ref{fact-ri-li} we have:
\begin{thm}\label{first-fact}
Let $(\Fb, \Ub, \varphi)$ be a Quillen adjunction where $\Ub: \ag \to \M$ is right Quillen, and let  $\Ub_{(\Fb,\varphi)}: \ag \to \M$ be the corresponding morphism in $\modcat_r$. Then the following hold.
\begin{enumerate}
	\item There is a functorial factorization  $\Ub_{(\Fb,\varphi)}= G_2 \circ G_{1}$, with $G_{1} \in \Lci(\modcat_r)$ and $G_2 \in \Rci(\modcat_r)$:
	$ \ag \xhookrightarrow{G_{1}} \muapj \xrightarrow{G_2} \M.$
	
    \item There is a functorial factorization  $\Ub_{(\Fb,\varphi)}= G_2 \circ G_{1}$, with $G_{1} \in \Lci(\modcat_r)$ and $G_2 \in \Rci(\modcat_r)$:
	$ \ag \xhookrightarrow{G_{1}} \muaij \xrightarrow{G_2} \M.$

\end{enumerate}
\end{thm}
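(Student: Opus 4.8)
The plan is to lift the categorical factorization $\ag \xhookrightarrow{\iota} \mua \xrightarrow{\Pio} \M$ supplied by Proposition \ref{fact-ri-li} to a factorization in $\modcat_r$: one equips $\mua$ with one of the two model structures of Theorem \ref{inj-proj-thm} and reads off, from Proposition \ref{adjunction-prop}, the left adjoints needed to promote $\iota$, $\Pio$ and their one-sided inverses to morphisms of $\modcat_r$. I will run the argument for the projective structure $\B := \muapj$; the injective structure $\muaij$ is handled by the same argument, using instead its own description of fibrations.

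First I would record that $\iota : \ag \to \B$ and $\Pio : \B \to \M$ are right Quillen functors. For $\muapj$, whose fibrations are the level-wise fibrations, this is immediate: $\iota$ sends a fibration $f$ of $\ag$ to the morphism of $\mua$ with coordinates $(\Ub(f), f)$, which is a level-wise fibration because $\Ub$ is right Quillen, and $\Pio$ merely forgets the first coordinate; the trivial-fibration case is identical. (This is part of Theorem \ref{inj-proj-thm}(4).) Using the adjunctions $(\Piun \dashv \iota)$ and $(\Fb^+ \dashv \Pio)$ of Proposition \ref{adjunction-prop}, set $G_1 := \iota_{(\Piun,\cdot)}$ and $G_2 := \Pio_{(\Fb^+,\cdot)}$; these are morphisms of $\modcat_r$. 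Since $\Pio\circ\iota = \Ub$ on right adjoints and $\Piun\circ\Fb^+ = \Fb$ on left adjoints (Proposition \ref{adjunction-prop} and the definition of $\Fb^+$), one has $G_2 \circ G_1 = \Ub_{(\Fb,\varphi)}$.

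Next, for $G_1 \in \Lci(\modcat_r)$ I would use the adjunction $(L^1 \dashv \Piun)$: the functor $\Piun : \B \to \ag$ sends a level-wise fibration $\sigma$ to the fibration $\sigma^1$, hence is right Quillen, so $H := \Piun_{(L^1,\cdot)}$ is a morphism of $\modcat_r$; since $\Piun\circ\iota = \Id_\ag = \Piun\circ L^1$ by Proposition \ref{adjunction-prop}(3), the composite $H\circ G_1$ is the identity on both right and left adjoints, so $H$ is a left inverse of $G_1$ in $\modcat_r$. Dually, for $G_2 \in \Rci(\modcat_r)$ I would use $(\Pio \dashv R^0)$: the functor $R^0(m) = [m,\ast_\ag, m\to\ast_\M]$ sends a (trivial) fibration of $\M$ to a level-wise (trivial) fibration, hence is right Quillen, so $S := R^0_{(\Pio,\cdot)}$ is a morphism of $\modcat_r$, and $\Pio\circ R^0 = \Id_\M = \Pio\circ\Fb^+$ shows $G_2\circ S = \Id_\M$, so $S$ is a right inverse of $G_2$. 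Finally, functoriality of the construction on $\modcat_r$ follows from the categorical functoriality of the comma category in Proposition \ref{fact-ri-li}: the comparison functor $E(H,K)$ is a right adjoint by Proposition \ref{prop-left-ehk}, and a computation like the one above for $\iota$ — using that a right adjoint preserves the pullbacks occurring in the injective fibrations and that the square (\ref{diag-fact-comma}) commutes — shows $E(H,K)$ is right Quillen for both model structures, hence a morphism of $\modcat_r$.

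The step I expect to be the main obstacle is not conceptual but a matter of coherence: one must verify that, with the explicit units and counits of $\Piun\dashv\iota$, $L^1\dashv\Piun$, $\Fb^+\dashv\Pio$ and $\Pio\dashv R^0$, the triples at hand compose to the identity of $\modcat_r$ on the nose — so that $H\circ G_1$ and $G_2\circ S$ are literally $\Id$ rather than merely isomorphic to it — and, relatedly, that $E(H,K)$ is genuinely right Quillen (not just a right adjoint) for both the injective and the projective structure. These verifications are routine once one writes out the formulas for the units and counits and for the fibrations of $\muaij$ and $\muapj$, but that is where all the bookkeeping lives.
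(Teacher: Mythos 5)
Your proposal is correct and follows essentially the same route as the paper: the same triples $G_1=\iota_{(\Piun,\varphi)}$ and $G_2=\Pio_{(\Fb^+,\varphi)}$, the same retraction $\Piun_{(L^1,\varphi)}$ and section $R^0_{(\Pio,\varphi)}$ built from Proposition \ref{adjunction-prop}, and functoriality via $E(H,K)$. The only (harmless) divergence is in checking that $E(H,K)$ is right Quillen for the injective structure: you verify directly that $K$ carries the pullback-gluing map of an injective fibration to the corresponding one downstairs, whereas the paper sidesteps this by showing instead that the left adjoint $E(H,K)_\ast$ preserves level-wise (trivial) cofibrations; both work.
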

\begin{proof}
On a categorical level, i.e., if we discard the model structures, the two factorizations coincide. One sets:
\begin{itemize}
	\item $G_{1}=(G_{1}^l,G_{1}^r,\varphi)= (\Pi^{1}, \iota, \varphi)=\iota_{(\Pi^{1},\varphi)}$, where $\varphi$ is the implicit isomorphism in the adjunction $\iota \dashv \Pi^{1}$. 
	\item $G_2=(G_2^l,G_2^r,\varphi)= (\Fb^+, \Pi^0, \varphi)= \Pio_{(\Fb^+,\varphi)}$. 
\end{itemize}
There is a retraction $Z: \mua \to \ag$ of $G_{1}$ given by $Z=(Z^l,Z^r,\varphi)=(L_{1}, \Pi^{1}, \varphi)=\Piun_{(L_{1},\varphi)}$. The map $G_2$ admits a section $T: \M \to \mua$ given by $T=(T^l,T^r,\varphi)=(\Pi^0, R^0, \varphi) = {R^0}_{(\Pio,\varphi)}$. To prove that these factorizations are functorial, we must show that for $ms \in \{\tx{`inj'},\tx{ `proj'}\}$, the induced functor $E(H,K):\mdua_{ms} \to (\M'\downarrow\Ub')_{ms}$   is a right Quillen functor:
\[
\xy
(0,15)*+{\ag}="W";
(0,0)*+{\ag'}="X";
{\ar@{->}^-{H}"W";"X"};
(25,15)*+{\M}="U";
(25,0)*+{\M'}="V";
{\ar@{->}^-{\Ub'}"X";"V"};
{\ar@{->}_-{K}"U";"V"};
{\ar@{->}^-{\Ub}"W";"U"};
\endxy
\xy
(-5,7)*+{\Rightarrow}="L";
(0,15)*+{\ag}="W";
(0,0)*+{\ag'}="X";
(50,0)*+{\M'}="Y";
(50,15)*+{\M}="E";
{\ar@{->}^-{H}"W";"X"};
{\ar@{->}_-{K}"E";"Y"};
(25,15)*+{\mdua}="U";
(25,0)*+{(\M'\downarrow \Ub')}="V";
{\ar@{->}^-{\iota}"X";"V"};
{\ar@{->}^-{\Pio}"V";"Y"};
{\ar@{.>}^-{E(H,K)}"U";"V"};
{\ar@{->}^-{\Pio}"U";"E"};
{\ar@{->}^-{\iota}"W";"U"};
\endxy
\]
 By definition we have $E(H,K)[\sgo,\sgi]=[K(\sgo), H(\sgi)]$. Since $H$ and $K$ preserve the fibrations and the trivial fibrations, we see that  $E(H,K)[\sgo,\sgi]$ is a level-wise (trivial) fibration if $[\sgo,\sgi] $ is. This proves that $E(H,K): \mdua_{proj} \to (\M'\downarrow\Ub')_{proj}$ is a right Quillen functor.  To show that $E(H,K): \mdua_{inj} \to (\M'\downarrow\Ub')_{inj}$ is also a right Quillen functor, it is much easier to prove that its left adjoint $E(H,K)_{\ast}$ is a left Quillen functor, in that it preserves the level-wise (trivial) cofibrations. By definition, we have $E(H,K)_{\ast}[\theta^0, \theta^1] = [K_\ast(\theta^0), H_\ast(\theta^1)]$, where $H_\ast$ and $K_\ast$ are the respective left adjoint of $H$ and $K$. Since  $H_\ast$ and $K_\ast$ preserve the cofibrations and the trivial cofibrations, then clearly $E(H,K)_{\ast}[\theta^0, \theta^1] $ is a level-wise (trivial) cofibration if $[\theta^0, \theta^1] $ is.
\end{proof}
\subsection{Left Bousfield localizations}
\noindent
We are going to localize the two  model structures $\muaij$ and $\muapj$ along the functor $\Piun : \mua \to \ag$ without the hypothesis used in \cite{Bacard_QS}, wherein we've restricted ourselves to a right Quillen functor between combinatorial model categories. 
\subsubsection{Left injective model structure}
\begin{df}\label{left-injective-data}
	Let $\sigma: \Fc \xrightarrow{[\sgo,\sgi]} \Gc$ be a map in $\mua$. We will say that:
	\begin{enumerate}
		\item $\sigma$ is a \emph{left injective cofibration} if $\sigma^0$ is a cofibration in $\M$ and $\sigma^{1}$ is a  cofibration in $\ag$, that is if $\sigma$ is a  cofibration in $\muaij$. 
		\item $\sigma$ is a \emph{left weak equivalence} if  $\sigma^{1}$ is a weak equivalence  in $\ag$, that is if $\Pi^{1}(\sg)$ is a weak equivalence in $\ag$.
		
		\item $\sigma$ is a \emph{left injective fibration}  if:
		\begin{itemize}
			\item $\sigma^{1}: \F^1 \to \G^1$ is a  fibration in $\ag$ and if 
			\item the induced map $ \delta: \F^0 \to \Ub(\F^1) \times_{\Ub(\G^1)} \G^0 $
			is a \emph{trivial fibration} in $\M$. 
		\end{itemize}
	\end{enumerate}
\end{df}

\begin{nota}For simplicity we will adopt the following notation.
\begin{itemize}
	\item $\lcof(\muaij)= $ the class of all left injective cofibrations
	\item $\lfib(\muaij)= $ the class of all left injective fibrations
	\item $\lwe(\muaij) = $ the class of all left weak equivalences.
\end{itemize}
\end{nota}
\begin{thm}\label{left-inj-thm}
	Call a morphism $\sigma: \Fc \xrightarrow{[\sgo,\sgi]} \Gc$ in $\mua$
	\begin{enumerate}
		\item  a weak equivalence if it is a left injective weak equivalence,
		\item a cofibration if it is a left injective cofibration and 
		\item a fibration if it is a left injective fibration. 
	\end{enumerate}
	Then these choices provide $\mua$ with the structure of a model category that will be denoted by $\mua^{\bf{L}}_{inj}$. 
\end{thm}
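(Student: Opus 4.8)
The plan is to verify the model category axioms directly, but to save almost all of the labor by observing that the putative model structure $\mua^{\bf L}_{inj}$ is a \emph{left Bousfield localization} of the known injective model structure $\muaij$ of Theorem \ref{inj-proj-thm}. Indeed, the cofibrations of $\mua^{\bf L}_{inj}$ are by Definition \ref{left-injective-data}(1) exactly the cofibrations of $\muaij$, while the weak equivalences are enlarged: a map $\sigma$ is a left weak equivalence iff $\Pi^1(\sigma)$ is a weak equivalence in $\ag$, and every level-wise weak equivalence has this property since $\Pi^1(\sigma) = \sigma^1$. So one expects $\mua^{\bf L}_{inj}$ to be the localization of $\muaij$ that inverts those maps $\sigma$ with $\sigma^1$ invertible, equivalently the localization forcing the fibrant objects to be the $\Fc = [\F^0,\F^1,\pi_\F]$ for which $\pi_\F$ is a trivial fibration after composing appropriately — this is precisely what the definition of \emph{left injective fibration} encodes (a fibration into the terminal object $[\ast_\M,\ast_\ag,\Id]$ has $\F^0 \to \Ub(\F^1)\times_{\ast}\ast = \Ub(\F^1)$ a trivial fibration).

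First I would check the two retract axioms and the $2$-out-of-$3$ axiom: retracts are immediate since each of $\lcof$, $\lfib$, $\lwe$ is defined by a pointwise condition ($\sigma^0$, $\sigma^1$, and the induced pullback map) that is itself retract-closed in $\M$ and $\ag$, using that retracts in $\mua$ are computed level-wise; and $2$-out-of-$3$ for $\lwe$ is inherited from $2$-out-of-$3$ in $\ag$ via the functor $\Pi^1$. Next I would establish the two lifting axioms. For a left injective (trivial) cofibration $\sigma$ against a left injective fibration $\tau$: transpose the lifting problem into the two components. The $\ag$-component is a lifting of a (trivial) cofibration against a fibration, solved by the model structure on $\ag$; having chosen that lift $\sgi$, the $\M$-component becomes a lifting problem of $\sigma^0$ against the map $\F^0 \to \Ub(\F^1)\times_{\Ub(\G^1)}\G^0$, which is a \emph{trivial} fibration in $\M$ by Definition \ref{left-injective-data}(3), so any cofibration — in particular $\sigma^0$ — lifts against it. The only subtlety is bookkeeping: I would set this up exactly as in the proof of Theorem \ref{inj-proj-thm} in \cite{Bacard_QS}, noting the pullback corner map is the same construction, only the required property (trivial fibration in $\M$) has changed, and the cofibration side of the lifting problem in $\M$ is unchanged.

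The main obstacle — and the step I would spend real effort on — is the factorization axiom, since the paper explicitly wants to avoid the combinatorial hypothesis of \cite{Bacard_QS}, so a small-object argument is not available in the enlarged setting. Here the strategy is to build factorizations by hand from the factorizations already available in $\ag$ and in $\M$. Given $\sigma: \Fc \to \Gc$, first factor $\sigma^1 = \ag$-(trivial cofibration followed by fibration) to get an intermediate object in the $\ag$-coordinate; then, working over this chosen $\ag$-data, factor the comparison map in $\M$ into the relevant pullback, using the \emph{ordinary} (non-localized) factorizations of $\M$ but inserting a trivial fibration where Definition \ref{left-injective-data}(3) demands one — concretely one first factors $\F^0 \to \Ub(\F^1)\times_{\Ub(\G^1)}\G^0$ as a cofibration followed by a trivial fibration in $\M$, and separately as a trivial cofibration followed by a fibration, then reassembles the triple. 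The bookkeeping to check that the reassembled maps are genuinely a left injective (trivial) cofibration and a left injective fibration, and that the construction is functorial, is the technical heart; I expect it to parallel closely Renaudin's cylinder/path-object construction \cite{Renaudin_mod} and the arguments of Section \ref{sec-fact}, and I would present the full details in the appendix alongside the other long proofs. Finally, completeness of $\mua$ is inherited from finite completeness and cocompleteness of $\ag$ and $\M$ as noted in the Introduction, so all model category axioms hold.
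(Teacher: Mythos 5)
Your proposal is correct and follows essentially the same route as the paper: the paper also treats $\mua^{\bf L}_{inj}$ as a localization of $\muaij$ sharing its cofibrations, gets the $(\lcof,\ \lfib\cap\lwe)$ half for free from the identities $\lcof=\cof(\muaij)$ and $\lfib\cap\lwe=\fib(\muaij)\cap\we(\muaij)$, and proves the remaining lifting and factorization (Lemma \ref{inj-lem}) exactly as you describe — first solve or factor the $\sigma^1$-component in $\ag$, then reduce the $\M$-component to a cofibration against the pullback corner map, which is a trivial fibration, thereby bypassing any small object argument. The only caveat is to state the two lifting/factorization cases separately rather than with a single ``(trivial)'' parenthetical, since a bare cofibration against a bare fibration is of course not liftable.
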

An important consequence of the theorem is the following result: 
\begin{thm}\label{fact-inj-thm}
	With the notation above, the following statements are true.
	\begin{enumerate}
		\item We have two Quillen equivalences where the functor on the left hand side is the left Quillen functor:
		\begin{align*}
		L^{1}: \ag \leftrightarrows \mua^{\bf{L}}_{inj}: \Piun \hspace{1in} \Piun: \mua^{\bf{L}}_{inj} \leftrightarrows \ag : \iota
		\end{align*}
		\item We have a Quillen adjunction $ \Fb^+: \M \rightleftarrows \mua^{\bf{L}}_{inj}:  \Pio$.
		\item We have a factorization $\Ub = \Pio \circ \iota$:
		$ \ag \xhookrightarrow[\sim]{\iota} \mua^{\bf{L}}_{inj} \xrightarrow{\Pio} \M.$
		\item We have  $\Id_{\ag} =\Piun \circ  \iota $ and the unit $\eta : \Id_{\mua^{\bf{L}}_{inj}} \to \iota \circ \Piun $ is a right homotopy.
	\end{enumerate}
\end{thm}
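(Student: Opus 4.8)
The plan is to derive everything from Theorem \ref{left-inj-thm} (existence of the model structure $\mua^{\bf{L}}_{inj}$) together with the adjunctions from Proposition \ref{adjunction-prop} and the factorizations recorded there. First I would establish item (1). From Proposition \ref{adjunction-prop} we already have the adjunctions $(L^{1} \dashv \Pi^{1})$ and $(\Pi^{1} \dashv \iota)$, so the only thing to check is the Quillen property on each. For $L^{1} \dashv \Pi^{1}$: by the very definition of $\lwe(\muaij)$ and $\lfib(\muaij)$ in Definition \ref{left-injective-data}, the functor $\Pi^{1}$ sends (trivial) left injective fibrations to (trivial) fibrations in $\ag$ — indeed a left weak equivalence is by definition a map $\sigma$ with $\Pi^{1}(\sigma)\in\we(\ag)$, and a left injective fibration has $\sigma^{1}=\Pi^{1}(\sigma)\in\fib(\ag)$ by clause (3) — so $\Pi^{1}$ is right Quillen and $L^{1}$ is left Quillen. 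For $\Pi^{1}\dashv\iota$: here $\iota$ is the right adjoint, and I would check that $\Pi^{1}$ (now playing the role of left adjoint) preserves cofibrations and trivial cofibrations; a left injective cofibration has $\sigma^{1}=\Pi^{1}(\sigma)$ a cofibration in $\ag$ by clause (1) of Definition \ref{left-injective-data}, and a trivial one additionally has $\sigma^{1}$ a weak equivalence, so $\Pi^{1}$ is left Quillen, i.e. $\iota$ is right Quillen.

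Next, the Quillen \emph{equivalence} claims. Since $\Pi^{1}\circ\iota=\Id_{\ag}$ and $\Pi^{1}\circ L^{1}=\Id_{\ag}$ (Proposition \ref{adjunction-prop}(3)), the composites on $\ag$ are the identity, so it suffices to analyze the unit/counit on the $\mua^{\bf{L}}_{inj}$ side. The derived functor $L\Pi^{1}$ (or $R\Pi^{1}$) inverts exactly the left weak equivalences by construction, and the class of left weak equivalences is precisely $(\Pi^{1})^{-1}(\we(\ag))$; a standard argument (a right Quillen functor whose weak equivalences are created from the target, with the target-to-source composite being the identity) then shows the adjunction is a Quillen equivalence. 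Concretely, for $\Piun\colon\mua^{\bf{L}}_{inj}\leftrightarrows\ag\colon\iota$ I would check: for fibrant $\Fc$ and cofibrant $P\in\ag$, a map $L^{1}(P)\to\Fc$ — equivalently $P\to\Pi^{1}(\Fc)$ — is a left weak equivalence iff its adjunct is a weak equivalence in $\ag$, which is immediate because left weak equivalences are detected by $\Pi^{1}$. For $L^{1}\dashv\Pi^{1}$ one argues symmetrically. I expect the slightly delicate point to be pinning down which functor is left and which is right in each adjunction (the same functor $\Pi^{1}$ appears as a left adjoint in one and a right adjoint in the other), and making sure the fibrant/cofibrant hypotheses line up with the definition of Quillen equivalence; this is the main obstacle, though a routine one.

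Item (2) is essentially free: $\mua^{\bf{L}}_{inj}$ is a \emph{left} Bousfield localization of $\muaij$ (same cofibrations, more weak equivalences, fewer fibrations), and $\Pio\colon\muaij\to\M$ was already right Quillen by Theorem \ref{inj-proj-thm}; a right Quillen functor out of $\muaij$ remains right Quillen out of the localization provided its left adjoint $\Fb^{+}$ still preserves the (unchanged) cofibrations and now only needs to preserve the trivial cofibrations of the localization, of which there are more — so one must verify $\Fb^{+}$ sends left-injective trivial cofibrations to trivial cofibrations in $\M$; but a trivial cofibration in $\mua^{\bf{L}}_{inj}$ is in particular a trivial cofibration in $\muaij$? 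No — here I would instead note that it is cleaner to check directly that $\Pio$ preserves fibrations and trivial fibrations of the localized structure, which is immediate since a left injective (trivial) fibration has underlying data making $\delta\colon\F^{0}\to\Ub(\F^{1})\times_{\Ub(\G^{1})}\G^{0}$ a trivial fibration, hence $\sigma^{0}=\Pio(\sigma)$ a (trivial) fibration in $\M$ after composing with the (trivial) fibration $\Ub(\F^{1})\times_{\Ub(\G^{1})}\G^{0}\to\G^{0}$. Item (3) is then Proposition \ref{adjunction-prop}(3), namely $\Ub=\Pio\circ\iota$, together with the fact just proved that $\iota$ is a right Quillen equivalence, so $\iota$ is decorated with $\sim$. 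Finally item (4): $\Id_{\ag}=\Piun\circ\iota$ is again Proposition \ref{adjunction-prop}(3); and the unit $\eta\colon\Id\to\iota\circ\Piun$ of the adjunction $\Piun\dashv\iota$, evaluated at $\Fc=[\F^{0},\F^{1},\pi_{\F}]$, is the map $\Fc\to[\Ub(\F^{1}),\F^{1},\Id]$ with components $(\pi_{\F},\Id_{\F^{1}})$, so $\Pi^{1}(\eta_{\Fc})=\Id_{\F^{1}}$ is an isomorphism, hence in particular $\eta_{\Fc}$ is a left weak equivalence — i.e. $\eta$ is a right homotopy in the sense of Definition \ref{df-right-ho} (no fibrancy hypothesis is even needed). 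This completes the proof.
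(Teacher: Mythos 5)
Your proposal is correct and follows essentially the same route as the paper: Quillen adjunctions are read off directly from the definitions of the left injective (co)fibrations, the Quillen equivalences in (1) follow because $\Piun$ detects the left weak equivalences and intertwines a map with its adjunct, (2) is the direct check that $\Pio$ sends left injective (trivial) fibrations to (trivial) fibrations, and (4) is the observation that $\eta_{\Fc}=[\pi_{\F},\Id_{\F^1}]$ has identity $\Piun$-component. The only cosmetic slips — briefly starting down the Bousfield-localization route in (2) before settling on the direct argument the paper uses, and attaching the cofibrant/fibrant adjunct check to $\Piun\dashv\iota$ when it is literally the check for $L^1\dashv\Piun$ — do not affect correctness.
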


\begin{proof}[Proof of Theorem \ref{fact-inj-thm}]
For $\Pa \in \ag $ and $\Fc=[\F^0,\F^{1},\pif] \in \mua$ we have $L^{1}(\Pa)= [\emptyset, \Pa, \emptyset \to  \Ub(\Pa)]$ and $\Piun(\Fc)= \F^1$. The functor $\Piun$ creates weak equivalences in $\mua^{\bf{L}}_{inj}$, therefore a map $L^{1}(\Pa) \to \Fc$ is a weak equivalence in $\mua^{\bf{L}}_{inj}$ if and only if -- by definition -- the map $(\Pa \to \F^1)=(\Pa \to \Piun(\F))$ is a weak equivalence in $\ag$. This is true in particular if $\Pa$ is a cofibrant and $\F$ is fibrant, which proves that the adjunction $L^{1} \dashv \Piun$ is a Quillen equivalence.  It also proves automatically that the other adjunction $\Piun \dashv \iota$ is a Quillen equivalence where $\Piun$ is a left Quillen functor. This gives Assertion $(1)$. Moreover,
for any left injective (trivial) fibration $\sg$, $\Pio(\sg)=\sg^0$ is a (trivial) fibration in $\M$, thus $\Pio$ is right Quillen, which proves Assertion $(2)$. Assertions $(3)$ is obvious. To prove Assertion $(4)$ we must show that for any fibrant object $\Fc \in \mua^{\bf{L}}_{inj}$ the map $\eta: \Fc \to \iota (\Piun(\Fc))$ is a weak equivalence. By inspection, the map $\eta$ is given by the couple  $[\pif, \Id_{\F^1}]$ which is (obviously) a weak equivalence in $\mua_{inj}^{\bf{L}}$ since $\Id_{\F^1}$ is a weak equivalence in $\ag$.
\end{proof}
The main ingredient for the proof of Theorem \ref{left-inj-thm} is the following lemma.
\begin{lem}\label{inj-lem}
	\ \\
	\begin{enumerate}
	
	\item Consider a lifting problem of solid arrows in $\mua$:
	\[
	\xy
	(0,18)*+{\Fc}="W";
	(0,0)*+{\Gc}="X";
	(30,0)*+{\Qc}="Y";
	(30,18)*+{\Pc}="E";
	{\ar@{->}^-{[\gamma^0, \gamma^{1}]}"X";"Y"};
	{\ar@{->}_-{[\sgo,\sgi]}"W";"X"};
	{\ar@{->}^-{[\theta^0,\theta^{1}]}"W";"E"};
	{\ar@{->}^-{[\beta^0,\beta^{1}]}"E";"Y"};
	{\ar@{-->}^-{[s^0,s^{1}]}"X";"E"};
	\endxy
	\]
If $\sg \in \lcof(\muaij) \cap \lwe(\muaij)$ and  $\beta  \in \lfib(\muaij)$, then a solution $s : \Gc \xrightarrow{[s^0,s^{1}]} \Pc$ exists.
\item Any morphism $\sg : \Fc \to \Gc$ of $\mua$ can be factored as:
$$[\F] \xhookrightarrow[\sim]{i} \Ec \xtwoheadrightarrow{p}[\G],$$
where $i \in  \lcof(\muaij) \cap \lwe(\muaij)$ and $p \in \lfib(\muaij)$. 
\end{enumerate}
\end{lem}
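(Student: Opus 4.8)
The plan is to reduce both assertions to the corresponding statements in the injective model structure $\muaij$ from Theorem \ref{inj-proj-thm}, pushing the extra strength of "left weak equivalence" and "left injective fibration" through the fiber-product construction in the first coordinate. For Assertion $(1)$, I would first read off what the hypotheses say coordinate-wise: $\sg \in \lcof(\muaij)$ means $\sg^0$ and $\sg^1$ are cofibrations; $\sg \in \lwe(\muaij)$ means $\sg^1$ is a weak equivalence in $\ag$ (with no constraint on $\sg^0$); and $\beta \in \lfib(\muaij)$ means $\beta^1$ is a fibration in $\ag$ and the induced gap map $\delta_\beta : \Pa^0 \to \Ub(\Pa^1)\times_{\Ub(\Qa^1)}\Qa^0$ is a \emph{trivial} fibration in $\M$. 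The lift $[s^0,s^1]$ is then constructed in two stages. First, since $\sg^1$ is a trivial cofibration in $\ag$ and $\beta^1$ is a fibration in $\ag$, the lifting problem in the $\ag$-coordinate has a solution $s^1 : \Ga^1 \to \Pa^1$. Second, with $s^1$ fixed, the required $s^0 : \Ga^0 \to \Pa^0$ must solve a lifting problem against the gap map $\delta_\beta$: one assembles the map $\Ga^0 \to \Ub(\Pa^1)\times_{\Ub(\Qa^1)}\Qa^0$ from $\gamma^0$, from $\Ub(s^1)\circ \pi_{\Ga}$, and compatibility given by the commuting squares, and then lifts against the trivial fibration $\delta_\beta$ using that $\sg^0$ is a cofibration (not necessarily trivial) in $\M$. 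Because $\delta_\beta$ is a \emph{trivial} fibration, no triviality is needed on $\sg^0$, which is exactly why the cofibration hypothesis on $\sg^0$ suffices. One then checks the two coordinates are compatible with the comma-category structure, i.e. that $[s^0,s^1]$ is genuinely a morphism in $\mua$; this is a routine diagram chase.

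For Assertion $(2)$, the factorization is built levelwise-then-corrected. Given $\sg : \Fc \to \Gc$, first factor $\sg^1 : \F^1 \to \G^1$ in $\ag$ as a trivial cofibration $j : \F^1 \to A$ followed by a fibration $q : A \to \G^1$, using the functorial factorization of $\ag$. The middle object $\Ec$ will have $\Ec^1 = A$ and $\pi_{\Ec}$ a map $\Ec^0 \to \Ub(A)$ to be determined; we want $\Ec^0$ to receive $\F^0$ and map to the fiber product $\Ub(A)\times_{\Ub(\G^1)}\G^0$ so as to control the gap map of $p$. Concretely, form $P := \Ub(A)\times_{\Ub(\G^1)}\G^0$; the map $\pi_{\F}$ together with $\Ub(j)$ and $\sg^0$ induces $\F^0 \to P$, and we factor this map in $\M$ as a cofibration $i^0 : \F^0 \to \Ec^0$ followed by a \emph{trivial} fibration $\Ec^0 \to P$, again using the functorial factorization of $\M$. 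Set $\pi_{\Ec}$ to be $\Ec^0 \to P \to \Ub(A)$, let $i = [i^0, j]$ and $p = [\Ec^0 \to P \to \G^0,\ q]$. Then by construction $i^1 = j$ is a trivial cofibration and $i^0$ is a cofibration, so $i \in \lcof(\muaij)\cap\lwe(\muaij)$; and $p^1 = q$ is a fibration in $\ag$ while the gap map of $p$ is precisely the chosen trivial fibration $\Ec^0 \to P$, so $p \in \lfib(\muaij)$. Functoriality is inherited from the functoriality of the chosen factorizations in $\ag$ and $\M$ together with the functoriality of the fiber-product construction, which is already present in the $E(H,K)$ formalism of Proposition \ref{prop-left-ehk} and Theorem \ref{first-fact}.

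The main obstacle I anticipate is bookkeeping the comma-category coherence: in both parts one must verify that the maps produced in the $\M$-coordinate are compatible with those in the $\ag$-coordinate under $\Ub$ and the structure maps $\pi_{(-)}$, so that one really obtains morphisms of $\mua$ rather than just pairs of morphisms. In Assertion $(1)$ this is the check that $\pi_{\Pa}\circ s^0 = \Ub(s^1)\circ \pi_{\Ga}$, which follows from the universal property of the pullback defining $\delta_\beta$ once the lift $s^0$ is taken through $\Ub(\Pa^1)\times_{\Ub(\Qa^1)}\Qa^0$; in Assertion $(2)$ it is the verification that $q\circ$ (nothing) and the composite $\Ec^0 \to P \to \Ub(A)$ fit together, which is immediate from the definition of $P$ as a pullback over $\Ub(\G^1)$. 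A secondary point worth stating carefully is why the gap map $\delta_\beta$ being a trivial fibration is the correct notion: this is what makes $\lfib(\muaij)$ the fibrations of a left Bousfield localization (more maps become weak equivalences, so fewer maps are fibrations), and it is the reason the lifting in Assertion $(1)$ goes through with only a plain cofibration on $\sg^0$. No deep input is needed beyond the model axioms for $\ag$ and $\M$ and the definition of the fiber product; the content is entirely in organizing the two-coordinate lift and factorization consistently.
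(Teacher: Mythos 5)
Your proposal is correct and follows essentially the same route as the paper's proof in Appendix \ref{proof-inj-lem}: for Assertion $(1)$ you first solve the $\ag$-coordinate lift against the trivial cofibration $\sg^1$ and fibration $\beta^1$, then assemble the induced map into $\Ub(\Pa^1)\times_{\Ub(\Qa^1)}\Qa^0$ and lift the cofibration $\sg^0$ against the trivial-fibration gap map; for Assertion $(2)$ you factor $\sg^1$ first, form the pullback, and factor the induced map $\F^0 \to \Ub(A)\times_{\Ub(\G^1)}\G^0$ as a cofibration followed by a trivial fibration, exactly as the paper does. The coherence checks you flag are the same ones the paper handles via the universal property of the pullback.
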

We defer the proof to Appendix \ref{proof-inj-lem}. 
\begin{proof}[Proof of Theorem \ref{left-inj-thm}]
It can be easily shown that the three classes of left weak equivalences, left injective cofibrations and left injective fibrations are closed under compositions and retracts. Moreover, one has:
\begin{itemize}
	\item $\lcof(\muaij)= \cof(\muaij)$,
	\item $\lfib(\muaij) \cap \lwe(\muaij) = \fib(\muaij) \cap \we(\muaij)$,
	\item $\we(\muaij) \subset \lwe(\muaij)$,
	\item $\lwe(\muaij)$ has the $3$-for-$2$ property.
\end{itemize}
For the remainder of the proof we will show that we have the required factorizations along with the lifting properties. Given any map $\sg$, we can use the axiom of the model category $\muaij$ to factor it as $\sg=p\circ i$ where $i \in \cof(\muaij)$ and $p\in \fib(\muaij) \cap \we(\muaij)$.  From the above observations, the map $i$ is a left injective cofibration and $p$ is a map that is simultaneously a left injective fibration and a left weak equivalence. This gives the first type of factorization.  By Lemma \ref{inj-lem}, any map $\sg$ can be factored as $\sg=p\circ i$, where $i \in  \lcof(\muaij) \cap \lwe(\muaij)$ and $p \in \lfib(\muaij)$. This gives the second type of factorization in $\mua$. The theorem will follow as soon as we show that the suitable lifting problems have a solution. Thanks to Assertion $(1)$ of Lemma \ref{inj-lem}, there is a solution to any lifting problem defined by an element of $\sg \in \lcof(\muaij) \cap \lwe(\muaij)$ and an element $\beta \in \lfib(\muaij)$. Finally, using the lifting property of model structure $\muaij$,  there is a solution to any lifting problem defined by an element $\sg \in \lcof(\muaij) =\cof(\muaij)$ and an element $\beta \in \lfib(\muaij) \cap \lwe(\muaij) = \fib(\muaij) \cap \we(\muaij)$.
\end{proof}
An object $\Fc \in \mua$ satisfies the \emph{Segal condition} if the map $\pif$ is a weak equivalence. We've called such object $\Fc$ a Quillen-Segal $\Ub$-object (see \cite{Bacard_QS}). There is a connection between Quille-Segal objects and the fibrant objects in $\mua^{\bf{L}}_{inj}$: 
\begin{cor}\label{cor-qs-uobj-inj}
The fibrant objects in $\mua^{\bf{L}}_{inj}$ are the objects $\Fc= [\F^0,\F^1,\pif]$ such that $\F^0 \in \M$ is fibrant, $\F^1 \in \ag$ is fibrant and $\pif: \F^0 \to  \Ub(\F^1)$ is a trivial fibration in $\M$. In particular $\Fc$ is a Quillen-Segal $\Ub$-object.
\end{cor}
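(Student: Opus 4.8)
The plan is to characterize the fibrant objects of $\mua^{\bf{L}}_{inj}$ directly from the model structure of Theorem \ref{left-inj-thm}, namely as those $\Fc$ for which the unique map $\Fc \to \ast$ to the terminal object is a left injective fibration. Recall from Proposition \ref{adjunction-prop} and the construction of $\mua$ that the terminal object is $\ast_{\mua}= [\ast_{\M}, \ast_{\ag}, \Id_{\ast_{\M}}]$, using $\Ub(\ast_{\ag})=\ast_{\M}$. So first I would unwind Definition \ref{left-injective-data}(3) applied to the morphism $\sg : \Fc \to \ast_{\mua}$, whose components are $\sg^0 : \F^0 \to \ast_{\M}$ and $\sg^1 : \F^1 \to \ast_{\ag}$. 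The condition that $\sg^1$ be a fibration in $\ag$ says exactly that $\F^1$ is fibrant in $\ag$. For the second condition, one computes the pullback $\Ub(\F^1) \times_{\Ub(\ast_{\ag})} \ast_{\M}$; since $\Ub(\ast_{\ag})=\ast_{\M}$, this pullback is just $\Ub(\F^1)$ itself, and the induced map $\delta : \F^0 \to \Ub(\F^1) \times_{\Ub(\ast_{\ag})} \ast_{\M} \cong \Ub(\F^1)$ is precisely $\pif$. Thus the left injective fibration condition for $\Fc \to \ast_{\mua}$ becomes: $\F^1$ is fibrant in $\ag$, and $\pif : \F^0 \to \Ub(\F^1)$ is a trivial fibration in $\M$.

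Next I would observe that these two conditions already force $\F^0$ to be fibrant in $\M$: since $\Ub$ is a right Quillen functor and $\F^1$ is fibrant in $\ag$, the object $\Ub(\F^1)$ is fibrant in $\M$; and $\pif$ being a trivial fibration with fibrant target implies $\F^0$ is fibrant in $\M$ (the composite $\F^0 \xrightarrow{\pif} \Ub(\F^1) \to \ast_{\M}$ is a fibration). Conversely, the stated three conditions ($\F^0$ fibrant, $\F^1$ fibrant, $\pif$ a trivial fibration) clearly imply the two conditions extracted above, so the characterization matches exactly. This gives the first sentence of the corollary. The final sentence is then immediate: a trivial fibration is in particular a weak equivalence, so $\pif$ is a weak equivalence, which is by definition the Segal condition, i.e. $\Fc$ is a Quillen-Segal $\Ub$-object.

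I do not anticipate a serious obstacle here; the only point requiring a little care is the identification of the pullback $\Ub(\F^1) \times_{\Ub(\ast_{\ag})} \ast_{\M}$ with $\Ub(\F^1)$, which rests on the standing hypothesis $\Ub(\ast_{\ag})=\ast_{\M}$ (so that $\ast_{\M}$ is terminal in the slice and the pullback collapses), together with the fact that $\Ub$ preserves terminal objects, which is automatic as $\Ub$ is a right adjoint. One should also make explicit that the terminal object of $\mua$ is $[\ast_{\M},\ast_{\ag},\Id_{\ast_{\M}}]$, which follows from $\Pio, \Piun$ being right adjoints hence preserving terminal objects, or can be checked by hand against the universal property of the comma category. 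With those identifications in place the proof is a routine unwinding of Definition \ref{left-injective-data}.
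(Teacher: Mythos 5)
Your proposal is correct and follows essentially the same route as the paper: identify the terminal object $[\ast_{\M},\ast_{\ag},\Id_{\ast_{\M}}]$ of $\mua$, unwind Definition \ref{left-injective-data}(3) for the map $\Fc \to [\ast]$ (noting the pullback collapses to $\Ub(\F^1)$ so that $\delta = \pif$), and deduce fibrancy of $\F^0$ from the composite $\F^0 \xrightarrow{\pif} \Ub(\F^1) \to \ast_{\M}$. No gaps.
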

\begin{proof}
	Let $\ast_{\ag}$ (resp. $\ast_{\M}$) be the  terminal object in $\ag$ (resp.  $\M$). Since $\Ub$ is a right adjoint, it preserves all limits, in particular it preserves the terminal object (obtained as the limit of the empty diagram). Therefore we can assume that $\Ub(\ast_{\ag})=\ast_{\M}$. The object $[\ast]=[\ast_{\M},\ast_{\ag}, \Id_{\ast_{\M}}]$ is a terminal object in $\mua$. An object $\Fc \in \mua^{\bf{L}}_{inj}$ is fibrant if and only if $\Fc \to [\ast]$ is a left injective fibration, if and only if  $\F^1 \twoheadrightarrow \ast_{\ag}$ is a fibration in  $\ag$  and if moreover $\F^0 \xtwoheadrightarrow{\sim} \Ub(\F^1)$ is a trivial fibration in $\M$. It follows that $\F^1 \in \ag$ is fibrant, and by composition $\F^0$ is also fibrant since the map $\F^0 \twoheadrightarrow \ast_{\M}$ is a fibration obtained as the composite of the trivial fibration $\pif: \F^0 \xtwoheadrightarrow{\sim} \Ub(\F^1)$ followed by the fibration $\Ub(\F^1 \twoheadrightarrow \ast_{\ag})=(\Ub(\F^1) \twoheadrightarrow \underbrace{\Ub(\ast_{\ag})}_{=\ast_{\M}})$.
\end{proof}
\begin{rmk}
It follows that if $\Fc$ is fibrant, then the map $\eta: \Fc \to \iota (\Piun(\Fc))$ defined by $[\pif, \Id_{\F^1}]$ is in fact a level-wise weak equivalence.
\end{rmk}
\subsubsection{Left projective model structure}
\begin{df}\label{left-projective-data}
	Let $\sigma: \Fc \xrightarrow{[\sgo,\sgi]} \Gc$ be a map in $\mua$. We will say that:
	\begin{enumerate}
		\item $\sigma$ is a \emph{left projective cofibration} if $\sg$ is a projective cofibration.
		\item $\sigma$ is a \emph{left weak equivalence} if  $\sigma^{1}$ is a weak equivalence  in $\ag$, that is if $\Pi^{1}(\sg)$ is a weak equivalence in $\ag$.
		
		\item $\sigma$ is a \emph{left projective fibration}  if:
		\begin{itemize}
			\item $\sg^0$ and $\sg^{1}$ are fibrations and if moreover

			\item the induced map $ \delta: \F^0 \to \Ub(\F^1) \times_{\Ub(\G^1)} \G^0 $
			is a \emph{weak equivalence} in $\M$. 
		\end{itemize}
	\end{enumerate}
\end{df}
\begin{nota}For simplicity we will also adopt the following notation.
	\begin{itemize}
\item $\lcof(\muapj)= $ the class of all left projective cofibrations
\item $\lfib(\muapj)= $ the class of all left projective fibrations
\item $\lwe(\muapj) = $ the class of all left weak equivalences.
	\end{itemize}
\end{nota}
\begin{thm}\label{left-proj-thm}
		Call a morphism $\sigma: \Fc \xrightarrow{[\sgo,\sgi]} \Gc$ in $\mua$
	\begin{enumerate}
		\item  a weak equivalence if it is a left projective weak equivalence,
		\item a cofibration if it is a left projective cofibration and 
		\item a fibration if it is a left projective fibration. 
	\end{enumerate}
	Then these choices provide $\mua$ with the structure of a model category that will be denoted by $\mua^{\bf{L}}_{proj}$. 
\end{thm}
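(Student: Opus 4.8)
The plan is to mirror the proof of Theorem \ref{left-inj-thm}, but starting from the projective model structure $\muapj$ of Theorem \ref{inj-proj-thm} instead of the injective one, and to exploit the Quillen equivalence $\Id : \muapj \to \muaij$ to transport the localization. First I would record the elementary closure properties: the three classes $\lcof(\muapj)$, $\lwe(\muapj)$, $\lfib(\muapj)$ are each closed under composition and retracts. For $\lcof(\muapj)$ this is immediate since it equals $\cof(\muapj)$; for $\lwe(\muapj)$ it follows because $\Pi^{1}$ is a functor and $\we(\ag)$ has these properties, and the $3$-for-$2$ property of $\lwe(\muapj)$ likewise reduces to $3$-for-$2$ in $\ag$; for $\lfib(\muapj)$ one combines closure of $\fib(\M)$, $\fib(\ag)$, and $\we(\M)$ with the fact that the pullback-corner map $\delta$ is functorial in $\sg$ and that a retract of a level-wise fibration whose $\delta$ is a weak equivalence again has these properties (using that pullbacks commute with retracts).

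Next I would identify the interaction with the unlocalized structure, in exact analogy with the bulleted list in the proof of Theorem \ref{left-inj-thm}: one has $\lcof(\muapj) = \cof(\muapj)$; $\we(\muapj) \subset \lwe(\muapj)$ since a level-wise weak equivalence in particular has $\sg^{1} \in \we(\ag)$; and, crucially, $\lfib(\muapj) \cap \lwe(\muapj) = \fib(\muapj) \cap \we(\muapj)$. For this last identity: if $\sg$ is a left projective fibration and a left weak equivalence, then $\sg^{1}$ is a fibration and a weak equivalence in $\ag$, whence $\Ub(\sg^{1})$ is a weak equivalence in $\M$, so by $3$-for-$2$ in the pullback square the base-change map $\Ub(\F^1)\times_{\Ub(\G^1)}\G^0 \to \G^0$ is a weak equivalence, and composing with $\delta \in \we(\M)$ gives $\sg^0 \in \we(\M)$; thus $\sg$ is a level-wise weak equivalence, and being also a level-wise fibration with $\delta$ a weak equivalence it is a trivial fibration in $\muapj$. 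The reverse inclusion is similar and easier.

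With these preparations the two factorization axioms are obtained exactly as before. The first factorization (left projective cofibration followed by a map that is simultaneously a left projective fibration and a left weak equivalence) comes from factoring $\sg$ in $\muapj$ as $p \circ i$ with $i \in \cof(\muapj)$ and $p \in \fib(\muapj)\cap\we(\muapj)$, then invoking the two identities above. The second factorization --- left trivial projective cofibration followed by left projective fibration --- is the projective analogue of Lemma \ref{inj-lem}, and this is the step I expect to be the main obstacle: one must build, for an arbitrary $\sg : \Fc \to \Gc$, a functorial factorization through an object $\Ec$ whose transition map is controlled so that the second leg $p$ has its $\delta$-map a weak equivalence and its components fibrations, while the first leg $i$ is a projective cofibration with $\Pi^{1}(i)$ a weak equivalence. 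The natural route is to perform the standard small-object-type or explicit mapping-cocylinder factorization of Theorem \ref{inj-proj-thm} fibre-wise and then correct the $\M$-component using a factorization of the relevant map in $\M$ into a cofibration followed by a trivial fibration; I would defer the verification of this lemma to the appendix, in parallel with the treatment of Lemma \ref{inj-lem}. Finally, the lifting axioms follow formally: lifts of a left trivial projective cofibration against a left projective fibration are provided by (the projective analogue of) Lemma \ref{inj-lem}(1), and lifts of a left projective cofibration against a map in $\fib(\muapj)\cap\we(\muapj) = \lfib(\muapj)\cap\lwe(\muapj)$ are provided by the model axioms of $\muapj$. Assembling the closure properties, the two factorizations, and the two lifting properties yields the model structure $\mua^{\bf{L}}_{proj}$.
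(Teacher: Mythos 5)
Your overall architecture coincides with the paper's: the closure properties, the identity $\lfib(\muapj)\cap\lwe(\muapj)=\fib(\muapj)\cap\we(\muapj)$ (this is Lemma \ref{proj-lem-tfib}, proved exactly by the $3$-for-$2$ argument in the pullback factorization of $\sg^0$ that you describe), the first factorization via the model axioms of $\muapj$ (Lemma \ref{old-fact-proj}), the second factorization deferred to an appendix (Lemma \ref{proj-lem-fact}, whose actual construction is in the spirit of your sketch, though considerably more delicate), and the two lifting axioms, one of which is inherited from $\muapj$.

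There is, however, a genuine gap at the one step the paper itself calls the hardest: the lifting of a map in $\lcof(\muapj)\cap\lwe(\muapj)$ against a map in $\lfib(\muapj)$. You assert this ``follows formally'' from ``the projective analogue of Lemma \ref{inj-lem}(1)'', i.e.\ by rerunning the injective argument, and you locate the main obstacle in the factorization lemma instead. But the injective argument does not transport. In the injective case one first lifts $(\sg^{1},\beta^{1})$ in $\ag$ and then reduces the remaining problem to a lifting problem in $\M$ between the cofibration $\sg^0$ and the corner map $\delta:\Pa^0\to\Ub(\Pa^{1})\times_{\Ub(\Qa^{1})}\Qa^0$, which is a \emph{trivial fibration} by the definition of a left injective fibration, so the model axioms of $\M$ apply. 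For a left projective fibration the corner map $\delta$ is only a weak equivalence (and $\sg^0$ is only a cofibration, not a trivial one), so there is no lifting axiom of $\M$ to invoke and the construction stalls. The paper's Lemma \ref{proj-lem-lift} circumvents this with a genuinely different, homotopy-theoretic argument: it passes to the under-over model category $(\Fc,\tau)/\mua_{/\Qc}$, checks that $[(\Gc,\gamma),\sg]$ is cofibrant there, that the corner construction yields a weak equivalence $\xi$ between the fibrant objects $[(\Pc,\beta),\theta]$ and $[(\Ec,\chi),\xi\circ\theta]$, and that the partial lift $\tld{s}$ gives a map into the latter; Lemma \ref{lem-exist-map} (a weak equivalence between fibrant objects induces a bijection on homotopy classes of maps out of a cofibrant object) then forces the existence of a map into $[(\Pc,\beta),\theta]$, which is the required lift. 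You would need to supply this (or an equivalent) argument for your proof to close.
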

A direct consequence is the following result which is the projective version of Theorem \ref{fact-inj-thm}.
\begin{thm}\label{fact-proj-thm}
	With the notation above, the following statements are true.
	\begin{enumerate}
        \item We have two Quillen equivalences:
        \begin{align*}
        L^{1}: \ag \leftrightarrows \mua^{\bf{L}}_{proj}: \Piun \hspace{1in} \Piun: \mua^{\bf{L}}_{proj} \leftrightarrows \ag : \iota
        \end{align*}
		\item We have a Quillen adjunction $\Fb^+ : \M  \rightleftarrows \mua^{\bf{L}}_{proj}: \Pio  $.
		\item We have a factorization $\Ub = \Pio \circ \iota$:
		$ \ag \xhookrightarrow[\sim]{\iota} \mua^{\bf{L}}_{proj} \xrightarrow{\Pio} \M.$
		\item We have  $\Id_{\ag} =\Piun \circ  \iota $ and the unit $\eta : \Id_{\mua^{\bf{L}}_{proj}} \to \iota \circ \Piun $ is a right homotopy.
	\end{enumerate}
\end{thm}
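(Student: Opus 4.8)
The plan is to transcribe the proof of Theorem~\ref{fact-inj-thm} into the projective setting, using Theorem~\ref{left-proj-thm} for the existence of the model category $\mua^{\bf{L}}_{proj}$ and Proposition~\ref{adjunction-prop} for the underlying adjunctions $(L^{1}\dashv\Piun)$, $(\Piun\dashv\iota)$, $(\Fb^+\dashv\Pio)$ together with the identities $\Pio\circ\iota=\Ub$ and $\Piun\circ\iota=\Id_{\ag}$. The reason the injective argument carries over is that the weak equivalences of $\mua^{\bf{L}}_{proj}$ are again the left weak equivalences, so $\Piun$ still \emph{creates} weak equivalences: a map $\sg$ is a weak equivalence in $\mua^{\bf{L}}_{proj}$ if and only if $\Piun(\sg)=\sg^{1}$ is one in $\ag$, and in particular $\Piun$ preserves every weak equivalence. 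Hence the only genuine work is to check that the functors involved are Quillen functors for the projective data of Definition~\ref{left-projective-data}.

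For Assertion~$(1)$ I would first verify that $L^{1}:\ag\to\mua^{\bf{L}}_{proj}$ is left Quillen. If $f:\Pa\to\Qa$ is a (trivial) cofibration in $\ag$, then in $L^{1}(f)$ the component $\sg^0=\Id_{\emptyset_{\M}}$ is a cofibration in $\M$, and since $\Fb$ preserves the initial object the source $\F^{1}\cup^{\Fb(\F^0)}\Fb(\G^0)$ of the comparison map in the definition of a projective cofibration collapses to $\Pa$ and the comparison map becomes $f:\Pa\to\Qa$ itself; so $L^{1}(f)$ is a projective (trivial) cofibration. Since $\Piun$ creates weak equivalences, a map $L^{1}(\Pa)\to\Fc$ with $\Pa$ cofibrant and $\Fc$ fibrant is a weak equivalence if and only if the corresponding map $\Pa\to\Piun(\Fc)=\F^{1}$ is a weak equivalence in $\ag$, which is the Quillen-equivalence criterion for $(L^{1}\dashv\Piun)$. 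For $(\Piun\dashv\iota)$ I would check that $\iota$ is right Quillen: for a (trivial) fibration $g:\Pa\to\Qa$ the map $\iota(g)=[\Ub(g),g]$ has $\sg^0=\Ub(g)$ a (trivial) fibration in $\M$ (as $\Ub$ is right Quillen), $\sg^{1}=g$ a (trivial) fibration, and comparison map $\Ub(\Pa)\to\Ub(\Pa)\times_{\Ub(\Qa)}\Ub(\Qa)\cong\Ub(\Pa)$ an isomorphism, so $\iota(g)$ is a left projective fibration, and it is a left weak equivalence whenever $g$ is a weak equivalence. Finally, $\Piun$ is right Quillen for $(L^{1}\dashv\Piun)$, left Quillen for $(\Piun\dashv\iota)$, and preserves all weak equivalences, so its total derived functor is just the induced functor $\Ho(\Piun)$ on homotopy categories; this is an equivalence because $(L^{1}\dashv\Piun)$ is a Quillen equivalence, whence $(\Piun\dashv\iota)$ is a Quillen equivalence too.

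Assertion~$(2)$ reduces to the claim that $\Pio:\mua^{\bf{L}}_{proj}\to\M$ is right Quillen. If $\sg$ is a left projective fibration then $\Pio(\sg)=\sg^0$ is a fibration in $\M$ by the very definition; if in addition $\sg$ is a left weak equivalence, then $\sg^{1}$ is a trivial fibration in $\ag$, hence $\Ub(\sg^{1})$ is a trivial fibration in $\M$, hence so is its base change, the projection $\Ub(\F^{1})\times_{\Ub(\G^{1})}\G^0\to\G^0$, and composing it with the weak equivalence $\delta:\F^0\to\Ub(\F^{1})\times_{\Ub(\G^{1})}\G^0$ shows $\sg^0$ is a weak equivalence, so $\Pio(\sg)$ is a trivial fibration. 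Assertion~$(3)$ is then immediate: $\Ub=\Pio\circ\iota$ comes from Proposition~\ref{adjunction-prop}, and $\iota$ is a right Quillen equivalence by the adjunction $(\Piun\dashv\iota)$ just treated. For Assertion~$(4)$, the identity $\Piun\circ\iota=\Id_{\ag}$ is again Proposition~\ref{adjunction-prop}, and the unit $\eta_{\Fc}:\Fc\to\iota(\Piun(\Fc))=[\Ub(\F^{1}),\F^{1},\Id_{\Ub(\F^{1})}]$ is the pair $[\pif,\Id_{\F^{1}}]$; its $\Piun$-component $\Id_{\F^{1}}$ is a weak equivalence in $\ag$, so $\eta_{\Fc}$ is a left weak equivalence for \emph{every} $\Fc$, in particular for every fibrant one, and therefore $\eta$ is a right homotopy.

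I do not expect a real obstacle here: everything is formal once Theorem~\ref{left-proj-thm} is available, and the only two steps requiring more than one line are the projective analogues of points treated as ``(trivial)'' in the proof of Theorem~\ref{fact-inj-thm}, namely that $L^{1}$ actually lands in the projective trivial cofibrations (which works because $\Fb$ preserves initial objects, so the relative pushout collapses) and that $\Pio$ carries left projective trivial fibrations to trivial fibrations (which uses the factorization of $\sg^0$ as the base change of the trivial fibration $\Ub(\sg^{1})$ followed by the weak equivalence $\delta$).
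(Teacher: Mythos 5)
Your proposal is correct and follows essentially the same route as the paper, which simply declares the proof to be that of Theorem \ref{fact-inj-thm} transposed verbatim to $\mua^{\bf{L}}_{proj}$. You in fact supply the two verifications the paper leaves implicit in that transposition (that $L^{1}$ lands in projective trivial cofibrations because $\Fb(\emptyset_{\M})=\emptyset_{\ag}$ collapses the comparison pushout, and that $\Pio$ sends left projective trivial fibrations to trivial fibrations via the base-change factorization of $\sg^0$, which is the paper's Lemma \ref{proj-lem-tfib}), so no gap remains.
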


\begin{proof}[Proof of Theorem \ref{fact-proj-thm}]
	The proof is exactly the same as that of Theorem \ref{fact-inj-thm}, word for word, except that we replace $\mua^{\bf{L}}_{inj}$ by $\mua^{\bf{L}}_{proj}$.
\end{proof}
We can now give the lemmas that will be used to prove Theorem \ref{left-proj-thm}.
\begin{lem}\label{proj-lem-lift}

		Consider a lifting problem of solid arrows in $\mua$ as follows.
		\[
		\xy
		(0,18)*+{\Fc}="W";
		(0,0)*+{\Gc}="X";
		(30,0)*+{\Qc}="Y";
		(30,18)*+{\Pc}="E";
		{\ar@{->}^-{[\gamma^0, \gamma^{1}]}"X";"Y"};
		{\ar@{->}_-{[\sgo,\sgi]}"W";"X"};
		{\ar@{->}^-{[\theta^0,\theta^{1}]}"W";"E"};
		{\ar@{->}^-{[\beta^0,\beta^{1}]}"E";"Y"};
		{\ar@{-->}^-{[s^0,s^{1}]}"X";"E"};
		\endxy
		\]
		If $\sg=[\sgo,\sgi] \in \lcof(\muapj) \cap \lwe(\muapj)$ and  $\beta=[\beta^0,\beta^{1}] \in \lfib(\muapj)$ then a solution $s : \Gc \xrightarrow{[s^0,s^{1}]}\Pc$ exists.
\end{lem}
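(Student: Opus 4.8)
The plan is to reduce the lifting problem in $\mua$ to two ordinary lifting problems in the model categories $\M$ and $\ag$, exactly as one does when establishing the projective model structure on a comma/Reedy-type category, but now twisted by the adjunction $\Fb \dashv \Ub$. First I would unpack the data: a map $\sg = [\sgo,\sgi] \in \lcof(\muapj) \cap \lwe(\muapj)$ means that $\sgo$ is a (trivial, since $\sg$ is a left weak equivalence and hence $\sgi$ is a weak equivalence — but note $\sgo$ need not be trivial a priori, so I must be careful here) cofibration in $\M$ and the induced map $\F^1 \cup^{\Fb(\F^0)} \Fb(\G^0) \to \G^1$ is a cofibration in $\ag$; combined with $\sgi$ being a weak equivalence in $\ag$. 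Dually, $\beta = [\beta^0,\beta^1] \in \lfib(\muapj)$ means $\beta^0, \beta^1$ are fibrations in $\M$, $\ag$ respectively and the induced comparison map $\delta_\beta : \Pc^0 \to \Ub(\Pc^1)\times_{\Ub(\Qc^1)}\Qc^0$ is a weak equivalence in $\M$.

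The key steps, in order, are as follows. (1) Solve the lifting problem on the $\ag$-component: I claim the square
\[
\xy
(0,15)*+{\F^1\cup^{\Fb(\F^0)}\Fb(\G^0)}="W";
(0,0)*+{\G^1}="X";
(50,15)*+{\Pc^1}="E";
(50,0)*+{\Qc^1}="Y";
{\ar@{->}"W";"X"};
{\ar@{->}_-{\beta^1}"E";"Y"};
{\ar@{->}"W";"E"};
{\ar@{->}"X";"Y"};
{\ar@{-->}^-{s^1}"X";"E"};
\endxy
\]
has a solution $s^1$, because the left vertical is a cofibration in $\ag$ (part of $\sg \in \lcof(\muapj)$) and $\beta^1$ is a fibration; for this I need one of them trivial. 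Here I would use that $\sgi$ is a weak equivalence, hence by the standard pushout‑product / retract argument the induced map out of the pushout is a \emph{trivial} cofibration (this is the point where the hypothesis $\sg \in \lwe(\muapj)$ enters), so the lift $s^1$ against the fibration $\beta^1$ exists. (2) With $s^1$ fixed, the remaining data on the $\M$-component amounts to a lifting problem of $\sgo$ against $\delta_\beta : \Pc^0 \to \Ub(\Pc^1)\times_{\Ub(\Qc^1)}\Qc^0$; since $\sgo$ is a cofibration in $\M$ and $\delta_\beta$ is a \emph{trivial} fibration (it is a weak equivalence by the definition of $\lfib(\muapj)$, and it is a fibration because $\beta^1$ is a fibration in $\ag$, $\Ub$ preserves fibrations, and $\beta^0$ is a fibration — a short pullback chase), a lift $s^0$ exists. (3) Verify that $(s^0, s^1)$ assembles into a morphism $s : \Gc \to \Pc$ in $\mua$, i.e.\ commutes with the structure maps $\pi$, and that it makes both triangles commute; this is a diagram chase using the adjunction isomorphism $\varphi$ to pass between the two equivalent square presentations of a morphism in $\mua$ displayed on page~\pageref{injective-data}.

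The main obstacle I expect is Step (1)–(2) bookkeeping: precisely checking that the relevant induced maps are \emph{trivial} cofibrations / \emph{trivial} fibrations rather than merely cofibrations / fibrations, since the definitions of $\lcof(\muapj)$ and $\lfib(\muapj)$ are asymmetric (the ``extra'' weak-equivalence condition sits only on the comparison map $\delta$, not level-wise). Concretely, I must show: if $\sgo$ is a cofibration, $\sgi$ is a weak equivalence, and the pushout‑corner map is a cofibration, then that corner map is in fact a trivial cofibration — this follows from the $2$‑out‑of‑$3$ property in $\ag$ applied to $\Fb(\G^0) \to \G^1 \to$ pushout together with left-properness‑free manipulation, or more simply from the fact already recorded in Theorem \ref{inj-proj-thm} that $\muapj$ is a genuine model structure whose trivial cofibrations are the projective cofibrations that are also level-wise weak equivalences. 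Once that is in hand, everything else is the routine two-step lift plus a compatibility check, and the lemma follows.
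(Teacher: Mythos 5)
Your reduction breaks down at Step (2), and the breakdown is not a bookkeeping issue but the entire difficulty of the lemma. By Definition \ref{left-projective-data}, $\beta \in \lfib(\muapj)$ only requires the comparison map $\delta_\beta : \Pa^0 \to \Ub(\Pa^{1}) \times_{\Ub(\Qa^{1})} \Qa^0$ to be a \emph{weak equivalence}; it is not required to be a fibration, and no ``short pullback chase'' produces one. What the pullback chase actually gives is the factorization $\beta^0 = p^{1}\circ\delta_\beta$, where $p^{1}$, the base change of $\Ub(\beta^{1})$ along $\piq$, is a fibration; but a map whose composite with a fibration is a fibration need not itself be a fibration. (Contrast with the left \emph{injective} case, Definition \ref{left-injective-data}, where $\delta$ is required to be a trivial fibration --- that is exactly why the direct two-step lift works for Lemma \ref{inj-lem} but fails here.) Since $\sgo$ is only a cofibration (not trivial: the condition $\lwe(\muapj)$ constrains $\sgi$, not $\sgo$) and $\delta_\beta$ is only a weak equivalence (not a fibration), neither lifting axiom of $\M$ applies to the square you write down, and your $s^0$ does not exist by this route. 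Your Step (1) is also slightly off as stated: the pushout-corner map need not be a trivial cofibration (only the composite $\sgi$ is, being a weak equivalence by hypothesis and a cofibration as the composite of the cobase change of $\Fb(\sgo)$ with the corner map), and the leg $\Fb(\G^0)\to\Pa^{1}$ of your square is not defined before $s^0$ is; the correct move is simply to lift the trivial cofibration $\sgi$ against the fibration $\beta^{1}$.

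The paper's proof has to work around precisely the missing fibration property of $\delta_\beta$. After producing $s^{1}$ as above, it forms the object $\Ec$ with $\Ea^0 = \Ub(\Pa^{1})\times_{\Ub(\Qa^{1})}\Qa^0$ and $\Ea^{1}=\Pa^{1}$, notes that $\xi:\Pc\to\Ec$ (given by $\delta_\beta$ and $\Id_{\Pa^{1}}$) is a level-wise weak equivalence between \emph{fibrant} objects of the under-over model category $(\Fc,\tau)/\mua_{/\Qc}$, that $[(\Gc,\gamma),\sg]$ is cofibrant there, and that there is an explicit map $\tld{s}=[\zeta,s^{1}]:\Gc\to\Ec$ over $\Qc$ and under $\Fc$. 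It then invokes the bijection on homotopy classes of maps out of a cofibrant object induced by a weak equivalence of fibrant objects (Lemma \ref{lem-exist-map}) to conclude that the nonemptiness of $\Hom([(\Gc,\gamma),\sg],[(\Ec,\chi),\xi\circ\theta])$ forces the nonemptiness of $\Hom([(\Gc,\gamma),\sg],[(\Pc,\beta),\theta])$, whose elements are exactly the desired lifts. Any repair of your argument needs some such homotopy-theoretic substitute; the strict lifting axioms of $\M$ alone cannot close the gap.
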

The proof of the last lemma is technical and a bit long, so we defer to the Appendix \ref{proof-proj-lem-lift}.
\begin{lem}\label{proj-lem-fact}
Any morphism $\sg : \Fc \to \Gc$ of $\mua$ can be factored as:
		$$[\F] \xhookrightarrow[\sim]{i} \Ec \xtwoheadrightarrow{j}[\G],$$
where $i \in  \lcof(\muapj) \cap \lwe(\muapj)$ and $j \in \lfib(\muapj)$. 
\end{lem}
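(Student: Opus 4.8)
The strategy is to build the factorization $[\F] \xrightarrow{i} \Ec \xrightarrow{j} [\G]$ component by component, first choosing the factorization in $\M$ at level $0$ and then propagating the choice to level $1$ in $\ag$ by means of the adjunction $\Fb \dashv \Ub$, exactly in the spirit of the projective model structure $\muapj$ from Theorem \ref{inj-proj-thm}. First I would look at the map $\sg^0 : \F^0 \to \G^0$ in $\M$ and apply the functorial factorization of the model category $\M$ to write $\sg^0 = q^0 \circ i^0$ with $i^0 : \F^0 \to E^0$ a trivial cofibration and $q^0 : E^0 \to \G^0$ a fibration. This defines the level $0$ part of both $i$ and $j$; it is automatically a left weak equivalence contribution on the $i$ side since $i^0$ is even a weak equivalence, but what really matters for ``left'' fibrancy/cofibrancy is controlled at level $1$, so the real work is upstairs in $\ag$.

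Next, to produce $\Ec$ I would set $\Ec = [E^0, E^1, \pi_{\Ec}]$ where $E^1$ and the maps out of it are obtained by a second functorial factorization, this time in $\ag$. Concretely, form the pushout $\F^1 \cup^{\Fb(\F^0)} \Fb(E^0)$ using $\varphi(\pif): \Fb(\F^0) \to \F^1$ and $\Fb(i^0): \Fb(\F^0) \to \Fb(E^0)$; there is a canonical map from this pushout to $\G^1$ induced by $\sgi$ and $\varphi(\pig) \circ \Fb(q^0)$. Now factor this canonical map in $\ag$ as a trivial cofibration $\F^1 \cup^{\Fb(\F^0)} \Fb(E^0) \xrightarrow{k} E^1$ followed by a fibration $E^1 \xrightarrow{q^1} \G^1$. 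The map $i^1 : \F^1 \to E^1$ is the composite $\F^1 \to \F^1 \cup^{\Fb(\F^0)} \Fb(E^0) \xrightarrow{k} E^1$, and $\pi_{\Ec} : E^0 \to \Ub(E^1)$ is the adjoint transpose of the other leg $\Fb(E^0) \to \F^1 \cup^{\Fb(\F^0)} \Fb(E^0) \xrightarrow{k} E^1$. By construction $i = [i^0, i^1]$ and $j = [q^0, q^1]$ compose to $\sg$, and all the commuting squares needed to make these genuine morphisms of $\mua$ hold because each arose from a universal property.

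It then remains to verify the three adjectives. That $i^1$ is a weak equivalence in $\ag$ — hence $i$ is a left weak equivalence — follows because $i^0$ is a trivial cofibration in $\M$, so $\Fb(i^0)$ is a trivial cofibration in $\ag$ (left Quillen), its pushout $\F^1 \to \F^1\cup^{\Fb(\F^0)}\Fb(E^0)$ is again a trivial cofibration, and composing with the trivial cofibration $k$ keeps it a weak equivalence by $3$-for-$2$. That $i$ is a left projective cofibration is immediate from Definition \ref{left-projective-data}: $i^0$ is a (trivial) cofibration in $\M$ and the induced map $\F^1 \cup^{\Fb(\F^0)} \Fb(E^0) \to E^1$ is exactly $k$, a (trivial) cofibration in $\ag$. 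The genuinely delicate point — and the one I expect to be the main obstacle — is showing $j = [q^0, q^1]$ is a \emph{left projective fibration}: one needs $q^0$ and $q^1$ to be fibrations (the first is by the $\M$-factorization, the second by the $\ag$-factorization) \emph{and}, crucially, that the induced comparison map $\delta : E^0 \to \Ub(E^1) \times_{\Ub(\G^1)} \G^0$ is a weak equivalence in $\M$. Here one must unwind the definition of $\pi_{\Ec}$ and the pushout, compare $\delta$ against the already-known trivial cofibration $i^0$ and fibration $q^0$, and use that $\Ub$ sends the fibration $q^1$ and the pullback to something whose comparison with $E^0$ can be analyzed via $3$-for-$2$ together with the compatibility of $\Ub$ with the pushout-adjoint data; this is precisely the technical computation that makes the lemma nontrivial and is why the authors defer an analogous lifting lemma to an appendix. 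I would carry out this $\delta$-analysis carefully, possibly arranging the factorization in $\M$ at level $0$ more cleverly (e.g. factoring not $\sg^0$ but an auxiliary map involving $\Ub(q^1)$) so that $\delta$ becomes a weak equivalence essentially by fiat, mirroring how the projective fibrations in $\muapj$ are set up.
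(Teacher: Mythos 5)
There is a genuine gap, and you have put your finger on it yourself: nothing in your construction makes the comparison map $\delta : E^0 \to \Ub(E^1)\times_{\Ub(\G^1)}\G^0$ a weak equivalence, and without that $j$ is only a level-wise fibration, i.e.\ a fibration in the original projective structure $\muapj$, not a left projective fibration. Your order of construction --- choose $E^0$ by factoring $\sgo$ first, then build $E^1$ by pushing out and factoring in $\ag$ --- fixes $E^0$ before $E^1$ even exists, so there is no mechanism coupling $E^0$ to the pullback $\Ub(E^1)\times_{\Ub(\G^1)}\G^0$. A quick sanity check: take $\Ub=\Id$ and $\sg$ the unique map $\Fc \to [\ast]$; unwinding your construction, $\delta$ is a weak equivalence if and only if $\pif$ is, so the required property fails whenever $\Fc$ is not a Quillen--Segal object. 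The sentence ``possibly arranging the factorization in $\M$ at level $0$ more cleverly'' is exactly where the whole proof lives, and it is not carried out; as written, your proposal establishes only the factorization ``projective cofibration followed by level-wise fibration'', which is already part of Theorem \ref{inj-proj-thm}.

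The paper resolves this by reversing the order of the two factorizations. One first factors $\sgi$ in $\ag$ as a trivial cofibration $\F^1\to E^1$ followed by a fibration $E^1\to\G^1$, forms the pullback $Q^0=\Ub(E^1)\times_{\Ub(\G^1)}\G^0$ in $\M$, and then factors the canonical map $\F^0\to Q^0$ as a cofibration $a_\delta:\F^0\to m^0$ followed by a \emph{trivial fibration} $b_\delta: m^0\to Q^0$; this is what makes the comparison map for $j$ a weak equivalence essentially by construction. The price is that $i$ is then not obviously a projective cofibration, so one must go back upstairs: form the pushout $R^1=\F^1\cup^{\Fb \F^0}\Fb m^0$, factor the induced map $R^1\to E^1$ as a cofibration $R^1\to {E^1}'$ followed by a trivial fibration ${E^1}'\to E^1$, and use $3$-for-$2$ twice --- once to see that $\F^1\to{E^1}'$ is still a trivial cofibration, and once to see that the corrected comparison map $m^0\to \Ub({E^1}')\times_{\Ub(\G^1)}\G^0$ is still a weak equivalence, using that $\Ub$ sends the trivial fibration ${E^1}'\to E^1$ to a trivial fibration whose base change remains one. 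Any salvage of your level-$0$-first approach would have to introduce an analogous correction step, which amounts to redoing this argument.
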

The proof of this lemma is also long so we defer it to the Appendix \ref{proof-proj-lem-fact}.

\begin{lem}\label{proj-lem-tfib}
We have an equality of classes:
	 $$\lfib(\muapj) \cap \lwe(\muapj)= \fib(\muapj) \cap \we(\muapj).$$ 
In other words, a map $\sg: \Fc \xrightarrow{[\sg^0,\sg^{1}]} \Gc$ is a left projective fibration and a left weak equivalence if and only if it is a level-wise trivial fibration.
\end{lem}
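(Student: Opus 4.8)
The plan is to prove the two inclusions separately, and in each direction unwind the definitions of the classes involved down to the level of the two model categories $\M$ and $\ag$. Recall that a map $\sg = [\sg^0,\sg^1]$ is a level-wise trivial fibration precisely when $\sg^0$ is a trivial fibration in $\M$ and $\sg^1$ is a trivial fibration in $\ag$ — this is simultaneously the description of $\fib(\muapj)\cap\we(\muapj)$, since the fibrations of $\muapj$ are the level-wise fibrations and the weak equivalences are the level-wise weak equivalences (Theorem \ref{inj-proj-thm}). So the content of the lemma is the identification $\lfib(\muapj)\cap\lwe(\muapj) = \{\sg : \sg^0,\sg^1 \text{ are trivial fibrations}\}$.

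For the inclusion $\lfib(\muapj)\cap\lwe(\muapj) \supseteq \fib(\muapj)\cap\we(\muapj)$: suppose $\sg^0$ and $\sg^1$ are both trivial fibrations. Then $\sg^1$ is in particular a weak equivalence in $\ag$, so $\sg \in \lwe(\muapj)$ by Definition \ref{left-projective-data}. It remains to see $\sg \in \lfib(\muapj)$; the first condition (that $\sg^0$ and $\sg^1$ be fibrations) is immediate, and for the second I need the comparison map $\delta : \F^0 \to \Ub(\F^1)\times_{\Ub(\G^1)}\G^0$ to be a weak equivalence in $\M$. Here I would use that $\Ub$ is a right Quillen functor, hence preserves trivial fibrations and pullbacks along them: since $\sg^1$ is a trivial fibration in $\ag$, $\Ub(\sg^1)$ is a trivial fibration in $\M$, its pullback $\Ub(\F^1)\times_{\Ub(\G^1)}\G^0 \to \G^0$ along $\sg^0$ is a trivial fibration, and then in the commutative triangle $\F^0 \xrightarrow{\delta} \Ub(\F^1)\times_{\Ub(\G^1)}\G^0 \to \G^0$ with composite $\sg^0$ a weak equivalence, two-out-of-three forces $\delta$ to be a weak equivalence. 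Hence $\sg \in \lfib(\muapj)$.

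For the reverse inclusion $\lfib(\muapj)\cap\lwe(\muapj) \subseteq \fib(\muapj)\cap\we(\muapj)$: suppose $\sg$ is a left projective fibration and a left weak equivalence. By definition $\sg^1$ is a fibration in $\ag$ and a weak equivalence in $\ag$, so $\sg^1$ is a trivial fibration; hence $\Ub(\sg^1)$ is a trivial fibration in $\M$ and so is the projection $\Ub(\F^1)\times_{\Ub(\G^1)}\G^0 \to \G^0$. The left projective fibration condition also gives that $\sg^0$ is a fibration and that $\delta$ is a weak equivalence; composing $\delta$ with the trivial fibration $\Ub(\F^1)\times_{\Ub(\G^1)}\G^0 \to \G^0$ recovers $\sg^0$, which is therefore a weak equivalence (two-out-of-three again) as well as a fibration, i.e. a trivial fibration in $\M$. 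Thus both $\sg^0$ and $\sg^1$ are trivial fibrations, which is exactly $\sg \in \fib(\muapj)\cap\we(\muapj)$.

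The only place requiring genuine care — and what I expect to be the main obstacle — is the interaction of $\Ub$ with the pullback defining $\delta$: I need that pullbacks along (images under $\Ub$ of) trivial fibrations are again trivial fibrations in $\M$, and that $\Ub$ preserves the relevant pullback squares. The former is standard in any model category (trivial fibrations are stable under pullback), and the latter holds because $\Ub$ is a right adjoint and hence preserves all limits, in particular the fiber product $\Ub(\F^1)\times_{\Ub(\G^1)}\G^0$. With these two facts in hand, both two-out-of-three arguments go through and the equality of classes follows; the argument is moreover symmetric to the analogous identification already used implicitly for the injective structure in the proof of Theorem \ref{left-inj-thm}.
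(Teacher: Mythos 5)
Your proof is correct and follows essentially the same route as the paper's: both rest on the canonical factorization $\sg^0 = \pig^\ast(\Ub(\sg^{1}))\circ\delta$, the facts that $\Ub$ preserves trivial fibrations and that trivial fibrations are stable under base change, and two applications of the $3$-for-$2$ property. Two immaterial slips: the projection $\Ub(\F^1)\times_{\Ub(\G^1)}\G^0 \to \G^0$ is the base change of $\Ub(\sg^1)$ along $\pi_{\G}$, not along $\sg^0$; and no preservation of pullbacks by $\Ub$ is needed, since the fiber product is formed entirely in $\M$.
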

\begin{proof}
   The map $\sg$ is displayed by a commutative square in $\M$ from which the universal property of the pullback gives a canonical factorization of $\sg^0$:
	$$\sg^0 = \F^0 \xrightarrow{\delta} \Ub(\F^1) \times_{\Ub(\G^1)} \G^0 \xrightarrow{\pig^\ast(\Ub(\sg^{1}))} \G^0.$$
	Here $\Ub(\F^1) \times_{\Ub(\G^1)} \G^0 \xrightarrow{\pig^\ast(\Ub(\sg^{1}))} \G^0$ is the base change of the map $\Ub(\sg^{1})$ along $\pig$. It follows that if $\sg$ is simultaneously a left projective fibration and a left weak equivalence, then $\sg$ is in particular a level-wise fibration such that $\sg^{1}$ is a trivial fibration. The map $\Ub(\sg^{1})$ is a trivial fibration as the image of a trivial fibration  under the right Quillen functor $\Ub$. Moreover, since the class of trivial fibrations is closed under base change we find that the canonical map $\pig^\ast(\Ub(\sg^{1}))$ is also a trivial fibration. The other part of being a left fibration is that $\delta$ is a weak equivalence which implies $\sg^0$ is also weak equivalence by composition. Consequently, $\sg$ is a level-wise fibration and a level-wise weak equivalence hence the inclusion:
	 $$\lfib(\muapj) \cap \lwe(\muapj)\subseteq \fib(\muapj) \cap \we(\muapj). $$
	With the same reasoning if $\sg$ is level-wise trivial fibration, then $\sg$ is already a left weak equivalence and a level-wise fibration, therefore $\sg$ will be a left projective fibration as soon as we show  that  $\delta$ is a weak equivalence. In the above factorization $\sg^0= \pig^\ast(\Ub(\sg^{1})) \circ \delta$, the two maps $\sg^0$ and $\pig^\ast(\Ub(\sg^{1}))$ are weak equivalences, therefore by $3$-for-$2$, $\delta$ is also a weak equivalence which means that $\sg$ is a left projective fibration as desired. This gives the other inclusion:
	$$\fib(\muapj) \cap \we(\muapj) \subseteq \lfib(\muapj) \cap \lwe(\muapj). $$
\end{proof}
A direct consequence of this lemma is the following obvious result.
\begin{lem}\label{old-fact-proj}
Any morphism $\sg : \Fc \to \Gc$ of $\mua$ can be factored as:
$$[\F] \xhookrightarrow{i} \Ec \xtwoheadrightarrow[\sim]{j}[\G],$$
where $i \in  \lcof(\muapj)$ and $j \in \lfib(\muapj) \cap \lwe(\muapj)$. 	
\end{lem}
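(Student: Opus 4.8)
The plan is to read this factorization off directly from the projective model structure $\muapj$ of Theorem \ref{inj-proj-thm}, reinterpreting the resulting classes through Lemma \ref{proj-lem-tfib}. Since $\muapj$ is a genuine model category, its factorization axiom furnishes, for the given map $\sg : \Fc \to \Gc$, a factorization $\sg = j \circ i$ with $i : \Fc \to \Ec$ a cofibration of $\muapj$ and $j : \Ec \to \Gc$ a trivial fibration of $\muapj$; that is, $i \in \cof(\muapj)$ and $j \in \fib(\muapj) \cap \we(\muapj)$.

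First I would observe that, by part (1) of Definition \ref{left-projective-data}, the left projective cofibrations are by definition exactly the projective cofibrations, so $\cof(\muapj) = \lcof(\muapj)$ and hence $i \in \lcof(\muapj)$. Next, Lemma \ref{proj-lem-tfib} provides the equality of classes $\lfib(\muapj) \cap \lwe(\muapj) = \fib(\muapj) \cap \we(\muapj)$, so that $j \in \lfib(\muapj) \cap \lwe(\muapj)$. This is precisely the asserted factorization $[\F] \xhookrightarrow{i} \Ec \xtwoheadrightarrow[\sim]{j} [\G]$; moreover, since the factorization axiom of $\muapj$ may be taken to be functorial, so is the one produced here.

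There is essentially no obstacle: all of the substance has already been absorbed into Lemma \ref{proj-lem-tfib} and into the construction of $\muapj$ in Theorem \ref{inj-proj-thm}. The only points requiring comment are the two identifications of classes above, and both are immediate — one straight from the definition of a left projective cofibration, the other straight from Lemma \ref{proj-lem-tfib} — which is why the statement is recorded as an obvious consequence.
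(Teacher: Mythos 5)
Your argument is correct and is essentially identical to the paper's own proof: factor $\sg$ in the model category $\muapj$ as a cofibration followed by a level-wise trivial fibration, identify $\cof(\muapj)$ with $\lcof(\muapj)$ by definition, and invoke Lemma \ref{proj-lem-tfib} to identify $\fib(\muapj)\cap\we(\muapj)$ with $\lfib(\muapj)\cap\lwe(\muapj)$. Nothing is missing.
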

\begin{proof}
	Indeed $\lcof(\muapj) = \cof(\muapj)$ by definition. Use the axiom of the model category $\muapj$ to factor any map as a cofibration followed by a  projective (=level-wise) trivial fibration  and Lemma \ref{proj-lem-tfib}.
\end{proof}
With the previous lemmas at hand, we can now give a proof of Theorem \ref{left-proj-thm}.
\begin{proof}[Proof of Theorem \ref{left-proj-thm}]
The classes $\lcof(\muapj)$, $\lfib(\muapj)$ and $ \lwe(\muapj)$ are closed under composition and retracts. Lemma \ref{proj-lem-fact}  and Lemma \ref{old-fact-proj} give the required factorizations. Moreover, any lifting problem defined by a left projective cofibration and a left projective trivial fibration admits a solution in the original model structure $\muapj$ since the class of left cofibration (resp. of left trivial fibrations) coincides with the class of cofibrations (resp. of trivial fibrations) in $\muapj$. The hardest part is the existence of a solution to a lifting problem defined by a map $\sg \in \lcof(\muapj) \cap \lwe(\muapj)$ and a map $\beta \in \lfib(\muapj)$ for which Lemma \ref{proj-lem-lift} guarantees the existence of such a solution
\end{proof}

\begin{cor}\label{cor-qs-uobj-proj}
	The fibrant objects in $\mua^{\bf{L}}_{proj}$ are the objects $\Fc= [\F^0,\F^1,\pif]$ such that $\F^0 \in \M$ is fibrant, $\F^1 \in \ag$ is fibrant and $\pif: \F^0 \to  \Ub(\F^1)$ is a weak equivalence in $\M$. In particular $\Fc$ is a Quillen-Segal $\Ub$-object.
\end{cor}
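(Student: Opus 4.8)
\textbf{Proof plan for Corollary \ref{cor-qs-uobj-proj}.}

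The plan is to mimic exactly the argument used for Corollary \ref{cor-qs-uobj-inj}, replacing the notion of left injective fibration by that of left projective fibration. First I would record, as in the injective case, that since $\Ub$ is a right adjoint it preserves limits; in particular we may take $\Ub(\ast_{\ag})=\ast_{\M}$, so that $[\ast]=[\ast_{\M},\ast_{\ag},\Id_{\ast_{\M}}]$ is a terminal object of $\mua$. An object $\Fc\in\mua^{\bf{L}}_{proj}$ is fibrant precisely when the unique map $\Fc\to[\ast]$ is a left projective fibration. I would then unwind Definition \ref{left-projective-data} applied to this particular map: writing $\sg:\Fc\to[\ast]$ as $[\sg^0,\sg^1]$ with $\sg^1:\F^1\to\ast_{\ag}$ and $\sg^0:\F^0\to\ast_{\M}$, the conditions are that $\sg^0$ and $\sg^1$ are fibrations (i.e.\ $\F^0$ and $\F^1$ are fibrant) and that the comparison map $\delta:\F^0\to\Ub(\F^1)\times_{\Ub(\ast_{\ag})}\ast_{\M}$ is a weak equivalence in $\M$.

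The key computational step is to identify this pullback. Since $\Ub(\ast_{\ag})=\ast_{\M}$, the pullback $\Ub(\F^1)\times_{\ast_{\M}}\ast_{\M}$ is canonically isomorphic to $\Ub(\F^1)$, and under this identification the map $\delta$ becomes exactly $\pif:\F^0\to\Ub(\F^1)$. Hence the condition ``$\delta$ is a weak equivalence in $\M$'' translates verbatim into ``$\pif$ is a weak equivalence in $\M$''. Combining the three conditions, $\Fc$ is fibrant in $\mua^{\bf{L}}_{proj}$ if and only if $\F^1\in\ag$ is fibrant, $\F^0\in\M$ is fibrant, and $\pif$ is a weak equivalence in $\M$, which is the claimed description. (Note that here one does \emph{not} get that $\pif$ is a trivial \emph{fibration}, only a weak equivalence, which is precisely the difference with Corollary \ref{cor-qs-uobj-inj}; and here fibrancy of $\F^0$ does not come for free by composition but is imposed directly by the condition on $\sg^0$.)

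Finally, since by hypothesis $\pif$ is a weak equivalence, $\Fc$ satisfies the Segal condition, so it is a Quillen-Segal $\Ub$-object by definition, giving the last sentence. I do not anticipate any real obstacle: the only point requiring a little care is the canonical identification of the pullback over the terminal object and the verification that the induced comparison map $\delta$ really is $\pif$ under that identification, but this is a straightforward diagram chase using the universal property of the terminal object and of the pullback.
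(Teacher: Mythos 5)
Your proposal is correct and follows exactly the route the paper takes: the paper's proof is literally ``the same proof as Corollary \ref{cor-qs-uobj-inj} considering the description of the fibrations in $\mua^{\bf{L}}_{proj}$,'' which is precisely your unwinding of Definition \ref{left-projective-data} for the map $\Fc\to[\ast]$ and the identification of $\delta$ with $\pif$ over the terminal object. Your parenthetical remarks on how the projective case differs from the injective one (fibrancy of $\F^0$ imposed directly rather than by composition, and $\pif$ only a weak equivalence rather than a trivial fibration) are accurate.
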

\begin{proof}
The same proof as in Corollary \ref{cor-qs-uobj-inj} considering the description of the fibrations in $\mua^{\bf{L}}_{proj}$.
\end{proof}
\subsection{Right Bousfield localizations}
\noindent Our goal here is to localize the two original model structures $\muaij$ and $\muapj$ along the functor $\Pio : \mua \to \M$. Rather than building the new model structures from zero, we are going to ``dualize'' the previous theorems using the involution $(-)^{\op}: \modcat_r^{op} \to \modcat_r$. Indeed, for any model category $\C$, the opposite category $\C^{\op}$ has a model structure and  we have an equality of model categories $(\C^{\op})^{\op} = \C$ (see \cite{Hov-model} for details). This functor maps a right Quillen functor $(\Ub:\ag \to \M)$ to the right Quillen functor $(\Fb^{op}: \M^{op} \to \ag^{op})$. 
\begin{nota} For a matter of clarity we will use some additional notation.
	\begin{itemize}
		\item $\fdua=$ the comma category whose objects are triples $\Xc= [\Xa^0, \Xa^{1}, \varepsilon ] \in \M \times \ag \times \Ar(\ag)$, with $\Fb \Xa^0 \xrightarrow{\varepsilon} \Xa^{1}$.
		\item $\Ub^{\op}: \ag^{\op} \to \M^{\op}$, $\Fb^{\op} : \M^{\op} \to \ag^{\op} $ are the induced functor between the dual categories.
		\item $(\Ub^{\op} \downarrow \M^{\op})$ and $(\ag^{\op} \downarrow \Fb^{\op})$ are the corresponding comma categories.
		\item $\Pio : (\ag^{\op} \downarrow \Fb^{\op}) \to \ag^{\op}$.
		\item $\Piun: (\ag^{\op} \downarrow \Fb^{\op}) \to \M^{\op}$.
	\end{itemize}
\end{nota}
The  result hereafter is obvious.
\begin{prop}\label{prop-dual-mod}
With the above notation the following hold. 
\begin{enumerate}
	\item  We have a Quillen adjunction $\Ub^{\op} \dashv \Fb^{\op}$, where $\Fb^{\op}$ is right Quillen.
	\item We have an isomorphism of categories $\mdua \xrightarrow{\cong} \fdua$ that maps $[\F^0,\F^1,\pif] \mapsto [\F^0,\F^1,\varphi(\pif)]$.
	\item Similarly, we have an isomorphism of categories $(\Ub^{\op} \downarrow \M^{\op}) \xrightarrow{\cong} (\ag^{\op} \downarrow \Fb^{op})$.
	\item There are isomorphisms of categories:
	$\mdua \cong  (\Ub^{\op} \downarrow \M^{\op})^{\op} \cong (\ag^{\op} \downarrow \Fb^{op})^{\op}.$
	\item Under the last isomorphism $\mdua \cong (\ag^{\op} \downarrow \Fb^{op})^{\op}$ we have an isomorphism of functors:
	$ (\Pio : \mdua \to \M) \cong (\Piun : (\ag^{\op} \downarrow \Fb^{\op}) \to \M^{\op})^{\op},$
	that is $\Pio \cong (\Piun)^{\op}$.
\end{enumerate}
\end{prop}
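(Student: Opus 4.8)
The plan is to treat the five assertions in the order given, observing that each is a formal consequence of the definitions together with standard facts about opposite model categories and about comma categories. First, Assertion (1) is immediate: given a Quillen adjunction $\Fb \dashv \Ub$ with $\Ub$ right Quillen, applying $(-)^{\op}$ to both functors reverses the direction of all arrows and swaps the roles of left and right adjoints, so $\Ub^{\op} \dashv \Fb^{\op}$ with $\Fb^{\op}$ now the right adjoint; the hom-set bijection $\varphi$ transports to a bijection $\M^{\op}(Y, \Ub^{\op}X) \cong \ag^{\op}(\Fb^{\op}Y, X)$. That $\Fb^{\op}$ is right Quillen follows from the fact (recorded in \cite{Hov-model}) that a functor is left (resp. right) Quillen if and only if its opposite is right (resp. left) Quillen, since cofibrations and fibrations are interchanged under $(-)^{\op}$.

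For Assertion (2), I would simply write down the assignment $[\F^0, \F^1, \pif] \mapsto [\F^0, \F^1, \varphi(\pif)]$ and its inverse $[\Xa^0, \Xa^1, \varepsilon] \mapsto [\Xa^0, \Xa^1, \varphi^{-1}(\varepsilon)]$, where $\varphi : \M(\F^0, \Ub(\F^1)) \xrightarrow{\cong} \ag(\Fb(\F^0), \F^1)$ is the adjunction isomorphism; naturality of $\varphi$ in both variables guarantees that these assignments are functorial and mutually inverse on morphisms, since a morphism $[\sg^0, \sg^1]$ in either comma category is exactly a commutative square, and the two squares displayed on page \pageref{injective-data} correspond under $\varphi$. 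Assertion (3) is the same statement applied to the adjunction $\Ub^{\op} \dashv \Fb^{\op}$ of Assertion (1), so nothing new is required. For Assertion (4), the key identity is the elementary fact that for any functor $T : \C \to \D$ one has a canonical isomorphism of comma categories $(\D \downarrow T)^{\op} \cong (T^{\op} \downarrow \D^{\op})$ — an object $[d, c, d \to Tc]$ on the left is an object $[c, d, T^{\op}c \to d]$ (in $\D^{\op}$, i.e. $Tc \leftarrow d$ in $\D$, which is $d \to Tc$) on the right, with morphisms reversed. Applying this with $T = \Ub$ gives $\mdua^{\op} \cong (\Ub^{\op} \downarrow \M^{\op})$, hence $\mdua \cong (\Ub^{\op} \downarrow \M^{\op})^{\op}$; composing with the $(-)^{\op}$ of the isomorphism in Assertion (3) yields $\mdua \cong (\ag^{\op} \downarrow \Fb^{\op})^{\op}$.

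Assertion (5) then amounts to chasing the functor $\Pio$ through the chain of isomorphisms of Assertion (4). Under $\mdua^{\op} \cong (\Ub^{\op} \downarrow \M^{\op})$, the functor $\Pio : \mdua \to \M$ sending $[\F^0, \F^1, \pif] \mapsto \F^0$ becomes, on opposite categories, the functor $(\Ub^{\op} \downarrow \M^{\op}) \to \M^{\op}$ that picks out the $\M^{\op}$-component of a triple; and in the comma category $(\ag^{\op} \downarrow \Fb^{\op})$ — whose objects are triples with an arrow $\Fb^{\op}(\text{something in }\ag^{\op}) \to (\text{something in }\M^{\op})$ — this projection to the $\M^{\op}$-coordinate is precisely what was named $\Piun$ in the Notation box. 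So $\Pio \cong (\Piun)^{\op}$, as claimed. I do not anticipate a genuine obstacle here: the entire proposition is bookkeeping, and the only place one must be slightly careful is in Assertion (4), keeping straight which of the two comma categories $(\Ub^{\op} \downarrow \M^{\op})$ versus $(\M^{\op} \downarrow \Ub^{\op})$ appears, and correspondingly whether $\Pio$ or $\Piun$ is the relevant projection — a sign-of-the-exponent issue rather than a mathematical difficulty, which is exactly why the author can call the result "obvious."
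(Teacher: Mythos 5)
Your proposal is correct and follows essentially the same route as the paper: pass everything through the involution $(-)^{\op}$, use the adjunction isomorphism $\varphi$ for the identification $\mdua \cong \fdua$, the formal duality $(\D \downarrow T)^{\op} \cong (T^{\op} \downarrow \D^{\op})$ for Assertion (4), and then chase $\Pio$ through the resulting chain of isomorphisms. The only blemish is a harmless slip in your description of the objects of $(\ag^{\op} \downarrow \Fb^{\op})$ (they are arrows $a \to \Fb^{\op}(m)$ in $\ag^{\op}$, not arrows out of $\Fb^{\op}$ applied to something in $\ag^{\op}$); since the identification of $\Piun$ as the projection onto the $\M^{\op}$-coordinate comes from the Notation box and is what your argument actually uses, the conclusion $\Pio \cong (\Piun)^{\op}$ is unaffected.
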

\begin{proof}
	The image of  an adjunction $(\Fb \dashv \Ub)$ under the dual functor is the adjunction $(\Ub^{op} \dashv \Fb^{op})$. In particular if $\Ub$ is right Quillen functor then $\Ub^{op}$ is a left Quillen functor. The first three assertions are easily verified to be true. The isomorphism $\mdua \cong  (\Ub^{\op} \downarrow \M^{\op})^{\op}$ maps $[\F^0,\F^1,\pif] \mapsto [\F^1,\F^0,\pif^{\op}]$ and $\sg=[\sg^0,\sg^{1}] \mapsto ([{(\sg^{1})}^{\op},{(\sg^0)}^{\op}])^{\op}$. One can also easily verify the other assertions.
\end{proof}

\subsubsection{Right injective model structure}
\begin{df}\label{right-injective-data}
	Let $\sigma: \Fc \xrightarrow{[\sgo,\sgi]} \Gc$ be a map in $\mua$. We will say that:
	\begin{enumerate}
		\item $\sigma$ is a right injective cofibration if:
		\begin{itemize}
			\item $\sigma^0$ is a  cofibration in $\M$, $\sigma^{1}$ is a  cofibration in $\ag$,
			\item and if moreover the induced map $ \F^1 \cup^{\Fb(\F^0)} \Fb(\G^0) \to  \G^1 $ is a weak equivalence in   $\ag$. 
		\end{itemize}
		
		\item $\sigma$ is a right weak equivalence if  $\Pi^0(\sg)=\sigma^0$ is a weak equivalence  in $\M$.
		
		\item $\sigma$ is a right injective fibration if:
		\begin{itemize}
			\item $\sigma^{1}: \F^1 \to \G^1$ is a  fibration in $\ag$,
			\item   and if moreover the induced map $ \F^0 \to \Ub(\F^1) \times_{\Ub(\G^1)} \G^0 $
			is a fibration in $\M$. 
		\end{itemize}
	\end{enumerate}
\end{df}

\begin{thm}\label{right-inj-thm}
	Call a morphism $\sigma: \Fc \xrightarrow{[\sgo,\sgi]} \Gc$ in $\mua$:
	\begin{itemize}
	\item  a weak equivalence if and only if it is a right weak equivalence.
	\item  a cofibration if it is a right injective cofibration. 
	\item a fibration if it is a right injective fibration. 
\end{itemize}
	Then these choices provide $\mua$ with the structure of a model category that will be denoted by $\mua^{\bf{R}}_{inj}$. 
\end{thm}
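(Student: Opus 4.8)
\noindent The plan is to derive Theorem \ref{right-inj-thm} from Theorem \ref{left-proj-thm} by passing to opposite categories, so that no new (large) small-object-type argument is needed. The key observation is that the ``injective'' and ``projective'' conditions trade places under the involution $(-)^{\op}$ — cofibrations and fibrations swap, so level-wise cofibrations become level-wise fibrations — and concretely the right injective model structure we are after should be exactly the opposite of the \emph{left projective} model structure attached to the right Quillen functor $\Fb^{\op}:\M^{\op}\to\ag^{\op}$. By Proposition \ref{prop-dual-mod}~(1), $\Fb^{\op}$ is indeed right Quillen with left adjoint $\Ub^{\op}$, and since $\M^{\op}$ and $\ag^{\op}$ are model categories with finite limits and colimits, Theorem \ref{left-proj-thm} applies to it and produces a left projective model structure on the comma category $(\ag^{\op}\downarrow\Fb^{\op})$. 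Because the opposite of a model category is a model category (same weak equivalences, cofibrations and fibrations interchanged) and because a model structure transports along an isomorphism of categories, the isomorphism $\mdua\cong(\ag^{\op}\downarrow\Fb^{\op})^{\op}$ of Proposition \ref{prop-dual-mod}~(4) endows $\mdua$ with a model structure; this will be $\mua^{\bf{R}}_{inj}$. As a consistency check this simultaneously exhibits $\mua^{\bf{R}}_{inj}$ as a right Bousfield localization of $\muaij$, since $\muaij$ is the opposite of the projective model structure of $\Fb^{\op}$.

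It then remains to identify the weak equivalences, cofibrations and fibrations of this transported structure with those of Definition \ref{right-injective-data}, which is where the actual content lies. I would first make the dictionary explicit: under $\mdua\cong(\ag^{\op}\downarrow\Fb^{\op})^{\op}$ an object $[\F^0,\F^1,\pif]$ corresponds to the object of $(\ag^{\op}\downarrow\Fb^{\op})$ with $\ag^{\op}$-component $\F^1$ and $\M^{\op}$-component $\F^0$, so the $\M$-component $\sg^0$ of a morphism becomes the $\M^{\op}$-component of its transpose and the $\ag$-component $\sg^1$ becomes the $\ag^{\op}$-component; this is exactly the identification $\Pio\cong(\Piun)^{\op}$ of Proposition \ref{prop-dual-mod}~(5). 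For weak equivalences the translation is then immediate: opposites preserve weak equivalences, a left projective weak equivalence of $\Fb^{\op}$ is by definition one whose $\M^{\op}$-component is a weak equivalence, and this becomes ``$\sg^0$ is a weak equivalence in $\M$'', i.e. a right weak equivalence. For cofibrations — which are the transposes of the left projective fibrations of $\Fb^{\op}$ — reversing arrows turns ``both components are fibrations'' into ``$\sg^0$ a cofibration in $\M$ and $\sg^1$ a cofibration in $\ag$'', and, using that a pullback in $\ag^{\op}$ is a pushout in $\ag$ and that $\Fb^{\op}$ agrees with $\Fb$ on objects, turns the weak-equivalence condition on the canonical map into the pullback built from $\Fb^{\op}$ into the condition that $\F^1\cup^{\Fb(\F^0)}\Fb(\G^0)\to\G^1$ be a weak equivalence in $\ag$; this is precisely a right injective cofibration.

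For the fibrations — the transposes of the left projective cofibrations of $\Fb^{\op}$ — the same recipe applies, and this is where I expect the one genuine subtlety to sit: the pushout in the definition of a projective cofibration is built from the \emph{left} adjoint of the ambient right Quillen functor, and the left adjoint of $\Fb^{\op}$ is $\Ub^{\op}$, not $\Fb$. Since $\Ub^{\op}$ agrees with $\Ub$ on objects and a pushout in $\M^{\op}$ is a pullback in $\M$, reversing arrows turns ``the $\ag^{\op}$-component is a cofibration and the induced map out of the pushout built from $\Ub^{\op}$ is a cofibration in $\M^{\op}$'' into ``$\sg^1$ is a fibration in $\ag$ and the induced map $\F^0\to\Ub(\F^1)\times_{\Ub(\G^1)}\G^0$ is a fibration in $\M$'', i.e. a right injective fibration. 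Once this dictionary is in place the three classes match verbatim, so the model category axioms for $\mua^{\bf{R}}_{inj}$ are simply those of the left projective model structure of $\Fb^{\op}$ read through the isomorphism; the only real difficulty is the bookkeeping — keeping track of which of $\M,\ag,\M^{\op},\ag^{\op}$ each component lives in after dualizing, and not conflating the adjunction $\Fb\dashv\Ub$ with $\Ub^{\op}\dashv\Fb^{\op}$.
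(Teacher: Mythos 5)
Your proposal is correct and follows essentially the same route as the paper: apply Theorem \ref{left-proj-thm} to the right Quillen functor $\Fb^{\op}:\M^{\op}\to\ag^{\op}$ and transport the resulting left projective model structure along the isomorphism $\mdua\cong(\ag^{\op}\downarrow\Fb^{\op})^{\op}$. Your explicit dictionary (weak equivalences via $\Pio\cong(\Piun)^{\op}$, cofibrations as transposed left projective fibrations, fibrations as transposed left projective cofibrations built from the left adjoint $\Ub^{\op}$) is exactly the ``tedious but straightforward checking'' the paper leaves to the reader, and you correctly flag the one subtlety about which adjoint enters the pushout/pullback.
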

\begin{proof}
Consider the right Quillen functor $\Fb^{\op}: \M^{\op} \to \ag^{op}$. We can apply Theorem \ref{left-proj-thm} with respect to ``$\Ub$'' $ = \Fb^{\op}$, ``$\M$'' $= \ag^{\op}$ and ``$\ag$'' $= \M^{\op}$. According to our notation therein, this gives the model category $\ag^{\op}_{\Fb^{\op}}[\M^{\op}]^{\bf{L}}_{\tx{proj}} := (\ag^{\op} \downarrow \Fb^{\op})^{\bf{L}}_{\tx{proj}}$. This model category is a localization of the projective model structure. With the dual functor, we get a model structure on $\mua:= \mdua$ under the  isomorphism $\mdua \cong (\ag^{\op} \downarrow \Fb^{op})^{\op}$:
$$ \mua^{\bf{R}}_{inj} \cong ((\Ub^{\op} \downarrow \M^{\op})^{\bf{L}}_{proj})^{\op} \cong ((\ag^{\op} \downarrow \Fb^{op})^{\bf{L}}_{proj})^{\op} $$
As $\Pio \cong (\Piun)^{\op}$, one immediately sees that the class of weak equivalences in $\mua^{\bf{R}}_{inj} \cong  ((\Ub^{\op} \downarrow \M^{\op})^{\bf{L}}_{proj})^{\op} $ coincides with the class of right weak equivalences. Moreover, with a tedious but straightforward checking one can show the class of cofibrations (resp. of fibrations) therein coincides the class of right injective cofibrations (resp. fibrations). To check this, one needs to keep in mind that given any (model) category $\C$, limits in $\C$ corresponds to colimits in $\C^{op}$ and vice-versa. In particular there is a mutual correspondence between pullback squares in $\C$ and pushout squares in $\C^{op}$.
\end{proof}
\begin{thm}\label{right-fact-inj-thm}
For any right Quillen functor $\Ub : \ag \to \M$,  the following statements are true.
	\begin{enumerate}
		\item  We have two Quillen equivalences:
		 \begin{align*}
		 \Fb^+:\M  \rightleftarrows \mua^{\bf{R}}_{inj} : \Pio \hspace{1in} \Pio: \mua^{\bf{R}}_{inj}  \rightleftarrows  \M : R^0
		 \end{align*}
		\item  We have a Quillen adjunction:   $\Piun: \mua^{\bf{R}}_{inj} \leftrightarrows \ag : \iota$.
		\item We have a factorization $\Ub = \Pio \circ \iota$:
		$\quad  \ag \xhookrightarrow{\iota} \mua^{\bf{R}}_{inj} \xrightarrow[\sim]{\Pio} \M.$
		\item We have a $\Id_{\M} = R^0 \circ \Pio $  and the unit $\eta: \Id_{\mua^{\bf{R}}_{inj}} \to R^0 \circ \Pio$ is a homotopy. 
	\end{enumerate}
\end{thm}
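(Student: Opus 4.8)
\textbf{Proof proposal for Theorem \ref{right-fact-inj-thm}.}

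The plan is to obtain everything from the corresponding left-projective statement, Theorem \ref{fact-proj-thm}, by transporting it through the duality $(-)^{\op}$ exactly as Theorem \ref{right-inj-thm} was deduced from Theorem \ref{left-proj-thm}. Concretely, I would apply Theorem \ref{fact-proj-thm} to the right Quillen functor $\Fb^{\op} : \M^{\op} \to \ag^{\op}$ (playing the role of ``$\Ub$''), with ``$\M$''$=\ag^{\op}$ and ``$\ag$''$=\M^{\op}$, yielding the localized model category $(\ag^{\op}\downarrow\Fb^{\op})^{\bf{L}}_{\tx{proj}}$ together with its two Quillen equivalences $L^{1}\dashv\Piun$, $\Piun\dashv\iota$, the Quillen adjunction $\Fb^{+}\dashv\Pio$, the factorization $\Fb^{\op}=\Pio\circ\iota$, and the right-homotopy unit $\Id\to\iota\circ\Piun$. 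Then I would apply $(-)^{\op}$ and use the isomorphisms of Proposition \ref{prop-dual-mod}, namely $\mua^{\bf{R}}_{inj}\cong((\ag^{\op}\downarrow\Fb^{\op})^{\bf{L}}_{proj})^{\op}$ and $\Pio\cong(\Piun)^{\op}$, to translate each of these into a statement about $\mua^{\bf{R}}_{inj}$.

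The key translation steps, in order, are as follows. First, the functor that in the dual world is the left Quillen functor $L^{1}:\ag^{\op}\to(\ag^{\op}\downarrow\Fb^{\op})$ becomes, after dualization, a \emph{right} Quillen functor into $\mua^{\bf{R}}_{inj}$; tracking the formula $L^{1}(\Pa)=[\emptyset,\Pa,\emptyset\to\Ub(\Pa)]$ through the isomorphism of Proposition \ref{prop-dual-mod}(4) identifies it with $R^{0}:\M\to\mua$, and the adjunction $L^{1}\dashv\Piun$ dualizes to $\Pio\dashv R^{0}$; since dualizing a Quillen equivalence gives a Quillen equivalence (with left/right roles swapped), this yields the second Quillen equivalence of Assertion (1), and the accompanying adjunction $\Piun\dashv\iota$ in the dual world dualizes to $\Fb^{+}\dashv\Pio$, giving the first Quillen equivalence of Assertion (1). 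Second, the Quillen adjunction $\Fb^{+}\dashv\Pio$ from the dual statement dualizes to the Quillen adjunction $\Piun:\mua^{\bf{R}}_{inj}\leftrightarrows\ag:\iota$ of Assertion (2). Third, the factorization $\Fb^{\op}=\Pio\circ\iota$ with $\Pio$ a Quillen equivalence dualizes, using $\Pio\cong(\Piun)^{\op}$ and the fact that $\iota$ on the comma category corresponds to $\iota$ again, to the factorization $\Ub=\Pio\circ\iota$ of Assertion (3) with $\Pio$ a Quillen equivalence (this is also already visible from Assertion (1)), and $\iota$ is injective on objects by Proposition \ref{adjunction-prop}. Fourth, the right-homotopy unit $\eta:\Id\to\iota\circ\Piun$ of the dual statement dualizes to a co-unit, which under $(-)^{\op}$ becomes a natural transformation $\Id_{\mua^{\bf{R}}_{inj}}\to R^{0}\circ\Pio$; one checks directly that its component at $\Fc=[\F^{0},\F^{1},\pif]$ is $[\Id_{\F^{0}},\ast,\ \cdot\ ]$-type data that is a weak equivalence whenever $\Fc$ is fibrant (the $\M$-component being an identity), using the description of fibrant objects analogous to Corollary \ref{cor-qs-uobj-proj}, and that $R^{0}\circ\Pio$ is a genuine section of $\Pio$, i.e.\ $\Id_{\M}=\Pio\circ R^{0}$, from Proposition \ref{adjunction-prop}(3). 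This gives Assertion (4).

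The main obstacle I anticipate is the bookkeeping of which functor is left and which is right at each stage: under $(-)^{\op}$ a left Quillen functor becomes a right Quillen functor between the opposite model categories, so the two adjunctions $L^{1}\dashv\Piun$ and $\Piun\dashv\iota$ in the source swap the ``direction'' of the Quillen equivalence, and one must verify that the concrete functors $\Fb^{+}$, $\Pio$, $\Piun$, $\iota$, $R^{0}$ on $\mua=\mdua$ really are the images of the dual-world functors on $(\ag^{\op}\downarrow\Fb^{\op})$ under the isomorphism of Proposition \ref{prop-dual-mod}(4)-(5)—this is the ``tedious but straightforward'' identification already invoked in the proof of Theorem \ref{right-inj-thm}, and I would simply record the correspondences $L^{1}\leftrightarrow R^{0}$, $\Fb^{+}\leftrightarrow\Fb^{+}$ (self-dual on the comma), $\iota\leftrightarrow\iota$, and $\Piun\leftrightarrow\Piun$ once and for all. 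The only other point requiring a small argument rather than pure dualization is the explicit verification in Assertion (4) that the unit component is a level-wise weak equivalence on fibrant objects, which follows, as in the remark after Theorem \ref{fact-inj-thm}, from the characterization of fibrant objects of $\mua^{\bf{R}}_{inj}$ together with $\Ub$ preserving terminal objects.
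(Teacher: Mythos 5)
Your strategy is genuinely different from the paper's. The paper proves this theorem \emph{directly}, mirroring its proof of Theorem \ref{fact-inj-thm}: it observes that $\Pio$ creates the weak equivalences of $\mua^{\bf{R}}_{inj}$, checks the Quillen-equivalence criterion for $\Fb^+\dashv\Pio$ on a cofibrant $m$ and a fibrant $\Fc$, deduces that $\Pio\dashv R^0$ is then automatically a Quillen equivalence, gets Assertion (2) by noting that $\Piun$ sends right injective (trivial) cofibrations to (trivial) cofibrations, and computes the unit component $\eta_{\Fc}=[\Id_{\F^0},\F^1\to\ast_{\ag}]:\Fc\to R^0\Pio(\Fc)$ explicitly for Assertion (4). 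You instead transport Theorem \ref{fact-proj-thm} through $(-)^{\op}$, which is the device the paper uses only to \emph{construct} the model structure (Theorem \ref{right-inj-thm}). Your route is legitimate and systematic, at the price of heavier left/right bookkeeping; the paper's direct argument is shorter and avoids the dictionary entirely.

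Two slips in that bookkeeping deserve attention, one of which affects the logic. First, the dictionary you record at the end ($\Fb^+\leftrightarrow\Fb^+$, $\iota\leftrightarrow\iota$, $\Piun\leftrightarrow\Piun$) contradicts the correspondences you correctly use in the body: under $\mdua\cong(\ag^{\op}\downarrow\Fb^{\op})^{\op}$ the two components of a comma object are interchanged, so the correct dictionary is $\iota\leftrightarrow\Fb^+$, $L^1\leftrightarrow R^0$, $\Pio\leftrightarrow\Piun$; none of these functors is self-dual. Second, and more substantively, dualizing Assertion (4) of Theorem \ref{fact-proj-thm} does \emph{not} yield Assertion (4) here. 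Since $(-)^{\op}$ does not reverse the order of functor composition, the unit $\Id\to$``$\iota$''$\circ$``$\Piun$'' of the adjunction ``$\Piun$''$\dashv$``$\iota$'' dualizes to the \emph{counit} $\Fb^+\circ\Pio\to\Id$ of $\Fb^+\dashv\Pio$ (and ``weak equivalence at fibrant objects'' dualizes to ``weak equivalence at cofibrant objects''); the same phenomenon turns the dual of ``$\Ub$''$=$``$\Pio$''$\circ$``$\iota$'' into the factorization $\Fb=\Piun\circ\Fb^+$ of the left adjoint rather than $\Ub=\Pio\circ\iota$. The transformation $\Id\to R^0\circ\Pio$ you need is the unit of the \emph{other} adjunction $\Pio\dashv R^0$, about which Theorem \ref{fact-proj-thm} asserts nothing. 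So the direct verification you offer as a cross-check is in fact the actual proof of Assertion (4); fortunately it is immediate, since $\eta_{\Fc}=[\Id_{\F^0},\,\F^1\to\ast_{\ag}]$ has identity $\M$-component and right weak equivalences are created by $\Pio$, so no fibrancy hypothesis or description of fibrant objects is even needed.
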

\begin{proof}[Proof of Theorem \ref{right-fact-inj-thm}]
	For $m \in \M $ and $\Fc=[\F^0,\F^{1},\pif] \in \mua$ we have $\Fb^+(m)= [m, \Fb(m), \eta_m]$ and $\Pio(\Fc)= \F^0$. The functor $\Pio$ creates weak equivalences in $\mua^{\bf{R}}_{inj}$, therefore a map $\Fb^+(m) \to \Fc$ is a weak equivalence in $\mua^{\bf{R}}_{inj}$ if and only if -- by definition -- the map $m \to \Pio(\F)$ is a weak equivalence in $\M$. This is true in particular if $\Fc$ is fibrant and if $m$ is cofibrant, which proves that the adjunction $\Fb^+ \dashv \Pio$ is a Quillen equivalence, and that  $\Pio$ is a right Quillen equivalence. It also proves automatically that the other adjunction $\Pio \dashv R^0$ is a Quillen equivalence where $\Pio$ is a left Quillen functor. This gives Assertions $(1)$.
	For any right injective (trivial) cofibration $\sg$, $\Piun(\sg)=\sg^{1}$ is a (trivial) cofibration in $\ag$, thus $\Piun$ is left Quillen and Assertion $(2)$ follows. Assertion $(3)$ is obvious. Assertion $(4)$ will follow if we show that the map $\eta: \Fc \to R^0\circ \Pio(\F)$ is a weak equivalence for any fibrant object $\Fc \in \mua_{inj}^{\bf{R}}$. By definition we have $R^0\circ \Pio(\Fc)= [\F^0, \ast_{\ag}, \F^0 \xrightarrow{!} \overbrace{\Ub(\ast_{\ag})}^{\ast_{\M}}]$. The map $\eta$ is given by the couple $[\Id_{\F^0}, \G^1 \xrightarrow{!} \ast_{\ag}]$. As $\Id_{\F^0}$ is a weak equivalence, it follows that $\eta$ is a weak equivalence in $\mua^{\bf{R}}_{inj}$.
\end{proof}
\subsubsection{Right projective model structure}
\begin{df}\label{right-projective-data}
	Let $\sigma: \Fc \xrightarrow{[\sgo,\sgi]} \Gc$ be a map in $\mua$. We will say that:
	\begin{enumerate}
		\item $\sigma$ is a right projective cofibration if:
		\begin{itemize}
			\item $\sigma^0$ is a  cofibration in $\M$ and if
			\item  the induced map $ \F^1 \cup^{\Fb(\F^0)} \Fb(\G^0) \to  \G^1 $ is a trivial cofibration in   $\ag$. 
		\end{itemize}
		
		\item $\sigma$ is a right weak equivalence if  $\sigma^0$ is a weak equivalence  in $\M$, that is if $\Pi^0(\sg)$ is a weak equivalence in $\M$.
		
		\item $\sigma$ is a right projective fibration if it is a level-wise fibration.
	\end{enumerate}
\end{df}
\begin{thm}\label{right-proj-thm}
Call a morphism $\sigma: \Fc \xrightarrow{[\sgo,\sgi]} \Gc$ in $\mua$:
	\begin{itemize}
		\item a weak equivalence if and only if it is a right weak equivalence.
		\item a cofibration if it is a right projective cofibration. 
		\item a fibration if it is a right projective fibration. 
	\end{itemize}
	Then these choices provide $\mua$ with the structure of a model category that will be denoted by $\mua^{\bf{R}}_{proj}$. 
\end{thm}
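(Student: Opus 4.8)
The strategy is identical in spirit to the proof of Theorem \ref{right-inj-thm}: we obtain $\mua^{\bf{R}}_{proj}$ as the image, under the involution $(-)^{\op}:\modcat_r^{op}\to\modcat_r$, of a \emph{left} model structure already constructed. Specifically, I would apply Theorem \ref{left-inj-thm} to the right Quillen functor $\Fb^{\op}:\M^{\op}\to\ag^{\op}$, using the dictionary ``$\Ub$'' $=\Fb^{\op}$, ``$\M$'' $=\ag^{\op}$, ``$\ag$'' $=\M^{\op}$. That theorem produces the left injective model structure $(\ag^{\op}\downarrow\Fb^{\op})^{\bf{L}}_{\tx{inj}}$, a localization of the injective model structure on $(\ag^{\op}\downarrow\Fb^{\op})$ along the second projection. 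Transporting this across the chain of isomorphisms from Proposition \ref{prop-dual-mod},
$$\mdua\cong(\Ub^{\op}\downarrow\M^{\op})^{\op}\cong(\ag^{\op}\downarrow\Fb^{\op})^{\op},$$
yields a model structure on $\mua$ that I would \emph{define} to be $\mua^{\bf{R}}_{proj}$, namely
$$\mua^{\bf{R}}_{proj}\cong\big((\ag^{\op}\downarrow\Fb^{\op})^{\bf{L}}_{\tx{inj}}\big)^{\op}\cong\big((\Ub^{\op}\downarrow\M^{\op})^{\bf{L}}_{\tx{inj}}\big)^{\op}.$$
Since the opposite of a model category is again a model category and $(\C^{\op})^{\op}=\C$, this is automatically a model structure; the entire content is to check that its cofibrations, fibrations, and weak equivalences are exactly the three classes named in Definition \ref{right-projective-data}.

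Next I would carry out that identification, class by class. For weak equivalences: under $\mdua\cong(\ag^{\op}\downarrow\Fb^{\op})^{\op}$ we have $\Pio\cong(\Piun)^{\op}$ by Proposition \ref{prop-dual-mod}(5); the weak equivalences of $(\ag^{\op}\downarrow\Fb^{\op})^{\bf{L}}_{\tx{inj}}$ are by Definition \ref{left-injective-data}(2) the maps inverted by $\Piun$ (there: a weak equivalence in the source), so the weak equivalences of $\mua^{\bf{R}}_{proj}$ are the maps $\sg$ with $\Pio(\sg)=\sg^0$ a weak equivalence in $\M$, matching Definition \ref{right-projective-data}(2). For fibrations: a fibration in $(\C)^{\op}$ is a cofibration in $\C$, so the fibrations of $\mua^{\bf{R}}_{proj}$ correspond to the left injective cofibrations of $(\ag^{\op}\downarrow\Fb^{\op})$, which by Definition \ref{left-injective-data}(1) are the maps $\tau$ with $\tau^0$ and $\tau^1$ cofibrations in the respective factors; dualizing term by term (a cofibration in $\ag^{\op}$ is a fibration in $\ag$, etc., and one must track which factor of the comma category is which under the isomorphism of Proposition \ref{prop-dual-mod}(4)) this unwinds to: $\sg^0$ and $\sg^1$ are fibrations, i.e.\ $\sg$ is a level-wise fibration, matching Definition \ref{right-projective-data}(3). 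For cofibrations: dually, the cofibrations of $\mua^{\bf{R}}_{proj}$ correspond to the left injective fibrations of $(\ag^{\op}\downarrow\Fb^{\op})$, and Definition \ref{left-injective-data}(3) describes these via a pullback condition (a fibration on one factor plus a \emph{trivial} fibration built from a fiber product); dualizing, a pullback in $\C^{\op}$ is a pushout in $\C$, a fibration in $\C^{\op}$ is a cofibration in $\C$, and a trivial fibration becomes a trivial cofibration, so the condition becomes: $\sg^0$ a cofibration in $\M$ together with the pushout-corner map $\F^1\cup^{\Fb(\F^0)}\Fb(\G^0)\to\G^1$ a trivial cofibration in $\ag$, matching Definition \ref{right-projective-data}(1).

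The main obstacle I anticipate is purely bookkeeping: keeping straight, through the composite isomorphism $\mdua\cong(\ag^{\op}\downarrow\Fb^{\op})^{\op}$, which of the two objects $\F^0,\F^1$ plays the role of ``$\Xa^0$'' and which the role of ``$\Xa^1$'' in the dual comma category, and correspondingly that the left adjoint $\Fb^{\op}$ of ``$\Ub$'' $=\Fb^{\op}$ is $\Ub^{\op}$, so that the pushout $\F^1\cup^{\Fb(\F^0)}\Fb(\G^0)$ appearing in Definition \ref{right-projective-data}(1) is genuinely the dual of the fiber product $\Ub(\F^1)\times_{\Ub(\G^1)}\G^0$ appearing in Definition \ref{left-injective-data}(3). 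Proposition \ref{prop-dual-mod} pins down all the relevant isomorphisms explicitly on objects and morphisms, so this is a finite, mechanical verification; I would present it in the same compressed style as the proof of Theorem \ref{right-inj-thm}, remarking that limits in $\C$ correspond to colimits in $\C^{\op}$ and pullback squares in $\C$ to pushout squares in $\C^{\op}$, and leaving the remaining term-by-term matching to the reader. No new homotopy-theoretic input is needed: every nontrivial assertion (existence of the localized left model structure, its lifting and factorization axioms) is already supplied by Theorem \ref{left-inj-thm}.
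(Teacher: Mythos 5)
Your proposal is correct and is essentially identical to the paper's own (very terse) proof: the paper likewise obtains $\mua^{\bf{R}}_{proj}$ by dualizing the left injective model structure $(\ag^{\op}\downarrow\Fb^{\op})^{\bf{L}}_{inj}$ under the isomorphism $\mdua\cong(\ag^{\op}\downarrow\Fb^{\op})^{\op}$, leaving the class-by-class matching implicit. Your explicit identification of the three classes (including the swap of the two components under the duality and the pullback-to-pushout translation) is the right bookkeeping and agrees with Definition \ref{right-projective-data}.
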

\begin{proof}
	We dualize the left injective model structure on $(\ag^{\op} \downarrow \Fb^{op})$ under the isomorphism $\mdua \cong (\ag^{\op} \downarrow \Fb^{op})^{\op}$:
	$$ \mua^{\bf{R}}_{proj} \cong ((\Ub^{\op} \downarrow \M^{\op})^{\bf{L}}_{inj})^{\op} \cong ((\ag^{\op} \downarrow \Fb^{op})^{\bf{L}}_{inj})^{\op}.$$
\end{proof}

The projective version of Theorem \ref{right-fact-inj-thm} is:
\begin{thm}\label{right-fact-proj-thm}
	Let $\Ub : \ag \to \M$ be a right Quillen functor. Then the following statements are true.
	\begin{enumerate}
		\item We have two Quillen equivalences:
		\begin{align*}
		\Fb^+:\M  \rightleftarrows \mua^{\bf{R}}_{proj} : \Pio \hspace{0.5in} \Pio: \mua^{\bf{R}}_{proj}  \rightleftarrows  \M : R^0
		\end{align*}
		\item  We have a Quillen adjunction $\Piun: \mua^{\bf{R}}_{proj} \leftrightarrows \ag : \iota$.
		\item We have a factorization $\Ub = \Pio \circ \iota$:
		$ \quad \ag \xhookrightarrow{\iota} \mua^{\bf{R}}_{proj} \xrightarrow[\sim]{\Pio} \M.$
		\item We have a $\Id_{\M} = R^0 \circ \Pio $  and the unit $\eta: \Id_{\mua^{\bf{R}}_{proj}} \to R^0 \circ \Pio$ is a homotopy.
	\end{enumerate}
\end{thm}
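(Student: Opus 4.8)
The plan is to obtain Theorem~\ref{right-fact-proj-thm} exactly as Theorem~\ref{right-proj-thm} was obtained, namely by dualizing Theorem~\ref{fact-inj-thm} along the involution $(-)^{\op}\colon\modcat_r^{\op}\to\modcat_r$ through the isomorphism $\mua^{\bf{R}}_{proj}\cong((\ag^{\op}\downarrow\Fb^{\op})^{\bf{L}}_{inj})^{\op}$ recorded in the proof of Theorem~\ref{right-proj-thm}; equivalently---and this is the form I would write out---by rerunning the proof of Theorem~\ref{right-fact-inj-thm} almost verbatim with $\mua^{\bf{R}}_{inj}$ replaced everywhere by $\mua^{\bf{R}}_{proj}$. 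The two model structures share the same class of weak equivalences (the right weak equivalences, created by $\Pio$), so every step that only involves weak equivalences together with the functors $\Pio$, $R^0$ and $\Fb^+$ transfers unchanged; the only places where the cofibrations and fibrations of Definition~\ref{right-projective-data} enter are Assertion~$(1)$ (we need $\Pio$ right Quillen) and Assertion~$(2)$ (we need $\iota$ right Quillen).

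For Assertions~$(1)$ and $(3)$ I would first note that $\Pio\colon\mua^{\bf{R}}_{proj}\to\M$ is right Quillen, which here is immediate from Definition~\ref{right-projective-data} since a right projective (trivial) fibration is in particular a level-wise (trivial) fibration, so $\Pio(\sg)=\sgo$ is a (trivial) fibration in $\M$. As $\Pio$ creates weak equivalences, a map $\Fb^+(m)\to\Fc$ is a weak equivalence in $\mua^{\bf{R}}_{proj}$ precisely when $m\to\F^0$ is a weak equivalence in $\M$, which holds whenever $m$ is cofibrant and $\Fc$ is fibrant; hence $\Fb^+\dashv\Pio$ is a Quillen equivalence, and therefore so is $\Pio\dashv R^0$, now with $\Pio$ as the left Quillen functor. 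Assertion~$(3)$ is then immediate: $\Ub=\Pio\circ\iota$ is the factorization of Proposition~\ref{adjunction-prop}, $\iota$ is injective on objects, and $\Pio$ is a Quillen equivalence by~$(1)$.

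For Assertion~$(2)$ I would check that $\iota\colon\ag\to\mua^{\bf{R}}_{proj}$ is right Quillen (this is what makes $\Piun\dashv\iota$ a Quillen adjunction). If $p$ is a (trivial) fibration in $\ag$, then $\iota(p)=[\Ub(p),p]$ is level-wise a (trivial) fibration, because $\Ub$ is right Quillen and hence sends (trivial) fibrations to (trivial) fibrations, so $\iota(p)$ is a right projective (trivial) fibration; thus $\iota$ preserves (trivial) fibrations. This is the one point where the projective case is not a literal copy of Theorem~\ref{right-fact-inj-thm}, in which one instead observed directly that $\Piun(\sg)=\sgi$ is a (trivial) cofibration; I expect this verification---or, dually, the short remark that $\Piun(\sg)=\sgi$ is still a (trivial) cofibration in $\ag$, being the composite of a cobase change of the (trivial) cofibration $\Fb(\sgo)$ with the (trivial) cofibration $\F^1\cup^{\Fb(\F^0)}\Fb(\G^0)\to\G^1$---to be the only nonformal step, and even it is routine.

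Finally, for Assertion~$(4)$, the identity $\Pio\circ R^0=\Id_{\M}$ and the unit $\eta\colon\Id_{\mua^{\bf{R}}_{proj}}\to R^0\circ\Pio$ of the adjunction $\Pio\dashv R^0$ come from Proposition~\ref{adjunction-prop}; and, exactly as in the proof of Theorem~\ref{right-fact-inj-thm}, for any fibrant $\Fc$ the component $\eta_{\Fc}\colon\Fc\to R^0(\Pio(\Fc))=[\F^0,\ast_{\ag},\F^0\to\ast_{\M}]$ equals $[\Id_{\F^0},\F^1\to\ast_{\ag}]$, which is a weak equivalence in $\mua^{\bf{R}}_{proj}$ because $\Id_{\F^0}$ is a weak equivalence in $\M$ and weak equivalences in $\mua^{\bf{R}}_{proj}$ are created by $\Pio$; hence $\eta$ is a right homotopy. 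In short, the proof is that of Theorem~\ref{right-fact-inj-thm} with $\mua^{\bf{R}}_{inj}$ replaced by $\mua^{\bf{R}}_{proj}$, the sole genuine adjustment being the recheck of Assertion~$(2)$.
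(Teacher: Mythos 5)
Your proposal is correct and matches the paper's approach: the paper's entire proof is the one-line remark that the argument of Theorem \ref{right-fact-inj-thm} carries over verbatim to $\mua^{\bf{R}}_{proj}$, which is exactly what you execute. Your extra care in Assertion $(2)$ (checking that $\iota$ preserves level-wise (trivial) fibrations, or dually that $\Piun$ still sends right projective (trivial) cofibrations to (trivial) cofibrations via the cobase-change-plus-composition argument) is a correct and welcome filling-in of the one step where "word for word" is not quite literal.
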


\begin{proof}
The proof is exactly the same as that of Theorem \ref{right-fact-inj-thm}.
\end{proof}
\subsection{Proof of the main theorem}\label{proof-main-thm-fact}
\begin{lem}\label{funct-inj-fact}
Consider the following diagrams.
\[
\xy
(0,15)*+{\ag}="W";
(0,0)*+{\ag'}="X";
{\ar@{->}^-{H}"W";"X"};
(25,15)*+{\M}="U";
(25,0)*+{\M'}="V";
{\ar@{->}^-{\Ub'}"X";"V"};
{\ar@{->}_-{K}"U";"V"};
{\ar@{->}^-{\Ub}"W";"U"};
\endxy
\xy
(-5,7)*+{\Rightarrow}="L";
(0,15)*+{\ag}="W";
(0,0)*+{\ag'}="X";
(50,0)*+{\M'}="Y";
(50,15)*+{\M}="E";
{\ar@{->}^-{H}"W";"X"};
{\ar@{->}_-{K}"E";"Y"};
(25,15)*+{\mua}="U";
(25,0)*+{\mupa}="V";
{\ar@{->}^-{\iota}"X";"V"};
{\ar@{->}^-{\Pio}"V";"Y"};
{\ar@{.>}^-{E(H,K)}"U";"V"};
{\ar@{->}^-{\Pio}"U";"E"};
{\ar@{->}^-{\iota}"W";"U"};
\endxy
\]
For $ms \in \{ \tx{`$inj$', `$proj$'} \}$, the following hold.
\begin{enumerate}
	\item We have a Quillen adjunction $E(H,K)_\ast :   \mupa_{ms}^{\bf{L}} \rightleftarrows  \mua_{ms}^{\bf{L}} : E(H,K)$\\
	\medskip
	\item We have a Quillen adjunction $E(H,K)_\ast :   \mupa_{ms}^{\bf{R}} \rightleftarrows  \mua_{ms}^{\bf{R}} : E(H,K)$
\end{enumerate}
\end{lem}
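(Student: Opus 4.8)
The plan is to reduce everything to statements we have already proved, namely Theorem \ref{first-fact} together with the explicit descriptions of the (trivial) cofibrations in the localized model structures. The cleanest route is to prove that the \emph{left adjoint} $E(H,K)_\ast$ is a left Quillen functor in each case, since the generating data (cofibrations and trivial cofibrations) is described level-wise on the source objects, whereas the fibrations are described via a pullback and are less convenient to manipulate directly. Recall from the proof of Theorem \ref{first-fact} that $E(H,K)_\ast[\theta^0,\theta^1] = [K_\ast(\theta^0), H_\ast(\theta^1)]$, where $H_\ast \dashv H$ and $K_\ast \dashv K$ are left Quillen functors (they are the left adjoints of right Quillen functors by hypothesis). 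So $E(H,K)_\ast$ preserves level-wise cofibrations and level-wise trivial cofibrations, and this is already enough for half the cases.

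First I would handle the left injective structures, item $(1)$ with $ms=\text{`}inj\text{'}$. By Definition \ref{left-injective-data}, a cofibration in $\mua^{\bf{L}}_{inj}$ is just an injective cofibration, i.e. level-wise; so $E(H,K)_\ast$ preserves cofibrations. A trivial cofibration is a map that is simultaneously a left injective cofibration and a left weak equivalence; but by the bullet points in the proof of Theorem \ref{left-inj-thm} we have $\lfib(\muaij)\cap\lwe(\muaij)=\fib(\muaij)\cap\we(\muaij)$, and dually (taking the other half of a functorial factorization, or arguing directly from the weak-equivalence being tested by $\Pi^1$) a left injective trivial cofibration is level-wise, hence has $\sigma^1$ a trivial cofibration in $\ag$; therefore $H_\ast(\sigma^1)$ is a trivial cofibration, so $E(H,K)_\ast$ preserves trivial cofibrations. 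Actually the cleanest phrasing: a left weak equivalence is detected by $\Pi^1$, and $\Pi^1\circ E(H,K)_\ast = H_\ast \circ \Pi^1$ on the nose (from the formula), and $H_\ast$ preserves weak equivalences between cofibrant objects / is left Quillen, so it preserves trivial cofibrations of the source. For the left projective structures, $ms=\text{`}proj\text{'}$: here a cofibration is a projective cofibration, and the argument already in the proof of Theorem \ref{first-fact} shows $E(H,K)_\ast$ preserves projective (trivial) cofibrations since $H_\ast$ and $K_\ast$ preserve cofibrations and trivial cofibrations and pushouts (colimits) are computed level-wise; and left weak equivalences are again detected by $\Pi^1$ and commute with $\Pi^1 \circ E(H,K)_\ast = H_\ast\circ\Pi^1$, so trivial left projective cofibrations go to trivial left projective cofibrations. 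This proves $(1)$ in both cases.

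For item $(2)$, the right structures, I would invoke the duality already set up in Proposition \ref{prop-dual-mod} and used to define $\mua^{\bf{R}}_{inj}$ and $\mua^{\bf{R}}_{proj}$. Under the isomorphisms $\mdua \cong ((\ag^{\op}\downarrow\Fb^{\op})^{\bf{L}}_{proj})^{\op}$ and $\mdua\cong((\ag^{\op}\downarrow\Fb^{\op})^{\bf{L}}_{inj})^{\op}$, the functor $E(H,K)$ corresponds to $E(H^{\op}_\ast, K^{\op}_\ast)$ (or its dual) for the dualized right Quillen functors $\Fb^{\op},\Fb'^{\op}$, with the obvious relabelling of which adjoint is which; and the square \eqref{diag-fact-comma} dualizes to the analogous square for $\Fb^{\op}$, with $H^{\op}, K^{\op}$ again right adjoints between model categories. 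Then applying item $(1)$ (which we have just proved for arbitrary right Quillen functors, in both the injective and projective flavours) to $\Fb^{\op}$ gives a Quillen adjunction between the left-localized comma categories of $\Fb^{\op}$, and reversing the involution $(-)^{\op}$ turns a left Quillen adjunction into a right Quillen adjunction of the opposite model categories, yielding exactly the Quillen adjunction $E(H,K)_\ast \dashv E(H,K)$ between $\mupa^{\bf{R}}_{ms}$ and $\mua^{\bf{R}}_{ms}$. One must only check that $E(H,K)$ is the correct functor under these identifications, which is a bookkeeping matter about how $E$ interacts with $(-)^{\op}$ and with passing to left adjoints — it is the same relabelling already implicit in the proofs of Theorem \ref{right-inj-thm} and Theorem \ref{right-proj-thm}.

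The main obstacle I anticipate is purely organizational rather than mathematical: making sure the formula $E(H,K)_\ast[\theta^0,\theta^1]=[K_\ast(\theta^0),H_\ast(\theta^1)]$ is compatible with \emph{all four} localized structures at once and that the left weak equivalences are genuinely detected after applying $\Pi^0$ or $\Pi^1$ in the way the formulas suggest — i.e. verifying the strict commutations $\Pi^1\circ E(H,K)_\ast = H_\ast\circ\Pi^1$ and $\Pi^0\circ E(H,K)_\ast = K_\ast\circ\Pi^0$ (and their duals), so that "preserves trivial cofibrations" can be checked on the single coordinate that controls the weak equivalence. Once these compatibilities are recorded, each of the four cases is a one-line consequence of the level-wise/coordinate-wise description together with the fact that $H_\ast,K_\ast$ (resp. $H^{\op}_\ast,K^{\op}_\ast$) are left Quillen. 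I would therefore state and prove the two commutation identities first as a short preliminary observation, then dispatch the four cases in a single short paragraph each, and finally note that $(2)$ follows from $(1)$ by the $(-)^{\op}$ involution exactly as in the proofs of Theorems \ref{right-inj-thm} and \ref{right-proj-thm}.
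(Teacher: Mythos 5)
Your argument for assertion $(1)$ is essentially the paper's: both check that $E(H,K)_\ast$ preserves cofibrations (which are unchanged by the localization, so Theorem \ref{first-fact} applies verbatim) and trivial cofibrations (characterized as cofibrations of the unlocalized structure whose $\Pi^1$-component is a trivial cofibration, which $H_\ast$ preserves since it is left Quillen); the commutation $\Pi^1\circ E(H,K)_\ast=H_\ast\circ\Pi^1$ you propose to record explicitly is exactly what the paper uses implicitly via the formula $E(H,K)_\ast(\theta)=[K_\ast(\theta^0),H_\ast(\theta^1)]$.

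For assertion $(2)$ you diverge from the paper. The paper argues directly on the right adjoint: in $\mua^{\bf R}_{ms}$ the fibrations coincide with those of $\mua_{ms}$, so $E(H,K)$ preserves them by Theorem \ref{first-fact}, and the trivial fibrations are exactly the fibrations whose $\Pi^0$-component is a trivial fibration, which $K$ preserves — a two-line mirror image of $(1)$. You instead propose to deduce $(2)$ from $(1)$ via the involution $(-)^{\op}$ of Proposition \ref{prop-dual-mod}. This is workable and has the virtue of proving nothing twice, but it carries a genuine bookkeeping cost that your "relabelling" remark underestimates slightly: dualizing the square (\ref{diag-fact-comma}) requires the vertical right Quillen functors of the dual square to be $H_\ast^{\op}$ and $K_\ast^{\op}$, and the relation $\Ub'\circ H=K\circ\Ub$ only gives $\Fb\circ K_\ast\cong H_\ast\circ\Fb'$ up to the canonical natural isomorphism of left adjoints of a common composite, not an equality. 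So the dual square commutes only up to isomorphism, and one must either rigidify it or check that $E(-,-)$ and its adjoint transport correctly across that isomorphism before identifying the resulting functor with $E(H,K)$. The paper's direct route avoids this entirely at the cost of one more paragraph of the same flavour as $(1)$; I would recommend the direct route, or at least flagging and resolving the strictness issue if you keep the duality argument.
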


\begin{proof}
To prove Assertion $(1)$ we will show that the left adjoint $E(H,K)_\ast$ preserves the cofibration and the trivial cofibrations.  By definition we have $\cof(\mupa_{ms}^{\bf{L}}) = \cof(\mupa_{ms})$ and $\cof(\mua_{ms}^{\bf{L}}) =\cof(\mua_{ms})$. By Theorem \ref{first-fact}, $E(H,K)_\ast$  is left Quillen functor therefore we have an inclusion: $E(H,K)_\ast[ \cof(\mupa_{ms})] \subseteq \cof(\mua_{ms}) $. This inclusion can also be written as:
$ E(H,K)_\ast[ \cof(\mupa_{ms}^{\bf{L}})] \subseteq \cof(\mua_{ms}^{\bf{L}})$, which means that $E(H,K)_\ast$  preserves the cofibrations.
Recall that a map  $\theta = [\theta^0, \theta^1]$  in $\mupa$  is a trivial cofibration $\mupa_{ms}^{\bf{L}}$  if and only if it is a cofibration in $\mupa_{ms}$ such that $\theta^1$  is a trivial cofibration in $\ag'$. The same description holds in $\mupa_{ms}^{\bf{L}}$. Furthermore, as $H_\ast$ is a left Quillen functor, it is clear that in the formula $E(H,K)_\ast(\theta)= [K_\ast(\theta^0), H_\ast(\theta^1)]$, the map $H_\ast(\theta^1)$ is a trivial cofibration if $\theta^1$ is. In other words, $E(H,K)_\ast(\theta)$ is a trivial cofibration in $\mua_{ms}^{\bf{L}}$ if $\theta$ is trivial cofibration in $\mupa_{ms}^{\bf{L}}$, whence the assertion.

We will prove Assertion $(2)$ by showing that $E(H,K)$ preserves the fibrations and the trivial fibrations. By definition, we have  $\fib(\mupa_{ms}^{\bf{R}}) = \fib(\mupa_{ms})$ and $\fib(\mua_{ms}^{\bf{R}}) =\fib(\mua_{ms})$, therefore $ E(H,K)[ \fib(\mupa_{ms}^{\bf{R}})] \subseteq \fib(\mua_{ms}^{\bf{R}})$ since we have a right Quillen functor: $E(H,K): \mua_{ms} \to \mupa_{ms}$. Moreover, a map $\sg= [\sgo, \sgi]$  is a  trivial fibration in  $\mupa_{ms}^{\bf{R}}$ if and only if it is a fibration such that $\sgo$ is a trivial fibration. We have the same characterization for the trivial fibration in $\mupa_{ms}^{\bf{R}}$. Since $K$ preserves the trivial fibrations, it is clear that in the formula $E(H,K)(\sg) = [K(\sgo), H(\sgi)]$, the map  $K(\sgo)$ is a trivial fibration if $\sgo$ is. It follows that $E(H,K)(\sg)$ is a trivial fibration $\mupa_{ms}^{\bf{R}}$ if $\sg$ is a trivial fibration in $\mua_{ms}^{\bf{R}}$ and the assertion follows.
\end{proof}
\noindent With the material of the previous section, we can now prove Theorem \ref{main-thm-fact}.
\begin{proof}[Proof of Theorem \ref{main-thm-fact}]
\hfill
\ \\
We will show that any morphism in $\modcat_r$ has two ``injective factorizations'' and two ``projective factorizations''. We will give a proof for the injective factorizations, the projective factorization are treated the same way. With the same notation as in the proof of Theorem \ref{first-fact}, given a Quillen adjunction $(\Fb, \Ub, \varphi)$ we consider the morphisms below:
\begin{itemize}
	\item $G_1= (G_1^l, G_1^r,\varphi) = (\Piun, \iota, \varphi) = \iota_{(\Pi^{1},\varphi)} \in \Hom(\ag, \mua_{inj}^{\bf{L}})$
	\item $G_2= (G_2^l,G_2^r,\varphi) = (\Fb^+, \Pio, \varphi) = \Pio_{(\Fb^+,\varphi)} \in \Hom( \mua_{inj}^{\bf{L}}, \M)$
\end{itemize}
By Theorem \ref{fact-inj-thm}, we have $G_1 \in \Lci_w(\modcat_r) \cap Mor_{inj_{ob}}(\modcat_r)$ and by Proposition \ref{fact-ri-li} we have $G_2 \in Mor_{ifib} (\modcat_r)$. Clearly, we have $\Ub_{(\Fb,\varphi)} = G_2 \circ G_1$ and  the factorization is functorial by Lemma \ref{funct-inj-fact}. This proves Assertion $(1)$. \\
To prove Assertion $(2)$ we consider the two maps below:
\begin{itemize}
	\item $G_1= (G_1^l, G_1^r,\varphi) = (\Piun, \iota, \varphi) = \iota_{(\Pi^{1},\varphi)} \in \Hom(\ag, \mua_{inj}^{\bf{R}})$
	\item $G_2= (G_2^l,G_2^r,\varphi) = (\Fb^+, \Pio, \varphi) = \Pio_{(\Fb^+,\varphi)} \in \Hom( \mua_{inj}^{\bf{R}}, \M)$
\end{itemize}
By Proposition \ref{fact-ri-li} we have $G_1 \in Mor_{inj_{ob}}(\modcat_r)$ and with  Theorem \ref{right-fact-inj-thm} we have $G_2 \in \Rci_w(\modcat_r) \cap Mor_{ifib} (\modcat_r)$. We have $\Ub_{(\Fb,\varphi)} = G_2 \circ G_1$ with the functoriality of the factorization also given by Lemma \ref{funct-inj-fact}.
With the same reasoning we get the projective factorizations with $\mua_{proj}^{\bf{L}}$, $\mua_{proj}^{\bf{R}}$ using  Theorem \ref{fact-proj-thm} and Theorem \ref{right-fact-proj-thm} instead of Theorem \ref{fact-inj-thm} and Theorem \ref{right-fact-inj-thm}, respectively.
\end{proof}

\section{Homotopy extension and lifting problem}\label{sec-lifting}
\begin{lem}\label{lem-lifting}
Consider a lifting problem of solid arrows in $\modcat_r$ as follows.
\[
\xy
(0,18)*+{\C}="W";
(0,0)*+{\D}="X";
(30,0)*+{\Ba}="Y";
(30,18)*+{\Aa}="E";
{\ar@{->}^-{\Phi^1}"X";"Y"};
{\ar@{->}_-{G}"W";"X"};
{\ar@{->}^-{\Phi^0}"W";"E"};
{\ar@{->}^-{K}"E";"Y"};
{\ar@{-->}^-{}"X";"E"};
\endxy
\]
\begin{enumerate}
	\item If $G \in \Lci_w(\modcat_r)$ and $K \in \Mor(\modcat_r)$ there is a solution $T: \D \to \Aa$ \emph{up-to-homotopy}, in that we have $T \circ G = \Phi^0$ and there is a homotopy $h: \Phi^1 \xrightarrow{\sim} K\circ T$.
	\item  If $G \in Mor(\modcat_r)$ and $K \in \Rci_w(\modcat_r)$ there is a solution $T: \D \to \Aa$ \emph{up-to-homotopy}, in that we have $K \circ T = \Phi^1$ and there is a homotopy $h: \Phi^0 \xrightarrow{\sim} T\circ G$.
\end{enumerate}
\end{lem}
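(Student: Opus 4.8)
The plan is to write down $T$ explicitly in each case as a composite of the given data with the weakly invertible retraction (resp. section), and then obtain the required homotopy by whiskering the accompanying homotopy $\tau$ onto one of the legs of the square. In neither case will the underlying category-level lifting be changed; everything is formal manipulation of $2$-morphisms in $\modcat_r$.

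For $(1)$ I would pick a weakly invertible retraction $H : \D \to \C$ of $G$, so that $H \circ G = \Id_{\C}$ and there is a right homotopy $\tau : \Id_{\D} \to G \circ H$, and then set $T := \Phi^0 \circ H$. The equality $T \circ G = \Phi^0 \circ H \circ G = \Phi^0$ is immediate. For the homotopy I would whisker $\tau$ on the left by $\Phi^1$: since the square commutes, i.e. $\Phi^1 \circ G = K \circ \Phi^0$, the $2$-morphism $h := \Phi^1 \circ \tau$ runs from $\Phi^1$ to $\Phi^1 \circ G \circ H = K \circ \Phi^0 \circ H = K \circ T$, as wanted. That $h$ is a right homotopy is exactly the lemma recorded above (post-composing a right homotopy with a morphism of $\modcat_r$ again yields a right homotopy, by Ken Brown's lemma).

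For $(2)$ I would dually pick a weakly invertible section $H : \Ba \to \Aa$ of $K$, so that $K \circ H = \Id_{\Ba}$ and there is a right homotopy $\tau : \Id_{\Aa} \to H \circ K$, and set $T := H \circ \Phi^1$; then $K \circ T = K \circ H \circ \Phi^1 = \Phi^1$. This time I would whisker $\tau$ on the right by $\Phi^0$, so that $h := \tau \circ \Phi^0$ runs from $\Phi^0$ to $H \circ K \circ \Phi^0 = H \circ \Phi^1 \circ G = T \circ G$, again using commutativity of the square. To see that $h$ is a right homotopy, observe that its component at a fibrant $C \in \C$ is $\tau_{\Phi^0(C)}$, and $\Phi^0(C)$ is fibrant in $\Aa$ because the right component of $\Phi^0$ is a right Quillen functor; hence $\tau_{\Phi^0(C)}$ is a weak equivalence. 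Equivalently, $h$ is the horizontal composite of $\tau$ with the identity $2$-morphism of $\Phi^0$, and right homotopies compose horizontally.

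The argument is formal, so I do not anticipate a real obstacle; the only thing to watch is the bookkeeping of directions — on which side one whiskers $\tau$, and checking that the whiskered $2$-morphism is still a right homotopy, which in $(2)$ uses that morphisms of $\modcat_r$ preserve fibrant objects and in $(1)$ uses Ken Brown's lemma. One should also double-check at the end that the constructions are compatible with the factorizations of Theorem \ref{main-thm-fact}, i.e. that the two maps produced by each factorization are ``homotopy orthogonal'' in the sense of the introduction, but that is a direct consequence of the two displayed cases.
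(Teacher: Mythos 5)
Your proposal is correct and coincides with the paper's own proof: in both cases the paper takes exactly $T=\Phi^0\circ H$ (resp.\ $T=H\circ\Phi^1$) and obtains the homotopy as the whiskering $\Id_{\Phi^1}\otimes\tau$ (resp.\ $\tau\otimes\Id_{\Phi^0}$), using commutativity of the square to identify its codomain with $K\circ T$ (resp.\ $T\circ G$). Your extra verification that the whiskered $2$-morphism is still a right homotopy (via Ken Brown's lemma in one case and preservation of fibrant objects in the other) is exactly the content the paper delegates to its earlier lemma on horizontal composition of right homotopies.
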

\begin{proof}
By assumption there is a retraction $H: \D \to \C$ of $G \in \Lci_w(\modcat_r)$ equipped with a homotopy $\tau: \Id_{\D} \xrightarrow{\sim} G \circ H$. We shall prove that $T = \Phi^0 \circ H$ is a solution up-to-homotopy. Using the fact $H$ is a retraction of $G$ one clearly has: $T \circ G = \Phi^0$. By inspection, the horizontal composite $ \Id_{\Phi^1} \otimes \tau$ is a right homotopy whose domain is $\Phi^1 \circ \Id_{\D} = \Phi^1$ and whose  codomain is $\Phi^1 \circ G \circ H = K  \circ \Phi^0 \circ H = K \circ T$. In other words, we have found a homotopy $h = \Id_{\Phi^1} \otimes \tau$ between $\Phi^1$ and $K \circ T$ which proves Assertion $(1)$. To prove Assertion $(2)$, consider a weakly invertible section $H$ of $K$ equipped with a homotopy $\tau:\Id_{\Aa} \to H \circ K$. Then we claim that $T = H \circ \Phi^1$ is a solution up-to-homotopy. To see this, first observe that $H$ being a section of $K$ implies that  $K \circ T = K \circ H \circ \Phi^1 = \Phi^1$.  Moreover, it can be easily checked that the homotopy $h = \tau \otimes \Id_{\Phi^0}$ is a homotopy whose domain is $\Phi^0$ and whose codomain is $T \circ G$.
\end{proof}
\begin{lem}\label{lem-lift-2}
With the same notation, the following hold.
\begin{enumerate}
	\item If $G \in \Lci_{wrel}(\modcat_r)$ and $K \in \Mor(\modcat_r)$ there is a solution $T: \D \to \Aa$ \emph{up-to-homotopy}, in that we have $T \circ G = \Phi^0$ and there is a homotopy $h: \Phi^1 \xrightarrow{\sim} K\circ T$ relative to $G$.
	\item  If $G \in Mor(\modcat_r)$ and $K \in \Rci_{wcor}(\modcat_r)$ there is a solution $T: \D \to \Aa$ \emph{up-to-homotopy}, in that we have $K \circ T = \Phi^1$ and there is a homotopy $h: \Phi^0 \xrightarrow{\sim} T\circ G$ co-relative to $K$.
\end{enumerate}
\end{lem}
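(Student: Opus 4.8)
The plan is to deduce Lemma \ref{lem-lift-2} from the proof of Lemma \ref{lem-lifting} by tracking one extra piece of data, namely the relative (resp. co-relative) nature of the homotopy produced. Recall from the notation that $G \in \Lci_{wrel}(\modcat_r)$ means $G$ admits a retraction $H$ together with a homotopy $\tau : \Id_{\D} \xrightarrow{\sim} G \circ H$ which is a \emph{deformation retraction}, i.e. $\tau$ is homotopy relative to $G$, so that $\tau \circ G$ is the identity homotopy on $G$. Likewise $K \in \Rci_{wcor}(\modcat_r)$ means $K$ admits a section $H$ with a homotopy $\tau : \Id_{\Aa} \xrightarrow{\sim} H \circ K$ which is co-relative to $K$, so $K \circ \tau$ is the identity homotopy.

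\medskip

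\noindent For Assertion $(1)$, I would again set $T = \Phi^0 \circ H$, which as in Lemma \ref{lem-lifting} satisfies $T \circ G = \Phi^0$ on the nose, and take $h = \Id_{\Phi^1} \otimes \tau$, the horizontal composite (whisker) of $\tau$ by $\Phi^1$ on the left. This $h$ is a homotopy from $\Phi^1 \circ \Id_{\D} = \Phi^1$ to $\Phi^1 \circ G \circ H = K \circ \Phi^0 \circ H = K \circ T$, exactly as before. The new point is that $h$ is relative to $G$: precomposing,
\[
h \circ G = (\Id_{\Phi^1} \otimes \tau) \circ G = \Id_{\Phi^1} \otimes (\tau \circ G),
\]
and since $\tau$ is a deformation retraction, $\tau \circ G$ is the identity homotopy on $G$, hence $\Id_{\Phi^1} \otimes (\tau \circ G)$ is the identity homotopy on $\Phi^1 \circ G = K \circ \Phi^0$. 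Thus $h \circ G$ is an identity homotopy, which is precisely the assertion that $h$ is relative to $G$. Here I am using that horizontal composition of homotopies is associative and compatible with vertical identities — the bookkeeping already recorded in the lemma preceding Definition \ref{df-right-ho} asserting that right homotopies horizontally compose.

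\medskip

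\noindent Assertion $(2)$ is dual. With $H$ a co-relative section of $K$ and $\tau : \Id_{\Aa} \xrightarrow{\sim} H \circ K$ a deformation section, put $T = H \circ \Phi^1$, so $K \circ T = K \circ H \circ \Phi^1 = \Phi^1$ on the nose, and let $h = \tau \otimes \Id_{\Phi^0}$, whiskering $\tau$ by $\Phi^0$ on the right. This is a homotopy from $\Id_{\Aa} \circ \Phi^0 = \Phi^0$ to $H \circ K \circ \Phi^0 = H \circ \Phi^1 \circ G = T \circ G$. To see $h$ is co-relative to $K$, postcompose:
\[
K \circ h = K \circ (\tau \otimes \Id_{\Phi^0}) = (K \circ \tau) \otimes \Id_{\Phi^0},
\]
and $K \circ \tau$ is the identity homotopy since $\tau$ is a deformation section, so $K \circ h$ is the identity homotopy on $K \circ \Phi^0 = \Phi^1 \circ G$, giving co-relativity.

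\medskip

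\noindent The only genuine subtlety — and the step I expect to need the most care — is checking that the interchange laws for whiskering behave as claimed, i.e. that $(\alpha \otimes \Id) \circ F = \alpha \otimes (\Id \circ F)$ and $K \circ (\alpha \otimes \Id) = (K \circ \alpha) \otimes \Id$ as \emph{2-morphisms in $\modcat_r$}, where recall a 2-morphism here is a natural transformation of right Quillen functors and composition with a map $\Ub_{(\Fb,\varphi)}$ of $\modcat_r$ acts on the right-adjoint component. This is a straightforward consequence of the 2-category structure on $\modcat_r$ recalled after Definition \ref{df-right-ho}, but one should verify that the notion ``identity homotopy'' (a natural transformation that is the identity) is preserved under these operations, which it is since whiskering an identity natural transformation yields an identity natural transformation. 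Everything else is literally the proof of Lemma \ref{lem-lifting} with the relativity clause appended.
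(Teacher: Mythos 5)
Your proof is correct and follows the paper's own argument: the paper reuses the explicit solutions $T=\Phi^0\circ H$ (resp. $T=H\circ\Phi^1$) and homotopies $h=\Id_{\Phi^1}\otimes\tau$ (resp. $h=\tau\otimes\Id_{\Phi^0}$) from Lemma \ref{lem-lifting} and simply notes "by inspection" that relativity (resp. co-relativity) of $\tau$ propagates to $h$. You have merely written out that inspection — the whiskering computation $h\circ G=\Id_{\Phi^1}\otimes(\tau\circ G)$ and its dual — which is exactly what the paper intends.
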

\begin{proof}
In the proof of Lemma \ref{lem-lifting}, if $\tau$ is homotopy relative to $G$ then so is  $h \circ G$ by inspection. This gives Assertion $(1)$. Dually, $K \circ \tau$ is a homotopy co-relative to $K$ if $\tau$ is, which gives Assertion $(2)$.
\end{proof}
\begin{df}
	In the diagram of Lemma \ref{lem-lifting}, when a solution up-to-homotopy exists we will say that the map $G$ has the h-LLP against $K$ and that $K$ has the h-RLP against $G$.
\end{df}

\begin{prop}
	In the category $\modcat_r$, the following hold.
\begin{enumerate}
	\item Any map $K : \Aa \to \Ba$ that posses the $h$-RLP against every map $G= (G^l,G^r,\varphi)$ such that $G^r$ is injective on objects,  is a Quillen equivalence.
	\item Any map $G: \C \to \D$ that posses the $h$-LLP against every map in $G= (G^l,G^r,\varphi)$ such that $G^r$ is an isofibration, is a Quillen equivalence.
\end{enumerate}

\begin{proof}
We use the retract argument for (right) Quillen functors to prove this. We will only give the proof of Assertion $(1)$, the argument is the same for Assertion $(2)$. Assume that $K : \Aa \to \Ba$  posses the $h$-RLP against every map in $\Lci(\modcat_r)$. By Theorem \ref{main-thm-fact}, there is a factorization $K= G_2 \circ G_1$ such that $G_1 \in \Lci(\modcat_r)$ and $G_2 \in \Rci_w(\modcat_r)$. Consider the commutative square of solid arrows as follows.
\[
\xy
(0,18)*+{\Aa}="W";
(0,0)*+{\D}="X";
(30,0)*+{\Ba}="Y";
(30,18)*+{\Aa}="E";
{\ar@{->}^-{G_2}"X";"Y"};
{\ar@{->}_-{G_1}"W";"X"};
{\ar@{->}^-{\Id}"W";"E"};
{\ar@{->}^-{K}"E";"Y"};
{\ar@{-->}^-{}"X";"E"};
\endxy
\]
There is a solution $T: \D \to \Aa$ up-to-homotopy such that $G_2= K \circ T$ and a homotopy $\tau: \Id_{\Aa} \to T \circ G_1$. We now use the same diagram as in the retract argument, except that the top horizontal composite is not the identity but homotopic to the identity:
\[
\xy
(0,15)*+{\A}="W";
(0,0)*+{\Ba}="X";
(50,0)*+{\Ba}="Y";
(50,15)*+{\A}="E";
{\ar@{->}^-{K}"W";"X"};
{\ar@{->}^-{K}"E";"Y"};
(25,15)*+{\D}="U";
(25,0)*+{\Ba}="V";
{\ar@{->}^-{\Id}"X";"V"};
{\ar@{->}^-{\Id}"V";"Y"};
{\ar@{->}^-{G_2}"U";"V"};
{\ar@{->}^-{T}"U";"E"};
{\ar@{->}^-{G_1}"W";"U"};
\endxy
\] 
 We have an explicit weak inverse of $K$ using the existing weak inverse $H_2$ of $G_2$. Indeed the composite $T \circ H_2$ is a weak inverse as we are going to explain. Clearly, we have an equality $K\circ (T \circ H_2) = G_2 \circ H_2 = \Id_{\Ba}$. There is a homotopy $\Id_{\Aa} \to (T\circ H_2) \circ K$ obtained as the composite:
$$\Id_{\Aa} \xrightarrow{\tau} \underbrace{T \circ G_1}_{= T \circ \Id_{\D} \circ G_1} \xrightarrow{\Id \otimes \tau(G_2) \otimes \Id } T \circ H_2 \circ G_2 \circ G_1 = (T\circ H_2) \circ K.$$
This proves that $K \in  \Rci_w(\modcat_r)$ which implies that $K$ is a Quillen equivalence by Proposition \ref{prop-retract-Quillen}.
\end{proof}
\end{prop}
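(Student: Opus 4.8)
The plan is to run the classical retract argument inside $\modcat_r$, but only up to homotopy, using the second (resp.\ first) functorial factorization of Theorem \ref{main-thm-fact} to supply the two intermediate maps and closing the loop with Proposition \ref{prop-retract-Quillen}(2). I will describe Assertion $(1)$ in detail; Assertion $(2)$ is obtained by the same argument read backwards, or formally via the involution $(-)^{\op}$, so I only indicate the changes at the end.

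Suppose then that $K:\Aa\to\Ba$ has the $h$-RLP against every morphism of $\modcat_r$ whose right component is injective on objects. First I would invoke the second factorization of Theorem \ref{main-thm-fact} to write $K=G_2\circ G_1$ with $G_1\in Mor_{inj_{ob}}(\modcat_r)$ and $G_2\in\Rci_w(\modcat_r)\cap Mor_{ifib}(\modcat_r)$; write $\D$ for the middle model category, so $G_1:\Aa\to\D$ and $G_2:\D\to\Ba$. Because $G_1$ is injective on objects, the hypothesis on $K$ applies to the commutative square whose top edge is $\Id_{\Aa}$, left edge $G_1$, right edge $K$ and bottom edge $G_2$ (it commutes since $K=G_2\circ G_1$). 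This yields a solution up-to-homotopy $T:\D\to\Aa$ of the flavour of Lemma \ref{lem-lifting}(2): one has $K\circ T=G_2$ on the nose and a right homotopy $\tau:\Id_{\Aa}\xrightarrow{\sim}T\circ G_1$.

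The heart of the argument is then to promote $T$ to a weakly invertible section of $K$. Let $H_2:\Ba\to\D$ be the weakly invertible section of $G_2$ guaranteed by $G_2\in\Rci_w(\modcat_r)$, so $G_2\circ H_2=\Id_{\Ba}$ together with a right homotopy $\rho:\Id_{\D}\xrightarrow{\sim}H_2\circ G_2$, and set $H:=T\circ H_2:\Ba\to\Aa$. On one hand $K\circ H=(K\circ T)\circ H_2=G_2\circ H_2=\Id_{\Ba}$, so $H$ is a genuine section of $K$. On the other hand, whiskering $\rho$ on the left by $T$ and on the right by $G_1$ produces a $2$-cell $T\circ G_1\Rightarrow T\circ H_2\circ G_2\circ G_1=H\circ K$; since every right Quillen functor carries fibrant objects to fibrant objects and weak equivalences between fibrant objects to weak equivalences (Ken Brown), this whiskered $2$-cell is again a right homotopy, and composing it vertically with $\tau$ gives a right homotopy $\Id_{\Aa}\xrightarrow{\sim}H\circ K$. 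Hence $K\in\Rci_w(\modcat_r)$, and $K$ is a Quillen equivalence by Proposition \ref{prop-retract-Quillen}(2).

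For Assertion $(2)$ I would dualize every step: factor $G=G_2\circ G_1$ using the \emph{first} factorization of Theorem \ref{main-thm-fact}, so that $G_1\in\Lci_w(\modcat_r)\cap Mor_{inj_{ob}}(\modcat_r)$ and $G_2$ is an isofibration; apply the $h$-LLP of $G$ against $G_2$ to the square with edges $G_1$, $G$, $G_2$, $\Id$ to obtain $T$ with $T\circ G=G_1$ and a right homotopy $\Id\xrightarrow{\sim}G_2\circ T$; then compose $T$ with the weakly invertible retraction $H_1$ of $G_1$, whisker the relevant homotopies, and conclude $G\in\Lci_w(\modcat_r)$, hence a Quillen equivalence, again by Proposition \ref{prop-retract-Quillen}(2). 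The step I expect to be delicate is the $2$-categorical bookkeeping in the third paragraph --- keeping straight the directions of the $2$-cells, which side each whiskering acts on, and in particular checking that whiskering a right homotopy by the ambient right Quillen functors preserves the property of being a right homotopy (this is exactly where fibrancy and Ken Brown's lemma intervene). One also has to confirm that the ``solution up-to-homotopy'' produced by the hypothesis is the one of Lemma \ref{lem-lifting}(2), with $K\circ T$ strictly equal to the bottom edge and the homotopy supported on the top triangle, since that is precisely what makes the splicing with $H_2$ go through.
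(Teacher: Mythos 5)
Your proposal is correct and follows essentially the same route as the paper: factor $K=G_2\circ G_1$ via the second functorial factorization of Theorem \ref{main-thm-fact}, use the $h$-RLP against the object-injective map $G_1$ to produce $T$ with $K\circ T=G_2$ and $\tau:\Id_{\Aa}\to T\circ G_1$, and then splice $T$ with the weakly invertible section $H_2$ of $G_2$, whiskering the homotopies to exhibit $K\in\Rci_w(\modcat_r)$ and conclude by Proposition \ref{prop-retract-Quillen}. Your explicit care about which flavour of ``solution up-to-homotopy'' the hypothesis supplies, and about whiskering preserving right homotopies, matches the (implicit) assumptions in the paper's own argument.
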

\section{Homotopy theory of Quillen-Segal algebras}\label{sec-qs-alg}
\noindent In this section we assume that $\M$ is a (monoidal) model category that is also cofibrantly generated. We let $\ag = \oalg(\M) $ be the category of $\O$-algebras, where $\O$ is an operad, monad, properad or  a PROP. We denote by $\Ub: \oalg(\M) \to \M$ the forgetful functor and by $\mualg= \mdua$. We remind the reader that the adjunction $\Fb \dashv \Ub$ gives an adjunction $\Gamma: \arm \leftrightarrows \mualg : \Piar$ (see \cite{Bacard_QS}). 
Call a morphism $\sg: \Fc \xrightarrow{\osg} \Gc$ of $\mualg$:
\begin{itemize}
	\item a $\Piar$-injective fibration if $\Piar(\sg)=[\sgo, \Ub(\sgi)]$ is a fibration in $\armij$.
	\item a $\Piar$-projective fibration if $\Piar(\sg)$ is a fibration in $\armpj$
	\item a $\Piar$-weak equivalence if $\Piar(\sg)$ is a level-wise weak equivalence in $\arm$.
	\item a  $\Piar$-injective (resp. $\Piar$-projective) cofibration it possesses the LLP against any map that is simultaneously a $\Piar$-injective  (resp. a $\Piar$-projective) fibration and a $\Piar$-weak equivalence.
\end{itemize}
If the $\Piar$-injective data determine a model structure, we will say that the \emph{right induced injective model structure} on $\mualg$ exists. Similarly, if the $\Piar$-projective data define a model structure we will say that the \emph{right induced projective model structure} exists.
We generalize a result in \cite{Bacard_QS} that was established under the hypothesis that $\M$ is combinatorial.

\begin{thm}\label{simultan-exist}
The following are equivalent. 
\begin{enumerate}
	\item The right induced model structure on $\oalg(\M)$ exists.
	\item  The right induced injective model structure on $\mualg$ exists.
	\item  The right induced projective model structure on $\mualg$ exists.
\end{enumerate}
\end{thm}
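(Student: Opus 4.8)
The plan is to reduce each of the statements $(1)$, $(2)$, $(3)$ to a single \emph{acyclicity condition} on cell complexes and then to prove that these three conditions are equivalent. Two preliminary observations set the stage. First, since $\arm=\Ar(\M)$ is the diagram category $\M^{[1]}$, the injective model structure $\armij$ and the projective model structure $\armpj$ supplied by Theorem \ref{inj-proj-thm} (applied to $\Id_\M$) are cofibrantly generated whenever $\M$ is — $\armij$ is a Reedy model structure, so no combinatoriality of $\M$ is required, and this is exactly what permits dropping that hypothesis from \cite{Bacard_QS}. Fix sets $I_{ms},J_{ms}$ of generating (trivial) cofibrations for $ms\in\{\mathrm{inj},\mathrm{proj}\}$ and write $\mathrm{sat}(S)$ for the saturation of a set $S$ of maps (its closure under pushouts, transfinite composition and retracts). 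Second, from the adjunctions $\Gamma\dashv\Piar$, $\Pi^0\dashv R^0$ and $\Pi^1\dashv\iota$ of Proposition \ref{adjunction-prop}, the functors $\Gamma,\Pi^0,\Pi^1$ all preserve colimits; and a direct computation with $\Fb\dashv\Ub$ shows that $\Gamma$ sends an arrow $A\to B$ of $\M$ to the object of $\mualg$ with components $A$ and $\Fb B$, whence the identities of functors $\Pi^0\circ\Gamma=\mathrm{dom}$ and $\Pi^1\circ\Gamma=\Fb\circ\mathrm{cod}$, where $\mathrm{dom},\mathrm{cod}\colon\arm\to\M$ are the domain and codomain functors.

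The implications $(1)\Rightarrow(2)$ and $(1)\Rightarrow(3)$ are immediate: if the right induced model structure on $\oalg(\M)$ exists then $\oalg(\M)$ is a model category, so Theorem \ref{inj-proj-thm} produces the model categories $\muaij$ and $\muapj$; and since a map $\sg^1$ of $\oalg(\M)$ is a (trivial) fibration precisely when $\Ub(\sg^1)$ is one in $\M$, comparing the defining classes identifies $\muaij$ with the $\Piar$-injective model structure on $\mualg$ and $\muapj$ with the $\Piar$-projective one.

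For the converse implications I would use the acyclicity form of the transfer theorem. On the one hand, $(1)$ holds if and only if every relative $\Fb(J_\M)$-cell complex in $\oalg(\M)$ is sent by $\Ub$ to a weak equivalence of $\M$ (necessity: such a map has the left lifting property against every $\Ub$-fibration and is therefore a trivial cofibration of the candidate structure; sufficiency: the transfer theorem, the smallness hypotheses being those available when $\O$ is an operad, monad, properad or PROP, as in \cite{Bacard_QS}). On the other hand, for $ms\in\{\mathrm{inj},\mathrm{proj}\}$, statement $(2)$ (resp.\ $(3)$) holds if and only if, for every relative $\Gamma(J_{ms})$-cell complex $c$ in $\mualg$, the map $\Piar(c)$ is a level-wise weak equivalence, i.e.\ $\Pi^0(c)$ and $\Ub(\Pi^1(c))$ are weak equivalences of $\M$. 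Since $\Pi^0$ is cocontinuous with $\Pi^0\Gamma=\mathrm{dom}$, the map $\Pi^0(c)$ is a relative $\mathrm{dom}(J_{ms})$-cell complex, and a trivial cofibration of $\armij$ (resp.\ $\armpj$) has trivial-cofibration components; hence $\mathrm{dom}(J_{ms})$ consists of trivial cofibrations of $\M$ and $\Pi^0(c)$ is automatically a weak equivalence, so the ``$\Pi^0$-half'' of the acyclicity condition is vacuous. For the ``$\Pi^1$-half'', $\Pi^1(c)$ is a relative $\Fb(\mathrm{cod}(J_{ms}))$-cell complex, and — working with the saturations rather than with cell complexes themselves, and using cocontinuity of $\Pi^1$ and $\Gamma$ together with the fact that $\mathrm{cod}$ is cocontinuous and carries trivial cofibrations of $\armij$ (resp.\ $\armpj$) to trivial cofibrations of $\M$ — one obtains $\Pi^1\bigl(\mathrm{sat}(\Gamma(J_{ms}))\bigr)=\mathrm{sat}\bigl(\Fb(\mathrm{cod}(J_{ms}))\bigr)=\mathrm{sat}\bigl(\Fb(J_\M)\bigr)$. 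Because the class of $\Ub$-weak equivalences is closed under retracts, the ``$\Pi^1$-half'' is thus equivalent to the acyclicity condition for $(1)$. Hence $(1)\Leftrightarrow(2)$ and $(1)\Leftrightarrow(3)$, and the theorem follows.

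The step I expect to be the main obstacle is this last comparison: one must pin down the generating trivial cofibrations of $\armij$ and $\armpj$ — via the diagram/Reedy description of $\Ar(\M)$ — well enough to control their images under $\mathrm{dom}$ and $\mathrm{cod}$, hence under $\Pi^0$ and $\Pi^1$, and then show that the saturated classes they generate in $\oalg(\M)$ all coincide with the one generated by $\Fb(J_\M)$; realizing an arbitrary relative $\Fb(\mathrm{cod}(J_{ms}))$-cell complex as $\Pi^1$ of a relative $\Gamma(J_{ms})$-cell complex (carrying attaching maps along $\iota$) is the delicate point here. A secondary, purely size-theoretic point is to check that the transfer theorem applies at all (smallness of the relevant domains in $\mualg$); this is handled as in \cite{Bacard_QS}, and is the place where cofibrant generation of $\M$, together with the admissibility assumptions on $\O$, is used.
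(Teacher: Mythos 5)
Your forward implications $(1)\Rightarrow(2),(3)$ agree with the paper, and the reduction of each statement to an acyclicity condition on relative cell complexes is a legitimate framing. The gap is exactly at the point you flag as delicate, and it is not a technicality that routine care will dispatch: to get $(2)\Rightarrow(1)$ or $(3)\Rightarrow(1)$ you must realize an arbitrary relative $\Fb(J_{\M})$-cell complex in $\oalg(\M)$ as $\Piun$ of a relative cell complex in $\mualg$ on which the assumed acyclicity can be invoked, and your proposed mechanism — ``carrying attaching maps along $\iota$'' — uses the wrong adjoint. Since $\Piun\dashv\iota$, an attaching map $\Piun(\mathrm{dom}\,\Gamma(j))\to X_\alpha$ transposes to a map into $\iota(X_\alpha)$, not into the partial cell complex $\tld{X}_\alpha$ you are building; the unit $\tld{X}_\alpha\to\iota\Piun\tld{X}_\alpha$ points the wrong way, and $\iota$, being a right adjoint, does not commute with the pushouts and transfinite compositions, so you cannot simply take $\tld{X}_\alpha=\iota(X_\alpha)$ either. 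Consequently the inclusion $\Piun\bigl(\mathrm{sat}(\Gamma(J_{ms}))\bigr)\supseteq\mathrm{sat}\bigl(\Fb(J_{\M})\bigr)$, on which your whole converse rests, is not established (only $\subseteq$ follows from cocontinuity of $\Piun$).

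The paper circumvents this entirely by using the \emph{other} adjoint of $\Piun$, namely $L^{1}$ with $L^{1}\dashv\Piun$ and $\Piun\circ L^{1}=\Id$: being a left adjoint, $L^{1}$ transports the entire cell complex (attaching maps, pushouts, transfinite composites) from $\oalg(\M)$ into $\mualg$, and the composite adjunctions $(L^{1}\dashv\Piun)$ and $(\Fb\dashv\Ub)$ show directly that $L^{1}(\Fb(f))$ has the LLP against every $\Piar$-projective (resp.\ new injective/projective) fibration, hence is a trivial cofibration in the assumed structure; its cobase changes are therefore weak equivalences, and applying $\Piun$ recovers the cobase change of $\Fb(f)$. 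If you replace $\Gamma$ by $L^{1}\circ\Fb$ in your comparison of saturated classes, your argument closes; as written, it does not. (The paper also routes $(2)\Rightarrow(3)$ through the observation that the generating projective trivial cofibrations of $\Arr(\M)$ are injective trivial cofibrations, rather than proving $(3)\Leftrightarrow(1)$ and $(2)\Leftrightarrow(1)$ independently, but that difference is cosmetic.)
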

\begin{proof}
	\hfill
	\ \\
$(1) \Longrightarrow (2)$:  this is given by Theorem \ref{inj-proj-thm}. Indeed, the weak equivalences and the fibrations in $\mualg_{inj}$ coincide with the $\Piar$-weak equivalences and the $\Piar$-fibrations respectively. Therefore the right induced injective model structure coincide with the (simply) injective model structure of Theorem \ref{inj-proj-thm} applied to $\Ub : \oalg(\M) \to \M$.  \\
$(2) \Longrightarrow (3)$: we create the projective model structure through the adjunction $\Gamma \dashv \Piar$. By the classical argument of right induced model structure it suffices to show that for any element  $j \in \Mor(\armpj)$ of the generating set for the projective trivial cofibrations, then any cobase change of $\Gamma(j)$ is a weak equivalence in $\mualg$. We note that $j$ is also an injective trivial cofibration since the identity $\Id :\armpj \to \armij$ is a left Quillen functor. With the Quillen adjunction $\Gamma: \armij \leftrightarrows \mualg_{inj}: \Piar$ , $\Gamma(j)$ is an injective trivial cofibration in $\mualg_{inj}$ whose cobase change is necessarily a weak equivalence in $\mualg_{inj}$, hence in $\mualg_{proj}$ as desired.\\
$(3) \Longrightarrow (1)$: this is also easy, it suffices to show that if $f$ is an element of the generating set of the trivial cofibration in $\M$, then the cobase change of $\Fb(f)$ along any attaching map $u$ is a weak equivalence. We use the same argument as in \cite[Theorem 6.1]{Bacard_QS}. With the adjunctions $(L^{1} \dashv \Piun)$ and $(\Fb \dashv \Ub)$, it can be easily seen that the map $L^{1}(\Fb(f))$ has the LLP against any $\Piar$-projective fibration, thus $L^{1}(\Fb(f))$ is a $\Piar$-projective trivial cofibration. Given a pushout diagram in $\oalg(\M)$, its image under $L^{1}$ is also a pushout diagram in $\mualg$ since $L^{1}$ preserves colimits (as does any left adjoint). We note that $\Piun \circ L^{1} = \Id_{\oalg(\M)}$ and  colimits are computed level-wise, then the cobase change of $\Fb(f)$ along  $u$ is the image under $\Piun$ of the cobase change of $L^{1}(\Fb(f))$ along the attaching map $L^{1}(u)$. Since $L^{1}(\Fb(f))$ is a $\Piar$-projective trivial cofibration, the cobase change of it along any map is a $\Piar$-weak equivalence, which implies that the cobase change of $\Fb(f)$ along the attaching map $u$ is also a weak equivalence as desired.
\end{proof}
Say that a morphism $\sg: \Fc \xrightarrow{\osg} \Gc$ is:
\begin{itemize}
	\item a new injective (resp. projective) fibration if it is a  left injective fibration in the sense of Definition \ref{left-injective-data} (resp. Definition \ref{left-projective-data}).
	\item a new weak equivalence if $\Ub(\sgi)$ is a weak equivalence in $\M$
	\item a new injective (resp. projective) cofibration if it has the LLP against any map that is simultaneously a new injective (resp. projective) fibration and a new weak equivalence.
\end{itemize}
If the new injective data (resp. projective data) determine a model structure on $\mualg$, we will say that the $\Piun$-localized injective (resp. projective) model structure exists.
\begin{cor}
	The following are equivalent.
	\begin{enumerate}
		\item The right induced model structure on $\oalg(\M)$ exists.
		\item The $\Piun$-localized injective model structure on $\mualg$ exists.
		\item The $\Piun$-localized projective model structure on $\mualg$ exists.
	\end{enumerate}
\end{cor}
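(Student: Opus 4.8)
The plan is to mirror the proof of Theorem~\ref{simultan-exist}. The key observation is that the $\Piun$-localized injective (resp.\ projective) model structure on $\mualg$, \emph{whenever it exists}, is literally the model category $\mualg^{\bf{L}}_{inj}$ (resp.\ $\mualg^{\bf{L}}_{proj}$) produced by Theorem~\ref{left-inj-thm} (resp.\ Theorem~\ref{left-proj-thm}) applied to the forgetful functor $\Ub:\oalg(\M)\to\M$. Consequently $(1)\Rightarrow(2)$ and $(1)\Rightarrow(3)$ will be formal, and the substantive content is the two converses.

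For $(1)\Rightarrow(2)$: assuming $(1)$, the category $\ag=\oalg(\M)$ is a model category in which a map $f$ is a weak equivalence (resp.\ a fibration) exactly when $\Ub(f)$ is one in $\M$, so $\Ub$ is a right Quillen functor and Theorem~\ref{left-inj-thm} applies. Unwinding Definition~\ref{left-injective-data}, a map $\sg=[\sgo,\sgi]$ is a weak equivalence in $\mualg^{\bf{L}}_{inj}$ iff $\Piun(\sg)=\sgi$ is a weak equivalence in $\oalg(\M)$, i.e.\ iff $\Ub(\sgi)$ is a weak equivalence in $\M$, which is precisely a \emph{new weak equivalence}; its fibrations are the left injective fibrations, that is the \emph{new injective fibrations}; and its trivial fibrations are therefore the new injective fibrations that are also new weak equivalences, so its cofibrations --- characterized by the left lifting property against the trivial fibrations --- are exactly the \emph{new injective cofibrations}. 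Thus $\mualg^{\bf{L}}_{inj}$ \emph{is} the $\Piun$-localized injective model structure, which gives $(2)$. The same argument with Theorem~\ref{left-proj-thm} and Definition~\ref{left-projective-data} gives $(1)\Rightarrow(3)$.

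For $(2)\Rightarrow(1)$ and $(3)\Rightarrow(1)$ I would argue uniformly, the point being that a new injective fibration and a new projective fibration $\beta=[\beta^0,\beta^1]$ both have the property that $\Ub(\beta^1)$ is a fibration in $\M$. By the usual transfer argument for the forgetful functor (as in \cite[Theorem~6.1]{Bacard_QS}), it suffices to show that for each generating trivial cofibration $f$ of $\M$, the cobase change of $\Fb(f)$ in $\oalg(\M)$ along any attaching map is carried by $\Ub$ to a weak equivalence. The crucial step is that $L^{1}(\Fb(f))$ has the left lifting property against every new injective (resp.\ new projective) fibration: using the adjunctions $L^{1}\dashv\Piun$ of Proposition~\ref{adjunction-prop} and $\Fb\dashv\Ub$, a lifting problem of $L^{1}(\Fb(f))$ against such a $\beta$ transposes to the lifting problem of $f$ against the fibration $\Ub(\beta^1)$ in $\M$, which has a solution since $f$ is a trivial cofibration. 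Granting $(2)$ (resp.\ $(3)$), it follows that $L^{1}(\Fb(f))$ is a trivial cofibration in $\mualg^{\bf{L}}_{inj}$ (resp.\ in $\mualg^{\bf{L}}_{proj}$), hence so are all of its cobase changes, which are in particular new weak equivalences. Finally, $L^{1}$ preserves colimits and $\Piun$ is itself a left adjoint with $\Piun\circ L^{1}=\Id_{\oalg(\M)}$, so applying $L^{1}$ to a pushout square in $\oalg(\M)$ and then $\Piun$ exhibits the cobase change of $\Fb(f)$ as $\Piun$ of a cobase change of $L^{1}(\Fb(f))$; being a new weak equivalence, its image under $\Ub$ is a weak equivalence in $\M$, as required.

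The step I expect to be the main obstacle is exactly this last pair of implications, and within them the verification that the transfer machinery is available at all: the existence of the relevant colimits in $\oalg(\M)$, a solution-set or finiteness condition allowing the small object argument, and the fact that $\Ub$ sends transfinite composites of the cobase changes above to weak equivalences. These are handled precisely as in \cite[Theorem~6.1]{Bacard_QS} and in the proof of Theorem~\ref{simultan-exist}, so in the write-up I would simply invoke that argument rather than reprove it.
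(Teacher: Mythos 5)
Your proposal is correct and follows essentially the same route as the paper: the forward implications are obtained by identifying the $\Piun$-localized classes with those of $\mualg^{\bf{L}}_{inj}$ and $\mualg^{\bf{L}}_{proj}$ from Theorems \ref{left-inj-thm} and \ref{left-proj-thm}, and the converses use exactly the paper's transfer argument, transposing the lifting problem for $L^{1}(\Fb(f))$ through $L^{1}\dashv\Piun$ and $\Fb\dashv\Ub$ and then pushing the cobase change back down via $\Piun\circ L^{1}=\Id$. No gaps.
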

\begin{proof}
	We will show directly that $(1) \Longleftrightarrow (2)$ and that $(1) \Longleftrightarrow (3)$.\\
 $(1) \Longrightarrow (2)$: From the right induced model structure on $\oalg(\M)$ we build successively the model categories $\mualg_{inj}$ (Theorem \ref{inj-proj-thm}) then  $\mualg_{inj}^{\bf{L}}$ (Theorem \ref{left-inj-thm}). The new injective fibrations and the new weak equivalences are precisely the left injective fibrations and the left injective weak equivalences, therefore the left injective model structure coincides with the $\Piun$-localized model structure. \\
$(1) \Longrightarrow (3)$: Same proof using  $\mualg_{proj}$ (Theorem \ref{inj-proj-thm}) and 
		 $\mualg_{proj}^{\bf{L}}$ (Theorem \ref{left-proj-thm}) instead of $\mualg_{inj}$ and $\mualg_{inj}^{\bf{L}}$ (Theorem \ref{left-inj-thm}) respectively.\\
$(2) \Longrightarrow (1)$ and $(3) \Longrightarrow (1)$: We reuse the argument in Theorem \ref{simultan-exist}. We must show that if $f$ is an element of the generating set for the trivial cofibrations in $\M$, then the cobase change of $\Fb(f)$ along any attaching map $u$ is a weak equivalence. First observe that
	a lifting problem in $\mualg$ defined by $L^{1}(\Fb(f))$ and $\sg=\osg$ is equivalent, by  $(L_1\dashv \Piun)$, to a lifting problem in $\oalg(\M)$ defined by $\Fb(f)$ and $\sgi$. The later problem is equivalent, by $(\Fb \dashv \Ub)$, to a lifting problem in $\M$ defined by $f$ and $\Ub(\sgi)$. Therefore if $\Ub(\sgi)$ is a fibration in $\M$, then there is a solution to the aforementioned lifting problems. By definition, the map $\Ub(\sgi)$ is a fibration for any new injective (resp. projective) fibration in $\mualg$, thus $L^{1}(\Fb(f))$ has the LLP against any new injective (resp. projective) fibration. In other words, $L^{1}(\Fb(f))$  is a trivial cofibration in the $\Piun$-localized injective (resp. projective)  model structure. In particular, the cobase change of $L^{1}(\Fb(f))$ along $L^1(u)$ is - necessarily - a $\Piun$-equivalence.  As observed earlier, pushouts are computed level-wise, $L^1$ preserves pushouts and we have $\Piun\circ L^1 = \Id_{\oalg(\M)}$. By definition of a $\Piun$-equivalence, we see that the cobase change of $\Fb(f)$ must be a weak equivalence of $\O$-algebras, whence Assertion $(1)$.
\end{proof}
\begin{cor}
There is a Quillen model structure for $\O$-algebras if and only if there is Quillen model structure for Quillen-Segal $\O$-algebras. Moreover, the homotopy theory of $\O$-algebras is Quillen equivalent to that of Quillen-Segal $\O$-algebras.
\end{cor}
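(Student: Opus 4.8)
\noindent The plan is to deduce this corollary directly from Theorem \ref{simultan-exist}, the Corollary that immediately precedes it, and Theorem \ref{fact-inj-thm} together with its projective analogue Theorem \ref{fact-proj-thm}, all applied to the forgetful functor $\Ub : \oalg(\M) \to \M$ with $\ag = \oalg(\M)$. First I would fix the terminology: by a \emph{Quillen model structure for Quillen-Segal $\O$-algebras} I mean the $\Piun$-localized injective model structure on $\mualg$ --- equivalently, by the preceding Corollary, the $\Piun$-localized projective one --- whose fibrant objects are exactly the Quillen-Segal $\O$-objects by Corollary \ref{cor-qs-uobj-inj} (respectively Corollary \ref{cor-qs-uobj-proj}). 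With this reading the biconditional asserted in the statement is precisely the chain of equivalences $(1)\Leftrightarrow(2)$ and $(1)\Leftrightarrow(3)$ of the preceding Corollary, which itself rests on Theorem \ref{simultan-exist}; so the first half of the corollary needs no new argument.

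For the ``moreover'' clause I would proceed as follows. Assuming the right induced model structure on $\oalg(\M)$ exists, $\oalg(\M)$ is then a genuine model category, so Theorem \ref{inj-proj-thm} and Theorem \ref{left-inj-thm} may be invoked with $\ag = \oalg(\M)$; this manufactures the model category $\mualg_{inj}^{\bf{L}}$, which by the preceding Corollary \emph{is} the homotopy theory of Quillen-Segal $\O$-algebras. By Theorem \ref{fact-inj-thm}$(1)$ the adjunction $\Piun : \mualg_{inj}^{\bf{L}} \leftrightarrows \oalg(\M) : \iota$ is a Quillen equivalence, so passing to homotopy categories yields the asserted equivalence $\Ho(\oalg(\M)) \simeq \Ho(\mualg_{inj}^{\bf{L}})$. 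I would then record the projective counterpart by the same reasoning, using Theorem \ref{left-proj-thm} and Theorem \ref{fact-proj-thm}$(1)$ in place of Theorem \ref{left-inj-thm} and Theorem \ref{fact-inj-thm}; and, to see that the injective and projective models of Quillen-Segal $\O$-algebras themselves agree, I would note that $\iota$ underlies the same functor of underlying $1$-categories in both localizations, so that the two-out-of-three property for Quillen equivalences --- together with Theorem \ref{inj-proj-thm}$(3)$ --- forces $\Id : \mualg_{proj}^{\bf{L}} \to \mualg_{inj}^{\bf{L}}$ to be a Quillen equivalence as well.

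I do not expect a genuine obstacle: all the homotopy-theoretic content is already carried by the cited theorems. The only points demanding care are bookkeeping ones --- matching ``the'' model structure for Quillen-Segal $\O$-algebras with what the preceding Corollary calls the $\Piun$-localized structure, confirming its fibrant objects really are Quillen-Segal $\O$-objects via Corollaries \ref{cor-qs-uobj-inj} and \ref{cor-qs-uobj-proj}, and observing that the construction of $\mualg_{inj}^{\bf{L}}$ uses nothing beyond $\oalg(\M)$ being a model category, which is exactly the standing hypothesis of the half of the biconditional that produces it. Hence the ``Quillen equivalent'' conclusion is automatic as soon as both structures exist.
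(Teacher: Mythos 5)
Your proposal is correct and follows essentially the same route as the paper: the biconditional is read off from Theorem \ref{simultan-exist} and the preceding corollary, the Quillen equivalence comes from the adjunction $\Piun \dashv \iota$ of Theorem \ref{fact-inj-thm} (resp.\ Theorem \ref{fact-proj-thm}), and Corollaries \ref{cor-qs-uobj-inj} and \ref{cor-qs-uobj-proj} justify identifying $\mualg_{ms}^{\bf{L}}$ with the homotopy theory of Quillen--Segal algebras via its fibrant objects. The extra observation comparing the injective and projective localizations is a harmless addition not present in (and not needed for) the paper's argument.
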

\begin{proof}
By the previous results, the right induced model structure on $\oalg(\M)$ exists (if and only) if any (hence all) of the following model categories exists:
\begin{center}
$\mualg_{ms}$,  $\mualg_{ms}^{\bf{L}}$, for $ms \in \{ \tx{`inj', `proj'}\}$.
\end{center}
  By Theorem \ref{fact-inj-thm} and Theorem \ref{fact-proj-thm}  the functor $\iota : \oalg(\M) \xhookrightarrow{\sim} \mualg_{ms}^{\bf{L}}$ is a right Quillen equivalence for $ms \in \{ \tx{`inj', `proj'}\}$. Corollary \ref{cor-qs-uobj-inj} and Corollary \ref{cor-qs-uobj-proj} assert that the fibrant objects in $\mualg_{ms}^{\bf{L}}$ are Quillen-Segal algebras.
\end{proof}
\section{Factorization of a lax monoidal functor}\label{sec-fact-lax}
\noindent In this section we show that the factorization $\ag \xhookrightarrow{\iota} \mdua \xrightarrow{\Pi^0} \M$ holds naturally if $\Ub$ is a lax monoidal functor. The material of this section will be needed for our upcoming work on monoidal Quillen adjunction and related topics.
 The laxity map for $\Ub$ will be denoted by $\psi: \Ub(a) \otimes \Ub(b) \to \Ub(a\otimes b)$. The unit objects will be denoted respectively by $I_{\ag}$ and $I_\M$, and if there is no potential confusion we will simply write $I$ for both. We remind the reader that the data of a lax functor include a laxity map $\psi_I: I_{\M} \to \Ub(I_{\ag})$ which is furthermore subjected to the unit axioms (see \cite{Leinster_higher_op}). We will prove in the first place the following:
\begin{prop}\label{fact-lax}
Assume that $\Ub: \ag \to \M$ is a lax functor between monoidal categories. Then there is a monoidal structure on the category $\mdua$ such that:
\begin{enumerate}
	\item The functor $\iota :\ag \hookrightarrow \mdua$ is a lax monoidal functor
	\item The functor $\Pio: \mdua \to \M$  and $\Piun: \mdua \to \ag$ are strong monoidal functors.
	\item If in addition $\ag$ and $\M$ are  closed monoidal , then so is $\mdua$.
\end{enumerate}
\end{prop}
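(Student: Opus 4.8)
The plan is to transport the monoidal structures of $\M$ and $\ag$ onto $\mdua$ levelwise and glue them together along the laxity map $\psi$. Concretely, I would define the tensor product on $\mdua$ by
$$[\F^0,\F^1,\pif] \ot [\G^0,\G^1,\pig] := [\F^0 \ot \G^0,\ \F^1 \ot \G^1,\ \psi \circ (\pif \ot \pig)],$$
where $\psi \circ (\pif \ot \pig)$ denotes the composite $\F^0 \ot \G^0 \xrightarrow{\pif \ot \pig} \Ub(\F^1) \ot \Ub(\G^1) \xrightarrow{\psi} \Ub(\F^1 \ot \G^1)$, with unit object $[I_\M,I_\ag,\psi_I]$. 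The tensor acts componentwise on morphisms, and the associativity and unit constraints are taken to be the pairs $[\alpha^\M,\alpha^\ag]$, $[\lambda^\M,\lambda^\ag]$, $[\rho^\M,\rho^\ag]$ of the corresponding constraints of $\M$ and $\ag$. The first point to check is that these pairs are genuine morphisms in $\mdua$, i.e. that the squares relating the various $\pi$-maps commute; after invoking naturality of $\alpha^\M$ and of $\psi$, this reduces verbatim to the associativity and unit axioms of the lax monoidal functor $\Ub$. The pentagon and triangle identities in $\mdua$ then hold because they hold componentwise in $\M$ and in $\ag$. This simultaneously shows that $\Pio$ and $\Piun$ are strict — hence strong — monoidal functors, with identity comparison data, which is Assertion (2).

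For Assertion (1), I would endow $\iota$ with the lax structure maps $\iota(\Pa) \ot \iota(\Qa) \to \iota(\Pa \ot \Qa)$ given by the pair $[\psi_{\Pa,\Qa},\ \Id_{\Pa \ot \Qa}]$ and the unit comparison $[\psi_I,\ \Id_{I_\ag}]\colon [I_\M,I_\ag,\psi_I] \to \iota(I_\ag) = [\Ub(I_\ag),I_\ag,\Id]$. That these are morphisms of $\mdua$ is immediate, since in each case one leg of the relevant square is an identity; and the associativity and unit coherence diagrams for this lax structure are again precisely the lax monoidal axioms for $\Ub$.

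The substantive part is Assertion (3). Assuming $\M$ and $\ag$ are closed monoidal, and recalling that $\M$ has finite limits, I would define the internal hom of $[\G]=[\G^0,\G^1,\pig]$ and $[\H]=[\H^0,\H^1,\pi_\H]$ by taking its $\ag$-component to be $\HOM_\ag(\G^1,\H^1)$ and its $\M$-component to be the pullback in $\M$
$$\HOM_\M(\G^0,\H^0) \times_{\HOM_\M(\G^0,\Ub(\H^1))} \Ub\big(\HOM_\ag(\G^1,\H^1)\big),$$
where the left-hand map is postcomposition with $\pi_\H$ and the right-hand map is the composite $\Ub(\HOM_\ag(\G^1,\H^1)) \to \HOM_\M(\Ub(\G^1),\Ub(\H^1)) \xrightarrow{\pig^{\ast}} \HOM_\M(\G^0,\Ub(\H^1))$, whose first arrow is the canonical lax-closed comparison map of $\Ub$ (the mate of $\psi$ under the two tensor-hom adjunctions). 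The structure map of $\HOM([\G],[\H])$ is the second projection of this pullback. One then checks the natural bijection
$$\Hom_{\mdua}([\F]\ot[\G],[\H]) \;\cong\; \Hom_{\mdua}([\F],\HOM([\G],[\H]))$$
by transposing across the adjunctions of $\M$ and $\ag$: a map on the left is a pair $(f^0,f^1)$ satisfying $\pi_\H f^0 = \Ub(f^1)\,\psi\,(\pif \ot \pig)$; a map on the right unwinds, via the universal property of the pullback, to a pair $(b^0\colon \F^0 \to \HOM_\M(\G^0,\H^0),\ a^1\colon \F^1 \to \HOM_\ag(\G^1,\H^1))$ subject to a single compatibility equation; and transposing that equation, using naturality of $\psi$ together with the description of $\Ub(\tx{ev}_\ag)$, turns it precisely into the displayed morphism condition for $(f^0,f^1)$. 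Naturality in $[\F]$, $[\G]$, $[\H]$ then follows from naturality of the component adjunctions and functoriality of the pullback, so $-\ot[\G]$ has a right adjoint for every $[\G]$.

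I expect the main obstacle to be exactly this last diagram chase: pinning down the comparison map $\Ub(\HOM_\ag(\G^1,\H^1)) \to \HOM_\M(\Ub(\G^1),\Ub(\H^1))$ as the mate of $\psi$, and then verifying that after transposition the pullback-compatibility equation agrees on the nose with $\pi_\H f^0 = \Ub(f^1)\,\psi\,(\pif \ot \pig)$. The monoidal part (Assertions (1) and (2)) is routine bookkeeping in which one sees that the lax monoidal axioms of $\Ub$ are exactly the conditions needed; the closed part requires the extra inputs that $\M$ admits the pullback above and that a lax monoidal functor is canonically lax closed.
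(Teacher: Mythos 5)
Your proposal follows essentially the same route as the paper: the same pointwise tensor product glued by $\psi$, the same lax structure $[\psi,\Id]$ on $\iota$ with $\Pio$ and $\Piun$ strict monoidal, and for the closed structure the same pullback formula for the internal hom, with the comparison map $\Ub(\HOM(\F^1,\G^1)) \to \HOM(\Ub\F^1,\Ub\G^1)$ being exactly the paper's $\ol{\delta}$ (the transpose of $\Ub(ev)\circ\psi$), followed by the same transposition argument for the adjunction. The only point you leave implicit is that, since the product need not be symmetric, one must run the construction twice to obtain both $\HOM_r$ and $\HOM_l$ as in Hovey's definition of closed monoidal; the paper does this explicitly but the second construction is verbatim the first with the variables swapped.
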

\noindent If $\Ub=\Id_{\M}$, then $\mdua$ is simply the arrow-category $\Arr(\M)$. For this case Hovey  \cite{Hovey_Arr} defined two monoidal (closed) structures on $\Arr(\M)$: the point-wise product and the pushout-product. In what follows we will consider for the moment the point-wise product.
\begin{prop}
Let $\Ub: \ag \to \M$ be a lax monoidal functor. The category $\mdua$ has a monoidal structure.
The tensor product of $\Fc$ and $\Gc$ is the object 
 $\Fc \otimes \Gc = [\F^0\otimes \G^0, \F^1 \otimes \G^1, \pi_{\Fc \otimes \Gc}]$ where $\pi_{\Fc \otimes \Gc}$ is the composite of the path:
$$ \F^0\otimes \G^0 \xrightarrow{\pif \otimes \pig} \Ub(\F^1) \otimes \Ub(\G^1) \xrightarrow{\psi} \Ub(\F^1 \otimes \G^1)$$
The unit object is $I=[I_{\M}, I_{\ag}, I_{\M} \xrightarrow{\psi_I}\Ub( I_{\ag})]$. 
\end{prop}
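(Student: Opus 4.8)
The plan is to transport the monoidal data from the product category $\M\times\ag$ along the faithful functor $\Pi:=(\Pio,\Piun)\colon\mdua\to\M\times\ag$. Faithfulness is immediate, since a morphism $\osg$ of $\mdua$ \emph{is} the pair $(\sgo,\sgi)$, subject to one commutation constraint. The point of this reduction is that any identity between morphisms of $\mdua$ can be checked after applying $\Pi$, where it splits into the corresponding identity in $\M$ and in $\ag$. Hence, as soon as one knows that the proposed associator and unitors are honest morphisms of $\mdua$, the pentagon identity, the triangle identity, and the naturality of all three constraints follow automatically from the corresponding axioms in $\M$ and $\ag$. So the whole proposition comes down to two checks: that $\otimes$ is a bifunctor on $\mdua$, and that the candidate constraints land in $\mdua$.

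For bifunctoriality, given $\sigma=[\sgo,\sgi]\colon\Fc\to\Gc$ and $\tau=[\tau^0,\tau^1]\colon\Pc\to\Qc$ one sets $\sigma\otimes\tau:=[\sgo\otimes\tau^0,\ \sgi\otimes\tau^1]$. I would verify that this is a morphism $\Fc\otimes\Pc\to\Gc\otimes\Qc$, i.e. that $\Ub(\sgi\otimes\tau^1)\circ\pi_{\Fc\otimes\Pc}=\pi_{\Gc\otimes\Qc}\circ(\sgo\otimes\tau^0)$: expanding the two structure maps $\pi_{-}$ by definition, this reduces to pasting the naturality square of the laxity constraint $\psi$ (for the pair $\sgi,\tau^1$) against the outer squares that say $\sigma$ and $\tau$ are morphisms of $\mdua$, and then invoking bifunctoriality of $\otimes$ in $\M$ and in $\ag$. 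Functoriality in each slot and the interchange law are then inherited from $\M\times\ag$.

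For the constraints, put $a_{\Fc,\Gc,\Pc}:=[\,a^{\M}_{\F^0,\G^0,\Pa^0},\ a^{\ag}_{\F^1,\G^1,\Pa^1}\,]$, $\ell_{\Fc}:=[\,\ell^{\M}_{\F^0},\ \ell^{\ag}_{\F^1}\,]$ and $r_{\Fc}:=[\,r^{\M}_{\F^0},\ r^{\ag}_{\F^1}\,]$, where $a,\ell,r$ with a superscript denote the constraints of the respective base. Each of these is a morphism of $\mdua$ exactly when a single square of the evident shape commutes, and unwinding the definition of $\pi_{\Fc\otimes\Gc}$ one sees that these squares are \emph{precisely} the three coherence axioms of a lax monoidal functor: the associativity coherence axiom for $\psi$ (together with naturality of $a^{\M}$) gives $a_{\Fc,\Gc,\Pc}\in\mdua$, and the left and right unit axioms involving $\psi$ and $\psi_I$ give the analogous statements for $\ell_{\Fc}$ and $r_{\Fc}$. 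This is the one genuinely substantive step, and the only place where the laxity structure of $\Ub$ — beyond bare naturality of $\psi$ — is actually used; I expect it to be the main obstacle, though it is more a matter of carefully transcribing the coherence diagrams than a conceptual difficulty.

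With those two verifications in hand, the monoidal-category axioms follow as indicated: the pentagon and triangle identities and the naturality of $a,\ell,r$ all reduce, through the faithful functor $\Pi$, to their counterparts in $\M$ and $\ag$. I would close by observing that, by construction, $\Pio$ and $\Piun$ carry $\otimes$, the unit $I=[I_{\M},I_{\ag},\psi_I]$, and all three constraints to the corresponding data in $\M$ and $\ag$ on the nose, so they are strong monoidal — which is part~(2) of Proposition~\ref{fact-lax} — while $\iota$ becomes lax monoidal with structure maps built from $\psi$ and $\psi_I$.
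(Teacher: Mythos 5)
Your proposal is correct and follows essentially the same route as the paper: define the tensor, associator and unitors componentwise, and observe that the only substantive verification is that the candidate constraints are actually morphisms of $\mdua$, which is exactly where the coherence axioms for $\psi$ and $\psi_I$ enter (the paper checks the same squares for $a$ and $r$). Your explicit use of the faithful functor $(\Pio,\Piun)$ to dispatch the pentagon, triangle, and naturality axioms is just a cleaner packaging of what the paper leaves as ``tedious but straightforward.''
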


\begin{proof}
The proof is tedious but straightforward.  For the associativity of this product,  one simply uses the coherence axioms of for the laxity maps of $\Ub$ along with the associativity of the respective products in $\ag$ and $\M$. Given $\Fc, \Gc$ and $[\H]$ in $\mua$, the two paths going from $[\Ub(\F^1) \otimes\Ub(\G^1)] \otimes \Ub(\H^1)$ to  $\Ub([\F^1 \otimes \G^1] \otimes \H^1)$ which utilize $\psi$ are equal. After this it suffices to precompose with $[\pif \otimes \pig] \otimes \pi_{\H}$ and use the isomorphism  $[\pif \otimes \pig] \otimes \pi_{\H} \cong \pif \otimes [\pig \otimes \pi_{\H}]$ in $\Arr(\M)$. This will give the isomorphism $(\Fc \otimes \Gc)\otimes [\H] \cong \Fc \otimes (\Gc \otimes [\H])$.
The isomorphism $\Fc \otimes I \xrightarrow[\cong]{r} \Fc$  is given by pasting vertically the two commutative diagrams:
\[
\xy
(0,18)*+{\F^0\otimes I}="W";
(0,0)*+{\Ub(\F^1)\otimes I}="X";
(30,0)*+{\Ub(\F^1)}="Y";
(30,18)*+{\F^0}="E";
{\ar@{->}^-{r}_{\cong}"X";"Y"};
{\ar@{->}_-{\pif \otimes \Id_I}"W";"X"};
{\ar@{->}^-{r}_{\cong}"W";"E"};
{\ar@{->}^-{\pif}"E";"Y"};
\endxy
\xy
(0,18)*+{\Ub(\F^1)\otimes I}="W";
(0,0)*+{\Ub(\F^1)\otimes \Ub(I)}="X";
(30,0)*+{\Ub(\F^1 \otimes I )}="Y";
(30,18)*+{\Ub(\F^1)}="E";
(60,0)*+{\Ub(\F^1)}="F";
{\ar@{->}^-{\psi}"X";"Y"};
{\ar@{->}_-{\Id \otimes \psi_I}"W";"X"};
{\ar@{->}^-{r}_{\cong}"W";"E"};
{\ar@{->}^-{\Ub(r^{-1})}_{\cong}"E";"Y"};
{\ar@{->}^-{\Ub(r)}_{\cong}"Y";"F"};
{\ar@{->}^-{\Id}"E";"F"};
\endxy
\]
The diagram on the left commutes by naturality of $r: - \otimes I \xrightarrow{\cong} \Id $ while the one the right commutes by the coherence axiom of the lax functor $\Ub$ with respect to $\psi_I$ and $\Ub(\F^1)$. Explicitly, the map $\Fc \otimes I \xrightarrow[\cong]{r} \Fc$ is given component-wise the isomorphism in `$r$' in $\ag$ and $\M$. With the same argument we also have the isomorphism $I \otimes \Fc \xrightarrow[\cong]{l} \Fc$.
\end{proof}
\begin{rmk}
This monoidal structure is not necessarily symmetric unless $\Ub$ is a symmetric lax monoidal functor.
\end{rmk}
\begin{lem}\label{lem-lax}
With the notation above, the following statement are true.
\begin{enumerate}
	\item The functor $\iota: \ag \hookrightarrow \mua$ is a lax monoidal functor with laxity map $\psi: \iota(\Pa) \otimes \iota(\Qa) \to \iota(\Pa \otimes \Qa)$ given by the couple $[\psi^0, \psi^1]$, where:
	\begin{itemize}
		\item $\psi^0=\psi$ is the laxity map: $ \Ub(\Pa) \otimes \Ub(\Qa) \to \Ub(\Pa  \ot \Qa) $.
		\item $\psi^1= \Id_{\Pa \ot \Qa}$ 
	\end{itemize}
	\item The functor $\Pio: \mdua \to \M$ is a strong monoidal functor.
	\item The functor $\Piun: \mdua \to \ag$ is a strong monoidal functor.
\end{enumerate}
\end{lem}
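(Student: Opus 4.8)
The plan is to unwind the definitions and check everything slotwise against the monoidal structure on $\mua$ constructed above. First I would dispose of Assertions $(2)$ and $(3)$, which are the easy ones. By construction the tensor product of $\mua$ is computed componentwise in the relevant slot: $(\Fc\otimes\Gc)^0=\F^0\otimes\G^0$ and $(\Fc\otimes\Gc)^1=\F^1\otimes\G^1$, the unit object satisfies $\Pio(I)=I_\M$ and $\Piun(I)=I_\ag$, and --- inspecting the proof above --- the associator of $\mua$ is the pair consisting of the associator of $\M$ in the $\Pio$-slot and that of $\ag$ in the $\Piun$-slot, with the analogous statement for the unitors. Hence $\Pio$ (resp. $\Piun$) strictly preserves the product and the unit, with identity structure maps, and every coherence diagram for it is just the $\Pio$- (resp. $\Piun$-) projection of a coherence diagram already holding in $\M$ (resp. $\ag$). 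So $\Pio$ and $\Piun$ are strict monoidal, a fortiori strong.

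Next I would treat Assertion $(1)$. The first task is to check that the proposed laxity datum makes sense. From $\iota(\Pa)=[\Ub(\Pa),\Pa,\Id_{\Ub(\Pa)}]$ one computes, using $\pi_{\iota(\Pa)}\otimes\pi_{\iota(\Qa)}=\Id$ in the formula $\pi_{\Fc\otimes\Gc}=\psi\circ(\pif\otimes\pig)$, that $\iota(\Pa)\otimes\iota(\Qa)=[\Ub(\Pa)\otimes\Ub(\Qa),\ \Pa\otimes\Qa,\ \psi]$, while $\iota(\Pa\otimes\Qa)=[\Ub(\Pa\otimes\Qa),\Pa\otimes\Qa,\Id]$. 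Then $[\psi^0,\psi^1]=[\psi,\Id_{\Pa\otimes\Qa}]$ is a morphism $\iota(\Pa)\otimes\iota(\Qa)\to\iota(\Pa\otimes\Qa)$ in $\mua$, because its defining compatibility square reads $\Ub(\Id)\circ\psi=\Id\circ\psi$; likewise $[\psi_I,\Id_{I_\ag}]$ is a morphism $I_{\mua}=[I_\M,I_\ag,\psi_I]\to\iota(I_\ag)=[\Ub(I_\ag),I_\ag,\Id]$. Naturality of $[\psi,\Id]$ in $(\Pa,\Qa)$ then follows, since the first slot is the naturality of the laxity map $\psi$ of $\Ub$ and the second slot is an identity.

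Finally I would verify the associativity axiom and the two unit axioms for the lax monoidal functor $(\iota,[\psi,\Id],[\psi_I,\Id])$. Each is an equality of morphisms in $\mua$, hence can be tested in each slot separately, since products, associators, unitors and the comparison maps are all slotwise. In the $\Piun$-slot $\iota$ acts as the identity functor of $\ag$ with identity comparison maps, so the axioms reduce to identities of identities. In the $\Pio$-slot the comparison maps are $\psi$ and $\psi_I$, and the three axioms are exactly the associativity and unit coherence axioms that $\psi,\psi_I$ satisfy as the lax structure of $\Ub$. This gives $\iota$ the structure of a lax monoidal functor; it fails to be strong in general because $[\psi,\Id]$ is invertible precisely when $\psi$ is. The hard part is not conceptual but clerical: one must keep the formula $\pi_{\Fc\otimes\Gc}=\psi\circ(\pif\otimes\pig)$ in view throughout so that the $\pi$-compatibility squares of the candidate morphisms are identified correctly, after which the coherence bookkeeping collapses onto the lax-functor axioms for $\Ub$ and the slotwise description of the monoidal structure on $\mua$.
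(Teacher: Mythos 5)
Your proposal is correct and follows essentially the same route as the paper: the paper likewise dismisses Assertions $(2)$ and $(3)$ as immediate from the slotwise description of the product, and reduces Assertion $(1)$ to the single commutative square $\Ub(\Id)\circ\psi=\Id\circ\psi$ showing that $[\psi,\Id_{\Pa\otimes\Qa}]$ is a morphism of $\mua$. You merely spell out the naturality and coherence checks that the paper leaves implicit, which is a harmless (and arguably welcome) addition.
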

\begin{proof}
	Assertion $(2)$ and $(3)$ are  obvious by inspection. For Assertion $(1)$ it boils down to checking that the couple $[\psi^0,\psi^1]$ defines a map $\iota(\Pa) \otimes \iota(\Qa) \to \iota(\Pa \otimes \Qa)$. This simply follows from the fact that we have commutative diagram:
	\[
	\xy
(0,18)*+{\Ub(\Pa)\otimes\Ub(\Qa)}="W";
(0,0)*+{\Ub(\Pa)\otimes \Ub(\Qa)}="X";
(30,0)*+{\Ub(\Pa \otimes \Qa )}="Y";
(30,18)*+{\Ub(\Pa \ot \Qa)}="E";
{\ar@{->}^-{\psi}"X";"Y"};
{\ar@{->}_-{\Id \otimes \Id}"W";"X"};
{\ar@{->}^-{\psi}"W";"E"};
{\ar@{->}^-{\Ub(\Id)= \pi_{\Pa \ot \Qa}}"E";"Y"};
{\ar@{.>}^-{\pi_{\iota(\Pa) \otimes \iota(\Qa)}}"W";"Y"};
\endxy
	\]
\end{proof}
\paragraph{\textbf{Hypothesis}} 
\noindent We assume that $\ag$ and $\M$ have a closed monoidal structure in the sense of Hovey \cite{Hov-model}, in that we have two adjunctions with two variables $\ot :\ag \times \ag \to \ag$, $\ot : \M \times \M \to \M$ with left and right internal hom $\HOM_l(\ast,-)$ and $\HOM_r(\ast, -)$. Our goal is to show that we have a closed monoidal structure on $\mdua$. We will focus on the construction of $\HOM_r(\ast,-)$, the same method applies for $\HOM_l(\ast,-)$.\ \\
 Recall that we have - by definition - the adjunctions $( -\ot a \dashv \HOM_r(a,-))$ and $( -\ot m \dashv \HOM_r(m,-))$. If we unravel each adjunction then:
 \begin{itemize}
 	\item any map $(f: b \otimes a \to c ) \in \Mor(\ag)$ has an adjoint-transpose map $\ol{f}: b \to \HOM_r(a, c)$, such that $f$ is the composition:
 	$ b \ot a \xrightarrow{\ol{f} \otimes \Id_a} \HOM_r(a,c) \ot a \xrightarrow{ev} c;$
 	\item any map $(g: n \otimes m \to m' ) \in \Mor(\M)$ has an adjoint-transpose map $\ol{g}: n \to \HOM_r(m, m')$, such that $g$ is the composition:
 	$ n \ot a \xrightarrow{\ol{g} \otimes \Id_m} \HOM_r(m,m') \ot m \xrightarrow{ev} m'.$
 \end{itemize}
 Let $\Fc$ and $\Gc$ be objects in $\mua$. Consider the composition in $\M$:
 \begin{align}
 \delta: \Ub(\HOM_r(\F^1,\G^1)) \ot \Ub(\F^1) \xrightarrow{\psi} \Ub( \HOM_r(\F^1, \G^1)  \ot \F^1) \xrightarrow{\Ub(ev)}\Ub(\G^1).
 \end{align}
 This map has a unique adjoint-transpose map $\ol{\delta}: \Ub(\HOM_r(\F^1,\G^1))  \to \HOM_r(\Ub(\F^1), \Ub(\G^1))$ such that:
 \begin{align}\label{eq-const-delta}
 \delta = ev_{\Ub(\F^1)} \circ (\ol{\delta} \ot \Id_{\Ub(\F^1)}).
 \end{align}
 Moreover, the map $\pif: \F^0 \to \Ub(\F^1)$  induces a map $$\HOM_r(\pif, \Ub(\G^1)): \HOM_r(\Ub(\F^1), \Ub(\G^1)) \to \HOM_r(\F^0, \Ub(\G^1)).$$
 The composition of $\ol{\delta}$ followed by the last map gives new map with codomain $\HOM_r(\F^0, \Ub(\G^1)$:
 \begin{align}
 \HOM_r(\pif, \Ub(\G^1)) \circ \ol{\delta}:  \Ub(\HOM_r(\F^1,\G^1)) \to \HOM_r(\F^0, \Ub(\G^1).
 \end{align}
 We also have a map $\HOM_r(\F^0, \pig): \HOM_r(\F^0, \G^0) \to \HOM_r(\F^0, \Ub(\F^1))$. The last two maps have a common codomain $\HOM_r(\F^0, \Ub(\G^1))$ and we can form the pullback square in $\M$:
 \[
 \xy
 (-30,18)*+{\HOM_r(\F^0, \G^0) \times_{\HOM_r(\F^0, \Ub(\G^1))}\Ub(\HOM_r(\F^1,\G^1))}="W";
 (-30,0)*+{\HOM_r(\F^0, \G^0) }="X";
 (50,0)*+{\HOM_r(\F^0, \Ub(\G^1))}="Y";
 (50,18)*+{\Ub(\HOM_r(\F^1,\G^1))}="E";
 {\ar@{->}^-{\HOM_r(\F^0, \pig)}"X";"Y"};
 {\ar@{.>}_-{p^0}"W";"X"};
 {\ar@{.>}^-{p^1}"W";"E"};
 {\ar@{->}^-{\HOM_r(\pif, \Ub(\G^1)) \circ \ol{\delta}}"E";"Y"};
 \endxy
 \]
 
 \begin{df}
 	Define  $\HOM_r(\Fc,\Gc) =[\HOM_r(\Fc,\Gc)^0, \HOM_r(\Fc,\Gc)^1, \pi_{\HOM_r}]$  by:
 	\begin{itemize}
 		\item $\HOM_r(\Fc, \Gc)^0:= \HOM_r(\F^0, \G^0) \times_{\HOM_r(\F^0, \Ub(\G^1))}\Ub(\HOM_r(\F^1,\G^1)) \in \M$
 		\item $\HOM_r(\Fc, \Gc)^1:= \HOM_r(\F^1,\G^1) \in \ag$
 		\item $\pi_{\HOM_r}= (\HOM_r(\F^0, \G^0) \times_{\HOM_r(\F^0, \Ub(\G^1))}\Ub(\HOM_r(\F^1,\G^1)) \xrightarrow{p^1} \Ub(\HOM_r(\F^1,\G^1) ))$.
 	\end{itemize}
 \end{df}
 
 \begin{lem}\label{adj-intern-hom-mua}
 	Let $\Ec, \Fc$ and $\Gc$ be objects in $\mua$. Then we have a functorial isomorphism of hom-sets:
 	$ \Hom(\Ec \ot \Fc, \Gc) \xrightarrow{\cong} \Hom(\Ea , \HOM_r(\Fc,\Gc)).$
 \end{lem}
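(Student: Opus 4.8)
The plan is to verify the adjunction $-\otimes\Fc \dashv \HOM_r(\Fc,-)$ componentwise, exploiting the fact that both $\Pio$ and $\Piun$ are strong monoidal (Lemma~\ref{lem-lax}) so that a map in $\mua$ is nothing but a compatible pair of maps in $\M$ and in $\ag$. First I would unwind what a morphism $\Ec\otimes\Fc \to \Gc$ in $\mua$ actually is: a pair $[\alpha^0,\alpha^1]$ with $\alpha^1 : \E^1\otimes\F^1\to\G^1$ in $\ag$, $\alpha^0:\E^0\otimes\F^0\to\G^0$ in $\M$, subject to the single square saying that $\pi_\Gc\circ\alpha^0$ equals $\Ub(\alpha^1)$ precomposed with $\psi\circ(\pi_\Ec\otimes\pi_\Fc)$. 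Using the two-variable adjunction $-\otimes\F^1\dashv\HOM_r(\F^1,-)$ in $\ag$, the map $\alpha^1$ transposes to $\ol{\alpha^1}:\E^1\to\HOM_r(\F^1,\G^1)=\HOM_r(\Fc,\Gc)^1$; and using $-\otimes\F^0\dashv\HOM_r(\F^0,-)$ in $\M$, the map $\alpha^0$ transposes to $\ol{\alpha^0}:\E^0\to\HOM_r(\F^0,\G^0)$. So the data on the left of the claimed bijection is repackaged as a pair $(\ol{\alpha^0},\ol{\alpha^1})$.

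Next I would show that the compatibility square for $[\alpha^0,\alpha^1]$ is \emph{equivalent} to the statement that $\ol{\alpha^0}$ and the composite $\ol{\delta}\circ\Ub(\ol{\alpha^1})$ (followed by $\HOM_r(\pif,\Ub(\G^1))$, resp.\ precomposed with $\pi_\Ec$ after $\HOM_r(\F^0,\pig)$) agree after mapping into $\HOM_r(\F^0,\Ub(\G^1))$ — i.e.\ that they glue to a map $\E^0\to\HOM_r(\Fc,\Gc)^0$ into the pullback. This is the crux: I would chase the defining square of $\delta$ in~(\ref{eq-const-delta}) and the naturality of the evaluation maps $ev_{\F^1}$, $ev_{\F^0}$, transposing the compatibility condition step by step. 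Concretely, the condition $\pi_\Gc\circ\alpha^0 = \Ub(\alpha^1)\circ\psi\circ(\pi_\Ec\otimes\pi_\Fc)$ is an equality of maps $\E^0\otimes\F^0\to\Ub(\G^1)$; transposing along $-\otimes\F^0\dashv\HOM_r(\F^0,-)$ turns the left side into $\HOM_r(\F^0,\pig)\circ\ol{\alpha^0}$ and the right side — after inserting $\pif\otimes\pif$ and using the transpose of $\alpha^1$ together with the defining equation of $\ol\delta$ — into $\HOM_r(\pif,\Ub(\G^1))\circ\ol\delta\circ\Ub(\ol{\alpha^1})\circ(\text{something built from }\pi_\Ec)$. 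Matching these two is exactly the cone condition for the pullback $\HOM_r(\Fc,\Gc)^0$, so the glued map $\E^0\to\HOM_r(\Fc,\Gc)^0$ exists and is unique; and one checks it is compatible with $\ol{\alpha^1}$ through $\pi_{\HOM_r}=p^1$, i.e.\ that the pair defines a genuine morphism $\Ea\to\HOM_r(\Fc,\Gc)$ in $\mua$.

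Finally I would record that the two passages $[\alpha^0,\alpha^1]\mapsto (\ol{\alpha^0},\ol{\alpha^1})\mapsto$ (glued map) and its reverse are mutually inverse — each is built from the two underlying two-variable adjunction bijections plus the universal property of the pullback, all of which are bijective — and that everything is natural in $\Ea$, $\Fc$, $\Gc$ because each ingredient (the transpositions, $\ol\delta$, the functorial maps $\HOM_r(\pif,-)$, $\HOM_r(-,\pig)$, and the pullback) is natural. I expect the main obstacle to be purely bookkeeping: keeping the two evaluation maps and the laxity map $\psi$ straight while transposing the single compatibility square in both directions, and verifying that the map landing in the pullback is compatible with the structure map $\pi_{\HOM_r}$ — there is no conceptual difficulty, only the risk of a sign-free but genuine diagram-chase error, which is why the proof in the text is likely deferred or labelled ``tedious but straightforward''.
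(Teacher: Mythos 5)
Your proposal follows essentially the same route as the paper's proof: transpose $\sigma^0$ and $\sigma^1$ along the two underlying two-variable adjunctions, show that the single compatibility square for $\Ec\ot\Fc\to\Gc$ translates (via the defining equation of $\ol\delta$ and naturality of the evaluation maps) into exactly the cone condition over $\HOM_r(\F^0,\Ub(\G^1))$, and then use the universal property of the pullback defining $\HOM_r(\Fc,\Gc)^0$ to glue, with the inverse obtained by reversing the transpositions. The crux you identify — the equality $\HOM_r(\F^0,\pig)\circ\ol{\sigma^0}=\HOM_r(\pif,\Ub(\G^1))\circ\ol\delta\circ\Ub(\ol{\sigma^1})\circ\pi_{\Ea}$ — is precisely the equation the paper establishes by its large diagram chase, so the argument is correct and matches the text.
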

 \begin{proof}
 	We will construct the isomorphism between the hom-sets, leaving the functoriality to the reader.
 	A map $\osg: \Ec \ot \Fc \to \Gc$ satisfies the equation: $\pig \circ \sgo = \Ub(\sgi) \circ \pi_{\Ea \ot \F}$. 
 	The map $\sg^i: \Ea^i \ot \F^i \to \G^i$ has a unique adjoint-transpose map $\ol{\sg^i} : \Ea^i \to \HOM_r(\F^i,\G^i)$, such that for $i \in \{0;1\}$:
 	\begin{align}\label{pf-int-hom-1}
 	\sg^i= ev_{\F^i} \circ (\ol{\sg^i} \ot \Id_{\F^i}).
 	\end{align}
 	We claim that the two maps:
 	\begin{center}
 		 $\Ub(\ol{\sgi}) \circ \pi_{\Ea} \in \Hom(\Ea^0, \Ub(\HOM_r(\F^1,\G^1)))$ and $\ol{\sgo} \in \Hom(\Ea^0, \HOM_r(\F^0, \G^0))$
 	\end{center}
 	 complete the pullback data hereafter - defining $\HOM_r(\Fc,\Gc)$ -  into a commutative square:
 	 \begin{align}\label{pb-data-homr}
 	 \HOM_r(\F^0,\G^0) \xrightarrow{\HOM_r(\F^0, \pig)} \HOM_r(\F^0,\Ub(\G^1)) \xleftarrow{\HOM_r(\pif, \Ub(\G^1)) \circ \ol{\delta}} \Ub(\HOM_r(\F^1,\G^1))).
 	 \end{align}
 	To prove this, first observe that the map $\pig \circ \sgo \in \Hom(\Ea^0 \ot \F^0, \Ub(\G^1))$ has a unique adjoint-transpose $\ol{\pig \circ \sgo} : \Ea^0 \to  \HOM_r(\F^0, \Ub(\G^1))$ satisfying the equation:
 	\begin{align}\label{pf-int-hom-2}
 	\pig \circ \sgo = ev_{\F^0} \circ (\ol{\pig \circ \sgo} \ot \Id_{\F^0}).
 	\end{align}
 	Recall that the evaluation ($ev: \Hom(\F^0, \ast) \ot \F^0 \to \ast) $ is simply the counit of the adjunction $-\ot \F^0 \dashv \Hom(\F^0,\ast)$. Then the naturality of the counit with respect to $\pig$ gives the equality:
 	\begin{align}
 	\pig \circ ev_{\F^0} = ev_{\F^0} \circ (\HOM_r(\F^0,\pig ) \ot \Id_{\F^0}).
 	\end{align}
 	If we precompose the last equality with $\ol{\sg^0} \ot \Id_{\F^0}$ and use (\ref{pf-int-hom-1}) for $i=0$ we get:
 	\begin{align}\label{pf-int-hom-3}
 	\underbrace{\pig \circ ev_{\F^0} \circ (\ol{\sg^0} \ot \Id_{\F^0})}_{= \pig \circ \sgo}
 	&= ev_{\F^0} \circ (\HOM_r(\F^0,\pig ) \ot \Id_{\F^0}) \circ (\ol{\sg^0} \ot \Id_{\F^0}) \\
 	&= ev_{\F^0}  \circ ((\HOM_r(\F^0,\pig )\circ \ol{\sg^0}) \ot \Id_{\F^0})
 	\end{align}
 	With (\ref{pf-int-hom-2})  and (\ref{pf-int-hom-3}), we have by uniqueness of the adjoint-transpose map the equality:
 	\begin{align}\label{pf-int-hom-4}
 	\HOM_r(\F^0,\pig )\circ \ol{\sg^0} = \ol{\pig \circ \sgo}.
 	\end{align}
 	The remainder of the proof is to establish that $\ol{\pig \circ \sgo}= \HOM_r(\pif, \Ub(\G^1)) \circ( \ol{\delta} \circ \Ub(\ol{\sgi}) \circ \pi_{\Ea})$. By adjointness, this boils down to showing that: 
 	\begin{align}
 	\pig \circ \sgo =ev_{\F^0} \circ(\ol{\pig \circ \sgo} \ot \Id_{\F^0}) = ev_{\F^0} \circ [(\HOM_r(\pif, \Ub(\G^1)) \circ( \ol{\delta} \circ \Ub(\ol{\sgi}) \circ \pi_{\Ea})) \ot \Id_{\F^0}].
 	\end{align}
 	With the various maps considered previously, we get a commutative diagram below  by puzzling smaller commutative squares. We've included a number inside each small square for explanation.
 	\begin{align}\label{pf-int-hom-5}
 	\xy
 	(-70,38)*+{\Ea^0\ot \F^0}="O";
 	(-70,-38)*+{\HOM_r(\F^0, \Ub(\G^1))\ot \F^0}="P";
 	(65,38)*+{\G^0}="A";
 	(65,-38)*+{\Ub(\G^1)}="B";
 	(-70,-18)*+{\HOM_r(\Ub(\F^1), \Ub(\G^1))\ot \F^0}="W";
 	(-10,18)*+{\Ub(\Ea^1) \ot \Ub(\F^1)}="X";
 	(-10,0)*+{\Ub(\HOM_r(\F^1,\G^1)) \ot \Ub(\F^1)}="M";
 	(40,0)*+{\Ub(\HOM_r(\F^1,\G^1) \ot \F^1)}="Y";
 	(40,18)*+{\Ub(\Ea^1 \ot \F^1)}="E";
 	(-10,38)*+{\Ea^0 \ot \F^0}="Z";
 	{\ar@{->}_-{\pi_{\Ea}\ot \pif}"Z";"X"};
 	{\ar@{->}_-{\Ub(\ol{\sgi})\ot \Ub(\Id)}"X";"M"};
 	{\ar@{->}^-{\Ub(\ol{\sgi} \ot \Id)}"E";"Y"};
 	(-10,-18)*+{\HOM_r(\Ub(\F^1), \Ub(\G^1))\ot \Ub(\F^1)}="S";
 	(40,-18)*+{\Ub(\G^1)}="T";
 	{\ar@{->}^-{\psi}"M";"Y"};
 	{\ar@{->}^-{\psi}"X";"E"};
 	{\ar@{->}^-{\Ub(ev_{\F^1})}"Y";"T"};
 	{\ar@{->}^-{ev_{\Ub(\F^1)}}"S";"T"};
 	{\ar@{->}^-{\HOM_r(\pif, \Ub(\G^1)) \ot \Id}"W";"P"};
 	{\ar@{->}^-{(\ol{\delta} \circ \Ub(\ol{\sgi}) \circ \pi_{\Ea}) \ot \Id_{\F^0}}"O";"W"};
 	{\ar@{->}^-{\Id}"T";"B"};
 	{\ar@{->}^-{ev_{\F^0}}"P";"B"};
 	{\ar@{=}_-{\Id}"Z";"O"};
 	{\ar@{->}^-{\sgo}"Z";"A"};
 	(65,18)*+{\Ub(\G^1)}="U";
 	{\ar@{->}^-{\Id}"U";"B"};
 	{\ar@{->}^-{\pig}"A";"U"};
 	{\ar@{->}^-{\Id \ot \pif}"W";"S"};
 	{\ar@{->}^-{\Ub(\sgi)}"E";"U"};
 	{\ar@{->}_-{\ol{\delta} \ot \Id}"M";"S"};
 	(-40,18)*+{\large \textcircled{\small 1}};
 	(30,28)*+{\large \textcircled{\small 2}};
 	(15,10)*+{\large \textcircled{\small 3}};
 	(15,-10)*+{\large \textcircled{\small 4}};
 	(0,-28)*+{\large \textcircled{\small 5}};
 	(60,-18)*+{\large \textcircled{\small 6}};
 	\endxy
 	\end{align}
 	\begin{itemize}
 		\item $\large \textcircled{\small 1}$ commutes by inspection.
 		\item $\large \textcircled{\small 2}$ represents the given map $\osg: \Ec \ot \Fc \to \Gc$.
 		\item $\large \textcircled{\small 3}$ commutes by functoriality of the laxity map $\psi$.
 		\item $\large \textcircled{\small 4}$ is commutative by definition of $\ol{\delta}$ (\ref{eq-const-delta}).
 		\item $\large \textcircled{\small 5}$ commutes by definition of $\HOM_r(\pif, \Ub(\G^1))$.
 		\item $\large \textcircled{\small 6}$ is given by applying $\Ub$ to the equality (\ref{pf-int-hom-1}).
 	\end{itemize}
 	The perimeter of the above diagram along with (\ref{pf-int-hom-4}) give:
 	\begin{align*}
 	& ev_{\F^0} \circ [(\HOM_r(\pif, \Ub(\G^1)) \circ( \ol{\delta} \circ \Ub(\ol{\sgi}) \circ \pi_{\Ea})) \ot \Id_{\F^0}]\\
 	&=\pig \circ \sgo \\
 	&= ev_{\F^0} \circ(\underbrace{\ol{\pig \circ \sgo}}_{=\HOM_r(\F^0,\pig )\circ \ol{\sg^0}} \ot \Id_{\F^0}) \\
 	& = ev_{\F^0} \circ [(\HOM_r(\F^0,\pig )\circ \ol{\sg^0}) \ot \Id_{\F^0}].
 	\end{align*}
 	The uniqueness of the adjoint-transpose map yields:
 	\begin{align}\label{pf-int-hom-6}
 	\HOM_r(\pif, \Ub(\G^1)) \circ( \ol{\delta} \circ \Ub(\ol{\sgi}) \circ \pi_{\Ea}) = \HOM_r(\F^0,\pig )\circ \ol{\sg^0}.
 	\end{align}
 	The last equality implies that we've completed the aforementioned pullback data (\ref{pb-data-homr}) into a commutative square. Using the universal property of the pullback  $ \HOM_r(\F^0, \G^0) \times_{\HOM_r(\F^0, \Ub(\G^1))}\Ub(\HOM_r(\F^1,\G^1)) = \HOM_r(\Fc,\Gc)^0$, there is a unique map
 	$$\theta : \Ea^0 \to \HOM_r(\F^0, \G^0) \times_{\HOM_r(\F^0, \Ub(\G^1))}\Ub(\HOM_r(\F^1,\G^1)),$$
 	such that:
 	\begin{align}
 	& \ol{\sgo} = p^0 \circ \theta \\
 	& \Ub(\ol{\sgi}) \circ \pi_{\Ea} = \overbrace{p^1}^{\pi_{\HOM_r}} \circ \theta.
 	\end{align}
 	The last equality is equivalent to saying that the couple $[\theta, \ol{\sgi}]$ defines a unique map $\Ec \to \HOM_r(\Fc, \Gc)$. This process clearly defines a one-one function: $$\phi : \Hom(\Ec \ot \Fc, \Gc) \to \Hom(\Ec, \HOM_r(\Fc,\Gc)).$$ 
 	If we reverse the argument we see that this function has an inverse that takes $[\theta, \ol{\sgi}] \mapsto [\varphi^{-1}(p^0\circ \theta), \varphi^{-1}(\ol{\sgi})]$, where $\varphi= \overline{(-)}$ is the isomorphism $\Hom(- \ot \F^i, \ast) \xrightarrow{\cong} \Hom(-, \HOM_r(\F^i, \ast))$.
\end{proof}
\begin{df}
Define $\HOM_l(\Fc,\Gc) =[\HOM_l(\Fc,\Gc)^0, \HOM_l(\Fc,\Gc)^1, \pi_{\HOM_l}] \in \mua$ by: 
\begin{itemize}
	\item $\HOM_l(\Fc, \Gc)^0:= \HOM_l(\F^0, \G^0) \times_{\HOM_l(\F^0, \Ub(\G^1))}\Ub(\HOM_l(\F^1,\G^1)) \in \M$
	\item $\HOM_l(\Fc, \Gc)^1:= \HOM_l(\F^1,\G^1) \in \ag$
	\item $\pi_{\HOM_l}= (\HOM_l(\F^0, \G^0) \times_{\HOM_l(\F^0, \Ub(\G^1))}\Ub(\HOM_l(\F^1,\G^1)) \xrightarrow{p^1} \Ub(\HOM_l(\F^1,\G^1) ))$.
\end{itemize}
\end{df}

\begin{lem}\label{lem-hom-l}
We have a functorial isomorphism of hom-sets:
 $$\Hom(\Fc \ot \Ec, \Gc) \cong \Hom(\Ec, \HOM_l(\Fc,\Gc)).$$
\end{lem}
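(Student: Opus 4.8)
The plan is to repeat the proof of Lemma \ref{adj-intern-hom-mua} with the roles of the two tensor variables interchanged; the definition of $\HOM_l(\Fc,\Gc)$ above was set up precisely so that this mirror argument applies verbatim. First I would recall the left-variable adjunctions $\F^i\ot -\dashv \HOM_l(\F^i,-)$ in $\ag$ (for $i=1$) and in $\M$ (for $i=0$): any map $f:\F^i\ot b\to c$ has a unique adjoint-transpose $\ol f:b\to\HOM_l(\F^i,c)$ with $f=ev\circ(\Id_{\F^i}\ot\ol f)$. Next I would introduce the composite
\[
\delta:\Ub(\F^1)\ot\Ub(\HOM_l(\F^1,\G^1))\xrightarrow{\psi}\Ub(\F^1\ot\HOM_l(\F^1,\G^1))\xrightarrow{\Ub(ev)}\Ub(\G^1)
\]
and its adjoint-transpose $\ol\delta:\Ub(\HOM_l(\F^1,\G^1))\to\HOM_l(\Ub(\F^1),\Ub(\G^1))$, exactly as in $(\ref{eq-const-delta})$ but on the left, so that the pullback square defining $\HOM_l(\Fc,\Gc)^0$ is the one already recorded, built from $\HOM_l(\F^0,\pig)$ and $\HOM_l(\pif,\Ub(\G^1))\circ\ol\delta$.

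Given a map $\osg:\Fc\ot\Ec\to\Gc$, I would take the component-wise adjoint-transposes $\ol{\sg^i}:\Ea^i\to\HOM_l(\F^i,\G^i)$ and verify that the pair $(\ol{\sgo},\ \Ub(\ol{\sgi})\circ\pi_{\Ea})$ completes the pullback data into a commutative square; as in Lemma \ref{adj-intern-hom-mua}, the crucial identity is
\[
\HOM_l(\F^0,\pig)\circ\ol{\sgo}\;=\;\HOM_l(\pif,\Ub(\G^1))\circ\ol\delta\circ\Ub(\ol{\sgi})\circ\pi_{\Ea},
\]
both sides being the adjoint-transpose $\ol{\pig\circ\sgo}$ of the map $\pig\circ\sgo:\F^0\ot\Ea^0\to\Ub(\G^1)$. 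This is checked by assembling the mirror image of diagram $(\ref{pf-int-hom-5})$: the small cells commute respectively by inspection, by the definition of $\osg$, by functoriality of the laxity map $\psi$, by the definition of $\ol\delta$, by the definition of $\HOM_l(\pif,\Ub(\G^1))$, and by applying $\Ub$ to $\sgi=ev_{\F^1}\circ(\Id_{\F^1}\ot\ol{\sgi})$. The universal property of the pullback $\HOM_l(\Fc,\Gc)^0$ then produces a unique $\theta:\Ea^0\to\HOM_l(\Fc,\Gc)^0$ with $p^0\circ\theta=\ol{\sgo}$ and $\pi_{\HOM_l}\circ\theta=\Ub(\ol{\sgi})\circ\pi_{\Ea}$, and the pair $[\theta,\ol{\sgi}]$ is the sought morphism $\Ec\to\HOM_l(\Fc,\Gc)$. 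Running the construction backwards supplies the inverse correspondence, and functoriality in all three variables is established exactly as before.

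The only delicate point, and the one worth being careful about, is the bookkeeping of which side of each tensor factor the evaluation maps and adjoint-transposes sit on; no idea beyond the proof of Lemma \ref{adj-intern-hom-mua} is required. Alternatively, one can avoid repeating the argument by passing to the monoidal structures on $\ag$ and $\M$ with the tensor product $a\otimes b$ replaced by $b\otimes a$: the functor $\Ub$ remains lax monoidal, the induced monoidal structure on $\mdua$ becomes its reverse, the left internal hom becomes the right internal hom, and Lemma \ref{adj-intern-hom-mua} applied in that setting yields the claim at once.
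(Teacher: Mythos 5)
Your proposal is correct and matches the paper, whose entire proof of Lemma \ref{lem-hom-l} is the remark that the argument of Lemma \ref{adj-intern-hom-mua} carries over verbatim with the tensor variables interchanged; you have simply spelled out that mirror argument (and your closing observation about passing to the reversed monoidal structures is a clean way to make the reduction formal, given that the product on $\mdua$ need not be symmetric).
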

\begin{proof}
The proof is the same as that of Lemma \ref{adj-intern-hom-mua}.
\end{proof}
With the previous material we can now prove Proposition \ref{fact-lax}.
\begin{proof}[Proof of Proposition \ref{fact-lax}]
	\hfill
	\ \\
Assertion $(1)$ and $(2)$ are the content of Lemma \ref{lem-lax}. Assertion $(3)$ is given by Lemma \ref{adj-intern-hom-mua} and Lemma \ref{lem-hom-l}.
\end{proof}
\subsection{Factorization of a monoidal Quillen adjunction}
We assume that $\ag$ and $\M$ are closed monoidal model categories as in \cite{Hov-model} except that we chose the simplified version for the \emph{Unit axiom} of Schwede-Shipley \cite{Sch-Sh-equiv}:\\
\textbf{Unit axiom:} Let $q:I^c \xrightarrow{\sim} I$ be a cofibrant replacement of the unit object. Then for every cofibrant object $A$ the map $q \otimes \Id_{A}: I^c \ot  A  \to I \ot A \cong A$ is a weak equivalence.
\begin{lem}\label{lem-push-prod}
The pushout product $\sg \square \theta$ of two maps in $\mua$ is given by the couple of pushout products $[\sgo \square \theta^0, \sgi \square \theta^1]$.
\end{lem}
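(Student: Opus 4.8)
The plan is to deduce the formula from the facts that both projections $\Pio:\mua\to\M$ and $\Piun:\mua\to\ag$ preserve pushouts and are strong monoidal, together with the observation that a morphism of $\mua$ is precisely the pair of its $\Pio$- and $\Piun$-components.

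First I would record that $\Pio$ and $\Piun$ preserve finite colimits: by Proposition~\ref{adjunction-prop}, $\Pio$ is a left adjoint (to $R^0$) and $\Piun$ is a left adjoint (to $\iota$). Hence pushouts in $\mua$ exist --- as $\M$ and $\ag$ have finite colimits --- and are computed componentwise, the structure map of a pushout being the evident induced comparison map. Moreover, by Lemma~\ref{lem-lax}, $\Pio$ and $\Piun$ are strong monoidal, so $\Pio(\Fc\ot\Gc)=\F^0\ot\G^0$ and $\Piun(\Fc\ot\Gc)=\F^1\ot\G^1$, naturally; since $\M$ and $\ag$ are closed monoidal, $-\ot X$ preserves colimits on each side, so the levelwise pushout-products $\sgo\square\theta^0$ in $\M$ and $\sgi\square\theta^1$ in $\ag$ are indeed defined.

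Now let $\sg:\Fc\to\Gc$ and $\theta:\Pc\to\Qc$ be maps in $\mua$. By definition $\sg\square\theta$ is the canonical map
\[
(\Gc\ot\Pc)\cup^{\Fc\ot\Pc}(\Fc\ot\Qc)\xrightarrow{\ \sg\square\theta\ }\Gc\ot\Qc
\]
out of the pushout of $\Gc\ot\Pc\xleftarrow{\sg\ot\Id}\Fc\ot\Pc\xrightarrow{\Id\ot\theta}\Fc\ot\Qc$. Applying $\Pio$ and invoking the two facts above --- preservation of pushouts and strong monoidality --- identifies $\Pio(\sg\square\theta)$ with the canonical map $(\G^0\ot\Pa^0)\cup^{\F^0\ot\Pa^0}(\F^0\ot\Qa^0)\to\G^0\ot\Qa^0$, that is, with $\sgo\square\theta^0$; symmetrically $\Piun(\sg\square\theta)=\sgi\square\theta^1$. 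Since the source object $(\Gc\ot\Pc)\cup^{\Fc\ot\Pc}(\Fc\ot\Qc)$ of $\sg\square\theta$ has $\Pio$- and $\Piun$-components equal to the pushouts appearing as the sources of $\sgo\square\theta^0$ and $\sgi\square\theta^1$, and since a morphism of $\mua$ with prescribed source and target is determined by its pair of components, I conclude $\sg\square\theta=[\sgo\square\theta^0,\sgi\square\theta^1]$; in particular that pair is automatically a well-defined morphism of $\mua$.

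The argument is essentially formal, so there is no real obstacle; the only point needing care is checking that, under these identifications, the structure map of the pushout object agrees with the canonical structure map assembled from the laxity map $\psi$ on each levelwise pushout-product. This is a routine diagram chase: every map in sight is induced by a universal property (of the various pushouts in $\M$ and $\ag$ and of $\psi$), so uniqueness forces the two structure maps to coincide.
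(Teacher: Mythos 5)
Your argument is correct and follows the same route as the paper, whose entire proof is the one-line observation that pushouts in $\mua$ are computed level-wise (together with the level-wise definition of the tensor product); you have merely spelled out the routine details. The extra appeal to closedness of the monoidal structures is unnecessary for the pushout-product to be defined, but it does no harm.
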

\begin{proof}
This is obvious by inspection, considering that pushouts are computed level-wise.
\end{proof}
\begin{thm}\label{muaij-mon-mod}
	Assume that $\Ub : \ag \to \M$ is a right Quillen functor which is  a lax monoidal functor between monoidal model categories. \\
	Then the model category $\mua_{inj}$ with the point-wise product, is a monoidal model category.
\end{thm}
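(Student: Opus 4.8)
The plan is to check the two conditions that make $(\mua_{inj},\otimes)$ a monoidal model category in the sense of Hovey~\cite{Hov-model} with the simplified Unit axiom of Schwede--Shipley~\cite{Sch-Sh-equiv} recalled above, namely the pushout-product axiom and the Unit axiom. Both reduce, componentwise, to the corresponding statements in $\M$ and $\ag$, because cofibrations, weak equivalences and --- by Lemma~\ref{lem-push-prod} --- pushout-products in $\mua_{inj}$ are all computed level-wise.

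First I would record the level-wise descriptions: a cofibration of $\mua_{inj}$ is a map $\sg=[\sgo,\sgi]$ with $\sgo\in\cof(\M)$ and $\sgi\in\cof(\ag)$, and, since the weak equivalences are the level-wise ones, a trivial cofibration is a map with $\sgo\in\cof(\M)\cap\we(\M)$ and $\sgi\in\cof(\ag)\cap\we(\ag)$. Given two cofibrations $\sg,\theta$ of $\mua_{inj}$, Lemma~\ref{lem-push-prod} gives $\sg\square\theta=[\sgo\square\theta^0,\sgi\square\theta^1]$; the pushout-product axiom of $\M$ makes $\sgo\square\theta^0$ a cofibration in $\M$, trivial as soon as $\sgo$ or $\theta^0$ is, and the pushout-product axiom of $\ag$ does the same for the second component. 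Reading this back level-wise gives the pushout-product axiom for $\mua_{inj}$.

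For the Unit axiom, recall the unit object $I=[I_\M,I_\ag,\psi_I]$ and that the initial object of $\mua$ is $[\emptyset_\M,\emptyset_\ag,!]$, colimits in $\mua$ being computed level-wise. I would use the model structure of Theorem~\ref{inj-proj-thm} to factor $\emptyset_{\mua}\to I$ in $\mua_{inj}$ as an injective cofibration $\emptyset_{\mua}\to I^c$ followed by an injective trivial fibration $q\colon I^c=[(I^c)^0,(I^c)^1,\pi^c]\to I$. From the first map, $(I^c)^0$ is cofibrant in $\M$ and $(I^c)^1$ is cofibrant in $\ag$. From the definition of injective trivial fibrations, $q^1\colon(I^c)^1\to I_\ag$ is a trivial fibration in $\ag$ and $(I^c)^0\to\Ub((I^c)^1)\times_{\Ub(I_\ag)}I_\M$ is a trivial fibration in $\M$; since $\Ub$ is right Quillen, $\Ub(q^1)$ is a trivial fibration in $\M$, hence so is its base change $\Ub((I^c)^1)\times_{\Ub(I_\ag)}I_\M\to I_\M$ along $\psi_I$, and composing shows the component $q^0=\Pio(q)\colon(I^c)^0\to I_\M$ is a trivial fibration, in particular a weak equivalence with cofibrant source. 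Thus $I^c$ is a cofibrant replacement of the unit whose two components are weak equivalences out of cofibrant objects.

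Finally, for $\Fc=[\F^0,\F^1,\pif]$ cofibrant in $\mua_{inj}$ (so $\F^0$ cofibrant in $\M$ and $\F^1$ cofibrant in $\ag$), the map $q\otimes\Id_{\Fc}\colon I^c\otimes\Fc\to I\otimes\Fc\cong\Fc$ of the point-wise product has components $q^0\otimes\Id_{\F^0}\colon(I^c)^0\otimes\F^0\to\F^0$ and $q^1\otimes\Id_{\F^1}\colon(I^c)^1\otimes\F^1\to\F^1$. I would then invoke the standard fact that the Unit axiom is independent of the chosen cofibrant replacement of the unit --- lifting $(I^c)^0$ through a trivial-fibration cofibrant replacement of $I_\M$ yields a weak equivalence between cofibrant objects, which $-\otimes\F^0$ preserves by Ken Brown's lemma since $\F^0$ is cofibrant, so the Unit axiom of $\M$ forces $q^0\otimes\Id_{\F^0}\in\we(\M)$, and likewise $q^1\otimes\Id_{\F^1}\in\we(\ag)$. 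Hence $q\otimes\Id_{\Fc}$ is a level-wise weak equivalence, i.e.\ a weak equivalence in $\mua_{inj}$, which is the Unit axiom. The pushout-product half is essentially automatic; the main obstacle is the Unit axiom, and within it the only step that is not a level-wise copy of the $\M$- and $\ag$-arguments is showing that the $\M$-component $q^0$ of the cofibrant replacement $q$ is a weak equivalence out of a cofibrant object --- which is exactly where the right Quillen hypothesis on $\Ub$ enters, through the base change of $\Ub(q^1)$.
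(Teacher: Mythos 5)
Your proof is correct and follows essentially the same route as the paper: the pushout-product axiom is read off level-wise via Lemma \ref{lem-push-prod}, and the Unit axiom is reduced to the Unit axioms of $\M$ and $\ag$ by observing that a cofibrant replacement of $[I_{\M}, I_{\ag}, \psi_I]$ restricts to cofibrant replacements of $I_{\M}$ and $I_{\ag}$. Your detour through the right Quillen property of $\Ub$ and the base change of $\Ub(q^1)$ to show that $q^0$ is a weak equivalence is correct but superfluous, since weak equivalences in $\muaij$ are by definition the level-wise ones, so any cofibrant replacement of the unit there is automatically a level-wise weak equivalence out of a level-wise cofibrant object.
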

\begin{proof}
We simply need to verify the \emph{Pushout product axiom} and the \emph{Unit axiom}. In the model category $\mua_{inj}$ the weak equivalences  and the (trivial) cofibrations are precisely the level-wise weak equivalences and the level-wise (trivial) cofibrations respectively. With Lemma \ref{lem-push-prod}, the pushout product axiom in $\ag$ and $\M$ give the result. For the Unit axiom, we note that a cofibrant replacements $[I_{\M}^c, I_{\ag}^c,\pi^c] \xrightarrow{\sim} [I_{\M}, I_{\ag}, \psi_I]$ induces two cofibrant replacement  $I_{\M}^c\xrightarrow{\sim} I_{\M}$, $I_{\ag}^c\xrightarrow{\sim} I_{\ag}$. The  Unit axiom in $\ag$ and $\M$  implies that the map $[I_{\M}^c, I_{\ag}^c,\pi^c] \ot \Fc \to  [I_{\M}, I_{\ag}, \psi_I] \ot \Fc$ is a level-wise weak equivalence for every (level-wise) cofibrant object $\Fc \in \muaij$.
\end{proof}
\begin{thm}\label{muaij-left-mon-mod}
Under the same hypothesis, the model category $\mua_{inj}^{\bf{L}}$ with the point-wise product, is a monoidal model category.
\end{thm}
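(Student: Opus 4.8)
The plan is to verify the two axioms that upgrade a model category with a closed monoidal structure to a monoidal model category — the pushout-product axiom and the unit axiom — for $\mua_{inj}^{\bf{L}}$ equipped with the point-wise product. The closed monoidal structure is already available by Proposition \ref{fact-lax}(3) (using that $\ag$ and $\M$ are closed monoidal), and $\mua_{inj}^{\bf{L}}$ is a model category by Theorem \ref{left-inj-thm}, so only the two compatibility axioms remain. The whole argument will be a level-by-level reduction to the corresponding axioms already available in the monoidal model categories $\ag$ and $\M$, via Lemma \ref{lem-push-prod}, which says $\sg \square \theta = [\sg^0 \square \theta^0,\, \sg^1 \square \theta^1]$. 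The one point requiring attention is the bookkeeping of morphism classes in the Bousfield-localized structure: from the proof of Theorem \ref{left-inj-thm}, the cofibrations of $\mua_{inj}^{\bf{L}}$ are exactly the cofibrations of $\muaij$ (the level-wise cofibrations), while a cofibration $\sg$ is acyclic in $\mua_{inj}^{\bf{L}}$ precisely when $\sg^1$ is a weak equivalence in $\ag$ — equivalently, when $\sg^0$ is a cofibration in $\M$ and $\sg^1$ is a trivial cofibration in $\ag$.

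For the pushout-product axiom, first note that since the cofibrations of $\mua_{inj}^{\bf{L}}$ coincide with those of $\muaij$, the statement ``cofibration $\square$ cofibration is a cofibration'' is literally the one already established in Theorem \ref{muaij-mon-mod}. What remains is the acyclic half. So let $\sg$ be a trivial cofibration of $\mua_{inj}^{\bf{L}}$ and $\theta$ a cofibration (the other case is symmetric). Then $\sg^0 \in \cof(\M)$, $\sg^1 \in \cof(\ag)\cap\we(\ag)$, $\theta^0 \in \cof(\M)$ and $\theta^1 \in \cof(\ag)$. By Lemma \ref{lem-push-prod}, $(\sg \square \theta)^0 = \sg^0 \square \theta^0$ is a cofibration in $\M$ by the pushout-product axiom there, so $\sg \square \theta$ is a cofibration of $\mua_{inj}^{\bf{L}}$; and $(\sg \square \theta)^1 = \sg^1 \square \theta^1$ is a trivial cofibration in $\ag$ by the pushout-product axiom in $\ag$, in particular a weak equivalence, so $\sg \square \theta$ is also a left weak equivalence. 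Hence $\sg \square \theta$ is a trivial cofibration of $\mua_{inj}^{\bf{L}}$. Note that no input from the weak equivalences of $\M$, nor from any unit axiom, is used here.

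For the unit axiom, I would take a cofibrant replacement $q : I^c \xrightarrow{\sim} I$ of the unit object $I = [I_\M, I_\ag, \psi_I]$ in $\mua_{inj}^{\bf{L}}$ — for instance one produced by factoring $\emptyset \to I$ into a cofibration followed by a (level-wise) trivial fibration, though any cofibrant replacement works equally well. Since the cofibrant objects of $\mua_{inj}^{\bf{L}}$ are exactly the level-wise cofibrant ones and $q$ is a left weak equivalence, $q^1 = \Piun(q)$ is a weak equivalence in $\ag$ out of a cofibrant object, i.e.\ a cofibrant replacement of $I_\ag$. Given any cofibrant $\Fc \in \mua_{inj}^{\bf{L}}$, the object $\F^1 = \Piun(\Fc)$ is cofibrant in $\ag$, and because the product is point-wise we have $(q \otimes \Id_{\Fc})^1 = q^1 \otimes \Id_{\F^1} : \Piun(I^c) \otimes \F^1 \to I_\ag \otimes \F^1 \cong \F^1$, which is a weak equivalence in $\ag$ by the unit axiom in $\ag$. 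Hence $q \otimes \Id_{\Fc}$ is a left weak equivalence in $\mua_{inj}^{\bf{L}}$, and since $I \otimes \Fc \cong \Fc$, the unit axiom holds.

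I do not expect a genuine obstacle: once Theorem \ref{muaij-mon-mod} and Lemma \ref{lem-push-prod} are in hand, everything reduces, one level at a time, to the axioms of $\ag$ and $\M$. The only subtle point — and the thing worth stating carefully — is the precise shape of the trivial cofibrations in the localized structure $\mua_{inj}^{\bf{L}}$: they impose acyclicity only on the $\ag$-component, which is exactly why the pushout-product axiom of $\ag$ propagates the triviality while the $\M$-side contributes nothing beyond being a cofibration, and why the unit axiom collapses to the one in $\ag$ alone.
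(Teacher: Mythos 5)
Your proof is correct and follows essentially the same route as the paper's: both use Lemma \ref{lem-push-prod} to compute the pushout product level-wise, the observation that a trivial cofibration in $\mua_{inj}^{\bf{L}}$ is a level-wise cofibration whose $\ag$-component is a trivial cofibration, and the reduction of the unit axiom to the unit axiom of $\ag$ via the induced cofibrant replacement $I_{\ag}^c \xrightarrow{\sim} I_{\ag}$. Your write-up is in fact slightly more careful than the paper's in noting that the non-acyclic half of the pushout-product axiom also needs the pushout-product axiom of $\M$ (equivalently, Theorem \ref{muaij-mon-mod}), not just that of $\ag$.
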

\begin{proof}
A trivial cofibration $\osg$ in $\mua_{inj}^{\bf{L}}$ is a level-wise cofibration such that $\sgi$ is a trivial cofibration. The pushout product axiom of the monoidal model category $\ag$ implies that if $\sg=\osg$ or $\theta=[\theta^0, \theta^1]$ is a (trivial) cofibration in $\mua_{inj}^{\bf{L}}$, then so is the map $[\sgo \square \theta^0, \sgi \square \theta^1]$. A cofibrant replacement $[I_{\M}^c, I_{\ag}^c,\pi^c] \xrightarrow{\sim} [I_{\M}, I_{\ag}, \psi_I]$ yields a cofibrant replacement $I_{\ag}^c\xrightarrow{\sim} I_{\ag}$. Therefore, by the Unit axiom of the monoidal model category $\ag$, the map $[I_{\M}^c, I_{\ag}^c,\pi^c] \ot \Fc \to  [I_{\M}, I_{\ag}, \psi_I] \ot \Fc$ is a weak equivalence in $\mua_{inj}^{\bf{L}}$, for every cofibrant $\Fc \in \mua_{inj}^{\bf{L}}$.
\end{proof}
We remind the reader that in an adjunction $\Fb \dashv \Ub$, if $\Ub$ is a lax monoidal then $\Fb$ is necessarily a colax monoidal functor by doctrinal adjunction (see \cite{Kelly-doctrine}).
\begin{df}
A Quillen adjunction $\Fb \dashv \Ub$ is a \emph{a weak monoidal adjunction} if $\Ub$ is lax monoidal and $\Fb$ is colax monoidal such that the following two conditions hold:
\begin{enumerate}
	\item for all cofibrant objects $\Pa$ and $\Qa$ in $\M$, the colaxity map $\Fb(\Pa \ot \Qa) \xrightarrow{\tld{\psi}} \Fb(\Pa) \ot \Fb(\Qa)$ is a weak equivalence and
	\item for some (hence any) cofibrant replacement $q:I_{\M}^c \xrightarrow{\sim} I_{\M}$ of the unit object, the composite map $\Fb(I_{\M}^c) \xrightarrow{\Fb(q)} \Fb(I_{\M}^c) \xrightarrow{\tld{\psi}} I_{\ag}$ is a weak equivalence.
\end{enumerate}
\end{df}
This definition appears in \cite{Sch-Sh-equiv}. If in addition $\Fb$  a strong monoidal colax functor, we will say that $(\Fb \dashv \Ub)$ is a strong monoidal Quillen adjunction. The later notion is the one appearing in Hovey's book \cite{Hov-model}. For simplicity, we will say that $\Ub$ is a lax monoidal right Quillen functor if $\Ub$ is part of weak (or strong) monoidal Quillen adjunction $(\Fb \dashv \Ub)$.
\begin{thm}\label{monoidal-mod-adj}
Assume that $\Fb : \M \leftrightarrows \ag: \Ub$ is a weak (resp. strong) monoidal Quillen adjunction. Then the following hold.
\begin{enumerate}
	\item The adjunctions $\Fb^+ : \M  \leftrightarrows \mua_{inj}: \Pio$ and    $\Fb^+ : \M  \leftrightarrows \mua_{inj}^{\bf{L}}: \Pio$ are both weak (resp. strong)  monoidal Quillen adjunctions.
	\item The adjunction $\Piun : \mua_{inj}  \leftrightarrows \ag :\iota$ is a strong monoidal Quillen  adjunction.
	\item The adjunction $\Piun : \mua_{inj}^{\bf{L}}  \leftrightarrows \ag :\iota$  is a strong monoidal Quillen equivalence.
\end{enumerate}
\end{thm}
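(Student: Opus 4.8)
The plan is to deduce all three assertions from the corresponding facts for the original pair $\Fb\dashv\Ub$, using that the point-wise tensor product on $\mua$ is formed level-wise and that a map $\sg=[\sgo,\sgi]$ is a weak equivalence in $\mua_{inj}$ exactly when both $\sgo$ and $\sgi$ are weak equivalences, and a weak equivalence in $\mua_{inj}^{\bf{L}}$ exactly when $\sgi$ is. The homotopical data underlying the statement is already available: $\Fb^+\dashv\Pio$ is a Quillen adjunction for both model structures (Theorem \ref{inj-proj-thm}, Theorem \ref{fact-inj-thm}), $\Piun\dashv\iota$ is a Quillen adjunction for $\mua_{inj}$ and a Quillen equivalence for $\mua_{inj}^{\bf{L}}$ (Proposition \ref{adjunction-prop}, Theorem \ref{fact-inj-thm}), and $\mua_{inj}$, $\mua_{inj}^{\bf{L}}$ are monoidal model categories (Theorem \ref{muaij-mon-mod}, Theorem \ref{muaij-left-mon-mod}). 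Thus in each case only the monoidal compatibility of the comparison maps with the weak equivalences must be checked.

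For Assertion (1), $\Pio$ is strong (in fact strict) monoidal by Lemma \ref{lem-lax}, so by doctrinal adjunction \cite{Kelly-doctrine} the left adjoint $\Fb^+$ inherits a colax monoidal structure. First I would write this structure out: I expect the colaxity map $\Fb^+(m\ot n)\to\Fb^+(m)\ot\Fb^+(n)$ to be the couple $[\Id_{m\ot n},\ \tld{\psi}_{m,n}]$ and the unit comparison $\Fb^+(I_\M)\to I$ to be $[\Id_{I_\M},\ \tld{\psi}_I]$, where $\tld{\psi}$, $\tld{\psi}_I$ are the colax data of $\Fb$ and $I=[I_\M,I_\ag,\psi_I]$ is the unit of $\mua$. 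Granting this, the two defining conditions of a weak monoidal adjunction for $\Fb^+$ become level-wise statements: for $\Pa,\Qa$ cofibrant in $\M$, the map $\tld{\psi}_{\Pa,\Qa}$ is a weak equivalence in $\ag$ by condition (1) for $\Fb$, so $[\Id,\tld{\psi}_{\Pa,\Qa}]$ is a level-wise weak equivalence and hence a weak equivalence both in $\mua_{inj}$ and in $\mua_{inj}^{\bf{L}}$; and for a cofibrant replacement $q: I_\M^c\to I_\M$ the composite $\Fb^+(I_\M^c)\to\Fb^+(I_\M)\to I$ equals $[q,\ \tld{\psi}_I\circ\Fb(q)]$, whose level-$0$ component is the weak equivalence $q$ and whose level-$1$ component is a weak equivalence by condition (2) for $\Fb$, hence again a level-wise weak equivalence. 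If $\Fb$ is strong monoidal then $\tld{\psi}$ and $\tld{\psi}_I$ are isomorphisms, so the two couples above are level-wise isomorphisms, i.e.\ isomorphisms, and $\Fb^+$ is strong monoidal.

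For Assertions (2) and (3), $\Piun:\mua\to\ag$ is strong monoidal and $\iota$ is lax monoidal, both by Lemma \ref{lem-lax}. Since cofibrations in $\mua_{inj}$ (resp.\ $\mua_{inj}^{\bf{L}}$) are level-wise cofibrations, $\Piun$ preserves cofibrations and trivial cofibrations, so $\Piun\dashv\iota$ is a Quillen adjunction for $\mua_{inj}$, and for $\mua_{inj}^{\bf{L}}$ it is moreover a Quillen equivalence by Theorem \ref{fact-inj-thm}. A strong monoidal left adjoint has invertible comparison maps, so conditions (1) and (2) in the definition of a weak monoidal adjunction hold automatically; therefore $\Piun\dashv\iota$ is a strong monoidal Quillen adjunction in case (2) and a strong monoidal Quillen equivalence in case (3). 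The hypothesis that $\Fb\dashv\Ub$ be weak or strong monoidal is used here only through Lemma \ref{lem-lax} — i.e.\ that $\Ub$, hence $\iota$, is lax monoidal — and through the monoidality of the model structures on $\mua$.

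The main obstacle is the bookkeeping step inside Assertion (1): verifying that the couples $[\Id,\tld{\psi}]$ and $[\Id,\tld{\psi}_I]$ really define morphisms of the comma category $\mua$, i.e.\ that each satisfies the defining square of a map of $\mua$. Concretely this amounts to the coherence $\Ub(\tld{\psi}_{m,n})\circ\eta_{m\ot n}=\psi_{\Fb m,\Fb n}\circ(\eta_m\ot\eta_n)$, together with its unit-object analogue $\Ub(\tld{\psi}_I)\circ\eta_{I_\M}=\psi_I$, relating the laxity maps of $\Ub$, the colaxity maps of $\Fb$ and the unit $\eta$ of $\Fb\dashv\Ub$; this is the standard compatibility produced by doctrinal adjunction, but it is the one step that is not purely formal. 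Once it is in place, everything else follows immediately, since both $\ot$ and the weak equivalences on $\mua$ are controlled level-wise.
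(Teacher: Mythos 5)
Your proposal is correct and follows essentially the same route as the paper: compute the colax comparison maps of $\Fb^+$ explicitly as the level-wise couples $[\Id,\tld{\psi}]$ and $[q,\tld{\psi}\circ\Fb(q)]$ (the paper verifies the required commutative squares coming from the compatibility of $\psi$, $\tld{\psi}$ and $\eta$, exactly the doctrinal-adjunction coherence you flag), conclude they are level-wise weak equivalences hence weak equivalences in $\mua_{inj}$ and a fortiori in $\mua_{inj}^{\bf{L}}$, and dispatch Assertions (2) and (3) from the fact that $\Piun$ is strong monoidal together with Theorem \ref{fact-inj-thm}.
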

\begin{proof}
 Let $m, m'$ be cofibrant objects in $\M$. The colaxity map $\Fb^+(m \ot m') \xrightarrow{} \Fb^+(m) \ot \Fb^+(m')$
  is given by the couple $[\Id_{m\ot m'}, \tld{\psi}]$ as illustrated in this diagram:
	\[
\xy
(0,30)*+{m\otimes m'}="W";
(0,0)*+{\Ub(\Fb(m\ot m'))}="X";
(40,15)*+{\Ub(\Fb(m))\otimes \Ub(\Fb(m'))}="Z";
(40,0)*+{\Ub(\Fb(m) \ot \Fb(m'))}="Y";
(40,30)*+{m \ot m'}="E";
{\ar@{->}^-{\Ub(\tld{\psi})}"X";"Y"};
{\ar@{->}_-{\eta_{m\ot m'}}"W";"X"};
{\ar@{->}^-{\Id_{m\ot m'}}"W";"E"};
{\ar@{->}^-{ \eta_m \ot \eta_{m'}}"E";"Z"};
{\ar@{->}^-{\psi}"Z";"Y"};
\endxy
\]
By assumption, $\Fb(m \ot m') \xrightarrow{\tld{\psi}} \Fb(m) \ot \Fb(m')$ is weak equivalence, therefore  $[\Id_{m\ot m'}, \tld{\psi}]$ is  a weak equivalence in  $\mua_{inj}$, hence in $\mua_{inj}^{\bf{L}}$. Moreover, if $q:I_{\M}^c \xrightarrow{\sim} I_{\M}$ is a cofibrant replacement, the composite $\Fb^+(I_{\M}^c) \xrightarrow{\Fb^+(q)} \Fb^+(I_{\M}) \xrightarrow{\tld{\psi}} [I_{\M}, I_{\ag}, \psi_I] $ is given by the couple $[q, \tld{\psi} \circ \Fb(q)]$:
\[
\xy
(0,15)*+{I_{\M}^c}="W";
(0,0)*+{\Ub(\Fb(I_{\M}^c))}="X";
(60,0)*+{\Ub(I_{\ag})}="Y";
(60,15)*+{I_{\M}}="E";
{\ar@{->}^-{\eta_{I_{\M}^c}}"W";"X"};
{\ar@{->}^-{\psi}"E";"Y"};
(30,15)*+{I_{\M}}="U";
(30,0)*+{\Ub(\Fb(I_{\M}))}="V";
{\ar@{->}^-{\Ub(\Fb(q))}"X";"V"};
{\ar@{->}^-{\Ub(\tld{\psi})}"V";"Y"};
{\ar@{->}^-{\eta_{I_{\M}}}"U";"V"};
{\ar@{->}^-{\Id}"U";"E"};
{\ar@{->}^-{q}"W";"U"};
\endxy
\] 
By assumption the composition $\tld{\psi} \circ \Fb(q)$ is a weak equivalence in $\ag$ which implies that $[q, \tld{\psi} \circ \Fb(q)]$ is a weak equivalence in $\mua_{inj}$, hence in $\mua_{inj}^{\bf{L}}$. This gives Assertion $(1)$.

 The other assertions are easily checked considering that $\Piun$ is a strong monoidal functor that creates the weak equivalences in  $\mua_{inj}^{\bf{L}}$ and that in $\mua_{inj}$ everything is level-wise. The Quillen equivalence $\Piun : \mua_{inj}^{\bf{L}}  \leftrightarrows \ag :\iota$ is given by Theorem \ref{fact-inj-thm}.
\end{proof}
\begin{cor}\label{cor-fact-mon}
\
\begin{enumerate}
	\item Any lax monoidal right Quillen functor  can be factored as a lax monoidal functor which is part of a strong monoidal adjunction, followed by strong monoidal right Quillen functor.
	\item Any lax monoidal right Quillen functor can be factored as a lax monoidal right  Quillen equivalence, followed by a strong monoidal right Quillen functor.
\end{enumerate}
\begin{rmk}
The category $\mua_{inj}^{\bf{R}}$ appearing in the other factorization is a right (Bousfield) localization and we have a very little control on the new cofibrations. Without any further hypothesis it is difficult to check the pushout product axiom if $\Fb$ is colax monoidal functor that is not a strong monoidal functor. However, if $\M$ and $\ag$ are cofibrantly generated and if $\Fb$ is a strong monoidal functor, then we can endow $\mua_{inj}^{\bf{R}}$ with the structure of a monoidal model structure such that $\Pio$ becomes a monoidal Quillen equivalence. 
\end{rmk}
\end{cor}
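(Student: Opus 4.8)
\textit{Proof proposal.} The plan is to deduce Corollary~\ref{cor-fact-mon} formally from Lemma~\ref{lem-lax} and Theorem~\ref{monoidal-mod-adj}, applied to the two monoidal model structures already constructed on the comma category $\mdua$. Fix a lax monoidal right Quillen functor $\Ub:\ag\to\M$, that is a weak (resp.\ strong) monoidal Quillen adjunction $\Fb\dashv\Ub$, and equip $\mdua$ with the point-wise tensor product of Proposition~\ref{fact-lax}. Recall from Proposition~\ref{adjunction-prop} that at the level of underlying categories one has the factorization $\Ub=\Pio\circ\iota$; the whole point is to upgrade each of $\iota$ and $\Pio$ to a morphism of the required monoidal and model-categorical strength, for a single fixed choice of model structure on $\mdua$.

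\textbf{Assertion (1).} I would put on $\mdua$ the injective model structure $\muaij$ of Theorem~\ref{inj-proj-thm}, which is a monoidal model category for the point-wise product by Theorem~\ref{muaij-mon-mod}. Then the first factor $\iota:\ag\hookrightarrow\muaij$ is lax monoidal by Lemma~\ref{lem-lax}(1), and by Theorem~\ref{monoidal-mod-adj}(2) the adjunction $\Piun\dashv\iota$ is a strong monoidal Quillen adjunction; this is exactly what it means for $\iota$ to be a lax monoidal functor that is part of a strong monoidal adjunction. The second factor $\Pio:\muaij\to\M$ is strong monoidal by Lemma~\ref{lem-lax}(2), is right Quillen by Theorem~\ref{inj-proj-thm}, and by Theorem~\ref{monoidal-mod-adj}(1) it is part of the weak (resp.\ strong) monoidal Quillen adjunction $\Fb^+\dashv\Pio$, hence a strong monoidal right Quillen functor. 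Composing gives the factorization, and functoriality is inherited from Theorem~\ref{first-fact} because the monoidal structures live on the same comma categories.

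\textbf{Assertion (2).} I would instead use the left-localized structure $\mua_{inj}^{\bf{L}}$ of Theorem~\ref{left-inj-thm}, which is again monoidal for the point-wise product by Theorem~\ref{muaij-left-mon-mod}. Now $\iota:\ag\hookrightarrow\mua_{inj}^{\bf{L}}$ is still lax monoidal by Lemma~\ref{lem-lax}(1); by Theorem~\ref{fact-inj-thm}(1) it is a right Quillen equivalence, and by Theorem~\ref{monoidal-mod-adj}(3) the adjunction $\Piun\dashv\iota$ is a strong monoidal Quillen equivalence, so $\iota$ is a lax monoidal right Quillen equivalence. The second factor $\Pio:\mua_{inj}^{\bf{L}}\to\M$ is strong monoidal by Lemma~\ref{lem-lax}(2), right Quillen by Theorem~\ref{fact-inj-thm}(2), and part of the monoidal Quillen adjunction $\Fb^+\dashv\Pio$ of Theorem~\ref{monoidal-mod-adj}(1), hence a strong monoidal right Quillen functor. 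Composing yields Assertion (2).

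Since every ingredient is already in place, the only real work is bookkeeping: checking that one and the same model structure ($\muaij$ for the first factorization, $\mua_{inj}^{\bf{L}}$ for the second) simultaneously realizes both factors with the advertised monoidal strength and the advertised Quillen-type strength. The genuine subtlety---already dealt with in Theorem~\ref{monoidal-mod-adj}---is that $\Fb^+$ inherits only the strength of $\Fb$, so the adjunction $\Fb^+\dashv\Pio$ is weak monoidal precisely when $\Fb\dashv\Ub$ is, which is why the ``weak (resp.\ strong)'' qualifier propagates through the statement. It is also worth recording explicitly why the right-localized structure $\mua_{inj}^{\bf{R}}$ does not appear here: as noted in the remark below, without cofibrant generation of $\M$ and $\ag$ and without $\Fb$ being strong monoidal, one has too little control on the new (right injective) cofibrations to verify the pushout-product axiom, so the dual ``cofibration followed by trivial fibration'' factorization of a lax monoidal functor is only available under those extra hypotheses.
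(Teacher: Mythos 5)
Your proposal is correct and is exactly the argument the paper intends: the corollary is drawn as an immediate consequence of Lemma \ref{lem-lax} together with Theorem \ref{monoidal-mod-adj}, using $\muaij$ (with Theorem \ref{muaij-mon-mod}) for Assertion $(1)$ and $\mua_{inj}^{\bf{L}}$ (with Theorem \ref{muaij-left-mon-mod} and Theorem \ref{fact-inj-thm}) for Assertion $(2)$. The only extra content in your write-up (functoriality, and the discussion of $\mua_{inj}^{\bf{R}}$) goes beyond what the corollary claims but is consistent with the paper's remark.
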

\appendix
\section{Other factorizations}\label{sec-other-fact}
In this section we show that there are alternative factorizations for a (right) Quillen functor.\\
\noindent Say that a morphism $\sg: \Fc \xrightarrow{\osg} \Gc$ is:
\begin{itemize}
	\item a $\Pio$-fibration (resp. $\Pio$-equivalence) if $\sgo: \F^0 \to \G^0$ is a fibration in $\M$ (resp. weak equivalence in $\M$).
	\item a strong (trivial) cofibration if:
	\begin{itemize}
		\item $\sigma^0: \F^0 \to \G^0$ is a  (trivial) cofibration in $\M$ and if
		\item the equivalent diagram representing $\sg$ is a pushout square:
		\[
		\xy
		(0,18)*+{\Fb(\F^0)}="W";
		(0,0)*+{\F^1}="X";
		(30,0)*+{\G^1}="Y";
		(30,18)*+{\Fb(\G^0)}="E";
		{\ar@{->}^-{\sigma^{1}}"X";"Y"};
		{\ar@{->}^-{\varphi(\pi_{\F})}"W";"X"};
		{\ar@{->}^-{\Fb(\sigma^0)}"W";"E"};
		{\ar@{->}^-{\varphi(\pi_{\G})}"E";"Y"};
		\endxy
		\]
	\end{itemize}
\item a trivial $\Pio$-fibration is it is simultaneously a $\Pio$-fibration and a $\Pio$-equivalence, i.e, if $\sgo$ is a trivial fibration in $\M$.
\end{itemize}
\noindent We note that the above definition doesn't use the model structure on $\ag$. When $\ag$ has a model structure such that $\Ub$ is right Quillen, then a strong cofibration is in particular a projective cofibration since $\sgo$ is a cofibration and $\F^1 \cup^{\Fb(\F^0)} \Fb(m^0) \to  \G^1$ is an isomorphism.
\begin{lem}\label{lem-lift-zero}
\ 
\begin{enumerate}
	\item Any map $\sg: \Fc \xrightarrow{\osg} \Gc$ can be factored as a strong cofibration followed by a trivial $\Pio$-fibration.
	\item Any map $\sg: \Fc \xrightarrow{\osg} \Gc$ can be factored as a strong trivial cofibration followed by a $\Pio$-fibration.
\end{enumerate}
\end{lem}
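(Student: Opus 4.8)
The plan is to obtain both factorizations at once by applying the factorization axiom of the model category $\M$ to the single component $\sgo\colon\F^0\to\G^0$ of $\sg$, and then to build the intermediate object of $\mua$ from one pushout in $\ag$; the only difference between the two assertions is which half of the factorization in $\M$ is used. First I would factor $\sgo$ in $\M$ as $\sgo=q\circ j$, choosing $j\colon\F^0\to Z$ a cofibration and $q\colon Z\to\G^0$ a trivial fibration for Assertion~$(1)$, and $j\colon\F^0\to Z$ a trivial cofibration and $q\colon Z\to\G^0$ a fibration for Assertion~$(2)$.

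Next I would define the intermediate object $\Ec=[\Ea^0,\Ea^1,\pi_{\Ea}]\in\mua$ by $\Ea^0=Z$, by letting $\Ea^1=\F^1\cup^{\Fb(\F^0)}\Fb(Z)$ be the pushout in $\ag$ of $\varphi(\pif)\colon\Fb(\F^0)\to\F^1$ along $\Fb(j)\colon\Fb(\F^0)\to\Fb(Z)$, and by taking $\pi_{\Ea}$ to be the composite $Z\xrightarrow{\eta_Z}\Ub\Fb(Z)\to\Ub(\Ea^1)$ of the unit followed by $\Ub$ of the coprojection $\Fb(Z)\to\Ea^1$. A short computation with the triangle identities of $\Fb\dashv\Ub$ shows that the adjoint-transpose $\varphi(\pi_{\Ea})$ is precisely that coprojection, so that the adjoint square attached to the object $\Ec$ is literally the pushout square defining $\Ea^1$. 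The first map of the factorization is then $i=[j,\kappa]\colon\Fc\to\Ec$, where $\kappa\colon\F^1\to\Ea^1$ is the remaining coprojection of the pushout; by the previous observation $i$ is a morphism of $\mua$ whose $\Pio$-component $j$ is a (trivial) cofibration of $\M$ and whose adjoint square is a pushout, hence $i$ is a strong cofibration in case~$(1)$ and a strong trivial cofibration in case~$(2)$, exactly as demanded by the definitions preceding the lemma.

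For the second map I would invoke the universal property of the pushout $\Ea^1$. The two maps $\sgi\colon\F^1\to\G^1$ and $\varphi(\pig)\circ\Fb(q)\colon\Fb(Z)\to\G^1$ agree after restriction to $\Fb(\F^0)$, since $\varphi(\pig)\circ\Fb(q)\circ\Fb(j)=\varphi(\pig)\circ\Fb(\sgo)=\sgi\circ\varphi(\pif)$, the last equality being the commutativity of the adjoint square that defines $\sg$ as a morphism. So there is a unique $\beta^1\colon\Ea^1\to\G^1$ with $\beta^1\circ\kappa=\sgi$ and with $\beta^1$ restricting to $\varphi(\pig)\circ\Fb(q)$ on $\Fb(Z)$, and I would set $p=[q,\beta^1]\colon\Ec\to\Gc$. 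That $p$ is a morphism of $\mua$ follows from the second restriction identity together with the triangle identities; moreover $p\circ i=[q\circ j,\beta^1\circ\kappa]=[\sgo,\sgi]=\sg$, while $\Pio(p)=q$ is a trivial fibration in case~$(1)$ (so $p$ is a trivial $\Pio$-fibration) and a fibration in case~$(2)$ (so $p$ is a $\Pio$-fibration).

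I do not expect a genuine obstacle: the homotopical content is entirely confined to the factorization of $\sgo$ in $\M$, and everything afterwards is formal. The delicate bookkeeping — and the step where it is easiest to make a direction-or-sign slip — is checking that the two pairs $i$ and $p$ really define morphisms of $\mdua$, i.e.\ that the relevant squares over $\M$ (equivalently over $\ag$) commute; each such check reduces, through the adjunction $\Fb\dashv\Ub$ and its triangle identities, to the universal property of the single pushout $\Ea^1$. Finally, if one uses a functorial factorization in $\M$ in the first step, the resulting factorization of $\sg$ is automatically functorial, although functoriality is not claimed in the statement.
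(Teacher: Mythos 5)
Your proposal is correct and follows essentially the same route as the paper: factor $\sgo$ in $\M$ (choosing which half of the factorization system according to which assertion is wanted), push out $\varphi(\pif)$ along $\Fb$ of the first factor to build $\Ea^1$, take $\pi_{\Ea}$ to be the adjoint-transpose of the coprojection $\Fb(Z)\to\Ea^1$, and obtain the second map from the universal property of the pushout. Your $Z$, $j$, $q$, $\kappa$, $\beta^1$ are exactly the paper's $m^0$, $l(\sgo)$, $r(\sgo)$, $i_2$, $\zeta$, and your explicit check that the adjoint square of $i$ is the defining pushout square is the (implicit) reason the paper's first factor is a strong cofibration.
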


\begin{proof}
We will follow the same idea as in the proofs of the injective and projective model structures in \cite{Bacard_QS}. However,  the argument here only uses the model structure of $\M$, that is, we do not require $\ag$ to be a model category for the lemma to be true. Let  $(\lm, \mr)$ be one of the factorization systems $(\cof(\M), \fib(\M)\cap \we(\M))$, $(\cof(\M) \cap \we(\M), \fib(\M))$. The axiom of the model category $\M$ gives a factorization $\sgo= r(\sgo) \circ l(\sgo)$:
$$\F^0 \xrightarrow{\sgo} \G^0 = \F^0\xhookrightarrow{l(\sgo)}  m^0 \xtwoheadrightarrow{r(\sgo)} \G^0,$$ 
with $ r(\sgo) \in \mr $ and $ l(\sgo) \in \lm$.
The image under $\Fb$ of this factorization, gives a factorization  $\Fb(\sgo)= \Fb(r(\sgo)) \circ \Fb(l(\sgo))$. Consider the pushout data $ \F^0 \xleftarrow{\varphi(\pif)} \Fb(\F^0) \xrightarrow{\Fb(l(\sgo))} \Fb(m^0)$ and let  $i_1: \Fb (m^0)   \to E^1$, $i_2 : \F^1 \to  \F^1 \cup^{\Fb(\F^0)} \Fb(\G^0)$ be the canonical maps. 
The universal property of the pushout square gives a unique map 
$ \zeta : \F^1 \cup^{\Fb(\F^0)} \Fb(m^0) \to  \G^1,$
such that everything below commutes. 
\[
\xy
(0,25)*+{\Fb(\F^0)}="W";
(0,0)*+{\F^1}="X";
(60,0)*+{\G^1}="Y";
(60,25)*+{\Fb(\G^0)}="E";
{\ar@{->}^-{\varphi(\pif)}"W";"X"};
{\ar@{->}^-{\sgi}"X";"Y"};
{\ar@{->}^-{\varphi(\pig)}"E";"Y"};
(30,10)*+{\F^1 \cup^{\Fb(\F^0)} \Fb(m^0)}="U";
(30,25)*+{\Fb(m^0)}="V";
{\ar@{->}^-{\Fb(l(\sgo))}"W";"V"};
{\ar@{->}^-{\Fb(r(\sgo))}"V";"E"};
{\ar@{->}^-{i_2}"X";"U"};
{\ar@{->}^-{i_1}"V";"U"};
{\ar@{->}^-{\zeta}"U";"Y"};
\endxy
\]
Let $\Ec=[\Ea^0,\Ea^1,\pi_{\Ea}]$ be the object of $\mua$ defined by
$$ \Ea^0= m^0, \quad \Ea^1=\F^1 \cup^{\Fb(\F^0)} \Fb(m^0), \quad \pi_{\Ea}= \varphi^{-1} (i_1) \in  \Hom_{\M}(m^0,\Ub(\F^1 \cup^{\Fb(\F^0)} \Fb(m^0))).$$
The couple $[l(\sgo), i_2]$ defines a morphism $l(\sg): \Fc \to \Ec$ and the couple $[r(\sgo), \zeta]$ defines a morphism $r(\sg): \Ec \to \Gc$. Clearly, the factorization $\sg= r(\sg) \circ l(\sg)$ gives both assertions.
\end{proof}
\begin{lem}\label{lem-fact-zero}
Consider a lifting problem in $\mua$ as follows.
\[
\xy
(0,18)*+{\Fc}="W";
(0,0)*+{\Gc}="X";
(30,0)*+{\Qc}="Y";
(30,18)*+{\Pc}="E";
{\ar@{->}^-{[\gamma^0, \gamma^{1}]}"X";"Y"};
{\ar@{->}_-{[\sgo,\sgi]}"W";"X"};
{\ar@{->}^-{[\theta^0,\theta^{1}]}"W";"E"};
{\ar@{->}^-{[\beta^0,\beta^{1}]}"E";"Y"};
\endxy
\]
\begin{enumerate}
	\item If $\sg=[\sgo,\sgi]$ is a strong cofibration and $\beta = [\beta^0, \beta^1]$ a trivial $\Pio$-fibration, then a solution $s : \Gc \xrightarrow{[s^0,s^{1}]}\Pc$ exists.
	\item  If $\sg=[\sgo,\sgi]$ is a strong trivial cofibration and $\beta = [\beta^0, \beta^1]$ a $\Pio$-fibration, then a solution $s : \Gc \xrightarrow{[s^0,s^{1}]}\Pc$ exists.
\end{enumerate}
\end{lem}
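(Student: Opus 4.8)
The plan is to solve both lifting problems by a single argument carried out one level at a time, using the model structure on $\M$ only, in accordance with the remark that precedes the statement. First I would note that in case $(1)$ the component $\sgo$ is a cofibration and $\beta^0$ a trivial fibration of $\M$, whereas in case $(2)$ the component $\sgo$ is a trivial cofibration and $\beta^0$ a fibration of $\M$; in either case the $0$-components of the given square form a lifting problem in $\M$ between $\sgo$ and $\beta^0$ which is solved by the defining lifting axiom of the model category $\M$. This yields a map $s^0 : \G^0 \to \Pa^0$ with $s^0 \circ \sgo = \theta^0$ and $\beta^0 \circ s^0 = \gamma^0$.

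Next I would build the $1$-component $s^1 : \G^1 \to \Pa^1$ from the pushout clause in the definition of a strong (trivial) cofibration. Since $\sg$ is a strong (trivial) cofibration, the adjoint square of $\sg$ exhibits $\G^1$ as the pushout of $\F^1 \xleftarrow{\varphi(\pif)} \Fb(\F^0) \xrightarrow{\Fb(\sgo)} \Fb(\G^0)$, with pushout legs $\sgi : \F^1 \to \G^1$ and $\varphi(\pig) : \Fb(\G^0) \to \G^1$. I would define $s^1$ by the cocone on this pushout given by $\theta^1 : \F^1 \to \Pa^1$ and by $\varphi(\pip) \circ \Fb(s^0) : \Fb(\G^0) \to \Pa^1$. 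These two maps agree after restriction to $\Fb(\F^0)$ because, on the one hand, $\theta = [\theta^0,\theta^1]$ being a morphism of $\mua$ gives $\theta^1 \circ \varphi(\pif) = \varphi(\pip) \circ \Fb(\theta^0)$, and, on the other hand, $\Fb(\theta^0) = \Fb(s^0 \circ \sgo) = \Fb(s^0) \circ \Fb(\sgo)$ by the first step. The universal property of the pushout then produces a unique $s^1$ with $s^1 \circ \sgi = \theta^1$ and $s^1 \circ \varphi(\pig) = \varphi(\pip) \circ \Fb(s^0)$.

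It then remains to verify that $[s^0,s^1]$ is indeed a morphism $\Gc \to \Pc$ solving the square. Morphism-ness is precisely the identity $s^1 \circ \varphi(\pig) = \varphi(\pip) \circ \Fb(s^0)$, which holds by construction on the $\Fb(\G^0)$-leg. The equation $[s^0,s^1] \circ \sg = \theta$ holds in the $0$-component by the first step and in the $1$-component because $s^1 \circ \sgi = \theta^1$. The equation $\beta \circ [s^0,s^1] = \gamma$ holds in the $0$-component by the first step; in the $1$-component one must check $\beta^1 \circ s^1 = \gamma^1$, which, since $\G^1$ is a pushout, is tested on each leg: on the $\F^1$-leg it reduces to the $1$-component $\beta^1 \circ \theta^1 = \gamma^1 \circ \sgi$ of the original commutative square, and on the $\Fb(\G^0)$-leg, after using the morphism identities for $\beta$ and $\gamma$ (namely $\beta^1 \circ \varphi(\pip) = \varphi(\pi_{\Qa}) \circ \Fb(\beta^0)$ and $\gamma^1 \circ \varphi(\pig) = \varphi(\pi_{\Qa}) \circ \Fb(\gamma^0)$), it reduces to $\Fb(\beta^0 \circ s^0) = \Fb(\gamma^0)$, which holds by the first step.

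The one point needing genuine attention is the pushout bookkeeping in the second and third paragraphs: confirming that the cocone defining $s^1$ is compatible over $\Fb(\F^0)$, and that the two verification equations for $s^1$ are applied to the correct legs of $\G^1$. The rest is a routine chase with the adjunction $\Fb \dashv \Ub$, and, as anticipated, no model structure on $\ag$ is used anywhere.
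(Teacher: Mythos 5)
Your proposal is correct and follows the same route as the paper's proof: solve the level-$0$ lifting problem in $\M$ using the appropriate factorization-system pairing of $(\sgo,\beta^0)$, then exploit the pushout clause in the definition of a strong (trivial) cofibration to induce $s^1$ from the cocone $(\theta^1,\varphi(\pi_{\Pa})\circ\Fb(s^0))$, with the remaining identities checked by uniqueness of maps out of the pushout. Your write-up is merely more explicit about the cocone compatibility and the leg-by-leg verification of $\beta^1\circ s^1=\gamma^1$, which the paper leaves to the reader.
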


\begin{proof}
Let $(\lm, \mr)$ be one of the aforementioned factorization systems of the model category $\M$. 
 Applying  the functor $\Pio$ to the commutative square of the lemma gives a lifting problem defined by $\sgo$ and $\beta^0$. The latter problem admits a solution  solution $s^0: \G^0 \to \Pa^0$ if  $(\sgo, \beta^0) \in \lm \times \mr$. We note that the lifting problem of the lemma is represented by a commutative cube in $\ag$ which is adjoint to another commutative cube in $\M$. By inspection, the co-pushout data $\F^1 \xrightarrow{\theta^1} \Pa^1 \xleftarrow{\varphi(\pi_{\Pa}) \circ \Fb(s^0)} \Fb(\G^0)$ completes the pushout data  $\F^1 \xleftarrow{\varphi(\pif)} \Fb(\F^0) \xrightarrow{\Fb(\sgo)} \Fb(\G^0)$ into a commutative square. Moreover, by assumption, the commutative square in $\ag$ that represents $\sg$ is a pushout square, therefore there is a unique map $s^1:\G^1 \to \Pa^1$ satisfying the obvious equations. The uniqueness of a map out of a pushout object implies that the couple $[s^0,s^1]$ defines a morphism $s : \Gc \to \Pc$ which is a solution to our lifting problem. The commutative cube below helps in visualizing the situation. 
 \[
\xy
(-60,10)*+{\Fb(\F^0)}="A";
(10,20)*+{\Fb(\Pa^0)}="B";
(-20,-10)*+{\Fb(\G^0)}="C";
(50,0)*+{\Fb(\Qa^0)}="D";
{\ar@{->}^{\Fb(\theta^0)}"A";"B"};
{\ar@{->}_{\Fb(\sg^0)}"A";"C"};
{\ar@{->}_{\Fb(s^0)}"C";"B"};
{\ar@{->}^{\Fb(\beta^0)}"B";"D"};
{\ar@{->}_{}"C";"D"};
(-60,-30)*+{\F^1}="X";
(10,-20)*+{\Pa^1}="Y";
(-20,-50)*+{\G^1}="Z";
(50,-40)*+{\Qa^1}="W";
{\ar@{-->}^{\quad \quad ~~~~~~~~~~~~~~~ \theta^1}"X";"Y"};
{\ar@{->}_{\varphi(\pif)}"A";"X"};
{\ar@{->}^{\beta^1}"Y";"W"};
{\ar@{->}^{\varphi(\pig)}"C";"Z"};
{\ar@{->}_-{}"B";"Y"};
{\ar@{->}^{\varphi(\piq)}"D";"W"};
{\ar@{->}_{\sg^1}"X";"Z"};
{\ar@{->}_{\gamma^1}"Z";"W"};
{\ar@{.>}_-{s^1}"Z";"Y"};
\endxy
\]
\end{proof}
\begin{thm}\label{new-fact-pio}
	Call a morphism $\sigma: \Fc \xrightarrow{[\sgo,\sgi]} \Gc$ in $\mua$:
	\begin{itemize}
		\item  a weak equivalence if it is a $\Pio$-equivalence
		\item  a cofibration if it is a strong cofibration.
		\item a fibration if it is a $\Pio$-fibration. 
	\end{itemize}
	Then these choices provide $\mua$ with the structure of a model category that will be denoted by $\mua^{0}$. 
\end{thm}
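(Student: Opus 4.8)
The plan is to verify the model category axioms for $\mua^{0}$ directly, using only the model structure on $\M$ together with the already-established lemmas. First I would record that the three classes involved are closed under composition and retracts: for $\Pio$-equivalences and $\Pio$-fibrations this is immediate from the functor $\Pio=\Pi^0$ and the corresponding closure properties in $\M$; for strong cofibrations one combines the closure of $\cof(\M)$ under composition and retracts with the stability of pushout squares under these operations (a retract of a pushout square, in the appropriate double-arrow sense, is again a pushout square). The $2$-out-of-$3$ property for $\Pio$-equivalences is inherited verbatim from $\M$ via $\Pio$. Limits and colimits in $\mua$ are computed level-wise (with the structure map induced by the universal property), so $\mua$ has the finite limits and colimits required by Quillen's original axioms; this is where the Note in the introduction about finite (co)limits is used.

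Next come the two factorization axioms, which are exactly the content of Lemma \ref{lem-lift-zero}: every map factors as a strong cofibration followed by a trivial $\Pio$-fibration, and as a strong trivial cofibration followed by a $\Pio$-fibration. Here ``strong trivial cofibration'' means a strong cofibration whose $\sg^0$ is a trivial cofibration in $\M$, and one checks — as in the discussion preceding Lemma \ref{lem-lift-zero} — that a strong trivial cofibration is indeed simultaneously a strong cofibration and a $\Pio$-equivalence. These factorizations are moreover functorial because the factorization in $\M$ is and because $\Fb$ and the pushout construction are functorial.

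The lifting axioms are handled by Lemma \ref{lem-fact-zero}: strong cofibrations lift against trivial $\Pio$-fibrations, and strong trivial cofibrations lift against $\Pio$-fibrations. To finish, I must identify the trivial cofibrations of the putative model structure with the strong trivial cofibrations and the trivial fibrations with the trivial $\Pio$-fibrations. One inclusion in each case is the observation above (a strong trivial cofibration is a strong cofibration that is a $\Pio$-equivalence; dually a trivial $\Pio$-fibration is a $\Pio$-fibration that is a $\Pio$-equivalence). For the reverse inclusions one uses the retract argument in the standard way: if $\sg$ is a strong cofibration and a $\Pio$-equivalence, factor it by Lemma \ref{lem-lift-zero}(2) as a strong trivial cofibration $i$ followed by a $\Pio$-fibration $p$; then $p$ is a $\Pio$-equivalence by $2$-out-of-$3$, so $p$ is a trivial $\Pio$-fibration, hence by Lemma \ref{lem-fact-zero}(1) the square built from $\sg$ and $p$ admits a diagonal, exhibiting $\sg$ as a retract of $i$; since strong trivial cofibrations are closed under retracts (same argument as for strong cofibrations plus closure of $\cof(\M)\cap\we(\M)$ under retracts), $\sg$ is a strong trivial cofibration. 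The dual retract argument, using Lemma \ref{lem-lift-zero}(1) and Lemma \ref{lem-fact-zero}(2), handles $\Pio$-fibrations that are $\Pio$-equivalences.

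\textbf{Main obstacle.} The routine but genuinely delicate point is the closure of the class of strong cofibrations under retracts: unlike the injective or projective cofibrations, strong cofibrations are defined by a pushout-square condition on the adjoint diagram rather than by a lifting property, so one cannot simply invoke ``retract-closed classes defined by LLP''. One must argue by hand that a retract (in $\Arr(\mua)$) of a map whose adjoint square is a pushout again has this property — concretely, that the canonical comparison map $\F^1\cup^{\Fb(\F^0)}\Fb(\G^0)\to\G^1$ is an isomorphism for the retract once it is for the original — which follows from functoriality of the pushout and the retract identities, but deserves to be spelled out. Everything else is a bookkeeping assembly of the cited lemmas.
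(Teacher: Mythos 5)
Your proposal is correct and follows essentially the same route as the paper's (very terse) proof: level-wise finite (co)limits, closure of the three classes under composition and retracts, and then Lemma \ref{lem-lift-zero} for the two factorizations and Lemma \ref{lem-fact-zero} for the two lifting properties. Two small remarks: your closing retract argument is superfluous, since a strong cofibration that is a $\Pio$-equivalence is \emph{by definition} a strong trivial cofibration (and dually a $\Pio$-fibration that is a $\Pio$-equivalence is a trivial $\Pio$-fibration), so the trivial classes coincide definitionally; on the other hand, your observation that retract-closure of strong cofibrations is not automatic and must be checked by hand on the comparison map $\F^1\cup^{\Fb(\F^0)}\Fb(\G^0)\to\G^1$ (a retract of an isomorphism being an isomorphism) is a genuine detail that the paper dismisses with the word ``clearly''.
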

\begin{proof}
    Limits and colimits are computed level-wise.
	The three classes of maps are clearly closed under retracts and composition. Lemma \ref{lem-lift-zero} and Lemma \ref{lem-fact-zero} provide the other axioms of a model structure.
\end{proof}
\begin{cor}
For any right Quillen functor $\Ub: \ag \to \M$, the following hold.
\begin{enumerate}
	\item We have two Quillen equivalences: $$\Fb^+: \M \leftrightarrows \mua^{0}:\Pio \hspace{0.5in} \Pio : \mua^{0} \leftrightarrows \M: R^{0}$$
	\item We have a Quillen adjunction: $\Piun : \mua^{0} \leftrightarrows \ag : \iota$.
	\item The identity induces a left Quillen functor $\mua^{0} \to \mua_{proj}^{\bf{R}}$ which is a Quillen equivalence.
	\item The functor $\Ub$ is the composite: $ \quad \ag \xhookrightarrow{\iota} \mua^{0} \xrightarrow[\sim]{\Pio} \M.$
\end{enumerate}
\end{cor}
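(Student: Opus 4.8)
The categorical factorization $\Ub=\Pio\circ\iota$ is recorded in Proposition \ref{adjunction-prop}, so the content of the corollary is that the adjunctions $\Fb^+\dashv\Pio\dashv R^0$ and $\Piun\dashv\iota$ of Proposition \ref{adjunction-prop} become Quillen adjunctions for the model structure $\mua^0$ of Theorem \ref{new-fact-pio}, and that the first two — together with $\Pio$ — are Quillen equivalences. The plan is to deduce everything from a single comparison: the identity functor is a left Quillen equivalence $\mua^0\to\mua_{proj}^{\bf{R}}$, after which the homotopical statements can be transported from Theorem \ref{right-fact-proj-thm}. For that comparison, note first that the two model structures have literally the same weak equivalences: a map $\sigma$ is a weak equivalence in $\mua^0$ iff $\sigma^0$ is one in $\M$ (Theorem \ref{new-fact-pio}), and likewise in $\mua_{proj}^{\bf{R}}$ by Definition \ref{right-projective-data}. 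Next, every strong cofibration is a right projective cofibration: its defining square is a pushout, so the comparison map $\F^1\cup^{\Fb(\F^0)}\Fb(\G^0)\to\G^1$ is an isomorphism (a fortiori a trivial cofibration in $\ag$), while $\sigma^0$ is a cofibration in $\M$. Hence $\Id\colon\mua^0\to\mua_{proj}^{\bf{R}}$ preserves colimits, cofibrations, and — the weak equivalences agreeing — trivial cofibrations, so it is left Quillen; as it both preserves and reflects weak equivalences between the same underlying category, it induces the identity on homotopy categories and is a Quillen equivalence. This is Assertion $(3)$.

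Next I record the Quillen properties of the remaining functors directly on $\mua^0$. By Theorem \ref{new-fact-pio}, $\Pio$ sends $\Pio$-fibrations (resp. trivial $\Pio$-fibrations) to fibrations (resp. trivial fibrations) of $\M$, so $\Pio$ is right Quillen and $\Fb^+\dashv\Pio$ is a Quillen adjunction. At the same time $\Pio$ has the right adjoint $R^0$ (Proposition \ref{adjunction-prop}), hence preserves colimits, and it takes a strong cofibration (resp. a strong trivial cofibration, i.e. a strong cofibration that is a $\Pio$-equivalence) to a cofibration (resp. trivial cofibration) of $\M$; so $\Pio$ is also left Quillen and $\Pio\dashv R^0$ is a Quillen adjunction. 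Similarly $\Fb^+\colon\M\to\mua^0$ is left Quillen: for $f$ in $\M$ the defining square of $\Fb^+(f)$ has identity horizontal arrows, hence is trivially a pushout, so $\Fb^+(f)$ is a strong cofibration when $f$ is a cofibration and a strong trivial cofibration when $f$ is a trivial cofibration. Finally $\Piun\colon\mua^0\to\ag$ factors as $\mua^0\xrightarrow{\,\Id\,}\mua_{proj}^{\bf{R}}\xrightarrow{\,\Piun\,}\ag$, a composite of left Quillen functors (Assertion $(3)$ and Theorem \ref{right-fact-proj-thm}(2)), so $\Piun$ is left Quillen and $\Piun\dashv\iota$ is a Quillen adjunction; this is Assertion $(2)$, and in particular $\iota$ is a right Quillen functor.

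It remains to upgrade $\Fb^+\dashv\Pio$ and $\Pio\dashv R^0$ to Quillen equivalences (Assertion $(1)$). Since $\Pio\colon\mua^0\to\M$ is the composite $\mua^0\xrightarrow{\,\Id\,}\mua_{proj}^{\bf{R}}\xrightarrow{\,\Pio\,}\M$ of left Quillen equivalences — the first by Assertion $(3)$, the second by Theorem \ref{right-fact-proj-thm}(1) — it is itself a left Quillen equivalence, so $\Pio\dashv R^0$ is a Quillen equivalence. Likewise $\Id\circ\Fb^+=\Fb^+\colon\M\to\mua_{proj}^{\bf{R}}$ is a Quillen equivalence (Theorem \ref{right-fact-proj-thm}(1)) and $\Id$ is a Quillen equivalence, so by the two-out-of-three property for Quillen equivalences $\Fb^+\colon\M\to\mua^0$ is a Quillen equivalence; thus $\Fb^+\dashv\Pio$ is one as well. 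One may alternatively argue this last point in the style of the proof of Theorem \ref{right-fact-inj-thm}: $\Pio$ creates weak equivalences and $\Pio\Fb^+=\Id_{\M}$, so the transpose of any map $\Fb^+(m)\to\Fc$ equals its $\Pio$-component $m\to\Pio(\Fc)$, and the Quillen-equivalence criterion is immediate. Combining, Assertion $(4)$ follows: $\iota$ is right Quillen, $\Ub=\Pio\circ\iota$ by Proposition \ref{adjunction-prop}, and $\Pio$ is a Quillen equivalence, giving $\ag\xhookrightarrow{\ \iota\ }\mua^0\xrightarrow[\sim]{\ \Pio\ }\M$.

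The argument is essentially formal, so the ``hard part'' is really just bookkeeping. The one place to be careful is the double role of $\Pio$ in the second step: the \emph{right}-Quillen half is immediate from the definition of $\mua^0$, but the \emph{left}-Quillen half requires checking that a strong (trivial) cofibration has $\sigma^0$ a (trivial) cofibration in $\M$ and that $\Pio$ preserves the level-wise colimits defining $R^0$ — again immediate. The only genuinely non-tautological verification is that $\Fb^+(f)$ is a strong cofibration, where one must notice that the relevant square has identity rows and is therefore automatically a pushout. No model-categorical obstruction arises precisely because $\mua^0$ and $\mua_{proj}^{\bf{R}}$ share their weak equivalences, which is what makes the identity comparison a Quillen equivalence and lets all homotopical content be imported from Theorem \ref{right-fact-proj-thm}.
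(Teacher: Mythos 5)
Your proof is correct, and its substance coincides with the paper's: everything rests on the two observations that a strong (trivial) cofibration is in particular a right projective (trivial) cofibration — the pushout condition forces the comparison map $\F^1 \cup^{\Fb(\F^0)}\Fb(\G^0) \to \G^1$ to be an isomorphism, hence a trivial cofibration — and that $\mua^{0}$ and $\mua^{\bf{R}}_{proj}$ have identical weak equivalences, both created by $\Pio$. Where you diverge is in the order of deduction. You establish Assertion $(3)$ first and then transport Assertions $(1)$ and $(2)$ from Theorem \ref{right-fact-proj-thm} through the identity comparison, invoking the $3$-for-$2$ property of Quillen equivalences to handle $\Fb^+$ and factoring $\Piun$ through $\mua^{\bf{R}}_{proj}$ to get $(2)$. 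The paper instead verifies $(1)$ and $(2)$ directly: $\iota$ is right Quillen because $\iota(j)=[\Ub(j),j]$ and $\Ub$ preserves (trivial) fibrations, and the Quillen-equivalence criterion is checked on adjoint transposes exactly as in Theorem \ref{right-fact-inj-thm}, using that $\Pio$ creates the weak equivalences of $\mua^{0}$ and that $\Pio\circ \Fb^+ = \Id_{\M}$ and $\Pio\circ R^0=\Id_{\M}$. Since you record this direct adjoint-transpose argument as an alternative, nothing is lost; the transport route makes Assertion $(3)$ carry all the homotopical weight at the modest cost of quoting $3$-for-$2$ for Quillen equivalences, while the paper's direct checks are self-contained and do not depend on $(3)$ at all. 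Your verification that $\Fb^+(f)$ is a strong (trivial) cofibration because the adjoint square has identity vertical maps is exactly the point one must not skip, and you state it correctly.
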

\begin{proof}
Assertion $(1)$ is proved the same way as in Theorem \ref{right-fact-inj-thm} and Theorem \ref{right-fact-proj-thm}. If $\Fc \in \mua^{0}$ is cofibrant and $m \in \M$ is fibrant, then a map $\Fc \xrightarrow{} R^0(m)$ is a $\Pio$-equivalence in $\mua^{0}$ if and only if the adjoint-transpose map $(\F^0 \xrightarrow{} m)= (\Pio(\Fc) \xrightarrow{} m)$ is a weak equivalence in $\M$, whence the assertion. Given a map $j: \Pa \to \Qa$ in $\ag$, then the map $\iota(j)$ is given by the couple $[\Ub(j), j]$. Since $\Ub$ is a right Quillen, the map $\iota(j)$ is clearly a $\Pio$-fibration (resp.  trivial $\Pio$-fibration) if $j$ is, which proves that $\iota$ is a right Quillen functor, thus Assertion $(2)$. As observed earlier, a strong (trivial) cofibration is a projective (trivial) cofibration. Moreover, we have the same weak equivalences in $\mua^{0}$ and $\mua_{inj}^{\bf{R}}$ which proves Assertion $(3)$. Assertion $(4)$ is obvious.
\end{proof}
\noindent We can dualize the previous results using the isomorphisms of Proposition \ref{prop-dual-mod}:
$$ \mua:= \mdua \cong  (\Ub^{\op} \downarrow \M^{\op})^{\op} \cong (\ag^{\op} \downarrow \Fb^{op})^{\op}.$$
\noindent Say that a morphism $\sg: \Fc \xrightarrow{\osg} \Gc$ is:
\begin{itemize}
	\item a $\Piun$-cofibration (resp. $\Piun$-equivalence) if $\sgi: \F^1 \to \G^1$ is a cofibration in $\ag$ (resp. weak equivalence in $\ag$).
	\item a strong (trivial) fibration if:
	\begin{itemize}
		\item $\sigma^1: \F^1 \to \G^1$ is a  (trivial) fibration in $\ag$ and if
		\item the equivalent diagram representing $\sg$ is a pullback square in $\M$:
		\[
		\xy
		(0,18)*+{\F^0}="W";
		(0,0)*+{\Ub(\F^1)}="X";
		(30,0)*+{\Ub(\G^1)}="Y";
		(30,18)*+{\G^0}="E";
		{\ar@{->}^-{\Ub(\sigma^{1})}"X";"Y"};
		{\ar@{->}^-{\pi_{\F}}"W";"X"};
		{\ar@{->}^-{\sigma^0}"W";"E"};
		{\ar@{->}^-{\pi_{\G}}"E";"Y"};
		\endxy
		\]
	\end{itemize}
	\item a trivial $\Piun$-cofibration is it is simultaneously a $\Piun$-cofibration and a $\Piun$-equivalence, i.e, if $\sgo$ is a trivial cofibration in $\M$.
\end{itemize}
\begin{thm}\label{new-fact-piun}
	Call a morphism $\sigma: \Fc \xrightarrow{[\sgo,\sgi]} \Gc$ in $\mua$:
	\begin{itemize}
		\item  a weak equivalence if it is a $\Piun$-equivalence
		\item  a cofibration if it is a $\Piun$-cofibration.
		\item a fibration if it is a  strong fibration. 
	\end{itemize}
	Then these choices provide $\mua$ with the structure of a model category that will be denoted by $\mua^{1}$. 
	
\end{thm}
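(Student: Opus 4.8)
The plan is to obtain $\mua^{1}$ by dualizing Theorem \ref{new-fact-pio}, exactly as the preceding remark suggests. By Proposition \ref{prop-dual-mod}(1) the left adjoint $\Fb : \M \to \ag$ of $\Ub$ gives a right Quillen functor $\Fb^{\op} : \M^{\op} \to \ag^{\op}$ with left adjoint $\Ub^{\op}$. Applying Theorem \ref{new-fact-pio} to $\Fb^{\op}$ produces a model structure, call it $(\ag^{\op} \downarrow \Fb^{\op})^{0}$, on the comma category $(\ag^{\op} \downarrow \Fb^{\op})$, in which the weak equivalences, the cofibrations and the fibrations are respectively the $\Pio$-equivalences, the strong cofibrations and the $\Pio$-fibrations \emph{for the functor} $\Fb^{\op}$, where now $\Pio : (\ag^{\op} \downarrow \Fb^{\op}) \to \ag^{\op}$ is the projection onto $\ag^{\op}$.

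I would then transport this structure along the isomorphism of categories $\mdua \cong (\ag^{\op} \downarrow \Fb^{\op})^{\op}$ of Proposition \ref{prop-dual-mod}(4) and define $\mua^{1} := \bigl((\ag^{\op} \downarrow \Fb^{\op})^{0}\bigr)^{\op}$. Since the opposite of a model structure is again a model structure and $(\C^{\op})^{\op} = \C$, this is automatically a model structure on $\mua$; the only remaining task is to match its distinguished classes with those in the statement. Under the isomorphism a morphism $\sg = [\sgo, \sgi]$ of $\mdua$ corresponds to a morphism of $(\ag^{\op} \downarrow \Fb^{\op})$ whose components are $(\sgi)^{\op}$ and $(\sgo)^{\op}$, so the projection $\Pio$ of $(\ag^{\op} \downarrow \Fb^{\op})$ onto $\ag^{\op}$ matches $\Piun$ on $\mdua$ (the mirror of Proposition \ref{prop-dual-mod}(5)). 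Hence a $\Pio$-equivalence on the dual side is exactly a $\Piun$-equivalence, i.e. a left weak equivalence in the sense of the statement. Passing to opposites interchanges cofibrations and fibrations, so: the cofibrations of $\mua^{1}$ are the opposites of the $\Pio$-fibrations of $(\ag^{\op} \downarrow \Fb^{\op})^{0}$, that is the maps with $(\sgi)^{\op}$ a fibration in $\ag^{\op}$, i.e. $\sgi$ a cofibration in $\ag$ --- the $\Piun$-cofibrations; and the fibrations of $\mua^{1}$ are the opposites of the strong cofibrations, that is the maps with $\sgi$ a fibration in $\ag$ whose defining square, pushed back from $\M^{\op}$ to $\M$, becomes a pullback --- the strong fibrations.

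The one step that genuinely needs care is this last translation: $\C \mapsto \C^{\op}$ turns pushout squares into pullback squares and cofibrations into fibrations while swapping the roles of $\Pio$ and $\Piun$, so one must check that the ``strong cofibration'' data of Theorem \ref{new-fact-pio} spelled out for $\Fb^{\op}$ reassembles, after this double passage, precisely into the ``strong fibration'' data for $\Ub$ --- concretely, that the square with top $\Ub^{\op}((\sgi)^{\op}) : \Ub^{\op}(\G^1) \to \Ub^{\op}(\F^1)$ and structure maps $\pif^{\op}, \pig^{\op}$ is a pushout in $\M^{\op}$ exactly when the canonical map $\F^0 \to \Ub(\F^1) \times_{\Ub(\G^1)} \G^0$ is an isomorphism in $\M$. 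I expect this bookkeeping to be the main --- indeed the only --- obstacle; the two required factorizations and the lifting axioms then come for free. If one prefers to avoid opposite categories altogether, the same result can be obtained by a direct argument dualizing Lemma \ref{lem-lift-zero} and Lemma \ref{lem-fact-zero}: given $\sg$, factor $\sgi$ in $\ag$ with one of the factorization systems $(\cof(\ag), \fib(\ag) \cap \we(\ag))$ or $(\cof(\ag) \cap \we(\ag), \fib(\ag))$, form the pullback $\F^0 \times_{\Ub(\F^1)} \Ub(m^1)$ in $\M$ in place of the pushout used there, and verify the two factorizations and the lifting properties from the universal property of that pullback; only the model structure on $\ag$ and the existence of finite limits in $\M$ are needed. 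I would present the dualization, those computations being already in place (dually) in Lemma \ref{lem-lift-zero} and Lemma \ref{lem-fact-zero}.
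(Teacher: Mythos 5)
Your proposal is correct and follows exactly the paper's route: apply Theorem \ref{new-fact-pio} to the right Quillen functor $\Fb^{\op}\colon \M^{\op}\to\ag^{\op}$ and transport the resulting model structure along the isomorphism $\mua\cong(\ag^{\op}\downarrow\Fb^{\op})^{\op}$ of Proposition \ref{prop-dual-mod}. Your identification of the three classes under duality (in particular that the strong-cofibration pushout square for $\Fb^{\op}$ becomes the strong-fibration pullback square for $\Ub$) is the ``tedious but straightforward'' bookkeeping the paper leaves implicit, and your alternative direct argument via pullbacks is a valid dual of Lemmas \ref{lem-lift-zero} and \ref{lem-fact-zero} but is not needed.
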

\begin{proof}
	Apply Theorem \ref{new-fact-pio} to the right Quillen functor $\Fb^{\op}: \M^{\op} \to \ag^{\op}$ to get the model category $\ag^{\op}_{\Fb^{\op}}[\M^{\op}]^{0}$. Then dualize it with the isomorphism $ \mua \cong (\ag^{\op}_{\Fb^{\op}}[\M^{\op}])^{\op}$.
\end{proof}
We also have:
\begin{cor}
	For any right Quillen functor $\Ub: \ag \to \M$, the following hold.
	\begin{enumerate}
		\item We have two Quillen equivalences: $$L^1: \ag \leftrightarrows \mua^{1}:\Piun \hspace{0.5in} \Piun: \mua^{1} \leftrightarrows \ag :\iota$$
		\item We have a Quillen adjunction: $\Fb^{+} : \M  \leftrightarrows \mua^{1} : \Pio$.
		\item The identity induces a right Quillen functor $\mua^{1} \to \mua_{inj}^{\bf{R}}$ which is a Quillen equivalence.
		\item The functor $\Ub$ is the composite: $ \quad \ag \xhookrightarrow[\sim]{\iota} \mua^{1} \xrightarrow{\Pio} \M.$
	\end{enumerate}
\end{cor}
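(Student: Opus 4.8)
The plan is to prove the four assertions directly, copying almost verbatim the proofs of Theorem \ref{fact-inj-thm} and Theorem \ref{right-fact-inj-thm}; alternatively they follow formally by applying the corresponding corollary for $\mua^{0}$ to the right Quillen functor $\Fb^{\op}\colon\M^{\op}\to\ag^{\op}$ and dualizing through the isomorphism $\mua\cong(\ag^{\op}_{\Fb^{\op}}[\M^{\op}])^{\op}$ of Proposition \ref{prop-dual-mod}, under which left Quillen functors become right Quillen functors, Quillen equivalences are preserved, and $\Pio,\Piun,\iota,L^{1},R^{0},\Fb^{+}$ are interchanged with their $\Fb^{\op}$-counterparts. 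I will describe the direct route. The single fact used over and over is that $\Piun\colon\mua^{1}\to\ag$ \emph{creates} the weak equivalences of $\mua^{1}$: a map $\sg$ is a weak equivalence in $\mua^{1}$ precisely when $\sg^{1}=\Piun(\sg)$ is one in $\ag$.

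For Assertion $(1)$ I would first observe that $\Piun$ is right Quillen, since a strong (trivial) fibration $\sg$ has $\sg^{1}=\Piun(\sg)$ a (trivial) fibration in $\ag$ by definition. As $L^{1}(\Pa)=[\emptyset_{\M},\Pa,\emptyset_{\M}\to\Ub(\Pa)]$ and $\Piun(\Fc)=\F^{1}$, a map $L^{1}(\Pa)\to\Fc$ is a weak equivalence in $\mua^{1}$ iff its adjoint $\Pa\to\Piun(\Fc)$ is one in $\ag$; taking $\Pa$ cofibrant and $\Fc$ fibrant shows $L^{1}\dashv\Piun$ is a Quillen equivalence. On the other hand $\Piun$ is also left Quillen in $\Piun\dashv\iota$ — it takes $\Piun$-cofibrations, resp. trivial $\Piun$-cofibrations, to (trivial) cofibrations in $\ag$ — and it is homotopical; so from the triple $L^{1}\dashv\Piun\dashv\iota$ with the middle functor homotopical one concludes, exactly as in the proof of Theorem \ref{fact-inj-thm}, that $\Piun\dashv\iota$ is a Quillen equivalence as well. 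Along the way one checks that the unit $\eta\colon\Id_{\mua^{1}}\to\iota\circ\Piun$ is the level-wise map $[\pif,\Id_{\F^{1}}]$, a weak equivalence in $\mua^{1}$ because $\Id_{\F^{1}}$ is a weak equivalence in $\ag$; this is part of Assertion $(4)$.

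For Assertion $(2)$ I would use the adjunction $\Fb^{+}\dashv\Pio$ of Proposition \ref{adjunction-prop} and verify that $\Pio$ is right Quillen: if $\sg$ is a strong fibration, its defining square is a pullback in $\M$, so $\sg^{0}=\Pio(\sg)$ is the base change of $\Ub(\sg^{1})$ along $\pig$; since $\Ub$ is right Quillen, $\Ub(\sg^{1})$ is a (trivial) fibration when $\sg^{1}$ is, and base change preserves (trivial) fibrations, whence $\Pio(\sg)$ is one. Assertion $(4)$ then follows immediately, as $\Pio\circ\iota=\Ub$ as plain functors (Proposition \ref{adjunction-prop}) and $\iota$ is a right Quillen equivalence by Assertion $(1)$. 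For Assertion $(3)$ the crucial point is that for a strong fibration the comparison map $\F^{0}\to\Ub(\F^{1})\times_{\Ub(\G^{1})}\G^{0}$ is an \emph{isomorphism} (the square being a pullback), hence in particular a trivial fibration in $\M$; this is exactly the condition on that comparison map in Definition \ref{left-injective-data}, so the identity is a right Quillen functor from $\mua^{1}$ into the $\Piun$-localized injective model structure $\mua^{\bf{L}}_{inj}$ of Theorem \ref{left-inj-thm}, and since these two model structures have the same class of weak equivalences, the identity is automatically a Quillen equivalence.

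The only step needing a little care is the bookkeeping in Assertion $(1)$, namely checking that both adjunctions in which $\Piun$ occurs — once as a right adjoint, once as a left adjoint — really do upgrade to Quillen equivalences. This is precisely the standard argument for a triple of adjoints with homotopical middle functor that was already carried out for $\mua^{\bf{L}}_{inj}$ and $\mua^{\bf{L}}_{proj}$ in Theorem \ref{fact-inj-thm} and Theorem \ref{fact-proj-thm}, so no new idea is required; everything else reduces to the stability of (trivial) fibrations under $\Ub$ and under base change, together with the fact that the limits defining strong fibrations are computed level-wise in $\mua$.
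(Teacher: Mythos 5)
Your proposal is correct and follows exactly the route the paper intends (the printed proof is ``left to the reader'', with the dualization through $\mua\cong(\ag^{\op}_{\Fb^{\op}}[\M^{\op}])^{\op}$ as the implicit argument); your direct verifications of $(1)$, $(2)$ and $(4)$ are the precise analogues of the proofs of Theorem \ref{fact-inj-thm} and Theorem \ref{right-fact-inj-thm} and are sound: $\Piun$ creates the weak equivalences of $\mua^{1}$ and is simultaneously left and right Quillen, $L^{1}\dashv\Piun$ is a Quillen equivalence by adjoint transposition of the defining weak equivalences, and for a strong fibration $\sg^{0}$ is the base change of $\Ub(\sg^{1})$ along $\pig$, so $\Pio$ is right Quillen.

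One point deserves comment. In Assertion $(3)$ you prove that the identity is a right Quillen equivalence $\mua^{1}\to\mua^{\bf L}_{inj}$, whereas the statement as printed names $\mua^{\bf R}_{inj}$. Your target is the correct one: $\mua^{1}$ and $\mua^{\bf L}_{inj}$ share the class of weak equivalences (those $\sg$ with $\sg^{1}$ a weak equivalence in $\ag$), and a strong (trivial) fibration is a left injective (trivial) fibration because the comparison map $\F^{0}\to\Ub(\F^{1})\times_{\Ub(\G^{1})}\G^{0}$ is an isomorphism, hence a trivial fibration. The printed version with $\mua^{\bf R}_{inj}$ cannot be right as a Quillen \emph{equivalence}: the weak equivalences of $\mua^{\bf R}_{inj}$ are detected by $\Pio$ rather than $\Piun$, and its homotopy category is equivalent to $\Ho(\M)$ while that of $\mua^{1}$ is equivalent to $\Ho(\ag)$, so the identity could only be a Quillen equivalence between them if $\Ub$ itself were one. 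This is also what the duality predicts: dualizing the corresponding assertion for $(\ag^{\op}\downarrow\Fb^{\op})^{0}$ lands in an ${\bf L}$-localized structure, not an ${\bf R}$-localized one. So your proof in fact repairs a typo in the statement rather than deviating from it; no gap.
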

\begin{proof}
	Left to the reader.
\end{proof}
\begin{cor}
With the notation above, we have two commutative diagrams of right Quillen functors:
	\[
\xy
(0,18)*+{\ag}="W";
(0,0)*+{\mua_{proj}^{\bf{R}}}="X";
(30,0)*+{\M}="Y";
(30,18)*+{\mua^{0}}="E";
{\ar@{->}^-{\Pio}_-{\sim}"X";"Y"};
{\ar@{->}^-{\iota}"W";"X"};
{\ar@{->}^-{\iota}"W";"E"};
{\ar@{->}^-{\Pio}_-{\sim}"E";"Y"};
{\ar@{.>}^-{\Id}_-{\sim}"X";"E"};
\endxy
\xy
(0,18)*+{\ag}="W";
(0,0)*+{\mua^{1}}="X";
(30,0)*+{\M}="Y";
(30,18)*+{\mua_{inj}^{\bf{R}}}="E";
{\ar@{->}^-{\Pio}"X";"Y"};
{\ar@{->}^-{\iota}_-{\sim}"W";"X"};
{\ar@{->}^-{\iota}_-{\sim}"W";"E"};
{\ar@{->}^-{\Pio}"E";"Y"};
{\ar@{.>}^-{\Id}_-{\sim}"X";"E"};
\endxy
\]
\end{cor}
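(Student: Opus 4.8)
The plan is to show that each triangle commutes strictly in $\modcat_r$ and then to harvest the ``right Quillen'' and ``right Quillen equivalence'' labels from results already in hand. The commutativity is the easy part. In each diagram the two arrows named $\iota$, the two named $\Pio$, and the dotted arrow $\Id$ are literally the same underlying functors --- the one embedding $\ag\hookrightarrow\mua$, the one projection $\Pio:\mua\to\M$, and the identity functor of $\mua$ --- only the model structure attached to $\mua$ varies. Hence $\Id\circ\iota=\iota$ and $\Pio\circ\Id=\Pio$ on underlying categories, and since the adjunction packaged into each morphism of $\modcat_r$ is one of the canonical ones ($\iota\dashv\Pi^{1}$, $\Fb^{+}\dashv\Pi^{0}$, the identity adjunction), these equalities lift to equalities of morphisms in $\modcat_r$; the only thing to check is that composition in the $2$-category $\modcat_r$ of two such morphisms returns the morphism attached to the composite of the underlying adjunctions, which is the purely formal assertion that the structure isomorphisms $\varphi$ compose correctly.

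Next I would read off the decorations. For the left diagram: $\Pio:\mua_{proj}^{\bf{R}}\to\M$ is a right Quillen equivalence and $\iota:\ag\to\mua_{proj}^{\bf{R}}$ is right Quillen by Theorem \ref{right-fact-proj-thm}; the corollary following Theorem \ref{new-fact-pio} gives that $\Pio:\mua^{0}\to\M$ is a right Quillen equivalence, that $\iota:\ag\to\mua^{0}$ is right Quillen, and that the identity carries a left Quillen equivalence $\mua^{0}\to\mua_{proj}^{\bf{R}}$ whose right adjoint is precisely the dotted arrow $\Id:\mua_{proj}^{\bf{R}}\to\mua^{0}$, hence a right Quillen equivalence. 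For the right diagram I would argue dually, applying Theorem \ref{new-fact-pio} and its corollary to $\Fb^{\op}:\M^{\op}\to\ag^{\op}$ and transporting along the isomorphisms of Proposition \ref{prop-dual-mod}: this gives that $\iota:\ag\to\mua^{1}$ is a right Quillen equivalence and that the identity carries a Quillen equivalence $\mua^{1}\to\mua_{inj}^{\bf{R}}$ (the corollary following Theorem \ref{new-fact-piun}), while $\iota:\ag\to\mua_{inj}^{\bf{R}}$ and both occurrences of $\Pio$ are right Quillen by Theorem \ref{right-fact-inj-thm}. The remaining label, that $\iota:\ag\to\mua_{inj}^{\bf{R}}$ is itself a Quillen equivalence, then follows from the factorization $\iota=\Id\circ\iota$ of the first step together with the closure of Quillen equivalences under composition.

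I do not expect a genuine obstacle: no new model-categorical input is needed and the statement is essentially a bookkeeping assembly of facts from Section \ref{sec-other-fact} and from the subsection on right Bousfield localizations. The one point that repays attention is making the first step precise --- pinning down strict commutativity inside $\modcat_r$, not merely for the underlying diagrams of $1$-categories --- and cross-checking each ``$\sim$'' against the exact statement it quotes; once the compatibility of the adjunction data under composition is settled, everything else is routine.
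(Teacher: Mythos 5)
Your proposal is correct and is exactly the intended argument: the paper's own proof is just ``Clear,'' and what you write is the bookkeeping it leaves implicit — strict commutativity because the underlying functors and adjunctions coincide, plus citations of Theorem \ref{right-fact-proj-thm}, Theorem \ref{right-fact-inj-thm}, and the corollaries to Theorems \ref{new-fact-pio} and \ref{new-fact-piun} for the decorations. In particular you correctly spot the one genuinely non-immediate label, namely that $\iota:\ag\to\mua_{inj}^{\bf R}$ is a Quillen equivalence, and derive it from $\iota=\Id\circ\iota$ and closure of Quillen equivalences under composition.
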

\begin{proof}
	Clear.
\end{proof}
\subsection{Cofibrantly generation and monoidal model categories}
If $\M$ is cofibrantly generated we will denote by $\Iam$ and $\Jam$ the respective generating sets of cofibrations and of trivial cofibrations. In the sequel we are interested in the model category $\mua^{0}$ with the adjunction $\Fb^{+} \dashv \Pio$. 
\begin{lem}
Assume that $\M$ is a cofibrantly generated model category. Then for any Quillen adjunction $\Fb \dashv \Ub$ the following hold.
\begin{enumerate}
	\item The set $\Fb^{+}(\Iam)$ is a generating set for the cofibrations in $\mua^{0}$
	 \item The set $\Fb^{+}(\Jam)$ is a generating set for the trivial cofibrations in $\mua^{0}$
\end{enumerate}
\end{lem}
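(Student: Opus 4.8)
The plan is to invoke the standard recognition theorem for cofibrantly generated model categories together with the adjunction $\Fb^{+} \dashv \Pio$ and the fact, already established in Lemma~\ref{lem-lift-zero} and Lemma~\ref{lem-fact-zero}, that $\mua^{0}$ is a model category whose weak equivalences and fibrations are detected by $\Pio$. Concretely, recall that in $\mua^{0}$ a map $\sg=\osg$ is a fibration (resp.\ trivial fibration) if and only if $\sgo$ is a fibration (resp.\ trivial fibration) in $\M$, and a weak equivalence if and only if $\sgo$ is a weak equivalence in $\M$. Since limits and colimits in $\mua^{0}$ are computed level-wise and the relevant data at the ``$0$''-slot is just $\Pio$, the functor $\Pio : \mua^{0} \to \M$ both preserves and \emph{creates} fibrations, trivial fibrations and weak equivalences. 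This is exactly the situation in which one expects a left adjoint to send generating (trivial) cofibrations of the base to generating (trivial) cofibrations of the total category.

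First I would make precise the adjunction isomorphism. For $f : A \to B$ in $\M$ and $\Gc=[\G^0,\G^1,\pig] \in \mua^{0}$, a lifting problem of $\Fb^{+}(f)$ against a map $\beta : \Ec \to \Gc$ in $\mua^{0}$ is, by the adjunction $\Fb^{+} \dashv \Pio$, the same as a lifting problem of $f$ against $\Pio(\beta)=\beta^0$ in $\M$. Hence $\Fb^{+}(f)$ has the LLP against $\beta$ in $\mua^{0}$ \emph{if and only if} $f$ has the LLP against $\beta^0$ in $\M$. Applying this with $f$ ranging over $\Iam$: a map $\beta$ in $\mua^{0}$ has the RLP against all of $\Fb^{+}(\Iam)$ iff $\beta^0$ has the RLP against $\Iam$ in $\M$, i.e.\ iff $\beta^0$ is a trivial fibration in $\M$, i.e.\ iff $\beta$ is a trivial fibration in $\mua^{0}$. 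Thus $\Fb^{+}(\Iam)\text{-inj} = \{\text{trivial fibrations of } \mua^{0}\}$, and therefore $\Fb^{+}(\Iam)\text{-cof} = \{\text{cofibrations of } \mua^{0}\}$, which is Assertion~$(1)$. The same argument with $\Jam$ in place of $\Iam$ gives $\Fb^{+}(\Jam)\text{-inj} = \{\text{fibrations of } \mua^{0}\}$ and hence $\Fb^{+}(\Jam)\text{-cof} = \{\text{trivial cofibrations of } \mua^{0}\}$, which is Assertion~$(2)$.

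To finish one must also check the smallness hypotheses in the recognition theorem, namely that the domains of $\Fb^{+}(\Iam)$ and $\Fb^{+}(\Jam)$ are small relative to the respective cellular classes. Here I would use that $\Fb^{+}$ is a left adjoint, hence preserves colimits, and that transfinite compositions and pushouts in $\mua^{0}$ are computed level-wise; combined with the smallness of the domains of $\Iam$, $\Jam$ in $\M$ (and the fact that $\Fb$, being a left adjoint, preserves the relevant filtered colimits at the ``$1$''-slot), one transfers smallness from $\M$ to $\mua^{0}$. The permanence properties of the classes (closure under pushout, transfinite composition, retract) have already been recorded, so no separate argument is needed there. I expect the main obstacle to be purely bookkeeping: verifying the smallness transfer carefully, since a cell attachment in $\mua^{0}$ along $\Fb^{+}(f)$ modifies the ``$1$''-component by a pushout of $\Fb(f)$ and one must be sure the domain $[\,\mathrm{dom}(f), \Fb(\mathrm{dom}\,f), \eta\,]$ of $\Fb^{+}(f)$ remains small relative to these composite cellular maps. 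Everything else is a formal consequence of the adjunction and the level-wise nature of (co)limits in $\mua^{0}$.
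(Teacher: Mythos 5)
Your proposal is correct and follows essentially the same route as the paper: the core of both arguments is the observation that, via the adjunction $\Fb^{+} \dashv \Pio$, a map $\beta$ has the RLP against $\Fb^{+}(\Iam)$ (resp.\ $\Fb^{+}(\Jam)$) if and only if $\Pio(\beta)$ has the RLP against $\Iam$ (resp.\ $\Jam$), i.e.\ if and only if $\beta$ is a trivial fibration (resp.\ fibration) in $\mua^{0}$. Your additional paragraph on transferring smallness of domains along the left adjoint (using that $\Pio$ preserves the relevant colimits, which are computed level-wise) is a sound and worthwhile supplement that the paper's own proof leaves implicit.
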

\begin{proof}
A map $\sg$ in $\mua^{0}$ has the RLP against all elements in $\Fb^{+}(\Iam)$ if and only if, by $\Fb^{+} \dashv \Pio$, the map $\Pio(\sg)$ has the RLP against all elements in $\Iam$, if and only if, $\Pio(\sg)$ is a trivial fibration , if and only if $\sg$ is a trivial fibration in $\mua^{0}$. This gives Assertion $(1)$. The other assertion is proved the same way.  
\end{proof}
\begin{lem}
Let $K: (\C, \ot, I_\C) \to (\D, \ot, I_\D)$ be strong monoidal functor that preserves pushout squares.
Then for any maps $f, g$ in $\C$ we have an isomorphism $K(f \square g) \cong K(f) \square K(g)$ in $\Arr(\D)$.
\end{lem}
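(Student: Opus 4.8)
The plan is to unwind the definition of the pushout product and apply the two hypotheses on $K$ separately: strong monoidality to replace $K(X\ot Y)$ by $K(X)\ot K(Y)$, and preservation of pushout squares to replace $K$ of the pushout corner by the pushout corner of the images. Throughout, write $\mu_{X,Y}\colon K(X)\ot K(Y)\xrightarrow{\ \cong\ }K(X\ot Y)$ for the natural isomorphism witnessing that $K$ is strong monoidal, and let $f\colon A\to B$, $g\colon C\to D$ be the two maps of $\C$.

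First I would recall that $f\square g$ is, by definition, the canonical map
$$P:=(B\ot C)\cup_{A\ot C}(A\ot D)\to B\ot D$$
induced by $f\ot\Id_D$ and $\Id_B\ot g$, where $P$ is the pushout of the span $B\ot C\xleftarrow{f\ot\Id_C}A\ot C\xrightarrow{\Id_A\ot g}A\ot D$, the leg $B\ot C\to P$ (resp. $A\ot D\to P$) of which $f\square g$ extends by $\Id_B\ot g$ (resp. $f\ot\Id_D$). Applying $K$ to this pushout square and using that $K$ preserves pushout squares, $K(P)$ together with $K$ of the two inclusions is a pushout of $K(B\ot C)\xleftarrow{K(f\ot\Id_C)}K(A\ot C)\xrightarrow{K(\Id_A\ot g)}K(A\ot D)$. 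Transporting this span along the invertible natural transformation $\mu$ — i.e. using the naturality squares of $\mu$ for $f\ot\Id_C$ and for $\Id_A\ot g$ — identifies it with $K(B)\ot K(C)\xleftarrow{K(f)\ot\Id_{K(C)}}K(A)\ot K(C)\xrightarrow{\Id_{K(A)}\ot K(g)}K(A)\ot K(D)$, so that $K(P)$ is canonically isomorphic, via a map I will call $\xi$, to $P':=(K(B)\ot K(C))\cup_{K(A)\ot K(C)}(K(A)\ot K(D))$, which is precisely the domain of $K(f)\square K(g)$. On the codomain side, $\mu_{B,D}\colon K(B)\ot K(D)\xrightarrow{\cong}K(B\ot D)$ is the evident isomorphism.

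Next I would check that $(\xi,\mu_{B,D})$ is an isomorphism in $\Arr(\D)$ from $K(f\square g)$ to $K(f)\square K(g)$, i.e. that
$$\mu_{B,D}^{-1}\circ K(f\square g)\circ\xi^{-1}=K(f)\square K(g).$$
By the universal property of the pushout $P'$, equality of two maps out of $P'$ may be tested on the two canonical legs $K(B)\ot K(C)\to P'$ and $K(A)\ot K(D)\to P'$. On each leg, after unwinding the definition of $\xi$ and using that the restriction of $K(f\square g)$ to $K(B\ot C)$ (resp. $K(A\ot D)$) is $K(\Id_B\ot g)$ (resp. $K(f\ot\Id_D)$), the required identity reduces to the naturality square of $\mu$ for $\Id_B\ot g$ (resp. $f\ot\Id_D$). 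Hence the two maps agree, giving the desired isomorphism $K(f\square g)\cong K(f)\square K(g)$, and its naturality in $f$ and $g$ (so that it refines to an isomorphism of functors $\Arr(\C)\times\Arr(\C)\to\Arr(\D)$) is immediate from the naturality of $\mu$.

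The only genuine work is this last compatibility check: one must keep track of which instances of $\mu$ and of the associativity/unit coherence isomorphisms of the two monoidal structures intervene. Since $K$ is \emph{strong} monoidal all of these are invertible, so there is no obstruction — the verification is a finite diagram chase. (Had $K$ been merely lax monoidal, the same argument would at best produce a comparison morphism $K(f)\square K(g)\to K(f\square g)$, not in general an isomorphism, which is why strongness is used.)
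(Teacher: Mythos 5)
Your proof is correct and follows essentially the same route as the paper's: both identify the domain of the pushout product as a pushout (you via the explicit span, the paper via the colimit of the punctured square $\Un^{2,\ast}$), use the naturality of the strong monoidal isomorphism to identify the two spans/diagrams, invoke preservation of pushouts to commute $K$ past the colimit, and conclude by the uniqueness of the induced map out of the pushout. Your explicit verification on the two legs of $P'$ is just the concrete form of the paper's appeal to ``uniqueness of the map out of the colimit.''
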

\begin{proof}
	Let $\Un= [0 \to 1]$ be the \emph{walking-morphism category} and let $f:X^0 \to X^1$ and $g: Y^0 \to Y^1$ be maps in $\C$. The map $f\otimes g$ has two presentations that allows us to define a diagram $\Un^2(f\otimes g): \Un^2 \to \C$ that maps $(i,j) \mapsto X^i \ot Y^j$. Consider the punctured square $\Un^{2,\ast}:= \Un^2 \setminus \{(1,1)\}$. Then the inclusion $\Un^{2,\ast} \hookrightarrow \Un^2$ gives a diagram $\Un_{2,\ast}(f\ot g): \Un^{2,\ast} \to \C$. By definition, the map $f \square g$ is simply the induced map $\colim \Un^{2,\ast}(f\ot g) \to \colim \Un^{2}(f\ot g)$. The colimit on the left hand side is a pushout while the colimit on the right hand side is simply the evaluation at $(1,1)$ since $(1,1)$ is a terminal object in $\Un^2$. As $K$ is a strong monoidal functor, we have these isomorphisms of diagrams: $K(\Un^{2,\ast}(f\ot g)) \cong \Un^{2,\ast}(K(f)\ot K(g))$ and  $K(\Un^{2}(f\ot g)) \cong \Un^{2}(K(f)\ot K(g))$. Finally, since $K$ preserves pushouts, we get:
	 $$\colim \Un^{2,\ast}(K(f)\ot K(g)) \cong \colim K(\Un^{2,\ast}(f\ot g)) \cong K(\colim\Un^{2,\ast}(f\ot g)) $$
	 The uniqueness of the map of out of the colimit implies that $K(f)\square K(g)$ is the composite of the path below which gives the result.
	 $$ \colim \Un^{2,\ast}(K(f)\ot K(g)) \cong K(\colim\Un^{2,\ast}(f\ot g)) \xrightarrow{K(f \square g)} K(X^1\ot Y^1) \cong K(X^1) \ot K(Y^1).$$
\end{proof}
Since a left adjoint preserves all kind of colimits, the previous lemma applies if $\Fb$ is part of strong monoidal Quillen adjunction.
\begin{lem}\label{lem-push-mua-0}
	If  $\Fb \dashv \Ub$ is a strong monoidal Quillen adjunction, then the \emph{Pushout product axiom} and the \emph{Unit axiom} hold in $\mua^0$ with the point-wise tensor product.
\end{lem}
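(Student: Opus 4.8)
The plan is to reduce both axioms, via the level-$0$ projection $\Pio\colon\mua^{0}\to\M$, to the corresponding axioms in $\M$, exploiting that $\Fb$ — and therefore the functor $\Fb^{+}$ — is \emph{strong} monoidal. First I would record the bookkeeping: by Lemma~\ref{lem-push-prod} (whose proof only uses that colimits in $\mua$ are level-wise and the tensor product is point-wise) the pushout product in $\mua^{0}$ is computed level-wise, $\sg\square\theta=[\sg^{0}\square\theta^{0},\,\sg^{1}\square\theta^{1}]$; the weak equivalences of $\mua^{0}$ are the maps $\sg$ with $\sg^{0}\in\we(\M)$; and $\Pio$ is a left Quillen functor (the left adjoint in $\Pio\dashv R^{0}$), so every cofibrant $\Fc\in\mua^{0}$ has $\F^{0}=\Pio(\Fc)$ cofibrant in $\M$.

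For the \emph{Unit axiom} I would pick any cofibrant replacement $q\colon\mathcal{I}^{c}\xrightarrow{\sim}[I_{\M},I_{\ag},\psi_{I}]$ in $\mua^{0}$ (it exists since $\mua^{0}$ is a model category), observe that $(\mathcal{I}^{c})^{0}$ is then cofibrant in $\M$ and $q^{0}\colon(\mathcal{I}^{c})^{0}\xrightarrow{\sim}I_{\M}$ is a weak equivalence — hence a cofibrant replacement of $I_{\M}$ — and conclude: for cofibrant $\Fc\in\mua^{0}$ the object $\F^{0}$ is cofibrant in $\M$, so the Unit axiom in $\M$ makes $q^{0}\otimes\Id_{\F^{0}}=(q\otimes\Id_{\Fc})^{0}$ a weak equivalence, whence $q\otimes\Id_{\Fc}$ is a weak equivalence in $\mua^{0}$.

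For the \emph{pushout product axiom}, let $\sg\colon\Fc\to\Gc$ and $\theta\colon\Pc\to\Qc$ be strong cofibrations. The level-$0$ part $(\sg\square\theta)^{0}=\sg^{0}\square\theta^{0}$ is a cofibration in $\M$ by the pushout product axiom of $\M$, and a weak equivalence too if $\sg$ or $\theta$ is one; the remaining point is that $\sg\square\theta$ is again a \emph{strong} cofibration. I would prove this through the description — checked by inspection, using that (co)limits in $\mua$ are level-wise and $\Fb$ preserves colimits — that $\sg$ is a strong cofibration iff the square
\[
\xy
(0,16)*+{\Fb^{+}(\F^{0})}="W";
(0,0)*+{\Fb^{+}(\G^{0})}="X";
(32,0)*+{\Gc}="Y";
(32,16)*+{\Fc}="E";
{\ar@{->}^-{}"E";"Y"};
{\ar@{->}_-{\Fb^{+}(\sg^{0})}"W";"X"};
{\ar@{->}^-{}"W";"E"};
{\ar@{->}_-{}"X";"Y"};
\endxy
\]
(with $\Fb^{+}(\F^{0})\to\Fc$ the canonical map, identity in degree $0$ and $\varphi(\pi_{\F})$ in degree $1$) is a pushout in $\mua$, i.e.\ iff $\sg^{0}\in\cof(\M)$ and $\F^{1}\cup^{\Fb(\F^{0})}\Fb(\G^{0})\to\G^{1}$ is an isomorphism; equivalently, the strong cofibrations are exactly the cobase changes of maps $\Fb^{+}(f)$ with $f\in\cof(\M)$ (each such $\Fb^{+}(f)$ being a strong cofibration, with trivially-pushout representing square, and strong cofibrations being closed under cobase change as the cofibrations of the model category $\mua^{0}$). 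Then I would use that $\Fb^{+}$ is strong monoidal — its colaxity map is $[\Id_{m\otimes m'},\tld{\psi}]$, and $\tld{\psi}$ is invertible because $\Fb$ is strong monoidal — and colimit-preserving (being left adjoint to $\Pio$), so by the lemma on strong monoidal colimit-preserving functors $\Fb^{+}(f)\square\Fb^{+}(g)\cong\Fb^{+}(f\square g)$ in $\Arr(\mua)$. Finally, since the pushout product of a cobase change of $u$ with a cobase change of $v$ is a cobase change of $u\square v$, and $\sg,\theta$ are cobase changes of $\Fb^{+}(\sg^{0}),\Fb^{+}(\theta^{0})$, it follows that $\sg\square\theta$ is a cobase change of $\Fb^{+}(\sg^{0})\square\Fb^{+}(\theta^{0})\cong\Fb^{+}(\sg^{0}\square\theta^{0})$ with $\sg^{0}\square\theta^{0}\in\cof(\M)$; hence $\sg\square\theta$ is a strong cofibration, trivial whenever $\sg$ or $\theta$ is a weak equivalence.

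The hard part is this last step — showing that the ``representing square is a pushout'' condition survives the pushout product — and it is exactly here that strong monoidality of $\Fb$ is indispensable: it is what upgrades $\Fb^{+}$ to a strong monoidal functor compatible with pushout products, so that the square condition for $\sg\square\theta$ is pulled back from the automatic square condition for $\Fb^{+}$ of a cofibration of $\M$. For a merely colax $\Fb$ the isomorphism $\Fb^{+}(f\square g)\cong\Fb^{+}(f)\square\Fb^{+}(g)$ fails and the pushout product of two strong cofibrations need not be strong, which is the difficulty noted in the remark preceding this lemma.
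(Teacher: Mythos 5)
Your proof is correct. The unit-axiom argument is the same as the paper's: a cofibrant replacement of the unit in $\mua^{0}$ projects under the left Quillen functor $\Pio$ to a cofibrant replacement of $I_{\M}$, and the axiom is pulled back from $\M$. For the pushout product axiom you take a genuinely different route. The paper first proves that $\Fb^{+}(\Iam)$ and $\Fb^{+}(\Jam)$ are generating sets for the (trivial) cofibrations of $\mua^{0}$ and then cites \cite[Corollary 4.2.5]{Hov-model} to reduce the axiom to generating maps, where $\Fb^{+}(f)\square\Fb^{+}(g)\cong\Fb^{+}(f\square g)$ finishes the job; this requires $\M$ to be cofibrantly generated (the standing hypothesis of that subsection). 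You instead observe that every strong cofibration $\sg$ is the cobase change of $\Fb^{+}(\sg^{0})$ along the counit $\Fb^{+}(\F^{0})\to\Fc$, and conversely that any cobase change of $\Fb^{+}(f)$ with $f\in\cof(\M)$ is strong because $\Fb$ preserves pushouts; the same isomorphism then gives the axiom directly, with no appeal to cofibrant generation. What your version buys is the removal of that hypothesis and a completely explicit description of the cofibrations; what it costs is the one assertion you leave unproved, namely that $\sg\square\theta$ is a cobase change of $u\square v$ when $\sg,\theta$ are cobase changes of $u,v$. That is a standard fact about the Leibniz construction, but it does require the tensor product of $\mua$ to preserve pushouts in each variable — which holds here because $\mua$ is closed monoidal by Proposition \ref{fact-lax} — and it is precisely the step that the paper's citation to Hovey packages away, so it deserves an explicit sentence.
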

\begin{proof}
Following \cite[Corollary 4.2.5]{Hov-model}, it suffices to have the pushout product axiom for $u \in \Fb^+(\Iam)$  $v \in \Fb^+(\Iam)$ and also consider the case $u \in \Fb^+(\Jam)$ and/or $v \in \Fb^+(\Jam)$. Set $u = \Fb^+(f)$ and $v= \Fb^+(g)$. Then by the previous lemma we have $u\square v = \Fb^+(f) \square \Fb^+(g) \cong \Fb^+(f\square g)$, since $\Fb^+$ is strong monoidal. By assumption the map $f\square g$ is a  cofibration in $\M$ whose image under the left Quillen functor $\Fb^+$ is also cofibration in $\mua^0$. Furthermore, if either $f \in \Jam$ or $g \in \Jam$, $f\square g$ and $\Fb^+(f \square g)$ are trivial cofibrations. This gives the Pushout product axiom. A cofibrant replacement $[I_{\M}^c, I_{\ag}^c,\pi^c] \xrightarrow{\sim} [I_{\M}, I_{\ag}, \psi_I]$ yields a cofibrant replacement $I_{\M}^c\xrightarrow{\sim} I_{\M}$. Therefore, by the Unit axiom of the monoidal model category $\M$, the map $[I_{\M}^c, I_{\ag}^c,\pi^c] \ot \Fc \to  [I_{\M}, I_{\ag}, \psi_I] \ot \Fc$ is a weak equivalence in $\mua^{0}$, for every cofibrant $\Fc \in \mua^0$.
\end{proof}
\begin{thm}
	For any strong monoidal Quillen adjunction  $\Fb \dashv \Ub$, the following statements are true.
	\begin{enumerate}
		\item The model category $\mua^0$ is also a monoidal model category with the point-wise tensor product.
		\item We have a strong monoidal Quillen equivalence $\Fb^+: \M \leftrightarrows \mua^{0}:\Pio$
		\item We have a strong monoidal Quillen adjunction $\Piun: \mua^0 \leftrightarrows \ag : \iota$.
	\end{enumerate}
\end{thm}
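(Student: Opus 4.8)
The plan is to assemble everything from results already in the excerpt; there is no new construction to make. For Assertion (1), recall that $\mua^{0}$ is a model category by Theorem \ref{new-fact-pio}, and that $\mua=\mdua$ carries the point-wise monoidal structure of Proposition \ref{fact-lax} (applicable here since a strong monoidal functor is in particular lax monoidal, and $\M$, $\ag$ are closed monoidal). It then only remains to check the Pushout product axiom and the Unit axiom for the point-wise product on $\mua^{0}$, and this is exactly the content of Lemma \ref{lem-push-mua-0}. So Assertion (1) is immediate.

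For Assertion (2), the adjunction $\Fb^{+}:\M\leftrightarrows\mua^{0}:\Pio$ is a Quillen equivalence by the Corollary following Theorem \ref{new-fact-pio}, so the only thing to do is promote it to a \emph{strong monoidal} Quillen adjunction. The right adjoint $\Pio$ is strong monoidal by Lemma \ref{lem-lax}(2). For the left adjoint: the computation in the proof of Theorem \ref{monoidal-mod-adj} identifies the colaxity map $\Fb^{+}(m\ot m')\to\Fb^{+}(m)\ot\Fb^{+}(m')$ with the couple $[\Id_{m\ot m'},\tld{\psi}]$, where $\tld{\psi}$ is the colaxity map of $\Fb$, and likewise the unit comparison $\Fb^{+}(I_{\M})\to[I_{\M},I_{\ag},\psi_I]$ is of the form $[\Id,\tld{\psi}]$. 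Since $\Fb$ is \emph{strong} monoidal, $\tld{\psi}$ is an isomorphism, hence so are both couples, so $\Fb^{+}$ is a strong monoidal colax functor. Thus $(\Fb^{+}\dashv\Pio)$ is a strong monoidal Quillen adjunction, and being a Quillen equivalence it is a strong monoidal Quillen equivalence.

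For Assertion (3), the same Corollary gives that $\Piun:\mua^{0}\leftrightarrows\ag:\iota$ is a Quillen adjunction. The right adjoint $\iota$ is lax monoidal by Lemma \ref{lem-lax}(1), the left adjoint $\Piun$ is strong monoidal by Lemma \ref{lem-lax}(3) (in particular strong monoidal colax, sending the unit $[I_{\M},I_{\ag},\psi_I]$ to $I_{\ag}$), so $(\Piun\dashv\iota)$ is a strong monoidal Quillen adjunction. I do not expect any genuine obstacle here: the only substantive point is the explicit description of the colaxity map of $\Fb^{+}$ — already carried out in the proof of Theorem \ref{monoidal-mod-adj} — which is precisely what upgrades the ``weak monoidal'' statements of that theorem to ``strong monoidal'' ones under the extra hypothesis that $\Fb$ is strong monoidal; everything else is routine bookkeeping across the cited results.
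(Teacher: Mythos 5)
Your proof is correct and takes essentially the same route as the paper: Assertion (1) is obtained exactly as in the text from Proposition \ref{fact-lax} together with Lemma \ref{lem-push-mua-0}, and for Assertions (2) and (3) — which the paper dismisses as straightforward — you correctly supply the missing details via Lemma \ref{lem-lax} and the identification of the colaxity map of $\Fb^{+}$ as $[\Id,\tld{\psi}]$ carried out in the proof of Theorem \ref{monoidal-mod-adj}.
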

\begin{proof}
We will only prove Assertion $(1)$, the others ones are straightforward.
The underlying category $\mua$ is closed by Proposition \ref{fact-lax}. Lemma \ref{lem-push-mua-0} gives the remaining axioms of a monoidal model category.
\end{proof}
\begin{rmk}
It can be easily shown that the two factorizations of $\Ub$ obtained above are also functorial.
\end{rmk}
\section{Remarks on Abelian categories and Grothendieck topologies}\label{sec-hom-alg-top}
\noindent We show that the factorization $\ag \xrightarrow{\iota} \mdua \xrightarrow{\Pi^0} \M$  can be made for abelian categories and Grothendieck sites.
\begin{thm}
	Let $\Ub: \ag \to \M$ be a functor with the factorization $\ag \xrightarrow{\iota} \mdua \xrightarrow{\Pi^0} \M$. Then the following hold.
	\begin{enumerate}
		\item If $\Ub$ is a left exact functor between abelian categories that possesses a left adjoint, then $\mdua$ is an abelian category and the functors $\iota$ and $\Pi^0$ are also left exact.
		\item If $\Ub$ is a morphisms of sites, then $\mdua$ is a Grothendieck site and the functors $\iota$ and $\Pi^0$ are also morphisms of sites.
	\end{enumerate}
\end{thm}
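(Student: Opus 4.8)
The plan is to transport both structures onto $\mdua$ \emph{levelwise} along the two projections $\Pio:\mdua\to\M$ and $\Piun:\mdua\to\ag$, using throughout that $\mdua$ is, as a category, the strict fibre product $\Arr(\M)\times_{\M}\ag$ formed from the codomain functor $\Arr(\M)\to\M$ and from $\Ub$: an object is an arrow $\pif:\F^0\to\Ub(\F^1)$ of $\M$ together with the object $\F^1\in\ag$ realising its codomain. The first thing I would record is that in both parts $\Ub$ preserves finite limits (it is left exact in (1), and flat as a morphism of sites in (2)), so $\mdua$ has finite limits and they are computed componentwise. This is essentially the only place where a hypothesis on $\Ub$ is genuinely used, and it is used repeatedly.

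For Part (1) I would proceed in four steps. First, show $\mdua$ is additive: the zero object is $[\mathbf{0}_{\M},\mathbf{0}_{\ag},0]$ (using $\Ub(\mathbf 0_{\ag})=\mathbf 0_{\M}$, since a left exact functor between abelian categories is additive), the biproduct of $\Fc,\Gc$ is $[\F^0\oplus\G^0,\ \F^1\oplus\G^1,\ \pif\oplus\pig]$ via $\Ub(\F^1\oplus\G^1)\cong\Ub(\F^1)\oplus\Ub(\G^1)$, and the hom-sets sit inside $\Hom_{\M}(\F^0,\G^0)\times\Hom_{\ag}(\F^1,\G^1)$ with bilinear composition. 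Second, compute kernels levelwise: $\ker[\sgo,\sgi]=[\ker\sgo,\ker\sgi,\pi]$, the structure arrow being the unique factorisation of $\ker\sgo\hookrightarrow\F^0\xrightarrow{\pif}\Ub(\F^1)$ through $\Ub(\ker\sgi)=\ker(\Ub\sgi)$ — here left exactness of $\Ub$ is exactly what identifies the target as the correct kernel. Third, compute cokernels levelwise as well: $\operatorname{coker}[\sgo,\sgi]=[\operatorname{coker}\sgo,\operatorname{coker}\sgi,\bar\pi]$, where $\bar\pi$ is obtained from the universal property of $\operatorname{coker}\sgo$ applied to the composite $\G^0\xrightarrow{\pig}\Ub(\G^1)\to\Ub(\operatorname{coker}\sgi)$, which kills $\sgo$ because the quotient map $\G^1\to\operatorname{coker}\sgi$ does. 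Fourth, deduce that monos (resp. epis) of $\mdua$ are precisely the componentwise monos (resp. epis), and reduce ``every mono is a kernel, every epi a cokernel'' to the same statement in $\M$ and $\ag$, the coherence of the various $\pi$'s being forced by universal properties; this gives that $\mdua$ is abelian. Finally, $\Pio$ and $\iota$ are left exact because they preserve kernels and finite products by the levelwise formulas; equivalently, by Proposition \ref{adjunction-prop} they are right adjoints ($\Piun\dashv\iota$, and $\Fb^{+}\dashv\Pio$ using the left adjoint $\Fb$ of $\Ub$), so they preserve all limits, and $\Pio\dashv R^0$ makes $\Pio$ even exact.

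For Part (2) I would equip $\mdua$ with the topology in which a family $\{\sigma_i:\F_i\to\Fc\}$ is covering exactly when $\{\Pio(\sigma_i):\F_i^0\to\F^0\}$ is covering in $\M$ and $\{\Piun(\sigma_i):\F_i^1\to\F^1\}$ is covering in $\ag$ (equivalently: a sieve is covering iff its images under $\Pio$ and $\Piun$ generate covering sieves). The axioms of a Grothendieck (pre)topology are then verified componentwise: identities cover trivially; stability under base change follows because pullbacks in $\mdua$ are computed levelwise — this is where one uses that $\Ub$, being a morphism of sites, preserves finite limits — so the pullback of a covering family is again componentwise covering; local character is immediate from local character in $\M$ and $\ag$. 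As $\ag$ and $\M$ are (essentially) small, $\mdua$ embeds into $\M\times\ag\times\Arr(\M)$ and is small, so this is a genuine Grothendieck site. Then $\Pio$ is a morphism of sites: it preserves finite limits by Part (1) and is cover-preserving by construction, the base-change fibre products it must preserve being the levelwise ones. Likewise $\iota$ is a morphism of sites: it is a right adjoint, hence flat, and if $\{\Pa_i\to\Qa\}$ covers $\Qa$ in $\ag$ then $\iota(\{\Pa_i\to\Qa\})$ has level-$1$ part $\{\Pa_i\to\Qa\}$ (covering) and level-$0$ part $\{\Ub(\Pa_i)\to\Ub(\Qa)\}$, which is covering because $\Ub$ is cover-preserving; so it is covering in $\mdua$. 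Consequently $\Ub=\Pio\circ\iota$ is exhibited as a composite of morphisms of sites. (Conceptually this is the statement that $\mdua\cong\Arr(\M)\times_{\M}\ag$ is the fibre product of sites obtained by pulling back along $\Ub$ the evident topology on $\Arr(\M)$ for which the domain and codomain functors are morphisms of sites; the verifications above just unwind this.)

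The step I expect to be the main obstacle is, in Part (1), the bookkeeping that cokernels remain levelwise even though $\Ub$ need not be right exact: the whole argument rests on (i) left exactness of $\Ub$ for kernels and pullbacks, and (ii) the fact that every structure arrow has codomain of the special form $\Ub(-)$, which lets the cokernel's structure arrow be reconstructed inside $\Ub(\operatorname{coker}\sgi)$ rather than in $\operatorname{coker}(\Ub\sgi)$. In Part (2) the delicate point is the stability of the proposed topology under base change — precisely where the flatness built into the notion of a morphism of sites is indispensable — together with checking that the definition of ``morphism of sites'' one verifies for $\iota$ and $\Pio$ (continuous plus flat) is the intended one.
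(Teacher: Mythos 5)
Your proposal is correct and follows essentially the same route as the paper: transport everything levelwise, using left exactness of $\Ub$ for limits, the automatic levelwise computation of colimits in $(\M\downarrow\Ub)$ for cokernels, the adjunctions of Proposition \ref{adjunction-prop} for the exactness of $\iota$ and $\Pio$, and the ``both projections cover'' basis for the topology, with the same use of generated sieves to see that $\iota$ preserves covers. The only cosmetic difference is that you deduce that monomorphisms and epimorphisms are levelwise from the additive structure and levelwise (co)kernels, whereas the paper derives this directly from the adjunctions $\Fb^{+}\dashv\Pio$ and $L^{1}\dashv\Piun$; both arguments are valid.
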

The proof of the theorem is not hard but long winded, so we divide it into small intermediate lemmas. The definition of Grothendieck topologies is to be found for example in \cite{Maclane_Moer_sh, Vistoli_fib}. 

\begin{prop}
	For any (finite) limit preserving functor $\Ub: \ag \to \M$ between (finitely) complete categories, the following hold.
	\begin{enumerate}
		\item The category $\mua$ is (finitely) complete and limits are computed level-wise.
		\item The functors $\iota$ and $\Pio$ preserve also (finite) limits.
		\item  Any epimorphism in $\mua$ is a level-wise epimorphism.
	\end{enumerate}
\end{prop}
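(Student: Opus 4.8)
The plan is to treat (1) as the only substantive point; parts (2) and (3) will then follow formally, partly from (1) itself and partly from the adjunctions recorded in Proposition~\ref{adjunction-prop}. So I would begin with the construction of (finite) limits in $\mua=\mdua$.

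Given a diagram $D\colon J\to\mua$, say $D(j)=[\F^0_j,\F^1_j,\pi_j]$ with $\pi_j\colon\F^0_j\to\Ub(\F^1_j)$, I would first form $L^0:=\lim_j\F^0_j$ in $\M$ and $L^1:=\lim_j\F^1_j$ in $\ag$ (these exist since $\M$ and $\ag$ are (finitely) complete), with projections $\pr^0_j$ and $\pr^1_j$. Because $\Ub$ preserves (finite) limits, the canonical comparison $\Ub(L^1)\to\lim_j\Ub(\F^1_j)$ is an isomorphism; since the composites $L^0\xrightarrow{\pr^0_j}\F^0_j\xrightarrow{\pi_j}\Ub(\F^1_j)$ form a cone over $j\mapsto\Ub(\F^1_j)$, they factor through a unique map $\pi_L\colon L^0\to\lim_j\Ub(\F^1_j)\cong\Ub(L^1)$. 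I claim $[L^0,L^1,\pi_L]$ together with the pairs $[\pr^0_j,\pr^1_j]$ is a limit cone for $D$. For the universal property I would take a cone with vertex $[\G^0,\G^1,\pi_\G]$ given by morphisms $[\sigma^0_j,\sigma^1_j]$; the families $(\sigma^0_j)_j$ and $(\sigma^1_j)_j$ are cones in $\M$ and $\ag$, hence induce unique $s^0\colon\G^0\to L^0$ and $s^1\colon\G^1\to L^1$ with $\pr^i_j\circ s^i=\sigma^i_j$. The one thing to verify is that $[s^0,s^1]$ is a morphism of $\mua$, i.e. $\pi_L\circ s^0=\Ub(s^1)\circ\pi_\G$; composing both sides with each projection $\lim_j\Ub(\F^1_j)\to\Ub(\F^1_j)$ (which is $\Ub(\pr^1_j)$ under the isomorphism above) reduces this to $\pi_j\circ\sigma^0_j=\Ub(\sigma^1_j)\circ\pi_\G$, which holds precisely because each $[\sigma^0_j,\sigma^1_j]$ is a morphism; since the limit projections are jointly monic, the desired equality follows. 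As $\Pio$ and $\Piun$ applied to the limit object are $L^0$ and $L^1$, i.e. the limits of $\Pio\circ D$ and $\Piun\circ D$, this shows limits in $\mua$ are computed level-wise, giving (1). This verification is the only (mild) obstacle I anticipate: it is exactly here that the hypothesis on $\Ub$ is used, and it is essential, since without $\Ub(L^1)\cong\lim_j\Ub(\F^1_j)$ one cannot even write down $\pi_L$, let alone check the compatibility of $[s^0,s^1]$ componentwise.

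For (2): the computation above shows $\Pio$ preserves (finite) limits outright, since $\Pio(\lim D)=L^0=\lim(\Pio\circ D)$ (and likewise for $\Piun$). For $\iota$, the adjunction $\Piun\dashv\iota$ of Proposition~\ref{adjunction-prop} holds with no hypothesis on (co)initial objects, so $\iota$ is a right adjoint and therefore preserves every limit that exists; alternatively one checks directly that $\lim_j\iota(\Pa_j)=[\Ub(\lim_j\Pa_j),\lim_j\Pa_j,\Id]=\iota(\lim_j\Pa_j)$, using once more that $\Ub$ preserves $\lim_j\Pa_j$.

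For (3): by Proposition~\ref{adjunction-prop} we have $\Piun\dashv\iota$, and also $\Pio\dashv R^0$, the functor $R^0$ being available because $\ag$ and $\M$ are finitely complete and $\Ub$, preserving finite limits, sends the terminal object to the terminal object. Hence $\Piun$ and $\Pio$ are both left adjoints, and left adjoints preserve epimorphisms; so if $\sigma=[\sigma^0,\sigma^1]$ is an epimorphism in $\mua$ then $\sigma^0=\Pio(\sigma)$ is an epimorphism in $\M$ and $\sigma^1=\Piun(\sigma)$ is an epimorphism in $\ag$, i.e. $\sigma$ is level-wise epi. (The same conclusion can be reached by hand: a pair $u,v\colon\G^1\to Q$ with $u\sigma^1=v\sigma^1$ determines, via $\Hom_{\mua}(-,\iota(Q))\cong\Hom_{\ag}(\Piun(-),Q)$, two morphisms out of $[\G^0,\G^1,\pi_\G]$ with equal composites with $\sigma$, hence equal, so $u=v$; and symmetrically for $\sigma^0$ using $R^0$.)
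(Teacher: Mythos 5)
Your proof is correct and follows essentially the same route as the paper: for assertion (3) the paper uses exactly the adjunctions $\Pio \dashv R^0$ and $\Piun \dashv \iota$ to transport pairs of parallel maps across the hom-set bijections, which is just your ``left adjoints preserve epimorphisms'' argument unwound, and for assertions (1)--(2) the paper simply cites \cite{Bacard_QS}, where the standard level-wise construction of limits you carry out (using that $\Ub$ preserves the limit to define the structure map $\pi_L$ and checking compatibility against the jointly monic projections) is the intended argument. No gaps.
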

\begin{proof}
Assertion $(1)$ and Assertion $(2)$ can be found in \cite{Bacard_QS}. For Assertion $(3)$ we proceed as follows.
Since $\ag$ and $\M$ are finitely complete, there are terminal objects $\ast_{\ag}$ and $\ast_{\M}$, which give the adjunction $\Pio \dashv R^0$. This adjunction induces an equivalence of commutative diagrams:
$$(\F^0 \xrightarrow{\sgo} \G^0 \rightrightarrows m) = (\Pio(\Fc) \xrightarrow{\Pio(\sg)} \Pio(\Gc) \rightrightarrows m) \Longleftrightarrow (\Fc \xrightarrow{\sg} \Gc \rightrightarrows R^0(m)).$$
If $\sg$ is an epimorphism, then the two maps $\Gc \rightrightarrows R^0(m)$ are equal which implies that their adjoint-transpose maps $\G^0 \rightrightarrows m$ are also equal, whence $\sg^0$ is an epimorphism. Similarly, using the adjunction $\Piun \dashv \iota$, one has an equivalence of commutative diagrams:
$$(\F^1 \xrightarrow{\sgi} \G^1 \rightrightarrows a) = (\Piun(\Fc) \xrightarrow{\Piun(\sg)} \Piun(\Gc) \rightrightarrows a) \Longleftrightarrow (\Fc \xrightarrow{\sg} \Gc \rightrightarrows \iota(a)).$$
Therefore if $\sg$ is an epimorphism, the parallel maps $ \Gc \rightrightarrows \iota(a)$ must be equal which means that the adjoint-transpose maps $\G^1 \rightrightarrows a$ are also equal, proving that $\sgi$ is an epimorphism.
\end{proof}
\begin{lem}\label{lem-eq-coeq}
	Given an adjunction $\Fb \dashv \Ub$ with $\Ub: \ag \to \M$, the following hold.
	\begin{enumerate}
		\item A commutative diagram $D=(\Ec \xrightarrow{\sg} \Fc \rightrightarrows \Gc) $ is an equalizer diagram in $\mua$ if and only if $\Pio(D)$ and $\Piun(D)$ are equalizer diagrams in $\M$ and $\ag$ respectively.
		\item Dually, a commutative diagram $D=( \Fc \rightrightarrows \Gc \xrightarrow{p} \Qc) $ is a coequalizer diagram in $\mua$ if and only if $\Pio(D)$ and $\Piun(D)$ are coequalizer diagrams in $\M$ and $\ag$ respectively.
	\end{enumerate}
\end{lem}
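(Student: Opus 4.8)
The plan is to reduce everything to the corresponding statements about ordinary (co)limits in a comma category, which follow from the fact that limits in $\mua$ are computed level-wise (this is essentially the content of the earlier proposition on completeness of $\mua$) and that colimits in $\mua$ are also computed level-wise. Since the two assertions are formally dual — passing to the opposite categories via $(-)^{\op}$ and using the isomorphisms of Proposition \ref{prop-dual-mod} — it suffices to prove Assertion $(1)$; Assertion $(2)$ is obtained by applying Assertion $(1)$ to the adjunction $\Ub^{\op} \dashv \Fb^{\op}$ and using that equalizers in $\C^{\op}$ are coequalizers in $\C$, together with $\Pio \cong (\Piun)^{\op}$.

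For Assertion $(1)$, first I would recall the explicit description of an equalizer in $\mua$: given parallel maps $\gamma, \delta : \Fc \rightrightarrows \Gc$, form the equalizer $\Ea^0 = \mathrm{eq}(\gamma^0, \delta^0)$ in $\M$ and $\Ea^1 = \mathrm{eq}(\gamma^1, \delta^1)$ in $\ag$; since $\Ub$ preserves limits (being a right adjoint), $\Ub(\Ea^1) = \mathrm{eq}(\Ub(\gamma^1), \Ub(\delta^1))$, and the two composites $\Ea^0 \to \F^0 \xrightarrow{\pif} \Ub(\F^1)$ land — after equalizing — in $\Ub(\Ea^1)$, producing a canonical $\pi_{\Ea} : \Ea^0 \to \Ub(\Ea^1)$. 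One checks that $\Ec = [\Ea^0, \Ea^1, \pi_{\Ea}]$ with the evident map to $\Fc$ is the equalizer in $\mua$, and by construction $\Pio$ and $\Piun$ send it to the chosen equalizer diagrams in $\M$ and $\ag$. This gives the ``only if'' direction: any equalizer diagram in $\mua$ is isomorphic to this one, and both $\Pio$ and $\Piun$ preserve it. For the ``if'' direction, suppose $D = (\Ec \xrightarrow{\sg} \Fc \rightrightarrows \Gc)$ is a commutative diagram whose images under $\Pio$ and $\Piun$ are equalizers. Compare $D$ with the canonical equalizer diagram $D'$ constructed above; the universal property of $D'$ gives a comparison map $\Ec \to \Ec'$, which on applying $\Pio$ and $\Piun$ becomes an isomorphism (comparison of two equalizer diagrams in $\M$, resp. $\ag$). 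Since a morphism of $\mua$ is an isomorphism if and only if both $\sg^0 = \Pio(\sg)$ and $\sg^1 = \Piun(\sg)$ are isomorphisms (isomorphisms in $\mua$ are detected level-wise), the comparison map is an isomorphism, so $D$ is an equalizer diagram.

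The main obstacle, such as it is, lies in the ``if'' direction: one must be careful that a level-wise isomorphism of objects of $\mua$ is genuinely an isomorphism there, i.e. that the inverse maps $(\sg^0)^{-1}$ and $(\sg^1)^{-1}$ are compatible with the structure maps $\pi$, which is automatic once $\sg$ itself is a morphism of $\mua$. The remaining bookkeeping — that $\Ub$ preserves the relevant equalizers, that the induced structure map $\pi_{\Ea}$ is well-defined, and the dualization for Assertion $(2)$ — is routine. I would present the argument in roughly this order: (i) construct the canonical equalizer in $\mua$ and note $\Pio$, $\Piun$ preserve it; (ii) deduce the ``only if'' direction; (iii) run the comparison argument for the ``if'' direction, invoking level-wise detection of isomorphisms; (iv) dualize to get the coequalizer statement.
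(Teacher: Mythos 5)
Your proposal is correct and follows essentially the same route as the paper: both compare the given diagram with the canonical level-wise equalizer in $\mua$, obtain a comparison map from the universal property, show it is a level-wise isomorphism by the standard uniqueness of equalizers in $\M$ and $\ag$, and conclude using that isomorphisms in $\mua$ are detected level-wise. The only cosmetic difference is in Assertion $(2)$, where the paper reruns the same argument directly (remarking that colimits in $\mua$ are level-wise without $\Ub$ needing to be a right adjoint), whereas you dualize via $(-)^{\op}$; both are fine.
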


\begin{proof}
This is easy considering the fact that the functor $\mua \xrightarrow{\Pio \times \Piun} \M \times \ag$ creates limits and colimits (see \cite{Bacard_QS}). However, for completeness and for the reader's convenience, we give a detailed explanation for Assertion $(1)$. The same argument applies for Assertion $(2)$, with the difference that $\Ub$ needs not be a right adjoint.
Consider the equalizer $(\Xc \xrightarrow{\theta}  \Fc \rightrightarrows \Gc )$ diagram in $\mua$. Then there is a unique map $\gamma : \Ec \to \Xc$ such that $\sg = \theta \circ \gamma$. We shall see that the map $\gamma$ is an isomorphism: this will prove that $\sg$ is the equalizer of the two maps $(\Fc \rightrightarrows \Gc )$. The map $\gamma$ will be an isomorphism if we can show that $\gamma^0$ and $\gamma^1$ are isomorphisms. We give the argument for $\gamma^0$, leaving $\gamma^1$ to the reader. We note that $\sgo = \theta^0 \circ \gamma^0$. By assumption the two diagrams $(\Ea^0 \xrightarrow{\sgo} \F^0 \rightrightarrows \G^0 )$, $(\Xa^0 \xrightarrow{\theta^0} \F^0 \rightrightarrows \G^0 )$ are equalizer diagrams, therefore an application of the universal property of the equalizer gives a unique map $j^0: \Xa^0 \to \Ea^0$, such that $\theta^0 = \sgo \circ j^0.$
If we put the last two equalities together we have on the one hand:
$$\sgo = \theta^0 \circ \gamma^0 = \sgo \circ j^0  \circ \gamma^0 \Longleftrightarrow \sgo \circ \Id_{\F^0} = \sgo \circ(j^0  \circ \gamma^0).$$
Since $\sgo$ is a monomorphism - like any equalizer -, the last equality gives  $\Id_{\F^0} = j^0  \circ \gamma^0$.  On the other hand, we also have:
$$ \theta^0 = \sgo \circ j^0 = \theta^0 \circ \gamma^0 \circ   j^0 \Longleftrightarrow \theta^0 \circ \Id_{\Xa^0} = \theta^0 \circ ( \gamma^0 \circ   j^0).$$
As $\theta^0$ is also a monomorphism, the last equality gives  $\Id_{\Xa^0} = \gamma^0 \circ   j^0$.  It follows that  $j^0$ and $\gamma^0$ are inverses to each other which means that $\gamma^0$ is an isomorphism as desired. This proves Assertion $(1)$ which completes the proof the lemma.
\end{proof}
With the functors $\Pio$ and $\Piun$, it can be easily seen that any level-wise monomorphism is a monomorphism, i.e., if $\sgo$ and $\sgi$ are monomorphisms, then so is $\sg=\osg$. The proposition hereafter gives the converse statement under some hypotheses. 
\begin{prop}\label{char-mono-mua}
Given an adjunction $\Fb \dashv \Ub$ with $\Ub: \ag \to \M$, the following hold.
\begin{enumerate}
	\item If  there is an initial object $\emptyset_{\ag} \in \ag$,  then any monomorphism in $\mdu$ is a level-wise monomorphism.
	\item  Moreover, if the monomorphisms in $\ag$ and $\M$ are effective monomorphisms, then so are the monomorphisms of $\mua$.
\end{enumerate}
\end{prop}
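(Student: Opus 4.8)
The plan is to prove the two assertions separately: the first by reflecting monomorphisms through the projections $\Pio$ and $\Piun$ with the help of their adjoints from Proposition~\ref{adjunction-prop}, and the second by exhibiting a given monomorphism of $\mdu$ as an equalizer via Lemma~\ref{lem-eq-coeq}.

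For assertion~(1), let $\sigma=\osg\colon\Fc\to\Gc$ be a monomorphism of $\mdu$. First I would show that $\sigma^0=\Pio(\sigma)$ is a monomorphism of $\M$: the adjunction $\Fb^+\dashv\Pio$ supplies, for each $m\in\M$, a bijection $\Hom_{\mdu}(\Fb^+(m),-)\cong\Hom_{\M}(m,\Pio(-))$ natural in the target, and this bijection identifies the injective map $\Hom_{\mdu}(\Fb^+(m),\sigma)$ with $\Hom_{\M}(m,\sigma^0)$; since $m\in\M$ is arbitrary, $\sigma^0$ is monic. The statement that $\sigma^1=\Piun(\sigma)$ is a monomorphism of $\ag$ follows symmetrically, once a left adjoint to $\Piun$ is available: the initial object of $\ag$ provided by the hypothesis is precisely what makes available the auxiliary functor $L^1$ together with the adjunction $L^1\dashv\Piun$ of Proposition~\ref{adjunction-prop}, and the same naturality argument then shows that $\Hom_{\ag}(a,\sigma^1)$ is injective for all $a\in\ag$. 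Hence every monomorphism of $\mdu$ is level-wise.

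For assertion~(2), assume moreover that all monomorphisms of $\M$ and of $\ag$ are effective. Let $\sigma\colon\Fc\to\Gc$ be a monomorphism of $\mdu$. By~(1) its components $\sigma^0,\sigma^1$ are monic, hence effective, so $\sigma^0$ is the equalizer of its cokernel pair $\G^0\rightrightarrows Q^0$ with $Q^0:=\G^0\cup^{\F^0}\G^0$ and legs $d_1,d_2$, and likewise $\sigma^1$ is the equalizer of $\G^1\rightrightarrows Q^1$ with $Q^1:=\G^1\cup^{\F^1}\G^1$ and legs $e_1,e_2$. I would then glue these into a cokernel pair in $\mdu$. The two composites $\Ub(e_1)\circ\pig$ and $\Ub(e_2)\circ\pig$ from $\G^0$ to $\Ub(Q^1)$ agree after precomposition with $\sigma^0$: using the defining identity $\pig\circ\sigma^0=\Ub(\sigma^1)\circ\pif$ of the comma morphism $\sigma$ together with the relation $e_1\circ\sigma^1=e_2\circ\sigma^1$ (valid since $\sigma^1$ equalizes $e_1,e_2$), one gets $\Ub(e_1)\circ\pig\circ\sigma^0=\Ub(e_1\circ\sigma^1)\circ\pif=\Ub(e_2\circ\sigma^1)\circ\pif=\Ub(e_2)\circ\pig\circ\sigma^0$. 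By the universal property of the pushout $Q^0$ this determines a unique map $\pi_{\Qc}\colon Q^0\to\Ub(Q^1)$ with $\pi_{\Qc}\circ d_i=\Ub(e_i)\circ\pig$ for $i=1,2$. Setting $\Qc:=[Q^0,Q^1,\pi_{\Qc}]\in\mdu$, the pairs $[d_i,e_i]$ are morphisms $\Gc\to\Qc$ of $\mdu$, and the diagram $\Fc\xrightarrow{\sigma}\Gc\rightrightarrows\Qc$ has $\Pio$-image the equalizer diagram $\F^0\xrightarrow{\sigma^0}\G^0\rightrightarrows Q^0$ and $\Piun$-image the equalizer diagram $\F^1\xrightarrow{\sigma^1}\G^1\rightrightarrows Q^1$; by Lemma~\ref{lem-eq-coeq}(1) it is an equalizer diagram in $\mdu$, so $\sigma$ is the equalizer of a parallel pair, i.e. an effective monomorphism.

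The only place where real work occurs — and hence the step I expect to be the main obstacle — is the construction of the structure map $\pi_{\Qc}$ and the verification that it makes $\Qc$ an object of $\mdu$ with the two legs as morphisms there; this is exactly the compatibility computation above, which marries the comma-category condition on $\sigma$ with the equalizer property of $\sigma^1$. It also uses tacitly that the cokernel pairs $\G^0\cup^{\F^0}\G^0$ and $\G^1\cup^{\F^1}\G^1$ exist in $\M$ and $\ag$, which is automatic for abelian categories and Grothendieck sites, the settings in which the proposition is applied.
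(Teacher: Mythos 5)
Your proposal is correct and follows essentially the same route as the paper: assertion (1) via the adjunctions $\Fb^+\dashv\Pio$ and $L^{1}\dashv\Piun$, and assertion (2) by forming the cokernel pair level-wise and invoking Lemma~\ref{lem-eq-coeq}. The only difference is cosmetic: the paper simply cites that pushouts in $\mua$ are computed level-wise, whereas you build the level-wise pushout $\Qc$ and its structure map $\pi_{\Qc}$ by hand before applying the same lemma.
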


\begin{proof}
  For Assertion $(1)$, we shall proceed as follows. Let $\sg : \Fc \xrightarrow{\osg} \Gc$ be a monomorphism. The existence of an initial object $\emptyset_{\ag}$ gives the adjunction $L^{1} \dashv \Piun$, thus any map $a \to \F^1$  is uniquely equivalent to a map $L^{1}(a) \to \Fc$. Given two parallel morphisms $j, j': a \rightrightarrows \F^1 $, we have by adjointness, an equivalence of commutative diagrams:
$$( a \rightrightarrows \F^1 \xrightarrow{\sgi} \G^1) = (a \rightrightarrows  \Piun(\Fc) \xrightarrow{\Piun(\sg)} \Piun(\Gc) ) \Longleftrightarrow (L^{1}(a)\rightrightarrows  \Fc \xrightarrow{\sg} \Gc ).$$
It follows that if $\sg$ is a monomorphism then the two parallel maps $L^{1}(a)\rightrightarrows  \Fc$ are equal, and so are their adjoint-transpose maps $j, j': a \rightrightarrows \F^1 $, which proves that $\sgi$ is a monomorphism. To show that $\sgo$ is also a monomorphism, the existence of a left adjoint is essential. First note that the adjunction $\Fb \dashv \Ub$ gives another adjunction $\Fb^+ \dashv \Pio$. As in the first case, given two parallel maps $m \rightrightarrows \F^0$ we have an equivalence of commutative diagrams:
$$( m \rightrightarrows \F^0 \xrightarrow{\sgo} \G^0) = (m \rightrightarrows  \Pio(\Fc) \xrightarrow{\Pio(\sg)} \Pio(\Gc) ) \Longleftrightarrow (\Fb^+(m)\rightrightarrows  \Fc \xrightarrow{\sg} \Gc ).$$
$\sg$ being a monomorphism gives an equality between the parallel maps $\Fb^+(m)\rightrightarrows  \Fc$, which gives by adjointness, an equality of the parallel adjoint-transposes $m \rightrightarrows \F^0$, proving that $\sgo$ is a monomorphism. This gives Assertion $(1)$.

With Assertion $(1)$ at hand, if $\sg=\osg$ is a monomorphism , then $\sgo$ and $\sgi$ are monomorphisms which are effective monomorphisms by assumption. Recall that colimits and  limits, thus pushouts and equalizers, are computed level-wise in $\mua$. Consider the diagram $(\Gc \rightrightarrows \Gc \cup^{\Fc} \Gc )$ given by the two parallel maps obtained by forming the pushout of $\sg: \Fc \to \Gc$ along itself and let $D = (\Fc \xrightarrow{\sg} \Gc \rightrightarrows \Gc \cup^{\Fc} \Gc )$. Note that since $\sgo$ and $\sgi$ are effective monomorphisms, the diagrams $ \Pio(D)=(\F^0 \xrightarrow{\sgo} \G^0 \rightrightarrows \G^0 \cup^{\F^0} \G^0 )$  and $\Piun(D)=(\F^1 \xrightarrow{\sgi} \G^1 \rightrightarrows \G^1 \cup^{\F^1} \G^1)$ are equalizer diagrams. By Lemma \ref{lem-eq-coeq}   $D$ is an equalizer diagram which means that $\sg$ is an effective monomorphism, whence the assertion.
\end{proof}
\subsection{Left exact functors between abelian categories}
\begin{prop}\label{prop-abelian}
	 If $\Ub: \ag \to \M$ is a left exact functor between abelian categories that posses a left adjoint, then $\mdua$ is an abelian category and the functors $\iota$ and $\Pi^0$ are also left exact.
\end{prop}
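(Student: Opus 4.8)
### Proof proposal for Proposition \ref{prop-abelian}

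The plan is to verify the Grothendieck–Freyd axioms for $\mdua$ one at a time, transporting the abelian structure from $\ag$ and $\M$ levelwise, and throughout exploiting the two adjunctions $\Pi^0 \dashv R^0$ and $L^1 \dashv \Pi^1$ together with $\Fb^+\dashv \Pi^0$ that exist here because $\Ub$ is left exact (hence finite-limit preserving) and has a left adjoint $\Fb$. First I would record that $\mdua$ has a zero object, namely $[0_\M, 0_\ag, 0_\M \to \Ub(0_\ag)=0_\M]$, since $\Ub$ preserves the terminal object and abelian categories are pointed; biproducts $\Fc \oplus \Gc$ are computed levelwise as $[\F^0\oplus\G^0, \F^1\oplus\G^1, \pi_\F\oplus\pi_\G]$, using that $\Ub$ preserves finite products so $\Ub(\F^1\oplus\G^1)\cong \Ub(\F^1)\oplus\Ub(\G^1)$ and the structure map is the direct sum of the two structure maps. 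This gives the additive structure, and composition is bilinear because it is so levelwise.

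Next I would construct kernels and cokernels. The kernel of $\sg=[\sgo,\sgi]:\Fc\to\Gc$ is a levelwise limit, so by the fact that $\Pi^0\times\Pi^1$ creates limits (Proposition on completeness, already used in the excerpt), $\ker(\sg)=[\ker(\sgo),\ker(\sgi), \pi]$ where $\pi$ is the map induced on kernels by $\pi_\F,\pi_\G$ together with left-exactness of $\Ub$ — precisely, $\Ub$ sends $\ker(\sgi)$ to $\ker(\Ub(\sgi))$, and the square defining $\sg$ restricts to give the required arrow $\ker(\sgo)\to\Ub(\ker(\sgi))$. The cokernel is a levelwise colimit: $\mathrm{coker}(\sg)=[\mathrm{coker}(\sgo),\mathrm{coker}(\sgi),\bar\pi]$, and here one needs the map $\bar\pi:\mathrm{coker}(\sgo)\to\Ub(\mathrm{coker}(\sgi))$, which is obtained by composing $\pi_\G$ with $\Ub$ of the projection $\G^1\to\mathrm{coker}(\sgi)$ and checking it kills the image of $\sgo$; colimits exist in $\mdua$ by the dual argument (or directly, since $\ag,\M$ cocomplete and the comma category of a right adjoint is cocomplete). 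Crucially $\Ub$ need not be right exact for the cokernel to \emph{exist} — it exists as a colimit — it is only the structure map that uses functoriality of $\Ub$.

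The main obstacle, as usual in verifying an additive category is abelian, is the axiom that every monomorphism is a kernel and every epimorphism is a cokernel (equivalently $\mathrm{coim}\xrightarrow{\sim}\mathrm{im}$ for every map). I would handle this via the preceding Proposition \ref{char-mono-mua}: since $\ag$ has an initial (hence zero) object, every monomorphism $\sg$ in $\mdua$ is levelwise monic, and by the proposition on epimorphisms every epimorphism is levelwise epic; and in an abelian category every mono is an effective mono (a kernel) and every epi is a cokernel, so Lemma \ref{lem-eq-coeq} lets me assemble the levelwise kernel/cokernel presentations into one in $\mdua$. Concretely, given a monomorphism $\sg:\Fc\to\Gc$, form the cokernel $q:\Gc\to\Qc$ computed levelwise; then $\Pi^0(\sg)=\ker(\Pi^0(q))$ and $\Pi^1(\sg)=\ker(\Pi^1(q))$ since they are abelian, and Lemma \ref{lem-eq-coeq}(1) (kernels are equalizers against zero) upgrades this to $\sg=\ker(q)$ in $\mdua$; dually for epimorphisms using part (2). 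Finally, $\iota$ and $\Pi^0$ are additive by inspection and preserve finite limits: $\Pi^0$ is a right adjoint (to $R^0$) hence continuous, and $\iota$ is a right adjoint (to $\Pi^1$) hence continuous, so both are left exact. This closes the proof, the only genuinely delicate point being the mono-is-a-kernel verification, which is exactly why the hypotheses "$\ag$ abelian" (for the zero object feeding Proposition \ref{char-mono-mua}) and "$\Ub$ has a left adjoint" (for $\Fb^+\dashv\Pi^0$, needed to detect monos on the $\F^0$-level) are both invoked.
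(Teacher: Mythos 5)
Your argument is correct and follows essentially the same route as the paper's: zero object and levelwise biproducts via the additivity of the left exact functor $\Ub$, levelwise kernels and cokernels, and the mono-is-a-kernel / epi-is-a-cokernel axiom deduced from Proposition \ref{char-mono-mua} together with Lemma \ref{lem-eq-coeq}. One small slip: $\Pi^0$ is the \emph{left} adjoint in $\Pi^0 \dashv R^0$; its left exactness comes from $\Fb^+ \dashv \Pi^0$ (or simply from the fact that finite limits in $\mdua$ are computed level-wise), not from being right adjoint to $R^0$.
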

A typical example is the functor $\Ub=\HOM(P,-): k-Mod \to k-Mod$, where $P$ a left $k$-module for some commutative ring $k$. It is well known that $\HOM(P,-)$ is a left exact endofunctor whose left adjoint is $-\otimes P$ (see \cite{Weibel_HA}).
\begin{rmk}\label{zero-ab-comma}
	We note that the functor $\Ub$ sends zero object to zero object, i.e., $\Ub(\0_{\ag}) =\0_{\M}$, therefore
	$\0 = [\0_{\M}, \0_{\ag} ,  \Id_{\0_{\M}}]$ is clearly a zero object in $\mdua$. If $\Fc$ and $\Gc$ are objects in $\mua$, we have a map $\Fc \to \Fc \times \Gc$ given by the identity $\Id_{\Fc}$ and the zero map $\Fc \to \Gc$. Similarly we have a map $\Gc \to \Fc \times \Gc$. The two maps induce a unique map $\Fc \oplus \Gc \to \Fc \times \Gc$ satisfying the obvious equations.
\end{rmk}
We will need the following lemma which is classic in the theory of additive functors.
\begin{lem}
 Any left exact functor  $\Ub : \ag \to \M$ between abelian categories is additive, i.e., the canonical map
 $\Ub(X) \oplus \Ub(Y) \to \Ub(X \oplus Y) $ is an isomorphism for any $(X,Y) \in \ag^2$. 
\end{lem}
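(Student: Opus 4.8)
The plan is to prove that a left exact functor $\Ub$ between abelian categories is additive, i.e. the canonical comparison map $\Ub(X) \oplus \Ub(Y) \to \Ub(X\oplus Y)$ is an isomorphism. The key observation is that in an abelian category the biproduct $X\oplus Y$ is characterized purely by a finite diagram of morphisms: there are injections $\iota_X : X \to X\oplus Y$, $\iota_Y : Y \to X\oplus Y$ and projections $p_X : X\oplus Y \to X$, $p_Y : X\oplus Y \to Y$ satisfying $p_X\iota_X = \Id_X$, $p_Y\iota_Y = \Id_Y$, $p_X\iota_Y = 0$, $p_Y\iota_X = 0$, and $\iota_X p_X + \iota_Y p_Y = \Id_{X\oplus Y}$. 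Conversely, any such system of maps exhibits the object as a biproduct. So the strategy is: first show $\Ub$ preserves zero objects and zero morphisms; then show $\Ub$ carries the biproduct diagram for $X\oplus Y$ to a biproduct diagram for $\Ub(X)\oplus\Ub(Y)$.

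First I would record that a left exact functor preserves finite limits, in particular the terminal object; since in an abelian category the terminal object is a zero object, $\Ub(\0_\ag) = \0_\M$. Consequently $\Ub$ sends the zero morphism $X \to \0_\ag \to Y$ to the zero morphism $\Ub(X) \to \0_\M \to \Ub(Y)$, i.e. $\Ub(0) = 0$. Next, the product $X\oplus Y$ (which in an abelian category coincides with the biproduct) is a finite limit, so left exactness already gives that $\Ub(X\oplus Y)$ together with $\Ub(p_X)$ and $\Ub(p_Y)$ is the product of $\Ub(X)$ and $\Ub(Y)$ in $\M$; hence the comparison map $\Ub(X)\oplus \Ub(Y) \to \Ub(X\oplus Y)$ induced by $(\Ub(p_X), \Ub(p_Y))$ is already an isomorphism. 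In fact this is the cleanest route: the canonical map whose inverse is $(\Ub(p_X), \Ub(p_Y))$ is precisely the map being claimed to be an isomorphism, once one checks that the product projections applied to $\Ub(\iota_X), \Ub(\iota_Y)$ reproduce the biproduct identities. That check is a routine diagram chase: apply $\Ub$ to each of the five relations above, using $\Ub(0)=0$ and functoriality, to obtain the corresponding relations for $\Ub(\iota_X), \Ub(\iota_Y), \Ub(p_X), \Ub(p_Y)$; the relation $\Ub(\iota_X)\Ub(p_X) + \Ub(\iota_Y)\Ub(p_Y) = \Id_{\Ub(X\oplus Y)}$ is the one needing additivity of $\Hom$-groups in $\M$, which holds since $\M$ is abelian.

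The main obstacle — really the only subtlety — is that additivity of $\Ub$ on morphisms (that $\Ub(f+g) = \Ub(f) + \Ub(g)$ for parallel maps, which is the usual statement of "additive functor") does not literally appear among the hypotheses, so one cannot simply invoke it; one has to derive it. The standard trick is: $f + g$ can be written as the composite $X \xrightarrow{\Delta} X\oplus X \xrightarrow{f\oplus g} Y\oplus Y \xrightarrow{\nabla} Y$ where $\Delta = (\Id,\Id)$ is the diagonal into the product and $\nabla = \langle \Id, \Id\rangle$ is the codiagonal out of the coproduct. Since $\Ub$ preserves the product $X\oplus X$ and preserves $\Delta$ (being a map into a finite limit, determined by its components), and since — by the biproduct isomorphism just established — $\Ub$ also sends $\nabla$ to the codiagonal of $\Ub(Y)\oplus\Ub(Y)$, we conclude $\Ub(f+g) = \Ub(f)+\Ub(g)$. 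Thus the biproduct-preservation statement and the additivity-on-morphisms statement are proved together, bootstrapping off left exactness; I would present the biproduct isomorphism as the primary content of the lemma, since that is exactly what the proof of Proposition~\ref{prop-abelian} will consume (to equip $\mdua$ with the structure of a pre-additive, hence additive, category).
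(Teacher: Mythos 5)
Your proof is correct and takes essentially the same route as the paper: both arguments reduce the claim to the fact that a left exact functor preserves binary products, and then transport this across the canonical biproduct isomorphisms $X\oplus Y\cong X\times Y$ in $\ag$ and $\Ub(X)\oplus\Ub(Y)\cong\Ub(X)\times\Ub(Y)$ in $\M$, using preservation of the zero object and zero morphisms to check that the comparison maps match up. The one caveat is that you cannot literally ``apply $\Ub$'' to the identity $\iota_X p_X+\iota_Y p_Y=\Id$ (that would presuppose the additivity being proved); but, as you implicitly acknowledge, this fifth relation is instead derived inside $\M$ from the first four and the product structure, and your main argument does not depend on it.
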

\begin{proof}
By assumption, the functor $\Ub$ preserves all finite limits, in particular it preservers binary products, that is, we have a canonical isomorphism $\Ub(X \times Y) \xrightarrow[\cong]{\beta} \Ub(X) \times \Ub(Y)$. The canonical isomorphism $X \oplus Y \xrightarrow[\cong]{\alpha_{\ag}} X \times Y$ fits in the commutative diagram below.
\[
\xy
(-30,20)*+{X}="W";
(-30,0)*+{Y}="X";
(50,0)*+{Y}="Y";
(50,20)*+{X}="E";
(25,10)*+{X \times Y}="U";
(0,10)*+{X \oplus Y}="V";
{\ar@{->}_-{i_2}"X";"V"};
{\ar@{->}_-{p_2}"U";"Y"};
{\ar@{->}^-{\alpha_{\ag}}_{\cong}"V";"U"};
{\ar@{->}^-{p_1}"U";"E"};
{\ar@{->}^-{i_1}"W";"V"};
{\ar@{->}^-{\Id}"W";"E"};
{\ar@{->}^-{\Id}"X";"Y"};
\endxy
\] 
The image under $\Ub$ of the last diagram gives a new commutative diagram in which we've included the isomorphism $\beta$ as well as the canonical map of the lemma:
\begin{align}\label{mid-row}
\xy
(-60,20)*+{\Ub(X)}="W";
(-60,-20)*+{\Ub(Y)}="X";
(90,-20)*+{\Ub(Y)}="Y";
(90,20)*+{\Ub(X)}="E";
(-30,0)*+{\Ub(X)\oplus \Ub(Y)}="F";
(63,0)*+{\Ub(X) \times  \Ub(Y)}="Q";
(31,0)*+{\Ub(X \times Y)}="U";
(3,0)*+{\Ub(X \oplus Y)}="V";
{\ar@{->}_-{\Ub(i_2)}"X";"V"};
{\ar@{-->}^-{i_2}"X";"F"};
{\ar@{-->}^-{can}"F";"V"};
{\ar@{->}_-{\Ub(p_2)}"U";"Y"};
{\ar@{->}^-{\Ub(\alpha_{\ag})}_{\cong}"V";"U"};
{\ar@{->}^-{\Ub(p_1)}"U";"E"};
{\ar@{->}^-{\Ub(i_1)}"W";"V"};
{\ar@{-->}_-{i_1}"W";"F"};
{\ar@{->}^-{\Id}"W";"E"};
{\ar@{->}^-{\Id}"X";"Y"};
{\ar@{.>}^-{p_2}"Q";"Y"};
{\ar@{.>}^-{p_1}"Q";"E"};
{\ar@{->}^-{\beta}_{\cong}"U";"Q"};
\endxy
\end{align}
The composite of the middle row is exactly the isomorphism $\alpha_\M: \Ub(X) \oplus \Ub(Y) \xrightarrow{\cong} \Ub(X) \times \Ub(Y) $ in the abelian category $\M$. Since the other morphisms in this row are invertible, we see that   $\Ub(X) \oplus \Ub(Y) \xrightarrow{can} \Ub(X \oplus Y) $ is also an isomorphism as claimed.
\end{proof}
\begin{lem}\label{can-coprod-prod-comma}
Let $\Fc$ and $\Gc$  be objects of $\mua$, where $\Ub$ is a left exact functor between abelian categories. Then the canonical map $\Fc \oplus \Gc \xrightarrow{\alpha} \Fc \times \Gc$ in Remark \ref{zero-ab-comma}  is an isomorphism given by $\alpha_\M : \F^0 \oplus \G^0 \xrightarrow{\cong} \F^0 \times \G^0$ and 
$\alpha_{\ag} : \F^1 \oplus \G^1 \xrightarrow{\cong} \F^1 \times \G^1$.
\end{lem}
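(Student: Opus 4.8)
The plan is to reduce everything to two facts about the comma category $\mua$ that are already available: finite products and finite coproducts in $\mua$ are computed level-wise, and the comparison map $\alpha$ is already known to be a morphism of $\mua$ by Remark \ref{zero-ab-comma}, so that one only has to identify its two components. First I would recall (from \cite{Bacard_QS}, used in the preceding proposition) that $\Pio \times \Piun : \mua \to \M \times \ag$ creates limits and colimits. Since $\Ub$ is left exact it preserves binary products, hence $\Fc \times \Gc$ has underlying objects $\F^0 \times \G^0 \in \M$ and $\F^1 \times \G^1 \in \ag$; dually $\Fc \oplus \Gc$ has underlying objects $\F^0 \oplus \G^0$ and $\F^1 \oplus \G^1$. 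I would also note that $\Pio \times \Piun$ reflects isomorphisms: a morphism $\sg = [\sgo, \sgi]$ of $\mua$ is invertible as soon as $\sgo$ and $\sgi$ are, with inverse $[\sgo^{-1}, \sgi^{-1}]$, the compatibility with the $\pi$-components being an immediate diagram chase (equivalently, this is part of the creation of limits).

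The key observation is that both $\Pio$ and $\Piun$ are simultaneously left adjoints and right adjoints: by Proposition \ref{adjunction-prop} we have the adjunctions $(\Fb^+ \dashv \Pio)$ and $(\Pio \dashv R^0)$, and $(L^{1} \dashv \Piun)$ and $(\Piun \dashv \iota)$. Consequently $\Pio$ and $\Piun$ preserve finite products and finite coproducts, and hence they carry the canonical comparison morphism $\alpha : \Fc \oplus \Gc \to \Fc \times \Gc$ to the corresponding canonical comparison morphisms in $\M$ and in $\ag$. In other words $\Pio(\alpha)$ is the canonical map $\F^0 \oplus \G^0 \to \F^0 \times \G^0$, i.e.\ $\alpha_\M$, and $\Piun(\alpha)$ is the canonical map $\F^1 \oplus \G^1 \to \F^1 \times \G^1$, i.e.\ $\alpha_{\ag}$. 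This already proves that $\alpha$ is ``given by $\alpha_\M$ and $\alpha_{\ag}$'' in the sense of the statement.

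Finally, since $\M$ and $\ag$ are abelian, the canonical maps $\alpha_\M$ and $\alpha_{\ag}$ are isomorphisms, so by the reflection property of $\Pio \times \Piun$ the morphism $\alpha$ is an isomorphism of $\mua$, which completes the proof. I expect the only genuinely delicate point to be the level-wise bookkeeping of the structure maps $\pi_{\Fc \oplus \Gc}$ and $\pi_{\Fc \times \Gc}$ and of how $\alpha$ interacts with them; but this is precisely what is bypassed by obtaining $\alpha$ from a universal property in $\mua$ (Remark \ref{zero-ab-comma}) rather than constructing it by hand. Should one prefer a direct computation instead, the additivity lemma just established — which gives the isomorphism $\Ub(\F^1) \oplus \Ub(\G^1) \xrightarrow{\cong} \Ub(\F^1 \oplus \G^1)$ — lets one write both structure maps explicitly and verify the compatibility square by a routine naturality argument.
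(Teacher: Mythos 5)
Your proof is correct, but it takes a genuinely different route from the paper's. The paper argues by hand: it writes out the structure maps $\pi_{\oplus}$ and $\pi_{\times}$ explicitly, invokes the additivity lemma proved just before (the identity $\beta^{-1}\circ\alpha_{\M}^{1}=\Ub(\alpha_{\ag})\circ \mathrm{can}$ extracted from the middle row of its diagram) together with naturality of $\alpha_{\M}$ in both variables, checks that the pair $[\alpha_{\M},\alpha_{\ag}]$ is a morphism of $\mua$, and then identifies it with the canonical map by uniqueness. You instead observe that $\Pio$ and $\Piun$ are simultaneously left and right adjoints (using Proposition \ref{adjunction-prop}, which applies here because left exactness gives $\Ub(\ast_{\ag})=\ast_{\M}$ and a left adjoint $\Fb$ is assumed), hence preserve finite products, finite coproducts and the zero object, hence send the comparison map $\alpha$ — which is defined purely by a universal property involving identities and zero maps — to the canonical comparison maps $\alpha_{\M}$ and $\alpha_{\ag}$; joint reflection of isomorphisms by $\Pio\times\Piun$ then finishes the job. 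Your route buys a cleaner argument that bypasses both the additivity lemma and all bookkeeping with $\pi_{\oplus}$, $\pi_{\times}$ and the laxity-type maps $\mathrm{can}$ and $\beta$; the paper's computation, on the other hand, produces the explicit formulas for the structure maps of $\Fc\oplus\Gc$ and $\Fc\times\Gc$, which is information the abstract argument does not surface. The only points worth tightening are (i) to state explicitly that preservation of the zero object is what guarantees $\Pio(\alpha)$ and $\Piun(\alpha)$ satisfy the defining equations of the canonical maps, and (ii) that ``reflects isomorphisms'' is best justified by your direct computation with $[\sgo^{-1},\sgi^{-1}]$ rather than by appeal to creation of limits; both are minor.
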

\begin{proof}
Since $\Ub$ preserves products and coproducts, then $\Fc \oplus \Gc$ and $\Fc \times \Gc$ are computed level-wise. By inspection we have $ \Fc \oplus \Gc= [\F^0 \oplus \G^0, \F^1 \oplus \G^1, \pi_{\oplus}]$ where $\pi_{\oplus}$ is the composite of the path: $\F^0 \oplus \G^0 \xrightarrow{\pif \oplus \pig} \Ub(\F^1) \oplus \Ub(\G^1) \xrightarrow[\cong]{can} \Ub(\F^1 \oplus \G^1)$. We also have  $ \Fc \times \Gc= [\F^0 \times \G^0, \F^1 \times \G^1, \pi_{\times}]$, where $\pi_{\times}$ is the composite of the path: 
$$\F^0 \times \G^0 \xrightarrow{\pif \times \pig} \Ub(\F^1) \times \Ub(\G^1) \xrightarrow[\cong]{\beta^{-1}} \Ub(\F^1 \times \G^1).$$
In the abelian category $\M$ the isomorphism $\alpha_\M$ is natural in the two variables, thus we have a natural isomorphism in the arrow-category $\Arr(\M)$: $\pif \oplus \pig \xrightarrow{\cong} \pif \times \pig$, given by $\alpha_\M : \F^0 \oplus \G^0 \xrightarrow{\cong} \F^0 \times \G^0$ and 
$\alpha_{\M}^1 : \Ub(\F^1) \oplus \Ub(\G^1) \xrightarrow{\cong} \Ub(\F^1) \times \Ub(\G^1)$. In other words we have $\alpha_{\M}^1 \circ (\pif \oplus \pig)= (\pif \times \pig) \circ \alpha_{\M}$, from which we get: 
\begin{align}\label{eq-ab-1}
\beta^{-1} \circ \alpha_{\M}^1 \circ (\pif \oplus \pig)= \underbrace{\beta^{-1} \circ (\pif \times \pig)}_{\pi_{\times}} \circ \alpha_{\M}.
\end{align}
The middle row of (\ref{mid-row}) gives: 
\begin{align}\label{eq-ab-2}
\beta^{-1} \circ \alpha_{\M}^1 = \Ub(\alpha_{\ag}) \circ can.
\end{align}
Clearly, (\ref{eq-ab-1}) and (\ref{eq-ab-2}) give:
 $$ \Ub(\alpha_{\ag}) \circ can \circ  (\pif \oplus \pig) = \pi_{\times} \circ \alpha_{\M} \Longleftrightarrow \Ub(\alpha_{\ag}) \circ \pi_{\oplus} =  \pi_{\times} \circ \alpha_{\M}.$$
 The last equality implies that $[\alpha_{\M}, \alpha_{\ag}]$ defines an isomorphism $\alpha \in \Hom(\Fc \oplus \Gc, \Fc \times \Gc)$. By inspection, the map $\alpha$  satisfies the same equations as the canonical map in Remark \ref{zero-ab-comma}, whence the equality by uniqueness.
\end{proof}

\begin{lem}\label{lem-mono-epi}
 If $\Ub: \ag \to \M$ is a left exact functor between abelian categories that posses a left adjoint, then the following statements are true:
 \begin{enumerate}
 	\item Every monomorphism is the kernel of some morphism.
 	\item Every epimorphism is the cokernel of some morphism.
 \end{enumerate}
\end{lem}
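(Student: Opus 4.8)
The plan is to reduce both assertions to the corresponding facts in the abelian categories $\ag$ and $\M$, using that the pair of projections $\Pio : \mua \to \M$ and $\Piun : \mua \to \ag$ jointly controls all of the structure in play. Three preliminary observations make this work. First, both limits and colimits in $\mua$ are computed level-wise: equalizers — and hence kernels, taken against the zero morphism $\Fc \to \0 \to \Gc$, which is itself level-wise zero because $\0 = [\0_{\M}, \0_{\ag}, \Id_{\0_{\M}}]$ — are level-wise by the proposition on finite limits in $\mua$, and coequalizers, hence cokernels, are level-wise by Lemma \ref{lem-eq-coeq}; moreover $\mua$ is finitely cocomplete (it may be identified with $(\Fb \downarrow \ag)$, in which $\Fb$ preserves colimits), so the cokernels we shall need exist. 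Secondly, a monomorphism $\sg = \osg$ of $\mua$ satisfies $\sgo \in \Mono(\M)$ and $\sgi \in \Mono(\ag)$ by Proposition \ref{char-mono-mua}, whose hypothesis holds because the abelian category $\ag$ has an initial object; dually, an epimorphism $\sg = \osg$ of $\mua$ satisfies $\sgo \in \Epi(\M)$ and $\sgi \in \Epi(\ag)$ by the assertion that every epimorphism of $\mua$ is a level-wise epimorphism. Thirdly, $\Pio \times \Piun$ is conservative: if $\sgo$ and $\sgi$ are invertible, then $[(\sgo)^{-1}, (\sgi)^{-1}]$ is again a morphism of $\mua$ (invert the defining commutative square) and inverts $\sg$.

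Granting these, for Assertion $(1)$ I would take a monomorphism $\sg : \Fc \to \Gc$, form its cokernel $c$ in $\mua$, and use $c \circ \sg = 0$ to factor $\sg$ uniquely as $\Fc \xrightarrow{u} \ker(c) \hookrightarrow \Gc$ through the kernel of $c$. Applying $\Pio$ and $\Piun$ exhibits $\Pio(u)$ and $\Piun(u)$ as the canonical comparison maps of $\sgo$ and $\sgi$ into $\ker(\operatorname{coker}(\sgo))$ and $\ker(\operatorname{coker}(\sgi))$; since $\M$ and $\ag$ are abelian and $\sgo$, $\sgi$ are monomorphisms, these comparison maps are isomorphisms, so $u$ is an isomorphism in $\mua$ by conservativity and $\sg$ is a kernel of $c$. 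Assertion $(2)$ is the exact dual: for an epimorphism $p : \Fc \to \Gc$ one forms the kernel $k$ of $p$ in $\mua$, uses $p \circ k = 0$ to factor $p$ uniquely as $\Fc \twoheadrightarrow \operatorname{coker}(k) \xrightarrow{\bar p} \Gc$, and observes that $\Pio(\bar p)$ and $\Piun(\bar p)$ are the canonical maps $\operatorname{coker}(\ker(\Pio(p))) \to \Pio(\Gc)$ and $\operatorname{coker}(\ker(\Piun(p))) \to \Piun(\Gc)$, hence isomorphisms because $\Pio(p)$ and $\Piun(p)$ are epimorphisms in the abelian categories $\M$ and $\ag$; thus $\bar p$ is an isomorphism and $p$ is a cokernel of $k$.

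The one point that needs care — and the expected, though minor, obstacle — is to be sure that the comparison morphisms $u$ and $\bar p$ delivered by the universal properties in $\mua$ genuinely have the familiar abelian-category comparison maps as their two components, so that conservativity applies verbatim. This amounts to checking that not only the underlying objects but also the structure maps of $\ker(c)$ and $\operatorname{coker}(k)$ are built level-wise: the kernel side is immediate from level-wise completeness, whereas the cokernel side is precisely where the hypothesis that $\Ub$ admits a left adjoint is indispensable, since it is only through the identification $\mua \cong (\Fb \downarrow \ag)$ — where $\Fb$ preserves colimits — that colimits, together with their structure maps, are computed level-wise. Everything beyond this is a routine transcription of the abelian-category axioms applied separately in $\M$ and in $\ag$.
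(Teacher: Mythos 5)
Your proof is correct and follows essentially the same route as the paper's: reduce to level-wise monomorphisms/epimorphisms (Proposition \ref{char-mono-mua} and the level-wise description of epimorphisms), compute the (co)kernel level-wise, and apply $\ker(\operatorname{coker})$ and $\operatorname{coker}(\ker)$ in the abelian categories $\M$ and $\ag$ separately; where the paper packages the last step as Lemma \ref{lem-eq-coeq}, you use the conservativity of $\Pio\times\Piun$ together with the comparison map, which is the same argument in different clothing. One small misattribution: colimits in $\mdua$ are computed level-wise for an arbitrary $\Ub$ (no left adjoint required), so the hypothesis that $\Fb$ exists is really consumed by the level-wise description of monomorphisms rather than by the construction of cokernels.
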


\begin{proof}
	Let $\sg:\Fc \xrightarrow{\osg} \Gc$ be a monomorphism and consider its cokernel $p:\G \to coker(\sg)$,  obtained as the coequalizer of $\sg$ and the zero map. Since (co)equalizers are computed level-wise, we have $p=[p^0,p^1]$ where $p^0: \G^0 \to coker(\sgo)$ and $p^1: \G^1 \to coker(\sgi)$ are the respective coequalizers of $\sgo$ and $\sgi$. 
	By Proposition \ref{char-mono-mua}, the map $\osg$ is a level-wise monomorphism, that is $\sgo$ and $\sgi$ are also monomorphism in $\M$ and $\ag$ respectively. By the axioms of the abelian categories $\M$ and $\ag$, we have:
	$\sgo \cong \ker(coker(\sgo) )$ and $\sgi \cong \ker(coker(\sgi))$. In particular the diagram hereafter are equalizer diagrams:
	\begin{align}
    & \xy
	(0,0)*+{\F^0}="X";
	(20,0)*+{\G^0}="Y";
	(50,0)*+{coker(\sgo)}="E";
	{\ar@{->}^-{\sgo}"X";"Y"};
	{\ar@<-1.ex>_-{0}"Y";"E"};
	{\ar@<1.ex>^-{p^0}"Y";"E"};
	\endxy
	\\
   &\xy
     (0,0)*+{\F^1}="X";
     (20,0)*+{\G^1}="Y";
     (50,0)*+{coker(\sgi)}="E";
     {\ar@{->}^-{\sgi}"X";"Y"};
     {\ar@<-1.ex>_-{0}"Y";"E"};
     {\ar@<1.ex>^-{p^1}"Y";"E"};
	\endxy
	\end{align}
	Let $D$ be the commutative diagram:
	$
	\xy
	(0,0)*+{\Fc}="X";
	(20,0)*+{\Gc}="Y";
	(40,0)*+{coker(\sg)}="E";
	{\ar@{->}^-{\sg}"X";"Y"};
	{\ar@<-1.ex>_-{[0]}"Y";"E"};
	{\ar@<1.ex>^-{[p^0,p^1]}"Y";"E"};
	\endxy
	$. 
	By the above, $\Pio(D)$ and $\Piun(D)$ are equalizer diagrams, thus $D$ is an equalizer diagram by Lemma \ref{lem-eq-coeq}. It follows that $\sg \cong ker(coker(\sg))$, whence Assertion $(1)$. The other assertion is proved the same way using  $\sgo \cong coker(ker(\sgo))$ and $\sgi \cong coker(ker(\sgi))$ with the dual diagram $D=( \xy
	(40,0)*+{\Fc}="X";
	(0,0)*+{ker(\sg)}="Y";
	(20,0)*+{\Gc}="E";
	{\ar@{->}^-{\sg}"E";"X"};
	{\ar@<-1.ex>_-{[0]}"Y";"E"};
	{\ar@<1.ex>^-{[i^0,i^1]}"Y";"E"};
	\endxy)
	$.
\end{proof}
We can now prove Proposition \ref{prop-abelian}.
\begin{proof}[Proof of Proposition \ref{prop-abelian}]
 We note that colimits are always computed level-wise, in particular coproducts, coequalizers, whence cokernels exist in $\mua$. Moreover, being left-exact means that $\Ub$ preserves finite limits, therefore equalizers and kernels exist in $\mua$ and are computed level-wise. 
\begin{itemize}
	\item By Remark \ref{zero-ab-comma}, we have a zero-object: $\0 = [\0_{\M}, \0_{\ag} ,  \Id_{\0_{\M}}]$.
	\item Lemma \ref{can-coprod-prod-comma} asserts that the canonical map $\Fc \oplus \Gc \to \Fc \times \Gc$ is an isomorphism given by $[\alpha_{\M}, \alpha_{\ag}]$. By a classical argument $\mua$ becomes an additive category.
	\item Lemma \ref{lem-mono-epi} shows that any monomorphism is a kernel and every epimorphism is a cokernel.
\end{itemize}
\end{proof}
\subsection{Grothendieck topology on a comma category}
It is shown for example in \cite{Vistoli_fib} that if $\C$ is a Grothendieck site, then there is an induced Grothendieck topology on any comma category $\C_{ / X}$. It's easy to see that  $\C_{\/X}\cong\C \downarrow \1_X$ where $\1_X: \1 \to \C$ is the functor that picks out $X$. We show below that this result can be generalized to other functors $\Ub: \ag \to \M$ under reasonable hypotheses. First, we recall a definition that can be found in \cite[Ch. VII]{Maclane_Moer_sh}. 
\begin{df}
Let $(\C,J)$ and $(\D,K)$ be Grothendieck sites and suppose that $\C$ and $\D$ are closed under finite limits. Say that a functor $\phi: \C \to \D$ is a morphism of sites if the following two conditions hold.
\begin{enumerate}\label{df-site-mor}
	\item $\phi$ preserves finite limits, i.e. $\phi$ is left exact.
	\item $\phi$ preserves covers: if $S\in J(C)$ is a covering sieve for $C \in \C$, then the sieve $\phi(S)$ generated by the set $\{\phi(u) | \quad (u: C' \to C)\in S\}$ is a covering sieve for $\phi(C)$ in $\D$.
\end{enumerate}
\end{df}

\begin{df}
	Let $\Gc=\ogcr$ be an object of $\mdua$, where $\Ub$ is a left exact functor between finitely complete categories with a Grothendieck topology
	\begin{enumerate}
		\item Say that a set of morphism $S = \{\Xci \xrightarrow{\theta_i} \Gc \}$ is a covering family for $\Gc$ if 
		\begin{enumerate}
			\item $\Pi^0(S)= \{\Xa_i^0 \xrightarrow{\theta_i^0} \G^0\}$ is a covering family for $\G^0$ and if
			\item $\Pi^1(S)= \{\Xa_i^1 \xrightarrow{\theta_i^1} \G^1\}$ is a covering family for $\G^1$.
		\end{enumerate}
		\item Denote by $\bcov(\Gc)$ the collection of all covering families for $\Gc$.
	\end{enumerate}
\end{df}
\begin{prop}
	With the notation above, the following statements are true.
	\begin{enumerate}
		\item The collections $\bcov(\Gc)$ determine a basis of a Grothendieck topology -generated by $\bcov$- on the comma category $\mdua$.
		\item If $\Ub$ is a morphism of sites, then so are $\iota :\ag \to \mua$ and $\Pio: \mdua \to \M$. Moreover $\Ub$ can be factored as a morphism of sites : $(\ag \xhookrightarrow{\iota} \mdua \xrightarrow{\Pio} \M)$
	\end{enumerate} 
\end{prop}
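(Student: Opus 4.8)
The statement to prove concerns the comma category $\mdua$ equipped with the collections $\bcov(\Gc)$ of covering families. The plan is to verify, in order, the basis axioms for a Grothendieck pretopology on $\mdua$, using throughout the fact that $\Pi^0$ and $\Piun$ jointly create limits (hence pullbacks) and that $\Ub$ is left exact so that pullbacks in $\mdua$ are computed level-wise. The three axioms to check for $\bcov$ are: (i) every isomorphism $\Xc \xrightarrow{\cong} \Gc$ constitutes a covering family $\{\Xc \to \Gc\}$; (ii) stability under pullback, i.e.\ if $S=\{\Xci \xrightarrow{\theta_i} \Gc\} \in \bcov(\Gc)$ and $\sg : \H \to \Gc$ is any morphism, then the family of pullbacks $\{\H \times_{\Gc} \Xci \to \H\}$ lies in $\bcov(\H)$; (iii) transitivity (local character), i.e.\ if $S=\{\Xci \to \Gc\} \in \bcov(\Gc)$ and for each $i$ we have $S_i \in \bcov(\Xci)$, then the composed family $\{ \Ya_{ij} \to \Xci \to \Gc\}$ lies in $\bcov(\Gc)$. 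Each of these reduces level-wise to the corresponding axiom for the topologies on $\M$ and $\ag$, because $\Pio$ and $\Piun$ preserve pullbacks and send the relevant families to the defining level-wise data.

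\textbf{Details of the axiom check.} For (i), if $\Xc \to \Gc$ is an isomorphism in $\mdua$, then $\Pio$ and $\Piun$ applied to it give isomorphisms in $\M$ and $\ag$; a singleton family on an isomorphism is a cover in any site, so the family is in $\bcov(\Gc)$. For (ii), the key point is that since $\Ub$ preserves finite limits, a pullback in $\mdua$ is computed level-wise: $\Pio(\H \times_{\Gc} \Xci) = \H^0 \times_{\G^0} \Xa_i^0$ and likewise for $\Piun$. Hence the family $\{\Pio(\H \times_{\Gc}\Xci) \to \H^0\}$ is exactly the pullback of the covering family $\Pio(S)$ along $\Pio(\sg) = \sg^0$, which is a cover of $\H^0$ by stability in $\M$; the same argument in $\ag$ gives a cover of $\H^1$. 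By definition of $\bcov$, the pulled-back family covers $\H$. For (iii), applying $\Pio$ to the composed family yields $\{\Pio(\Ya_{ij}) \to \Xa_i^0 \to \G^0\}$, which is the composite of $\Pio(S)$ with the covers $\Pio(S_i)$, hence a cover of $\G^0$ by transitivity in $\M$; the same in $\ag$, and we conclude. This proves Assertion $(1)$.

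\textbf{Assertion (2).} Here I would invoke Definition~\ref{df-site-mor}: a morphism of sites must be left exact and preserve covers. That $\iota$ and $\Pio$ preserve finite limits is already recorded (Proposition on limit-preservation of $\iota$ and $\Pio$, and the hypothesis that $\Ub$ is a morphism of sites makes $\Ub$ left exact so the constructions apply). For preservation of covers: if $S = \{u_i : C_i \to C\}$ is a covering family in $\ag$, then $\iota(S) = \{\iota(u_i)\}$ has $\Piun(\iota(u_i)) = u_i$, so $\Piun(\iota(S)) = S$ is a cover of $C = \Piun(\iota(C))$; and $\Pio(\iota(u_i)) = \Ub(u_i)$, so $\Pio(\iota(S)) = \Ub(S)$, which is a cover of $\Ub(C)$ precisely because $\Ub$ is a morphism of sites. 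Hence $\iota(S) \in \bcov(\iota(C))$, so $\iota$ preserves covers. For $\Pio$: given a covering family $T = \{\Xci \xrightarrow{\theta_i} \Gc\}$ of $\Gc$ in $\mdua$, by definition $\Pio(T) = \{\Xa_i^0 \to \G^0\}$ is a covering family of $\G^0$, which is exactly the statement that $\Pio$ preserves covers. Finally the factorization $\Ub = \Pio \circ \iota$ of Proposition~\ref{adjunction-prop} exhibits $\Ub$ as a composite of morphisms of sites. The main obstacle, and the only place requiring genuine care rather than bookkeeping, is the level-wise computation of pullbacks in $\mdua$ in axiom (ii): one must check that the third component $\pi$ of the comma-object pullback is the induced map and that the universal property is correctly transported, which is where left-exactness of $\Ub$ is used essentially; everything else follows formally from the fact that $\Pio$ and $\Piun$ create and preserve the relevant (co)limits and from the definitions of the topologies involved.
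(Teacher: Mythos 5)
Your verification of Assertion $(1)$, and of the claims about $\Pio$ in Assertion $(2)$, matches the paper's proof: the isomorphism and transitivity axioms are immediate from functoriality of $\Pio$ and $\Piun$ (transitivity in fact needs no hypothesis on $\Ub$ at all, since composition is level-wise), and stability reduces level-wise because left exactness of $\Ub$ forces pullbacks in $\mdua$ to be computed componentwise; likewise $\Pio$ preserves covers by the very definition of $\bcov$ and preserves finite limits because limits are created by $\Pio\times\Piun$, and $\iota$ is left exact as a right adjoint.

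The one step that is too quick is your claim that $\iota$ preserves covers. You assert that for a covering family $S=\{u_i\colon C_i\to C\}$ in $\ag$ the set $\Pio(\iota(S))=\{\Ub(u_i)\}$ is a cover of $\Ub(C)$ ``precisely because $\Ub$ is a morphism of sites.'' But the definition of a morphism of sites (Definition \ref{df-site-mor}) only guarantees that the \emph{sieve generated by} $\{\Ub(u_i)\}$ is a covering sieve of $\Ub(C)$; it does not say that the bare set $\{\Ub(u_i)\}$ is itself a covering family in the sense required by the definition of $\bcov$. Hence $\{\iota(u_i)\}$ need not lie in $\bcov(\iota(C))$, and you cannot immediately conclude that the sieve generated by $\iota(S)$ contains a basis cover. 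The paper closes exactly this gap: it enlarges the candidate family by precomposing each $\iota(u)$ with the canonical maps $[f,\Id_{\Pa}]\colon[m,\Pa,f]\to\iota(\Pa)$ for all $f$ with codomain $\Ub(\Pa)$, so that the level-$0$ projection of the enlarged family is the entire covering sieve $\Ub(S)$ while the level-$1$ projection is $S$ itself; this enlarged family lies in $\bcov(\iota(\Qa))$ and is contained in the sieve generated by $\iota(S)$, which is therefore covering in the generated topology. Your argument goes through verbatim only under the weaker reading of ``covering family'' as ``family whose generated sieve is covering''; if that is the convention you intend, say so explicitly, and otherwise you need the paper's enlargement step.
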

\begin{proof}
	We verify the axioms of a basis for a topology (see \cite[Ch. III]{Maclane_Moer_sh}).
	If $\Fc \xrightarrow[\cong]{\sg} \Gc$ is an isomorphism then - by definition - $\Pio(\sg)$ and $\Piun(\sg)$ are isomorphisms, thus $\{\Pio(\sg): \F^0 \xrightarrow{\cong} \G^0\}$ and $\{\Piun(\sg): \F^1 \xrightarrow{\cong} \G^1\}$ are covering families in the respective sites, whence $\{ \Fc \xrightarrow[\cong]{\sg} \Gc\} \in \bcov(\Gc)$. The stability axioms for a basis of topology is also straightforward. Indeed, under the hypothesis of left exactness, fiber products are computed level-wise, therefore If $S =  \{\Xci \xrightarrow{\theta_i} \Gc\} \in \bcov(\G)$ and $\sg :\Fc \to \Gc$ is an arbitrary map, one has: $\sg^{\ast}(S) =  \{\Fc \times_{\Gc}  \Xci \xrightarrow{[(\sg^0)^{\ast}\theta_i^0, (\sg^1)^{\ast} \theta_i^1]} \Fc \}$. By the stability axiom of the respective topologies on $\ag$ and $\M$, the set $(\sg^1)^{\ast}(\Piun(S)) = \{(\sg^1)^{\ast}\theta_i^0 \quad | \quad \theta_i \in S\}$ is a covering family for $\F^1$ and $(\sg^0)^{\ast}(\Pio(S)) = \{(\sg^0)^{\ast}\theta_i^0 \quad | \quad \theta_i \in S\}$ is a covering for $\F^0$, thus $\sg^{\ast}(S) \in \bcov(\Fc)$. The transitivity axiom is always true without any assumption on the functor $\Ub$, since the composition of morphisms in $\mdua$ is defined level-wise. Put differently, if $\{\Xci \xrightarrow{\theta_i} \Gc \} \in \bcov(\Gc)$, and if for every $i$ we have a family $\{[\Ya_{ij}] \xrightarrow{\sg_{ij}} \Xci \} \in \bcov(\Xci)$, then $\{[\Ya_{ij}] \xrightarrow{\theta_i \circ \sg_{ij}} \Gc \} \in \bcov(\Gc)$, which proves Assertion $(1)$. 
	
	By definition of the topology on $\mdua$, the functor $\Pio$ clearly preserves covers. Moreover, since limits are created along  the functor $\Pio \times \Piun: \mdua \to \M \times \ag$, it is also clear that $\Pio$ preserves finite limits, therefore $\Pio$ is a morphism of sites. The functor $\iota$ is always a right adjoint by the adjunction $\Piun \dashv \iota$, so it preserves any kind of limits, in particular it is a left exact functor (= preserves finite limits). It remains to prove that $\iota$ preserves covers in the sense of Definition \ref{df-site-mor}. If $S$ is a covering sieve for $\Qa \in \ag$, then the sieve $\iota(S)$ generated  by the set $\{ \iota(u) = [\Ub(u), u] | \quad (\Pa \xrightarrow{u} \Qa) \in S\}$ is the family: 
	\begin{align}
	\iota(S)=\{ \iota(u) \circ \osg \quad | u \in S, \quad cod(\osg) = \iota(\Pa)= [\Ub(\Pa), \Pa, \Id_{\Ub(\Pa)}]\}.
	\end{align}
We note that given any map $f:m \to \Ub(\Pa)$ in $\M$, we have an object $[m, \Pa, f] \in \mdua$ with a canonical map $[f, \Id_{\Pa}]: [m, \Pa, f]  \to \iota(\Pa)$. By assumption, $\Ub$ preserves covers, therefore the sieve $\Ub(S) =\{ \Ub(u) \circ f  | \quad (\Pa \xrightarrow{u}\Qa) \in S, \quad cod(f)= \Ub(\Pa)\}$ is a covering sieve for $\Ub(\Qa)$. It follows that the family $\{\iota(u) \circ [f,\Id_{\Pa}] \quad | \quad (\Pa \xrightarrow{u}\Qa) \in S, \quad cod(f)= \Ub(\Pa)\} \in \bcov(\iota(\Qa))$. Clearly, we have the inclusion $\{\iota(u) \circ [f,\Id_{\Pa}] \quad | \quad (\Pa \xrightarrow{u}\Qa) \in S, \quad cod(f)= \Ub(\Pa)\} \subseteq \iota(S)$ which implies that $\iota(S)$ is a covering sieve for $\iota(\Qa)$ by definition of the topology generated by the basis $\bcov(\iota(\Qa))$. This proves Assertion $(2)$. 
\end{proof}
\begin{rmk}
For Assertion $(1)$, one simply needs to have a functor $\Ub$ that preserves pullbacks and not necessarily all finite limits.
\end{rmk}
\section{Proofs}

\subsection{Proof of Proposition \ref{prop-left-ehk}}\label{proof-prop-left-ehk}
	\begin{proof}
		We define the left adjoint $E(H,K)_\ast$ as follows. If $[X]= [X^0,X^{1}, \pi_X] \in (\M'\downarrow \Ub')$, then $E(H,K)_\ast([X])= [K_\ast X^0, H_\ast X^{1}, K_\ast X^0\to \Ub(H_\ast X^{1})]$, where $K_\ast X^0\to \Ub(H_\ast X^{1})$  is obtained through the following steps. Consider the map $\alpha \in \Hom(X^0, K\Ub(H_\ast (X^1)))$ displayed as the composite:
		\begin{align}
		\alpha = [\pi_X: X^0 \to \Ub'(X^1) \xrightarrow{\Ub(\eta_{X^1})} \Ub'H(H_\ast X^1) \xrightarrow{\Id} K(\Ub(H_\ast(X^1)))]
		\end{align}
		With the adjunction $K_\ast \dashv K$ the map $\alpha: X^0 \to K(\Ub(H_\ast(X^1)))$ has a unique adjoint-transpose $\alpha_\ast([X]) : K_\ast(X^0) \to \Ub(H_\ast X^1)$  fitting in the equality:
		\begin{align}
		\alpha = X^0 \xrightarrow{\eta_{X^0}} KK_\ast(X^0) \xrightarrow{K(\alpha_\ast)}  K(\Ub(H_\ast(X^1))).
		\end{align}
		If $\varepsilon: K_\ast K \to \Id_{\M}$ is the counit in the adjunction $(K_\ast \dashv K)$, we have an explicit formula $\alpha_\ast([X]) = \varepsilon_{\Ub(H_\ast(X^1))} \circ K_\ast (\alpha)$, i.e, $\alpha_\ast([X]) $ is the composite of the path:
		\begin{align}
	    K_\ast(X_0) \xrightarrow{K_\ast(\alpha)} K_\ast K(\Ub(H_\ast(X^1))) \xrightarrow{\varepsilon_{\Ub(H_\ast(X^1))}} \Ub(H_\ast(X^1))
		\end{align}
		Then we have $E(H,K)_\ast([X])= [K_\ast X^0, H_\ast X^{1},\alpha_\ast([X])]$. By construction, we have a commutative diagram:
		\begin{align}\label{unit-adj-comma}
		\xy
		(0,18)*+{X^0}="W";
		(0,0)*+{\Ub'(X^1)}="X";
		(30,0)*+{\Ub'(HH_\ast(X^1))}="Y";
		(30,18)*+{KK_\ast (X^0)}="E";
		(60,0)*+{K(\Ub H_\ast(X^1))}="Z";
		{\ar@{->}^-{\Ub'(\eta_{X^1})}"X";"Y"};
		{\ar@{->}^-{\pi_{X}}"W";"X"};
		{\ar@{->}^-{\eta_{X^0}}"W";"E"};
		{\ar@{->}_-{K(\alpha_\ast([X]))}"E";"Y"};
		{\ar@{->}^-{\Id}"Y";"Z"};
		{\ar@{->}^-{K(\alpha_\ast([X]))}"E";"Z"};
		\endxy
		\end{align}
		Moreover, if $\theta:[X] \xrightarrow{[\theta^0, \theta^1] } [Y]$ is a morphism in $(\M'\downarrow \Ub')$ the map $E_\ast(H,K)(\theta): E_\ast(H,K)([X]) \to E_\ast(H,K)([Y])$ is given by the couple $[K_\ast(\theta^0), H_\ast(\theta^1)]$ as displayed by the commutative diagram on the right: 
		\[
		\xy
		(0,18)*+{X^0}="W";
		(0,0)*+{K\Ub(H_\ast(X^1))}="X";
		(40,0)*+{K\Ub(H_\ast(Y^1))}="Y";
		(40,18)*+{Y_0}="E";
		(50,8)*+{}="A";
		(60,8)*+{}="B";
		{\ar@{->}^-{K\Ub H_\ast(\theta^1)}"X";"Y"};
		{\ar@{->}^-{\alpha}"W";"X"};
		{\ar@{->}^-{\theta^0}"W";"E"};
		{\ar@{->}^-{\alpha}"E";"Y"};
		{\ar@{=>}^-{K_\ast(-)~~\tx{and} ~~\varepsilon}"A";"B"};
		\endxy
		\xy
		(0,18)*+{K_\ast X^0}="W";
		(0,0)*+{K_\ast K\Ub(H_\ast(X^1))}="X";
		(50,0)*+{K_\ast K \Ub(H_\ast(Y^1))}="Y";
		(50,18)*+{Y_0}="E";
		{\ar@{->}^-{K_\ast K\Ub(H_\ast(\theta^1))}"X";"Y"};
		{\ar@{->}^-{K_\ast(\alpha)}"W";"X"};
		{\ar@{->}^-{K_\ast(\theta^0)}"W";"E"};
		{\ar@{->}^-{K_\ast(\alpha)}"E";"Y"};
		(0,-18)*+{\Ub(H_\ast(X^1))}="S";
		(50,-18)*+{\Ub(H_\ast(Y^1))}="T";
		{\ar@{->}^-{\varepsilon_{\Ub(H_\ast(X^1))}}"X";"S"};
		{\ar@{->}^-{\varepsilon_{\Ub(H_\ast(Y^1))}}"Y";"T"};
		{\ar@{->}^-{\Ub(H_\ast(\theta^1))}"S";"T"};
		\endxy
	\]
We leave the reader to check that these data define a functor $E(H,K)_\ast : (\M'\downarrow \Ub') \to \mdua$. It remains to prove that we have some functorial isomorphisms of hom-sets:
		$$\Hom[E(H,K)_\ast([X]), \Fc]  \cong \Hom[[X], E(H,K)(\Fc) ].$$
		To prove this, assume that we have two maps $\sgo: K_\ast(X^0) \to \F^0$ and $\sgi : H_\ast(X^1) \to \F_1$ defining a morphism $\sg: E(H,K)_\ast([X]) \xrightarrow{[\sgo, \sgi]} \Fc$. By definition, we have a commutative diagram:
		\begin{align}
		\xy
		(0,18)*+{K_\ast(X^0)}="W";
		(0,0)*+{\Ub(H_\ast(X^1))}="X";
		(30,0)*+{\Ub(\F^1)}="Y";
		(30,18)*+{\F^0}="E";
		{\ar@{->}^-{\Ub(\sigma^{1})}"X";"Y"};
		{\ar@{->}^-{\alpha_\ast([X])}"W";"X"};
		{\ar@{->}^-{\sigma^0}"W";"E"};
		{\ar@{->}^-{\pi_{\G}}"E";"Y"};
		\endxy
		\end{align}
		If we apply the functor $K$ to the last diagram, we get a commutative diagram to which we've concatenated the diagram (\ref{unit-adj-comma}):
		\begin{align}\label{adj-diag}
		\xy
		(-30,18)*+{X^0}="O";
		(-30,-18)*+{\Ub'(X^1)}="P";
		(0,18)*+{K(K_\ast(X^0))}="W";
		(0,0)*+{K(\Ub(H_\ast(X^1)))}="X";
		(40,0)*+{K(\Ub(\F^1))}="Y";
		(40,18)*+{K(\F^0)}="E";
		{\ar@{->}^-{K(\Ub(\sigma^{1}))}"X";"Y"};
		{\ar@{->}^-{K(\alpha_\ast([X]))}"W";"X"};
		{\ar@{->}^-{K(\sigma^0)}"W";"E"};
		{\ar@{->}^-{K(\pi_{\F})}"E";"Y"};
		(0,-18)*+{\Ub'H(H_\ast(X^1))}="S";
		(40,-18)*+{\Ub'H((\F^1))}="T";
		{\ar@{=}^-{\Id}"X";"S"};
		{\ar@{=}^-{\Id}"Y";"T"};
		{\ar@{->}^-{\Ub'H(\sgi)}"S";"T"};
		{\ar@{->}^-{\pi_X}"O";"P"};
		{\ar@{->}^-{\Ub'(\eta_{X^1})}"P";"S"};
		{\ar@{->}^-{\eta_{X^0}}"O";"W"};
		\endxy
		\end{align}
		Clearly, the last diagram defines a map $[X] \to E(H,K)(\Fc)$ whose components are $[K(\sgo) \circ \eta_{X^0}, H(\sgi) \circ \eta_{X^1}] = [\varphi_0(\sgo), \varphi_1(\sgi)]$, where $\varphi_0$ and $\varphi_1$ are the isomorphism of hom-sets:
		$$\varphi_0: \Hom(K_\ast(X^0), \F^0) \xrightarrow{\cong} \Hom(X^0, K(\F^0)) $$
		$$\varphi_1: \Hom(H_\ast(X^1), \F^1) \xrightarrow{\cong} \Hom(X^1, H(\F^1)).$$
		Conversely, assume that we are given a morphism $\theta: [X] \xrightarrow{[\theta^0, \theta^1]} E(H,K)(\Fc)$ where $\theta^0 \in \Hom(X^0, K(\F^0))$ and $\theta^1 \in \Hom(X^1, H(\F^1))$. If we set $\sgo =\varphi_0^{-1}(\theta^0)$ and $\sgi =\varphi_1^{-1}(\theta^1)$, then the map $\theta$ is displayed as a commutative diagram identical to the perimeter of(\ref{adj-diag}). However, the upper inner square involving $K(\alpha_\ast[X])$ and $K(\pif)$ does not commutes yet. Using the uniqueness of the adjoint-transpose map in the adjunction $K_\ast \dashv K$ with respect to the diagonal map $X^0 \to K(\Ub(\F^1))$, we see that this square does commute and is equal to the image under $K$ of the commutative square hereafter:
		\begin{align}
		\xy
		(0,18)*+{K_\ast(X^0)}="W";
		(0,0)*+{\Ub(H_\ast(X^1))}="X";
		(30,0)*+{\Ub(\F^1)}="Y";
		(30,18)*+{\F^0}="E";
		{\ar@{->}^-{\Ub(\sigma^{1})}"X";"Y"};
		{\ar@{->}^-{\alpha_\ast([X])}"W";"X"};
		{\ar@{->}^-{\sigma^0}"W";"E"};
		{\ar@{->}^-{\pi_{\G}}"E";"Y"};
		\endxy
		\end{align}
		The last diagram defines a map $E(H,K)_\ast([X]) \to \Fc$ given by $[\varphi_0^{-1}(\theta^0), \varphi_1^{-1}(\theta^1)]$.
		By the above, the function $[\sgo, \sgi] \mapsto [\varphi_1(\sgo), \varphi_2(\sgi)]$ provides the required isomorphisms of hom-sets.
	\end{proof}

\subsection{Proof of Proposition \ref{prop-retract-Quillen} \label{proof-prop-retract}} 
\begin{proof}
Let $RG$ and $RH$ be the respective right derived functors, where $H$ is a weakly invertible retraction of $G$ . The right homotopy $\Id_{\D} \xrightarrow{\sim} G\circ H$ gives a natural isomorphism after deriving it $\Id_{\Ho(\D)} \xrightarrow{\cong} RG\circ RH$ (see \cite{Hov-model}). Moreover, from the equality $H \circ G = \Id_{\C}$, one has $R G \circ R H \cong \Id_{\Ho(\C)}$, which proves that $RG$ and $RH$ are inverses to each other. It follows that $RG$ and $RH$ are equivalences of categories. This gives Assertion $(1)$. 
To prove the second assertion, consider a commutative diagram in $\modcat_r$ where the horizontal composites are identities:
\[
\xy
(0,15)*+{\A}="W";
(0,0)*+{\Ba}="X";
(50,0)*+{\Ba}="Y";
(50,15)*+{\A}="E";
{\ar@{->}^-{K}"W";"X"};
{\ar@{->}^-{K}"E";"Y"};
(25,15)*+{\C}="U";
(25,0)*+{\D}="V";
{\ar@{->}^-{\alpha^{1}}"X";"V"};
{\ar@{->}^-{\beta^{1}}"V";"Y"};
{\ar@{->}^-{G}"U";"V"};
{\ar@{->}^-{\beta^0}"U";"E"};
{\ar@{->}^-{\alpha^0}"W";"U"};
\endxy
\]
If $H: \D \to \C$ is a retraction of $G$, i.e., $H\circ G = \Id_{\C}$ then $T=\beta^0 \circ H \circ \alpha^{1}$ is a retraction of $K$. Moreover, if there is a homotopy $\tau : \Id_{\D} \to G\circ H$,
then the horizontal composite of $2$-morphisms $\Id_{\beta^1} \otimes \tau \otimes \Id_{\alpha^1} $ is a homotopy  whose domain and codomain are respectively $\beta^1 \circ \Id_{\D} \circ \alpha^1 = \Id_{\Ba}$ and $\beta^1 \circ G \circ H \circ \alpha^{1}$. A direct checking shows that $ \beta^1 \circ G \circ H \circ \alpha^{1}= K \circ T$,  which proves that $\Id_{\beta^1} \otimes \tau \otimes \Id_{\alpha^1} $ is a right homotopy $\Id_{\Ba} \to K \circ T$. It follows that the class $\Lci_w(\modcat_r)$ is closed under retracts.  It can be easily seen that $\Lci_w(\modcat_r)$ is also closed under composition. Let $G_i$ be composable maps in $\Lci_w(\modcat_r)$, with weakly invertible retraction $H_i$ and right homotopies $\tau_1: \Id \to G_i \circ H_i$, $i \in \{1,2\}$.   Then $H_1 \circ H_2$ is a weakly invertible retractions  of $G_2 \circ G_1$ with a right homotopy $ \Id \to (G_2 \circ G_1) \circ (H_1 \circ H_2)$ obtained as the vertical composite:
$ \Id \xrightarrow{\tau_2} \underbrace{G_2 \circ H_2}_{G_2 \circ \Id \circ H_2} \xrightarrow{\Id_{G_2} \otimes \tau_1 \otimes \Id_{H_2}} G_2 \circ G_1 \circ H_1 \circ H_2.$ If moreover $\tau : \Id_{\D} \to G\circ H$ is a homotopy relative to $G$, then it can be easily shown by inspection that $\Id_{\beta^1} \otimes \tau \otimes \Id_{\alpha^1} : \Id_{\Ba} \to K \circ T$ is a homotopy relative to $K$. The same conclusion holds for the homotopy $\Id \xrightarrow{(\Id_{G_2} \otimes \tau_1 \otimes \Id_{H_2}) \circ \tau_2} G_2 \circ G_1 \circ H_1 \circ H_2$ if we assume that $\tau_i$ is a homotopy relative to $G_i$, $i \in \{1,2\}$. The proof of the assertions involving the classes $\Rci_w(\modcat_r)$ and $\Rci_{wcor}(\modcat_r)$ goes the same way.
\end{proof}
\subsection{Proof of Lemma \ref{inj-lem}}\label{proof-inj-lem}
\begin{proof}[Proof of Lemma \ref{inj-lem}]
We will follow the same argument as in \cite{Bacard_QS} to prove the lemma. For Assertion $(1)$ we start by projecting the lifting problem in $\ag$ using the functor $\Pi^{1}$. This gives a lifting problem defined by $\sg^{1}$ and $\beta^{1}$. The assumption that $\sg \in  \lcof(\muaij) \cap \lwe(\muaij)$ implies that $\sg^{1}$ is a trivial cofibration and similarly since $\beta \in \lfib(\muaij)$ then $\beta^{1}$ is a fibration. The axiom of the model category $\ag$ gives a solution $s^{1}: \G^1 \to \Pa^{1}$ to this lifting problem. Part of $s^{1}$ being a solution gives an equality $\Ub(\gamma^{1})= \Ub(\beta^{1}) \circ \Ub(s^{1})$. Moreover,  $[\gamma]=[\gamma^0,\gamma^{1}]$ being a morphism in $\mua$ implies that $\pi_{\Qa} \circ \gamma^0 = \Ub(\gamma^{1}) \circ \pig$.

Now consider the map $\Ub(s^{1}) \circ \pig \in \Hom_{\M}(\G^0,\Ub(\Pa^{1}))$ and the map $\gamma^0 \in \Hom_{\M}(\G^0,\Qa^0)$. Then by the above, it is not hard to see that these maps complete the pullback data $$\Ub(\Pa^{1}) \xrightarrow{\Ub(\beta^{1})} \Ub(\Qa^{1}) \xleftarrow{\pi_{\Qa}} \Qa^0,$$ into a commutative square ($\pi_{\Qa} \circ \gamma^0= \Ub(\beta^{1}) \circ \Ub(s^{1}) \circ \pig$). Therefore, by the universal property of the pullback square there is a unique map: $\zeta:\G^0 \to \Ub(\Pa^{1}) \times_{\Ub(\Qa^{1})} \Qa^0$,
making everything compatible. In particular $\gamma^0$ and $\Ub(s^{1}) \circ \pig$  factor through $\zeta$.

Our original lifting problem in $\mua$ defined by $[\sigma]$ and $[\beta]$ is represented by a commutative cube in $\M$. If we unfold it, we find that everything commutes in the diagram hereafter:
\[
\xy
(-60,20)*+{\F^0}="A";
(20,30)*+{\Pa^0}="B";
(-20,-10)*+{\G^0}="C";
(60,0)*+{\Qa^0}="D";
{\ar@{->}^{\theta^0}"A";"B"};
{\ar@{->}_{\sg^0}"A";"C"};
{\ar@{->}^{\beta^0}"B";"D"};
{\ar@{->}_{\gamma^0~~~~~~~}"C";"D"};
(-60,-30)*+{\Ub(\F^1)}="X";
(20,-20)*+{\Ub(\Pa^{1})}="Y";
(-20,-60)*+{\Ub(\G^1)}="Z";
(60,-50)*+{\Ub(\Qa^{1})}="W";
{\ar@{-->}^{\quad \quad \theta^{1}}"X";"Y"};
{\ar@{->}_{\pif}"A";"X"};
{\ar@{->}^{\Ub(\beta^{1})}"Y";"W"};
{\ar@{->}^{\pig}"C";"Z"};
{\ar@{->}_-{}"B";"Y"};
{\ar@{->}^{\pi_{\Qa}}"D";"W"};
{\ar@{->}_{\Ub(\sg^{1})}"X";"Z"};
{\ar@{->}_{\gamma^{1}}"Z";"W"};
(20,30)+(11,-25)*+{\Ub(\Pa^{1}) \times_{\Ub(\Qa^{1})} \Qa^0 }="E";
{\ar@{->>}^{\delta}"B";"E"};
{\ar@{.>}^{}"E";"D"};
{\ar@{.>}^{}"E";"Y"};
{\ar@{.>}^{\zeta}"C";"E"};
{\ar@{->}_-{\Ub(s^{1})}"Z";"Y"};
\endxy
\]
Thus we get a commutative square that corresponds to a lifting problem defined by the map $\sg^0:\F^0 \to \G^0$ and the map $\delta: \Pa^0 \to  \Ub(\Pa^{1}) \times_{\Ub(\Qa^{1})} \Qa^0 $:
\[
\xy
(0,18)*+{\F^0}="W";
(0,0)*+{\G^0}="X";
(30,0)*+{\Ub(\Pa^{1}) \times_{\Ub(\Qa^{1})} \Qa^0}="Y";
(30,18)*+{\Pa^0}="E";
{\ar@{->}^-{\zeta}"X";"Y"};
{\ar@{->}_-{\sg^0}"W";"X"};
{\ar@{->}^-{\theta^0}"W";"E"};
{\ar@{->}^-{\delta}"E";"Y"};
\endxy
\]
Now it suffices to observe that a solution to this lifting problem gives a solution to the original lifting problem. Indeed, if $s^0 : \G^0 \to \Pa^0$ is a solution to the last lifting problem, then $[s]=[s^0,s^{1}]: \Gc \to \Pc$ is a solution to the original lifting problem. 
Finally, it is clear that the last lifting problem defined by $\sg^0$ and $\delta$ has a  solution $s^0 \in \Hom_{\M}(\G^0,\Pa^0)$ since $\sg^0$ is a cofibration   and $\delta$ is a trivial fibration. This gives Assertion $(1)$.\\
For Assertion $(2)$ we proceed as follows.
Let  $\sigma=[\sg^0,\sg^{1}]: \Fc \to \Gc$ be a map in $\mua$.
Use the axiom of the model category $\ag$ to factor $\sigma^{1}$ as  trivial cofibration followed by a fibration, i.e.,  $\sg^{1}= r(\sg^{1}) \circ l(\sg^{1})$:
$$\F^1 \xrightarrow{\sg^{1}} \G^1 = \F^1\xhookrightarrow[\sim]{l(\sg^{1})}  E^{1} \xtwoheadrightarrow{r(\sg^{1})} \G^1.$$ 
The image under $\Ub$ of this factorization, gives a factorization  $\Ub(\sg^{1})= \Ub(r(\sg^{1})) \circ \Ub(l(\sg^{1}))$. Moreover the map  $\Ub(r(\sg^{1}))$ is a fibration in $\M$ since $\Ub$ preserves the fibrations.
Form the pullback square in $\M$ defined by the pullback data: $$\Ub(E^{1}) \xrightarrow{\Ub(r(\sg^{1}))} \Ub(\G^1) \xleftarrow{\pig} \G^0,$$
and let $p^{1} : \Ub(E^{1}) \times_{\Ub(\G^1)} \G^0 \to \G^0$ and $p_2:   \Ub(E^{1}) \times_{\Ub(\G^1)} \G^0  \to \Ub(\Ea^{1})$ be the canonical maps. Then $p^{1}$ is a fibration in $\M$ since the class of fibrations is closed under pullbacks.  
The universal property of the pullback square gives a unique map $\delta : \F^0 \to \Ub(E^{1}) \times_{\Ub(\G^1)} \G^0,$
such that everything below commutes. 
\[
\xy
(0,30)*+{\F^0}="W";
(0,0)*+{\Ub(\F^1)}="X";
(60,0)*+{\Ub(\G^1)}="Y";
(60,30)*+{\G^0}="E";
{\ar@{->}^-{\pi_{\F}}"W";"X"};
{\ar@{->}^-{\sigma^0}"W";"E"};
{\ar@{->}^-{\pi_{\G}}"E";"Y"};
(30,20)*+{\Ub(E^{1}) \times_{\Ub(\G^1)} \G^0}="U";
(30,0)*+{\Ub(E^{1})}="V";
{\ar@{->}^-{\Ub(l(\sg^{1}))}"X";"V"};
{\ar@{->>}^-{\Ub(r(\sg^{1}))}"V";"Y"};
{\ar@{->}^-{p_2}"U";"V"};
{\ar@{->>}^-{p^{1}}"U";"E"};
{\ar@{->}^-{\delta}"W";"U"};
\endxy
\]
We can factor the map $\delta$ as cofibration followed by a trivial fibration:
$$ \delta : \F^0 \to \Ub(E^{1}) \times_{\Ub(\G^1)} \G^0 = \F^0  \xhookrightarrow{a(\delta)}  m^0 \xtwoheadrightarrow[\sim]{b(\delta)}\Ub(E^{1}) \times_{\Ub(\G^1)} \G^0.$$
Let $\Ec=[\Ea^0,\Ea^{1},\pi_{\Ea}]$ be the object of $\mua$ defined by
$$ \Ea^0= m^0, \quad \Ea^{1}=E^{1}, \quad \pi_{\Ea}=p_2 \circ b(\delta).$$
We  have a map $(i : \Fc \to \Ec) \in  \lcof(\muaij) \cap \lwe(\muaij)$ given by the couple $[a(\delta),  l(\sigma^{1})]$ and a map $(p : \Ec \to \Gc) \in \lfib(\muaij)$ given by the couple $[p^{1} \circ b(\delta),  r(\sigma^{1})]$. Clearly, we have $\sg = p \circ i$, which gives Assertion $(2)$. 

\end{proof}

\subsection{Proof of Lemma \ref{proj-lem-lift}}\label{proof-proj-lem-lift}
To prove the lemma we will need the following result which is well known in the theory of model categories. We shall refer the reader to Hirschhorn \cite[Ch. 7]{Hirsch-model-loc}.
\begin{lem}\label{lem-exist-map}
Let $\D$ be a model category.\\
If $g: X \to Y$ is a weak equivalence between fibrant objects in $\D$ and $C$ is a cofibrant object of $\D$, then $g$ induces an isomorphisms of the sets of homotopy classes of maps: 
$g_\ast: \pi(C,X) \to \pi(C,Y)$. \\
In particular there is a map $C \to X$ in $\D$ if and only if there is a map $C \to Y$ in $\D$.
\end{lem}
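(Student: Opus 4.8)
The plan is to prove Lemma \ref{lem-exist-map} by combining two standard facts from the theory of model categories: first, that a weak equivalence between fibrant objects is an isomorphism in the homotopy category, and second, the characterization of morphisms in the homotopy category as homotopy classes of maps between cofibrant-fibrant replacements. I would first reduce to the stated homotopy-categorical formulation, then extract the existence statement as a corollary.

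\begin{proof}[Proof of Lemma \ref{lem-exist-map}]
Since $X$ and $Y$ are fibrant and $C$ is cofibrant, a standard result in the theory of model categories (see \cite[Ch. 7]{Hirsch-model-loc}) identifies the set $\pi(C,X)$ of homotopy classes of maps $C \to X$ with the hom-set $\Ho(\D)(C,X)$ in the homotopy category, naturally in $X$; likewise $\pi(C,Y) \cong \Ho(\D)(C,Y)$. Under these identifications, the map $g_\ast : \pi(C,X) \to \pi(C,Y)$ induced by post-composition with $g$ corresponds to the map $\Ho(\D)(C,X) \to \Ho(\D)(C,Y)$ given by post-composition with the image of $g$ in $\Ho(\D)$. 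Now $g : X \to Y$ is a weak equivalence, hence becomes an isomorphism in $\Ho(\D)$ by the localization property of the homotopy category. Post-composition with an isomorphism is a bijection of hom-sets, so $g_\ast$ is an isomorphism of sets. This proves the first claim.

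For the final statement, observe that there is a map $C \to X$ in $\D$ if and only if $\pi(C,X) \neq \emptyset$, since any map has an underlying homotopy class and conversely every homotopy class is represented by an actual map; the same holds for $Y$. Since $g_\ast : \pi(C,X) \to \pi(C,Y)$ is a bijection, one set is nonempty if and only if the other is, which gives the desired equivalence.
\end{proof}

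The only point requiring care is the naturality of the identification $\pi(C,-) \cong \Ho(\D)(C,-)$ on the subcategory of fibrant objects, so that $g_\ast$ is genuinely identified with post-composition by $[g]$ in $\Ho(\D)$; this is exactly what is recorded in Hirschhorn's treatment and requires no new argument here. The reduction of the existence statement to nonemptiness of $\pi(C,X)$ is immediate. I do not expect any serious obstacle, as this lemma is a packaging of textbook facts; the substance of the surrounding argument lies in how it will be applied (to produce lifts of maps into fibrant replacements) rather than in its proof.
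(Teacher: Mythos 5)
Your argument is correct, but note that the paper does not actually prove this lemma: it states it as a known result and cites Hirschhorn \cite[Ch.~7]{Hirsch-model-loc}, so there is no in-paper proof to compare against. Your route --- identify $\pi(C,X)\cong\Ho(\D)(C,X)$ and $\pi(C,Y)\cong\Ho(\D)(C,Y)$ naturally in the fibrant variable, observe that $[g]$ is invertible in $\Ho(\D)$, and conclude that post-composition is a bijection --- is valid, and the reduction of the existence statement to nonemptiness of $\pi(C,-)$ is immediate since $\pi(C,X)$ is a quotient of $\Hom_{\D}(C,X)$. The one caveat worth recording is a question of logical order rather than correctness: in the standard development (Hirschhorn, Hovey, Quillen), the identification $\Ho(\D)(C,X)\cong\pi(C,X)$ is itself \emph{proved using} this lemma or its immediate relatives, so the primary, non-circular argument is the direct one --- use Ken Brown's lemma (or a factorization into a trivial cofibration followed by a trivial fibration, noting that a trivial cofibration between fibrant objects admits a retraction) to reduce to the case where $g$ is a trivial fibration, then get surjectivity of $g_\ast$ by lifting $\emptyset\to C$ against $g$ and injectivity by lifting a cylinder inclusion $C\sqcup C\hookrightarrow C\times I$ against $g$. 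Your proof buys brevity at the cost of presupposing the full description of $\Ho(\D)$; the direct proof is what the cited reference actually does and is the one to fall back on if self-containedness matters. Since the paper treats the lemma as a citation in any case, either packaging is acceptable here.
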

\begin{proof}[Proof of Lemma \ref{proj-lem-lift}]
	The proof of this lemma is inspired from the work of Renaudin \cite{Renaudin_mod}. 
	Given a lifting problem as in the lemma, the commutative square defines a morphism $\tau \in \Hom(\Fc, \Qc)$ with $\tau= \beta \circ \theta = \gamma \circ \sigma$. Let $\mua_{/\Qc}$ be the over category whose objects consists of pairs $(\Ec, \alpha)$, where $\alpha: \Ec \to \Qc$ is a morphism in $\mua$. The morphisms are the obvious ones. This category inherits a model structure called the ``over model structure'' from the projective model structure on $\mua$ (see \cite{Hov-model}). The object $(\Qc, \Id_{\Qc})$ is the terminal object. From the commutative square defining the lifting problem we have:
	\begin{itemize}
		\item an object $(\Gc, \gamma) \in\mua_{/\Qc}$,
		\item an object $(\Pc, \beta) \in\mua_{/\Qc}$,
		\item an object $(\Fc, \tau) \in\mua_{/\Qc}$,
		\item a map $\theta: (\Fc, \tau) \to (\Pc, \beta) $ in $\mua_{/\Qc}$,
		\item a map $\sg: (\Fc, \tau) \to (\Gc, \gamma)$ in $\mua_{/\Qc}$.
	\end{itemize}
	With the category $\mua_{/\Qc}$, introduce the under category:
	$$(\Fc, \tau)/\mua_{/\Qc} = (\Fc, \tau) \downarrow \mua_{/\Qc}.$$
	 In simple terms, this is the category of factorizations of the morphism $\tau: \Fc \to \Qc$. By the above observations, we have two objects of this under category $(\Fc, \tau)/\mua_{/\Qc}$:
	\begin{itemize}
		\item $[(\Pc, \beta), \theta]$, where $\theta :(\Fc, \tau) \to (\Pc, \beta)$,
		\item $[(\Gc, \gamma), \sg]$, where $\sg : (\Fc, \tau) \to (\Gc, \gamma)$.
	\end{itemize}
It's important to observe that a solution to the original lifting problem is equivalent to a morphism $[(\Gc, \gamma), \sg] \to [(\Pc, \beta), \theta]$ in $(\Fc, \tau)/\mua_{/\Qc}$. We are therefore reduced to show that some hom-set is nonempty, namely: $\Hom_{}([(\Gc, \gamma), \sg], [(\Pc, \beta), \theta] ) \neq \emptyset$. To prove this, we will work in the model category $(\Fc, \tau)/\mua_{/\Qc}$ and use an abstract argument, which is well known in the theory of model categories as explained below. Recall that
	the under category $(\Fc, \tau)/\mua_{/\Qc}$ inherits also a model structure - the \emph{under model structure} - from the previous model structure on the over category $\mua_{/\Qc}$. For the rest of the argument we need to have a precise description of the cofibrant and the fibrant objects in the model category $(\Fc, \tau)/\mua_{/\Qc}$.
	\begin{enumerate}
		\item A fibrant object in $(\Fc, \tau)/\mua_{/\Qc}$ is an object $[(\Ec,p),r]$ where  $p:\Ec \to \Qc$  is a level-wise fibration (=projective fibration), $r: \Fc \to \Ec$ is any map, such that we have a factorization of $\tau :\Fc \to \Qc$:
		$\tau = \Fc \xrightarrow{r}\Ec \xtwoheadrightarrow{p} \Qc.$
		\item A cofibrant object in $(\Fc, \tau)/\mua_{/\Qc}$ is an object $[(\Bc,q), i]$ where $q:\Bc \to \Qc$ is any map and $i : \Fc \hookrightarrow \Bc$ is a projective cofibration  such that $\tau= q \circ i$.
	\end{enumerate}
	Then by assumption on the lifting problem, the object $[(\Pc, \beta), \theta]$ is fibrant because $\theta$ is in particular a level-wise fibration, while $[(\Gc, \gamma), \sg]$ is cofibrant since $\sg$ is a projective fibration. With these observations, we can build our lift as follows. Like in the injective case, if we project the lifting problem using the functor $\Pi^{1}$ we get a lifting problem defined by $\sg^{1}$ and $\beta^{1}$. The later problem admits a solution $s^{1}: \G^1\to \Pa^{1}$ since $\sg^{1}$ is a trivial cofibration and $\beta^{1}$ is a fibration. With the same reasoning as in proof of Lemma \ref{inj-lem}, keeping the same notation, we find that everything below commutes. 
	\[
	\xy
	(-60,20)*+{\F^0}="A";
	(20,30)*+{\Pa^0}="B";
	(-20,-10)*+{\G^0}="C";
	(60,0)*+{\Qa^0}="D";
	{\ar@{->}^{\theta^0}"A";"B"};
	{\ar@{->}_{\sg^0}"A";"C"};
	{\ar@{->}^{\beta^0}"B";"D"};
	{\ar@{->}_{\gamma^0~~~~~~~}"C";"D"};
	(-60,-30)*+{\Ub(\F^1)}="X";
	(20,-20)*+{\Ub(\Pa^{1})}="Y";
	(-20,-60)*+{\Ub(\G^1)}="Z";
	(60,-50)*+{\Ub(\Qa^{1})}="W";
	{\ar@{-->}^{\quad \quad \theta^{1}}"X";"Y"};
	{\ar@{->}_{\pif}"A";"X"};
	{\ar@{->}^{\Ub(\beta^{1})}"Y";"W"};
	{\ar@{->}^{\pig}"C";"Z"};
	{\ar@{->}_-{}"B";"Y"};
	{\ar@{->}^{\pi_{\Qa}}"D";"W"};
	{\ar@{->}_{\Ub(\sg^{1})}"X";"Z"};
	{\ar@{->}_{\gamma^{1}}"Z";"W"};
	(20,30)+(11,-25)*+{\Ub(\Pa^{1}) \times_{\Ub(\Qa^{1})} \Qa^0 }="E";
	{\ar@{->>}^{\delta}"B";"E"};
	{\ar@{.>}^{}"E";"D"};
	{\ar@{.>}^{}"E";"Y"};
	{\ar@{.>}^{\zeta}"C";"E"};
	{\ar@{->}_-{\Ub(s^{1})}"Z";"Y"};
	\endxy
	\]
	Let  $\Ec= [\Ea^0,\Ea^{1}, \pi_{\Ea}]$ be  the object defined within the pullback square, where $\Ea^0= \Ub(\Pa^{1}) \times_{\Ub(\Qa^{1})} \Qa^0$ is the pullback object, $\Ea^{1}= \Pa^{1}$ and $\pi_{\Ea}=\Ub(\beta^1)^*(\piq)$ is the base change of $\piq$: $$\Ub(\Pa^{1}) \times_{\Ub(\Qa^{1})} \Qa^0 \to \Ub(\Pa^{1}).$$
	As everything commutes in the above cube  we get the following. 
	\begin{itemize}
		\item The pullback square defines a map $\chi:\Ec \to \Qc$ given by the pullback of $\Ub(\beta^{1})$ and $\beta^{1}$. Since $\beta^{1}$ is a fibration and $\Ub$ is right Quillen, then $\Ub(\beta^{1})$  and its pullback are fibrations in $\M$. Thus $\chi:\Ec \to \Qc$  is a level-wise fibration. Consequently the object $(\Ec,\chi)$ is fibrant in over category $\mua_{/\Qc}$.
		\item There is a map $\xi : \Pc \to \Ec$ given by $\delta$  and $\Id_{\Pa^{1}}$. By assumption $\delta$ is a weak equivalence, therefore $\xi : \Pc \xrightarrow{\sim} \Ec$ is a level-wise weak equivalence in $\muapj$.
		\item There is also a map $\tld{s}: \Gc \to \Ec$ defined by $\zeta: \G^0 \to \Ub(\Pa^{1}) \times_{\Ub(\Qa^{1})} \Qa^0$ and the previous solution $s^{1}: \G^1 \to \Pa^{1}$, i.e, $\tld{s}= [\zeta, s^1]$.
	\end{itemize}
	A key observation is that these various maps fit in two factorizations of $\tau : \Fc \to \Qc$: 
	$$(\tau: \Fc \to \Qc)= \Fc \xrightarrow{\theta} \Pc \xrightarrow[\sim]{\xi} \Ec \xtwoheadrightarrow{\chi} \Qc,$$
	$$(\tau: \Fc \to \Qc)=\Fc \xhookrightarrow{\sigma} \Gc \xrightarrow{\tld{s}} \Ec \xtwoheadrightarrow{\chi} \Qc.$$
	These factorizations determine two maps in the under category $(\Fc, \tau)/\mua_{/\Qc}$:
	\begin{itemize}
		\item $\xi: [(\Pc, \beta), \theta] \xrightarrow{\sim}  [(\Ec, \chi),\xi \circ  \theta]$. This map is a weak equivalence between fibrant objects in the under model category $(\Fc, \tau)/\mua_{/\Qc}$.
		\item $\tld{s} : [(\Gc, \gamma), \sg] \to [(\Ec, \chi),\xi \circ  \theta]$. The source of this map is a cofibrant object in the under model category $(\Fc, \tau)/\mua_{/\Qc}$.
	\end{itemize}
	Given the cofibrant object $[(\Gc, \gamma), \sg]$ and the weak equivalence $\xi$ between  fibrant objects, we know from Lemma \ref{lem-exist-map} that there is a map $[(\Gc, \gamma), \sg] \to [(\Ec, \chi),\xi \circ  \theta]$ if and only if there is a map $[(\Gc, \gamma), \sg] \to [(\Pc, \beta), \theta]$, that is:
	$$\Hom([(\Gc, \gamma), \sg],[(\Ec, \chi),\xi \circ  \theta]) \neq \emptyset \Longleftrightarrow  \Hom([(\Gc, \gamma), \sg],[(\Pc, \beta), \theta]) \neq \emptyset.$$
	The hom-set $\Hom([(\Gc, \gamma), \sg],[(\Ec, \chi),\xi \circ  \theta])$ is nonempty since it contains $\tld{s}$,  therefore the other hom-set is nonempty and there exists  an element $s \in \Hom([(\Gc, \gamma), \sg],[(\Pc, \beta), \theta])$ which is automatically a solution to our lifting problem. 
	
\end{proof}

\subsection{Proof of Lemma \ref{proj-lem-fact}}\label{proof-proj-lem-fact}
\noindent For a matter of clarity we will put ``p.b'' inside a commutative square for a pull-back square and ``p.o'' for a pushout square.
\begin{proof}[Proof of Lemma \ref{proj-lem-fact}]
	Let  $\sigma=[\sg^0,\sg^{1}]: \Fc \to \Gc$ be a map in $\mua$.
	Use the axiom of the model category $\ag$ to factor $\sigma^{1}$ as  trivial cofibration followed by a fibration, i.e.,  $\sg^{1}= r(\sg^{1}) \circ l(\sg^{1})$:
	$$\F^1 \xrightarrow{\sg^{1}} \G^1 = \F^1\xhookrightarrow[\sim]{l(\sg^{1})}  E^{1} \xtwoheadrightarrow{r(\sg^{1})} \G^1.$$ 
	The image under $\Ub$ of this factorization gives a factorization: $$\Ub(\sg^{1})= \Ub(r(\sg^{1})) \circ \Ub(l(\sg^{1})).$$ Moreover, the map  $\Ub(r(\sg^{1}))$ is a fibration in $\M$ since $\Ub$ preserves the fibrations.
	Let $ {Q^0}= \Ub(E^{1}) \times_{\Ub(\G^1)} \G^0 $ be the pullback-object  obtained from  the pullback square defined by the data: $\Ub(E^{1}) \xrightarrow{\Ub(r(\sg^{1}))} \Ub(\G^1) \xleftarrow{\pig} \G^0$.
	Denote by  $p^{1} : {Q^0} \to \G^0$ and $p_2:   {Q^0}  \to \Ub(\Ea^{1})$ the canonical maps. Then $p^{1}$ is a fibration in $\M$ since the class of fibrations is closed under pullbacks.  
	The universal property of the pullback square gives a unique map $\delta : \F^0 \to {Q^0}$
	such that everything below commutes. 
	\[
	\xy
	(0,30)*+{\F^0}="W";
	(0,0)*+{\Ub(\F^1)}="X";
	(60,0)*+{\Ub(\G^1)}="Y";
	(60,30)*+{\G^0}="E";
	(45,15)*+{\tx{p.b}}="PB";
	{\ar@{->}^-{\pi_{\F}}"W";"X"};
	{\ar@{->}^-{\sigma^0}"W";"E"};
	{\ar@{->}^-{\pi_{\G}}"E";"Y"};
	(30,20)*+{{Q^0}}="U";
	(30,0)*+{\Ub(E^{1})}="V";
	{\ar@{->}^-{\Ub(l(\sg^{1}))}"X";"V"};
	{\ar@{->>}^-{\Ub(r(\sg^{1}))}"V";"Y"};
	{\ar@{->}^-{p_2}"U";"V"};
	{\ar@{->>}^-{p^{1}}"U";"E"};
	{\ar@{->}^-{\delta}"W";"U"};
	\endxy
	\]
	Now we can factor the map $\delta$ as cofibration followed by a trivial fibration:
	$$ \delta : \F^0 \to {Q^0} = \F^0  \xhookrightarrow{a_\delta}  m^0 \xtwoheadrightarrow[\sim]{b_\delta} {Q^0}.$$
	The map $\pif \in \Hom_{\M}(\F^0,\Ub(\F^1))$ is equivalent to a morphism $\varphi(\pif) \in \Hom_{\ag}(\Fb m^0,\F^1)$, where $\varphi : \Hom_{\M}(x,\Ub(y))  \xrightarrow{\cong} \Hom_{\ag}(\Fb x, y)$ is the isomorphism of the adjunction $\Fb \dashv \Ub$. The previous commutative diagram in $\M$ corresponds by adjunction to the following commutative diagram in $\ag$:
	\[
	\xy
	(0,30)+(-3,3)*+{\Fb \F^0}="W";
	(0,0)+(-3,-3)*+{\F^1}="X";
	(60,0)+(3,-3)*+{\G^1}="Y";
	(60,30)+(3,3)*+{\Fb \G^0}="E";
	{\ar@{->}_-{\varphi(\pi_{\F})}"W";"X"};
	{\ar@{->}^-{\Fb \sigma^0}"W";"E"};
	{\ar@{->}^-{\varphi(\pi_{\G})}"E";"Y"};
	(30,20)+(3,0)*+{\Fb {Q^0}}="U";
	(15,25)+(-5,2)*+{\Fb m^0}="M";
	(30,0)+(0,-3)+(3,0)*+{E^{1}}="V";
	{\ar@{^(->}^-{l(\sg^{1})}_{\sim}"X";"V"};
	{\ar@{->>}^-{r(\sg^{1})}"V";"Y"};
	{\ar@{->}^-{\varphi(p_2)}"U";"V"};
	{\ar@{->>}^-{\Fb p^{1}}"U";"E"};
	{\ar@{^(->}_-{}"W";"M"};
	{\ar@{->}_-{}"M";"U"};
	\endxy
	\]
	 Since $\Fb$ is a left Quillen functor, it preserves the (trivial) cofibrations, therefore $\Fb(a_\delta)$ is a cofibration in $\ag$. Let $R^{1}=\F^1 \cup^{\Fb \F^0} \Fb m^0$ be the pushout-object obtained from the pushout data: $\F^1 \xleftarrow{\varphi(\pi_{\F})} \Fb \F^0 \xhrw{\Fb(a_\delta)} \Fb m^0 $. The canonical map $\F^1 \hrw R^{1}$ is also a cofibration as the cobase change of the cofibration $\Fb(a_\delta)$. Moreover, the universal property of the pushout square gives a unique map $R^{1} \to E^{1}$ that we can factor as a cofibration followed by a trivial fibration: $R\xhrw{}{E^1}' \xtwoheadrightarrow{\sim} E^{1}$. Putting all together we get a diagram in $\ag$ in which everything commutes.
	\[
	\xy
	(0,30)+(-3,3)*+{\Fb \F^0}="W";
	(0,0)+(-3,-3)*+{\F^1}="X";
	(60,0)+(3,-3)*+{\G^1}="Y";
	(60,30)+(3,3)*+{\Fb \G^0}="E";
	{\ar@{->}_-{\varphi(\pi_{\F})}"W";"X"};
	{\ar@{->}^-{\Fb \sigma^0}"W";"E"};
	{\ar@{->}^-{\varphi(\pi_{\G})}"E";"Y"};
	(30,20)+(3,0)*+{\Fb {Q^0}}="U";
	(15,25)+(-5,2)*+{\Fb m^0}="M";
	(15,10)+(0,-2)+(-5,0)*+{R^{1}}="H";
	(20,5)+(0,-2)*+{{E^1}'}="G";
	(30,0)+(0,-3)+(3,0)*+{E^{1}}="V";
	{\ar@{^(->}^-{l(\sg^{1})}_{\sim}"X";"V"};
	{\ar@{->>}^-{r(\sg^{1})}"V";"Y"};
	{\ar@{->}^-{\varphi(p_2)}"U";"V"};
	{\ar@{->>}^-{\Fb p^{1}}"U";"E"};
	{\ar@{^(->}_-{}"W";"M"};
	{\ar@{->}_-{}"M";"U"};
	(3.5,15)*+{\tx{p.o}}="PO";
	{\ar@{^(->}_-{}"X";"H"};
	{\ar@{->}_-{}"M";"H"};
	{\ar@{^(->}_-{}"H";"G"};
	{\ar@{->>}^-{\sim}"G";"V"};
	\endxy
	\]
	A key observation is that the composite of cofibrations $(\F^1 \hrw R^{1} \hrw {E^1}')$ is a weak equivalence by $3$-for-$2$ since both $l(\sg^{1}):\F^1 \xrightarrow{\sim} E^{1}$ and ${E^1}' \xtwoheadrightarrow{\sim} E^{1}$ are weak equivalences. It follows that the composite $\F^1 \hrw {E^1}'$ is in fact a trivial cofibration that will be denoted henceforth as $i^{1}$. 
	Moreover, since $\Ub$ is a right Quillen functor, the image under $\Ub$ of the trivial fibration $({E^1}' \xtwoheadrightarrow{\sim} E^{1})$  is a trivial fibration $\Ub({E^1}') \xtwoheadrightarrow{\sim} \Ub(E^{1})$ in $\M$. Let ${Q^0}' = \Ub({E^1}') \times_{\Ub(E^{1})} {Q^0}$ be the pullback-object obtained from the pullback data: $ \Ub({E^1}') \xtwoheadrightarrow{\sim} \Ub(E^{1}) \xleftarrow{p_2} {Q^0}$. The canonical map ${Q^0}'  \xtwoheadrightarrow{\sim} {Q^0}$ is a trivial fibration as the base change of the trivial fibration $\Ub({E^1}') \xtwoheadrightarrow{\sim} \Ub(E^{1})$. From the last diagram displayed in the category $\ag$, if we go back to the category $\M$ by adjunction, we find a commutative diagram where everything commutes and where we have omitted the object $\Ub(R^{1})$ for simplicity:
	\[
	\xy
	(0,30)+(-3,3)+(-5,0)*+{\F^0}="W";
	(0,0)+(-3,-3)+(-5,0)*+{\Ub(\F^1)}="X";
	(60,0)+(3,-3)*+{\Ub(\G^1)}="Y"; 
	(60,30)+(3,3)*+{\G^0}="E";
	{\ar@{->}_-{\pi_{\F}}"W";"X"};
	{\ar@{->}^-{\sigma^0}"W";"E"};
	{\ar@{->}^-{\pi_{\G}}"E";"Y"};
	(30,20)+(3,0)*+{{Q^0}}="U";
	(15,25)+(-5,2)*+{m^0}="M";
	(15,10)+(0,-2)+(-5,0)*+{\Ub ({E^1}')}="H";
	(20,17)*+{{Q^0}'}="G";
	(45,15)*+{\tx{p.b}}="PB";
	(25,10)*+{\tx{p.b}}="PB2";
	(30,0)+(0,-3)+(3,0)*+{\Ub(E^{1})}="V";
	{\ar@{->}^-{\Ub(l(\sg^{1}))}"X";"V"};
	{\ar@{->>}^-{\Ub(r(\sg^{1}))}"V";"Y"};
	{\ar@{->}^-{p_2}"U";"V"};
	{\ar@{->>}^-{p^{1}}"U";"E"};
	{\ar@{^(->}_-{}"W";"M"};
	{\ar@{->>}^-{\sim}"M";"U"};
	{\ar@{->}^-{\Ub(i^{1})}"X";"H"};
	{\ar@{->}_-{}"M";"H"};
	{\ar@{->>}^-{\sim}"H";"V"};
	{\ar@{->>}^-{\sim}"G";"U"};
	{\ar@{.>}^-{}"M";"G"};
	{\ar@{->}^-{}"G";"H"};
	\endxy
	\]
	In this last diagram, the map $m^0 \to \Ub({E^1}')$ is adjoint to the composite $(\Fb m^0 \to R^{1} \to {E^1}')$. The universal property of the pullback gives a unique (dotted) map  $\tau: m^0 \to {Q^0}'$. It's important to notice that $\tau$ is a weak equivalence by $3$-for-$2$ with respect to the commutative triangle above involving $m^0, {Q^0}$ and ${Q^0}'$. Another important aspect is that ``the pullback of the pullback is a pullback'', therefore the map ${Q^0}' \to \Ub({E^1}')$ is a base change of $\pig$ along the composite $\Ub({E^1}') \to \Ub(E^{1}) \to \Ub(\G^1)$. Put differently, the commutative square bounded by the object ${Q^0}', \G^0, \Ub(\G^1)$ and $\Ub({E^1}')$ is a pullback square. Let $j^{1}: {E^1}' \to \G^1$ be the composite fibration $({E^1}' \xtwoheadrightarrow{\sim} E^{1} \xtwoheadrightarrow{r(\sg^{1})} \G^1)$ and let $\Ec=[\Ea^0,\Ea^{1},\pi_{\Ea}]$ be the object of $\mua$ defined by:
	$$ \Ea^0= m^0, \quad \Ea^{1}={E^1}', \quad \pi_{\Ea}=(m^0 \to \Ub({E^1}')).$$
	\begin{itemize}
		\item We  have a map $i : \Fc \to \Ec$ given by the couple $[a_\delta,  i^{1}]$. By construction $i$ is a projective cofibration in $\mua$. Moreover, $i^{1}$ is a weak equivalence in $\ag$ which means that $i$ is a left weak equivalence. Thus $i  \in  \lcof(\muapj) \cap \lwe(\muapj)$.
		\item We also have a map $j : \Ec \to \Gc$  given by the couple $[p^{1} \circ b_\delta,  j^{1}]$. By construction $j$ is level-wise fibration such that the universal map $(\Ea^0 \to \Ub(\Ea^{1}) \times_{\Ub(\G^1)} \G^0 ) = (m^0 \to {Q^0}')$ is a weak equivalence. Thus $j \in \lfib(\muapj)$. 
	\end{itemize}
	Clearly, one has $\sg= j \circ i$ and the lemma follows.
\end{proof}
\addcontentsline{toc}{section}{References}

\bibliographystyle{plain}
\bibliography{Bibliography}

\end{document}